\documentclass[reqno]{amsart}

\makeatletter
\let\origsection=\section \def\section{\@ifstar{\origsection*}{\mysection}} 
\def\mysection{\@startsection{section}{1}\z@{.7\linespacing\@plus\linespacing}{.5\linespacing}{\normalfont\scshape\centering\S}}
\makeatother  

% packages
\usepackage[utf8]{inputenc} %test
\usepackage{amssymb}
\usepackage{dsfont}
\usepackage{mathrsfs}
\usepackage{graphicx}
\usepackage{latexsym}
\usepackage{stmaryrd}
\usepackage{enumitem}
\usepackage[bookmarks,hyperfootnotes=false]{hyperref}
\usepackage{multirow}
\usepackage{xcolor}
\usepackage{caption}
\colorlet{darkishRed}{red!60!black}
\colorlet{darkishBlue}{blue!60!black}
\colorlet{darkishGreen}{green!50!black}
\colorlet{lightishGreen}{green!70!black}
\hypersetup{
    draft = false,
    bookmarksopen=true,
    colorlinks,
    linkcolor={darkishGreen},
    citecolor={lightishGreen},
    urlcolor={darkishBlue}
}
\usepackage[nameinlink, capitalise, noabbrev]{cleveref}
\crefformat{enumi}{#2#1#3}
\crefformat{equation}{#2(#1)#3}
\usepackage[abbrev]{amsrefs} 
\usepackage{doi}

\renewcommand{\PrintDOI}[1]{\doi{#1}}

\let\setminus=\smallsetminus
\usepackage{tikz}
\usepackage{tikz-cd}
\usetikzlibrary{calc,through,intersections,arrows, trees, positioning, decorations.pathmorphing, cd}
\usepackage{relsize}
\usepackage{comment}
\usepackage{svg}
\usepackage{mathtools}
\usepackage{nccmath}
\usepackage{pifont}
\usepackage{comment}

\usepackage{subcaption} % mehrere Abbildungen nebeneinander/übereinander

\linespread{1.2}%1.2
\usepackage{geometry}
\geometry{left=26.5mm,right=26.5mm, top=32mm, bottom=32mm, marginparwidth=20mm}

%input
\usepackage[utf8]{inputenc}
\usepackage[T1]{fontenc}
\usepackage{lmodern}
\usepackage[babel]{microtype}
\usepackage[english]{babel}
\usepackage{relsize}

\let\setminus=\smallsetminus
\renewcommand{\leq}{\leqslant}
\renewcommand{\geq}{\geqslant}
\renewcommand{\ge}{\geq}
\renewcommand{\le}{\leq}

%
%

%for better greeks

%\let\theta=\vartheta
\let\rho=\varrho
\let\phi=\varphi

%macro stuff

%%% Bei Änderung von VG auch manuelle Änderung in 1pt Ocomp Beweis nötig!!!

%%%-------------------------------------------------------

%{(\ast,{#1},{#2})}%{({#1}{\blacktriangleleft}{#2})}
%{({#1},{#2},\ast)}%{({#1}{\blacktriangleright}{#2})}
%{\{\ast,{#1},{#2}\}}%{\{{#1}{\blacktriangleleft}{#2}\}}

%group

\renewcommand{\supset}{\supseteq}
%\newcommand{\tfrac}{\genfrac{}{}{}1}

%Max Macro
%---NEW COMMANDS---

\newcommand{ \N } { \mathbb{N} }

%Carl Macro 
\newcommand{\defn}[1]{{\color{darkishRed}{\emph{#1}}}}

\newcommand{\COMMENT}[1]{{}}

\newcommand{\abs}[1]{\lvert#1\rvert}
%---NEW COMMANDS

%---END NEW COMMANDS---

\makeatletter

\def\calCommandfactory#1{%
   \expandafter\def\csname c#1\endcsname{\mathcal{#1}}}
\def\frakCommandfactory#1{%
   \expandafter\def\csname frak#1\endcsname{\mathfrak{#1}}}

% \fraka für \mathfrak{a}, \cC für \mathcal{C} etc
\newcounter{ctr}
\loop
  \stepcounter{ctr}
  \edef\X{\@Alph\c@ctr}
  \expandafter\calCommandfactory\X
  \expandafter\frakCommandfactory\X
  \edef\Y{\@alph\c@ctr}
  \expandafter\frakCommandfactory\Y
\ifnum\thectr<26
\repeat

%\renewcommand{\cD}{\mathscr{D}}

%\renewcommand{\cP}{\mathscr{P}}

%\renewcommand{\cF}{\mathscr{F}}

% enumerate itemsep to 0
\setenumerate{label={\normalfont (\roman*)}}%,itemsep=0pt}

%%% START DIESTEL ARROW STUFF arXiv:1701.02651v2
% \def\lowfwd #1#2#3{{\mathop{\kern0pt #1}\limits^{\kern#2pt\raise.#3ex
% \vbox to 0pt{\hbox{$\scriptscriptstyle\rightarrow$}\vss}}}}
% \def\lowbkwd #1#2#3{{\mathop{\kern0pt #1}\limits^{\kern#2pt\raise.#3ex
% \vbox to 0pt{\hbox{$\scriptscriptstyle\leftarrow$}\vss}}}}
% \def\fwd #1#2{{\lowfwd{#1}{#2}{15}}}
% %\def\vS{{\vec S}_{\aleph_0}}
% \def\Sinf{S_{\aleph_0}}
% \def\SinfT{\Sinf^{\,\normalfont\text{t}}}
% \def\vS{{\hskip-1pt{\fwd S3}\hskip-1pt}}
% \def\vSinf{\vS_{\aleph_0}}
% \def\vSinfT{\vS{}{}_{\aleph_0}^{\,\normalfont\text{t}}}
% \def\vE{{\hskip-1pt{\fwd{E}{3.5}}\hskip-1pt}}
% \def\vF{{\hskip-1pt{\fwd{F}{3.5}}\hskip-1pt}}
% %\def\vE{\lowfwd E{1.5}1}
% \def\ve{\kern-1.5pt\lowfwd e{1.5}2\kern-1pt}
% \def\ev{\kern-1pt\lowbkwd e{0.5}2\kern-1pt}
% \def\vf{\kern-2pt\lowfwd f{2.5}2\kern-1pt}
% %\def\vf{{\vec f}}
% \def\fv{\lowbkwd f01}
% \def\vT{\lowfwd T{0.3}1}
% \def\vs{\lowfwd s{1.5}1}%1.5
% \def\sv{\lowbkwd s{0}1}
% \def\vr{\lowfwd r{1.5}2}
% \def\rv{\lowbkwd r02}
% \def\vSd{{\mathop{\kern0pt S\lower-1pt\hbox{${}% logically \v(e')
%      \scriptstyle'$}}\limits^{\kern2pt\raise.1ex
%      \vbox to 0pt{\hbox{$\scriptscriptstyle\rightarrow$}\vss}}}}
%%% END DIESTEL
\usepackage[presets={vec-cev,abc,ABC,cAcBcC}]{letterswitharrows}

\lineskiplimit=-4pt %fürs Schriftbild

% +++ Theorem Stuff +++

% Ordinary theorems that are no main theorems that are numbered with respect to the section
\newtheorem{theorem}{Theorem}[section] 
\newtheorem{proposition}[theorem]{Proposition}%[section]
\newtheorem{corollary}[theorem]{Corollary}
\newtheorem{lemma}[theorem]{Lemma}

\newtheorem{problem}[theorem]{Problem}
\newtheorem{sublemma}[theorem]{Sublemma}
% Mainresults 
\newtheorem{mainresult}{Theorem}
\crefname{mainresult}{Theorem}{Theorems}
\newtheorem{maincorollary}[mainresult]{Corollary}
\crefname{maincorollary}{Corollary}{Corollaries}

% The following is for Theorems that have a custom number, which is particularly of help if you restate Theorems (just take the ref of the theorem that you want to restate as parameter)  

\newenvironment{customthm}[1]
  {\innercustomthm}
  {\endinnercustomthm}

% Not italic  
\theoremstyle{definition}
\newtheorem{example}[theorem]{Example}

\newtheorem{setting}[theorem]{Setting}

% Theorem styles without a number 
\theoremstyle{remark}

%claims numbered within proofs

\crefname{claim}{Claim}{Claims}
\usepackage{etoolbox}
\AtEndEnvironment{proof}{\setcounter{claim}{0}}

\newenvironment{claimproof}{\noindent\textit{Proof.}}{\hfill\ensuremath{\blacksquare}\medskip}

%claims without numbers
\newtheorem*{claim*}{Claim}

\newcommand{\td}{tree-decom\-pos\-ition}

\usepackage{etoolbox}

% Define a boolean variable
\newbool{pdfBool}
\booltrue{pdfBool} % Set the boolean variable to true
%\boolfalse{pdfBool} % Set the boolean variable to false

% Define a command to include a part based on the boolean variable
\newcommand{\pdfOrNot}[2]{\ifbool{pdfBool}{{#1}}{{#2}}}

%svg
\usepackage{svg}
%\usepackage[extractname=filename]{svg-extract}

% Define a boolean variable
\newbool{arXiv}
\booltrue{arXiv} % Set the boolean variable to true
%\boolfalse{arXiv} % Set the boolean variable to false

% Define a command to include a part based on the boolean variable
\newcommand{\arXivOrNot}[2]{\ifbool{arXiv}{{#1}}{{#2}}}

\title{On vertex sets inducing tangles}
\author[S.\ Albrechtsen \and H.\ von Bergen \and R.\ W.\ Jacobs \and P.\ Knappe]{Sandra Albrechtsen, Hanno von Bergen, Raphael W. Jacobs, Paul Knappe}
\address{University of Hamburg, Department of Mathematics, Bundesstraße 55 (Geomatikum), 20146 Hamburg, Germany}
\email{\{sandra.albrechtsen,hanno.von.bergen,raphael.jacobs, paul.knappe\}@uni-hamburg.de}
\author[P. Wollan]{Paul Wollan}
\address{University of Rome, “La Sapienza”, Department of Computer Science, Via Salaria 113, 00198 Rome, Italy}
\email{wollan@di.uniroma1.it}

\keywords{tangles, tangles induced by vertex sets, rainbow-cloud-decomposition}
\subjclass[2020]{05C40, 05C83, 05C69}

\begin{document}

\begin{abstract}
    Diestel, Hundertmark and Lemanczyk asked whether every $k$-tangle in a graph is induced by a set of vertices by majority vote.
    We reduce their question to graphs whose size is bounded by a function in~$k$.
    Additionally, we show that if for any fixed $k$ this problem has a positive answer, then every~$k$-tangle is induced by a vertex set whose size is bounded in $k$.
    More generally, we prove for all~$k$ that every $k$-tangle in a graph $G$ is induced by a weight function $V(G) \to \N$ whose total weight is bounded in $k$.
    As the key step of our proofs, we show that any given $k$-tangle in a graph $G$ is the lift of a $k$-tangle in some topological minor of $G$ whose size is bounded in $k$.
\end{abstract}

\maketitle

\section{Introduction}

\subsection{Vertex sets inducing tangles}
Tangles are an abstract notion of `clusters' in graphs that originates in the theory of graph minors developed by Robertson and Seymour~\cite{GM}.
They allow for a unified treatment of various concrete highly cohesive substructures in graphs, such as large clique or grid minors.
Tangles describe these clusters in a graph indirectly.
Instead of describing what the cluster is composed of, they describe its position, in that they orient the low-order separations of the graph towards it.
Intuitively, a concrete cluster orients all low-order separations by majority vote, that is the cluster orients such a separation $\{A,B\}$ towards its side, $A$ or $B$, which contains most of the cluster. 
Such a side exists; otherwise, the cluster would be separated by few vertices, which contradicts its high cohesion.

The orientations of the low-order separations induced by concrete clusters are `consistent', in that they all point to the cluster.
Robertson and Seymour's key innovation was to distil from this an abstract notion of `consistency' which leads to the notion of tangle \cite{GMX}: %(see also \cite{DiestelBook16noEE}*{Section~12.5}):
Formally, a~\defn{$k$-tangle}~$\tau$ in a graph~$G$ is an orientation of the separations of~$G$ of order~$<k$ such that there do not exist three separations $(A_i, B_i) \in \tau$ such that the union of the small sides $G[A_i]$ covers the whole graph.
We refer to~$k$ as the \defn{order} of~$\tau$, and we will denote a tangle of unspecified order as simply a~\defn{tangle in~$G$}.

While every concrete cluster induces a tangle by majority vote, is the converse also true in that all tangles stem from concrete clusters in this way?
Without a precise definition of concrete cluster, it seems difficult to answer this question.
However, the question remains interesting if we consider arbitrary vertex sets instead of concrete clusters:
Is every tangle at least induced by the majority vote of some set of vertices?
Diestel, Hundertmark and Lemanczyk \cite{ProfilesNew}*{Section~7} formalised this problem as follows.
A set~$X$ of vertices of a graph $G$ \defn{induces} a tangle~$\tau$ in~$G$ if for every separation~$(A, B) \in \tau$ we have~$|X \cap A| < |X \cap B|$.
For example, the vertex set of a complete subgraph induces a tangle in this way.
\begin{problem} \cite{ProfilesNew} \label{conj:Decider}
    Is every tangle in a graph $G$ induced by some set~$X \subseteq V(G)$?
\end{problem}

\noindent In what follows we will often consider \cref{conj:Decider} for all tangles of some fixed order $k \in \N$ and then say for short: \cref{conj:Decider} for $k$. 

We remark that \cref{conj:Decider} is already answered in the affirmative for $k \leq 3$: 
Such sets $X$ inducing $k$-tangles exist for $k \leq 2$ due to the well-known correspondence of these tangles to components and blocks, respectively (cf.\ \cite{GMX}*{(2.6)}).
For $k = 3$, Elbracht \cite{ElbrachtMSc}*{Theorem~1.2} proved \cref{conj:Decider} directly.
Independently, Grohe \cite{char3tangles}*{Theorem~4.8} proved a direct correspondence between the $3$-tangles and the `proper triconnected components' of a graph, which yields another proof of \cref{conj:Decider} for $k=3$.
Similarly, it should be possible to derive \cref{conj:Decider} for $k = 4$ from the recent characterisation of $4$-tangles in terms of `internal $4$-connectedness' by Carmesin and Kurkofka \cite{characterising4tangles}*{Theorem~1}.
Besides these results for small~$k$, Diestel, Elbracht and Jacobs \cite{Focus}*{Theorem~12} showed that \cref{conj:Decider} is true for every $k$-tangle $\tau$ in a graph $G$ which \defn{extends} to a $2k$-tangle $\tau'$ in $G$, that is, $\tau \subseteq \tau'$.

For general $k$, Elbracht, Kneip and Teegen \cite{weighted_deciders_AIC}*{Theorem~2} made substantial progress towards~\cref{conj:Decider} by proving a relaxed weighted version.
A~\defn{weight function}
$w\colon V(G) \to \N$
on the vertex set~$V(G)$ of a graph~$G$ \defn{induces} a tangle~$\tau$ in~$G$ if~$w(A) < w(B)$ for every~$(A, B) \in \tau$.
Note that a set $X \subseteq V(G)$ induces a tangle~$\tau$ if and only if its indicator function $\mathds{1}_X$ on $V(G)$ induces $\tau$.

\begin{theorem} \label{thm:TanglesDecidedWeightedVertexSets}{\cite{weighted_deciders_AIC}} \label{thm:WeightFctsExist}
    Every tangle in a graph~$G$ is induced by some weight function on $V(G)$.
\end{theorem}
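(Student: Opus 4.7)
The plan is to phrase the existence of an inducing weight function as a linear feasibility problem and to attack it via LP duality. Fix a $k$-tangle $\tau$ in the finite graph $G$, and seek $w\colon V(G) \to \R_{\geq 0}$ satisfying $w(B) - w(A) \geq 1$ for every $(A, B) \in \tau$. Since $\tau$ is finite and the defining system has integer coefficients, any real feasible $w$ yields, after rationalisation and scaling, an $\N$-valued solution; hence it suffices to prove that this LP is feasible.

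Suppose for contradiction that the LP is infeasible. By Farkas' lemma there exists a nonzero $y\colon \tau \to \Q_{\geq 0}$ of finite support such that, for every vertex $v \in V(G)$,
\[
    \sum_{s = (A, B) \in \tau,\; v \in A} y_s \;\geq\; \sum_{s = (A, B) \in \tau,\; v \in B} y_s.
\]
Clearing denominators, $y$ may be viewed as a multiset $\cF$ of separations in $\tau$ in which every vertex lies on the small side of at least half of the members of $\cF$ (counted with multiplicity).

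The heart of the proof, and the main obstacle, is to derive from $\cF$ a violation of the tangle axiom that no three small sides in $\tau$ cover $V(G)$. The naive probabilistic approach — sampling three members of $\cF$ uniformly at random and bounding the expected size of $V(G) \setminus (A_1 \cup A_2 \cup A_3)$ — only yields at most $|V(G)|/8$, which is too weak once $|V(G)| \geq 8$. I would therefore exploit the submodular structure of $\tau$: for any $s, s' \in \tau$, submodularity of order forces at least one of the corners $(A_s \cup A_{s'}, B_s \cap B_{s'})$ or $(A_s \cap A_{s'}, B_s \cup B_{s'})$ to have order below $k$, and the tangle axiom then puts it into $\tau$. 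Starting from a Farkas certificate chosen extremally (say, minimising $\sum_s y_s$ or the weighted potential $\sum_s y_s |V(G) \setminus A_s|$), I would iteratively replace pairs in the support with their corner separations so as to strictly enlarge the small sides involved, correcting the multiplicities on the fly to preserve the dual inequalities. A successful iteration produces three separations in $\tau$ whose small sides cover $V(G)$, contradicting the tangle axiom. The delicate combinatorial core — and what I expect to be the hardest step — is to keep the Farkas certificate valid across these corner substitutions, in particular handling the unfavourable case in which only the "lower" corner $(A_s \cap A_{s'}, B_s \cup B_{s'})$ is available and the small side shrinks rather than grows.
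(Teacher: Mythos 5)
The paper does not prove this statement: it cites it as a theorem of Elbracht, Kneip and Teegen \cite{weighted_deciders_AIC}, so there is no in-paper argument to compare against. What can be assessed is whether your proposal is itself a proof, and it is not; it is an explicit sketch of a strategy whose central step you acknowledge you cannot complete.

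Your LP/Farkas framing is sound and does match the general starting point of the cited proof: the existence of an inducing weight function is a linear feasibility problem, and infeasibility yields (after clearing denominators) a nonempty multiset $\cF \subseteq \tau$ in which every vertex lies on the small side of at least half the members. The two places where you flag difficulty are, however, precisely where the argument has to do the real work, and the proposal offers no mechanism to get past them.

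First, the uncrossing step. Replacing $(A_s,B_s)$ and $(A_{s'},B_{s'})$ by both corners with suitable multiplicities does preserve the dual inequality, but it requires both corners to be separations of order $<k$ oriented by $\tau$; submodularity only guarantees that at least one of them has order $<k$. Replacing by two copies of the supremum alone in fact only increases small-side membership, so the Farkas certificate would survive — but the supremum may have order $\geq k$ and is then simply not in $S_k(G)$, so $\tau$ does not orient it. The "unfavourable case" you identify, where only the infimum is available and small sides shrink, is not a boundary case to be patched by "correcting multiplicities on the fly"; it is the main obstruction, and nothing in the proposal resolves it. Moreover, even if the process could always be run, its terminal state is a possibly large family of separations whose small sides cover $V(G)$, not a triple, and nothing is said about how to extract three members.

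Second, and independently, the target contradiction is stated against the wrong object. The tangle axiom forbids three $(A_i,B_i) \in \tau$ with $G[A_1] \cup G[A_2] \cup G[A_3] = G$; this is a condition on covering the \emph{graph}, edges included. Your LP and Farkas certificate control only vertex membership. A triple of small sides covering $V(G)$ need not cover $G$: any edge whose two ends lie in different separators $A_i \cap B_i$ can be left uncovered. Turning a vertex-covering triple into an edge-covering one, or reformulating the LP so that edge coverage comes for free, is a separate lemma that the proposal neither states nor proves. Both gaps are genuine, and the second one is orthogonal to the first.
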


\noindent 
Contrary to their positive result, Elbracht, Kneip and Teegen~\cite{weighted_deciders_AIC}*{Theorem~10} explicitly construct an example which shows that not only \cref{conj:Decider}, but also \cref{thm:TanglesDecidedWeightedVertexSets} fails for tangles in general discrete contexts, such as matroids or data sets (see e.g\,\cites{FiniteSplinters,TangleTreeGraphsMatroids,AbstractSepSys,ProfilesNew}). 
However, no such example is known for tangles in graphs. Thus, \cref{conj:Decider} is open for all $k \geq 4$.

\subsection{Our contributions to \texorpdfstring{\cref{conj:Decider}}{Problem 1.1}}

In this paper, we reduce \cref{conj:Decider} for every $k$ to graphs whose size is bounded by a function in~$k$:

\begin{mainresult}\label{mainresult:reduction}
    For every integer $k \geq 1$, there exists $M = M(k) \in O(3^{k^{k^5}})$ such that for every $k$-tangle $\tau$ in a graph $G$, there exists a $k$-tangle $\tau'$ in a connected topological minor $G'$ of $G$ with fewer than $M$ edges such that
    if a weight function $w'$ on $V(G')$ induces the tangle $\tau'$, then the weight function $w$ on $V(G)$ which extends $w'$ by zero\footnote{Given two sets $X' \subseteq X$, a function $w \colon X \to \N$ \defn{extends} a function $w' \colon X' \to \N$ \defn{by zero} if $w$ restricted to $X'$ is $w'$ and $w$ restricted to ${X \setminus X'}$ is $0$.} induces the tangle $\tau$.
    In particular, a set of vertices which induces $\tau'$ also induces~$\tau$.
\end{mainresult}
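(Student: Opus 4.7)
The plan is to derive \cref{mainresult:reduction} from the structural claim that constitutes the technical heart of the paper: every $k$-tangle $\tau$ in a graph $G$ arises as the \emph{lift} of a $k$-tangle $\tau'$ on a connected topological minor $G'$ of $G$ with fewer than $M(k)$ edges, where by \emph{lift} I mean that for every $(A,B) \in \tau$ the restriction $(A \cap V(G'), B \cap V(G'))$ is a separation of $G'$ lying in $\tau'$. Granted such a $G'$ and $\tau'$, the deduction is brief: apply \cref{thm:WeightFctsExist} to $\tau'$ to obtain a weight function $w' \colon V(G') \to \N$ inducing $\tau'$, and let $w$ on $V(G)$ be its extension by zero. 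For any $(A,B) \in \tau$, writing $(A',B') := (A \cap V(G'), B \cap V(G'))$, one has $w'(A') < w'(B')$ by the lift, and since $w$ vanishes off $V(G')$ this reads $w(A) < w(B)$. Hence $w$ induces $\tau$, and the ``in particular'' clause follows by specialising to $w' = \mathds{1}_X$.

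The central task is therefore the construction of $G'$ together with $\tau'$. My approach would be to build $G'$ as a topological minor whose branch-vertex set $U \subseteq V(G)$ is rich enough to absorb the separators of every separation of $G$ of order $<k$ that $\tau$ orients. Concretely, I would iteratively, at each order $i < k$, select a bounded family of tangle-distinguishing separations in $\tau$, add their separator vertices to $U$, and realise the edges of $G'$ by single representative paths through the pieces of the resulting decomposition, internally disjoint from $U$. The rainbow-cloud-decomposition announced in the keywords is presumably the tool that controls the size of the family of separations chosen at each order; a recursion over the order then yields both the existence of $G'$ and the bound $M(k) \in O(3^{k^{k^5}})$, with the triple-exponential shape reflecting nested exponential blow-ups at each of the $k$ orders.

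Two technical obstacles stand out. First, one must verify that the orientation $\tau'$ on $G'$, defined by assigning to each separation of $G'$ of order $<k$ the direction that $\tau$ assigns to its lift in $G$, is itself a $k$-tangle: a covering triple of small sides in $G'$ would lift, via the chosen subdivision paths, to a covering triple in $G$, contradicting the tangle axiom for $\tau$. Second, and the likely main difficulty, is the \emph{restriction-is-a-separation} property, namely that for every $(A,B) \in \tau$ the pair $(A \cap V(G'), B \cap V(G'))$ actually is a separation of $G'$ --- equivalently, that no representative path $P_e$ may have its endpoints on opposite sides of $(A,B)$ while crossing the separator $A \cap B$ only at internal vertices. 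This forces every low-order separator of $\tau$ to be absorbed into $U$ and every $P_e$ to avoid each such separator internally. Maintaining this closure condition simultaneously across all orders $<k$, by induction on $k$, is the heart of the construction and the source of the rapid growth of $M(k)$.
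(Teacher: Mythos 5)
The overall architecture you propose --- reduce \cref{mainresult:reduction} to a bounded topological minor $G'$ carrying a tangle $\tau'$ from which $\tau$ is recovered, then transfer a weight function from $G'$ to $G$ by extending it by zero --- matches the paper's in spirit, and your final deduction step would indeed be brief if the premise held. But the statement you want to reduce to, namely that the restriction $(A \cap V(G'), B \cap V(G'))$ is a separation of $G'$ lying in $\tau'$ for \emph{every} $(A,B) \in \tau$, is strictly stronger than the paper's notion of lift and is not in general achievable. When $G'$ arises via a vertex suppression, an edge $u_1u_2$ of $G'$ corresponds to a path in $G$ whose interior avoids $V(G')$; whenever some $(A,B) \in \tau$ has $u_1 \in A \setminus B$ and $u_2 \in B \setminus A$ (which it must if $u_1,u_2$ admit a $(<k)$-separator in $G$), the restriction is simply not a separation of $G'$. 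You correctly identify this as the main obstacle, but the fix you sketch --- absorbing every low-order separator encountered into the branch vertex set $U$ --- cannot yield a bounded $U$: a large graph has unboundedly many separations of order $<k$ with distinct separators, and the closure condition you describe would not terminate at a size controlled by $k$ alone.

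The paper sidesteps the issue entirely by never needing the restriction to be a separation. Its lift for vertex suppression handles the endpoints-on-opposite-strict-sides case by using $(A', B' \cup \{u_i\})$ or $(A' \cup \{u_j\}, B')$ rather than $(A',B')$, and the corresponding transfer step, \cref{lem:DeciderTransfersContractedEdge}, does not claim the restriction of $(A,B) \in \tau$ lies in $\tau'$; it instead exhibits, by a short case analysis with the tangle axioms, \emph{some} $(A',B') \in \tau'$ with $A \subseteq A' \cup \{v\}$ and $B' \subseteq B \cup \{v\}$, which already gives $w(A) \le w'(A') < w'(B') \le w(B)$. More fundamentally, $G'$ is not constructed in one shot: \cref{thm:InductiveProofMethod} supplies a sequence $G = G_0, \dots, G_m = G'$ where each step is a single edge deletion, vertex suppression, or passage to a component under which the tangle survives, and the weight-function transfer is then applied one step at a time via \cref{lem:DeciderTransfersDeletedEdge,lem:DeciderTransfersContractedEdge,lem:DeciderTransfersComponent}. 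The rainbow-cloud decomposition you anticipate is used to find, in a large graph of minimum degree $\ge 3$ with no $(k+1)$-tangle, a single deletable edge (\cref{main:SplitterTheorem}) for one step of this iteration, not to enforce a global closure condition on separators as your sketch envisions.
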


As an immediate corollary of \cref{mainresult:reduction}, we obtain that one may verify the validity of~\cref{conj:Decider} for any fixed $k$ computationally in explicitly bounded, though impractically long, time by checking \cref{conj:Decider} for every $k$-tangle in every connected graph with fewer than $M$ edges.
\begin{maincorollary}
\label{main:MinimalCterex}
    For $k \geq 1$, there exists $M = M(k) \in O(3^{k^{k^5}})$ such that 
    \cref{conj:Decider} holds for $k$ if it holds for all $k$-tangles in connected graphs $G$ with fewer than $M$ edges.
\end{maincorollary}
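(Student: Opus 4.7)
The plan is to deduce \cref{main:MinimalCterex} directly from \cref{mainresult:reduction} in a few lines; the substance of the argument has already been packaged into the theorem, and the corollary is essentially an unrolling of quantifiers. Let $M = M(k) \in O(3^{k^{k^5}})$ be the integer supplied by \cref{mainresult:reduction}, and assume that \cref{conj:Decider} holds for every $k$-tangle in every connected graph with fewer than $M$ edges. To verify \cref{conj:Decider} for $k$, let $\tau$ be an arbitrary $k$-tangle in an arbitrary graph $G$; the goal is to produce some $X \subseteq V(G)$ that induces $\tau$.

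The key step is to apply \cref{mainresult:reduction} to $\tau$ to obtain a connected topological minor $G'$ of $G$ with $|E(G')| < M$ together with a $k$-tangle $\tau'$ in $G'$ such that the extension by zero of any weight function on $V(G')$ inducing $\tau'$ induces $\tau$ on $V(G)$. Because $G'$ is a topological minor of $G$, its vertex set $V(G')$ is identified with the set of branch vertices of a subdivision of $G'$ in $G$, and may therefore be viewed as a subset of $V(G)$. Under this identification, the extension by zero of the indicator function $\mathds{1}_{X'}$ of a set $X' \subseteq V(G')$ is exactly the indicator function $\mathds{1}_{X'}$ on $V(G)$, so the last clause of \cref{mainresult:reduction} asserts that any vertex set of $G'$ inducing $\tau'$ also induces $\tau$ in $G$.

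It remains only to invoke the standing hypothesis: since $G'$ is a connected graph with fewer than $M$ edges, \cref{conj:Decider} applied to the $k$-tangle $\tau'$ yields a set $X' \subseteq V(G')$ that induces $\tau'$. By the transfer step above, this same set $X'$ then induces $\tau$ in $G$, which completes the reduction. The main (and essentially only) obstacle in this deduction is the bookkeeping surrounding the embedding of $V(G')$ into $V(G)$ through branch vertices, ensuring that the ``extension by zero'' of an indicator function is again an indicator function of a set — a routine check. All nontrivial content sits in \cref{mainresult:reduction} itself, whose proof will have to produce the small topological minor $G'$ and the compatible tangle $\tau'$; this corollary simply harvests its conclusion.
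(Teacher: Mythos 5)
Your proof is correct and is precisely the argument the paper has in mind — the paper's own proof of \cref{main:MinimalCterex} is the one-line "This follows immediately from \cref{mainresult:reduction}," and you have simply unpacked the quantifier bookkeeping (including the observation, already stated as the "in particular" clause of \cref{mainresult:reduction}, that the zero-extension of an indicator function of $X' \subseteq V(G')$ is again the indicator function of $X'$ viewed in $V(G)$).
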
 

Our second corollary of \cref{mainresult:reduction} asserts that whenever \cref{conj:Decider} is true for some fixed $k$, then every $k$-tangle is induced by a set of vertices of size bounded in~$k$.
This extends the known fact that one may choose the set $X$ in \cref{conj:Decider} that induces a given $k$-tangle with $k \leq 2$ to be of size at most $k$; this follows from the characterisation of $1$- and $2$-tangles by components and blocks, respectively.
More generally, we prove that for every $k$-tangle $\tau$ in a graph $G$ one may choose a weight function that induces $\tau$, which exists by \cref{thm:TanglesDecidedWeightedVertexSets}, in such way that its \defn{total weight} $w(V(G)) = \sum_{v \in V(G)} f(v)$, which it distributes on~$V(G)$, is bounded in~$k$.

\begin{maincorollary} \label{main:BoundTotalWeight}
    For every integer $k \geq 1$, there exists $K = K(k)$ such that for every $k$-tangle $\tau$ in a graph~$G$ there exists a weight function $V(G) \to \N$ which induces~$\tau$ and whose total weight $w(V(G))$ is bounded by $K$.
    In particular, the support of $w$ has size $\leq K$.

    Moreover, if~\cref{conj:Decider} holds for~$k$, then every~$k$-tangle in a graph is induced by a set of at most~$M(k)$ vertices, where $M(k)$ is given by \cref{mainresult:reduction}.
\end{maincorollary}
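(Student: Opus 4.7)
The plan is to invoke \cref{mainresult:reduction} to reduce to the case of graphs whose size is bounded in~$k$, and then to exploit the trivial finiteness available there. Almost all of the work is already contained in \cref{mainresult:reduction}; the remaining argument consists of an elementary finiteness observation, combined with the existence statement of \cref{thm:TanglesDecidedWeightedVertexSets} and the invariance of total weight under extension by zero. Thus, once \cref{mainresult:reduction} is in hand, there is no substantive obstacle left: the corollary follows by pure bookkeeping.

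Let $\tau$ be a $k$-tangle in a graph $G$. \cref{mainresult:reduction} yields a connected topological minor $G'$ of $G$ with $|E(G')| < M(k)$ and a $k$-tangle $\tau'$ in $G'$ such that every weight function $w'$ on $V(G')$ inducing $\tau'$ extends by zero to a weight function $w$ on $V(G)$ inducing $\tau$. Since $G'$ is connected, $|V(G')| \leq |E(G')| + 1 \leq M(k)$.

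For the first assertion, fix any pair $(G'', \sigma)$ consisting of a connected graph $G''$ with $|E(G'')| < M(k)$ together with a $k$-tangle $\sigma$ in $G''$. By \cref{thm:TanglesDecidedWeightedVertexSets} applied to $(G'', \sigma)$, the set of weight functions on $V(G'')$ inducing $\sigma$ is non-empty; hence
\[ K(G'', \sigma) := \min\bigl\{\, w''(V(G'')) : w'' \colon V(G'') \to \N \text{ induces } \sigma\,\bigr\} \]
is a well-defined non-negative integer. Since $|E(G'')| < M(k)$, there are only finitely many such pairs $(G'', \sigma)$ up to isomorphism, and so the maximum $K = K(k)$ of $K(G'', \sigma)$ over all these pairs is finite. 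Now choose $w'$ to be a weight function on $V(G')$ inducing $\tau'$ with $w'(V(G')) \leq K$. Then its extension $w$ to $V(G)$ by zero induces $\tau$ and satisfies $w(V(G)) = w'(V(G')) \leq K$. Since every vertex in the support of $w$ contributes at least $1$ to the total weight, the support of $w$ has size at most $K$.

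For the \emph{moreover} part, suppose that \cref{conj:Decider} holds for $k$. Applied to $\tau'$ in $G'$, it yields a vertex set $X' \subseteq V(G')$ inducing $\tau'$, equivalently, whose indicator function $\mathds{1}_{X'}$ on $V(G')$ induces $\tau'$. By \cref{mainresult:reduction}, the extension of $\mathds{1}_{X'}$ by zero to $V(G)$ induces~$\tau$; but this extension is precisely the indicator function of $X'$ regarded as a subset of $V(G)$. Hence $X'$ induces $\tau$ in $G$, and $|X'| \leq |V(G')| \leq M(k)$, as required.
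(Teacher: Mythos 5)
Your proof is correct and follows essentially the same route as the paper's: reduce to the finitely many connected graphs with fewer than $M(k)$ edges via \cref{mainresult:reduction}, invoke \cref{thm:TanglesDecidedWeightedVertexSets} to get weight functions there, and take a maximum to define $K(k)$. The only cosmetic difference is that you define $K(G'',\sigma)$ as a minimum total weight before taking the overall maximum, whereas the paper simply fixes one inducing weight function per tangle and takes the maximum of their total weights; both give a finite bound.
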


\noindent We derive \cref{main:BoundTotalWeight} from \cref{mainresult:reduction} by first fixing a weight function for every $k$-tangle in a connected graph with fewer than $M$ edges (such weight functions exist by~\cref{thm:TanglesDecidedWeightedVertexSets}) and then taking $K$ as the maximum of the total weights of these finitely many fixed weight functions.
The moreover-part follows immediately from \cref{mainresult:reduction} by fixing each such weight function to be an indicator function of some inducing set given by the assumed positive answer to~\cref{conj:Decider} (see \cref{sec:Deciders} for details).

\subsection{An inductive proof method for tangles}\label{subsecintro:inductiveproofmethod}

Our proof of \cref{mainresult:reduction} is based on the following theorem which allows inductive proofs for statements about tangles in graphs.
We expect that this inductive proof method will be of independent interest.

\begin{mainresult}\label{thm:InductiveProofMethod}
    For every integer $k \geq 1$ there is some $M(k) \in O(3^{k^{k^5}})$ such that the following holds:
    Let $\tau$ be a $k$-tangle in a graph $G$. 
    Then there exists a sequence $G_0, \dots, G_m$ of graphs and $k$-tangles $\tau_i$ in $G_i$ for every $i \in \{0,\dots,m\}$ such that
    \begin{itemize}
        \item $G_0 = G$, $\tau_0 = \tau$;
        \item $G_i$ is obtained from $G_{i-1}$ by deleting an edge, suppressing a vertex, or taking a proper component;
        \item the $k$-tangle $\tau_{i-1}$ in $G_{i-1}$ survives as the $k$-tangle $\tau_i$ in $G_i$ for every $i \in [m]$;
        \item $G_m$ is connected and has fewer than $M(k)$ edges.
    \end{itemize}
\end{mainresult}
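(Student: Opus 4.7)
My plan is to construct the sequence $G_0, \ldots, G_m$ greedily by applying reduction operations that preserve the tangle, and to show that when no further reduction is available the resulting graph must already be small.

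First, I would verify how each of the three operations interacts with a $k$-tangle. For taking a proper component, the partition of $G$ into components yields a separation of order $0$; since $k \geq 1$, the tangle $\tau$ orients this towards exactly one component, and restricting $\tau$ there yields a $k$-tangle, so this operation always preserves the tangle. For suppressing a degree-$2$ vertex $v$, the separations of the suppressed graph $G'$ correspond naturally to those separations of $G$ of the same order that avoid $v$ in the separator, and the orientation inherited from $\tau$ is again a $k$-tangle. For deleting an edge $e = uv$, a $<k$-order separation $(A, B)$ of $G - e$ is a separation of $G$ of order $<k$ or $<k+1$ depending on whether $\{u, v\}$ straddles $(A,B)$; one thereby obtains a candidate orientation of the $<k$-order separations of $G - e$, and we say $\tau$ survives the deletion of $e$ precisely when this candidate orientation is itself a $k$-tangle.

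With these survival conditions in place, I would construct the sequence greedily: starting from $G_0 = G$ and $\tau_0 = \tau$, whenever the current graph $G_i$ has at least $M(k)$ edges I would apply some tangle-preserving operation to obtain $G_{i+1}$ and $\tau_{i+1}$, continuing until no such operation exists. Since every operation strictly decreases either the vertex count or the edge count, the procedure terminates. The decisive question is whether, whenever $|E(G_i)| \geq M(k)$, such an operation is always available.

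The heart of the proof --- and what I expect to be the main obstacle --- is therefore the following structural claim: a connected graph $G$ carrying a $k$-tangle $\tau$ with no vertex of degree $2$ and no edge whose deletion preserves $\tau$ must have fewer than $M(k)$ edges. Such \emph{tangle-critical} graphs have every edge essential, in that for each edge $e$ some $<k$-order separation of $G - e$ witnesses a failure of the tangle axiom. The triple-exponential bound $O(3^{k^{k^5}})$ strongly suggests the argument proceeds via an iterated structural decomposition, presumably the \emph{rainbow-cloud-decomposition} indicated in the paper's keywords, which associates to every $k$-tangle a canonical core of bounded size; one would then identify $G_m$ with (essentially) this core and show that any part of $G$ lying outside can be peeled off by the three reduction operations. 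Tracking how the bounds compound through the layers of the decomposition --- most likely three layers, producing the three nested exponentials in $k$ --- is where the bulk of the technical work lies.
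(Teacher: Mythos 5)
Your overall framework matches the paper's: both proofs build the sequence greedily, applying tangle-preserving reductions until the graph is small, and both identify the crux as the claim that a connected graph of minimum degree $\geq 3$ carrying a $k$-tangle none of whose edges can be deleted must have fewer than $M(k)$ edges. This is exactly the paper's Theorem~5 (\cref{main:SplitterTheorem}), and the wrapper argument you describe is, modulo small cases, exactly the paper's short proof of Theorem~4 in \cref{sec:Deciders}.

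The gap is that you have identified this structural claim but not proved it, and this claim is where essentially all of the paper's work lies (\cref{sec:ExistenceOfHighOrderTangle,sec:ExistenceRCDecomp,sec:RCDecompAndSeps,sec:DeletingTheEdge}). The argument splits into two cases depending on whether $G$ carries a $(k+1)$-tangle. If it does, the paper shows one can delete an edge chosen using the higher-order tangle (and these two sub-cases, Lemmas~\cref{lem:TangleExtends,lem:AnotherExtendingTangle}, are already nontrivial). If it does not, tangle--tree duality yields a low-width tree-decomposition with a long path, from which the paper extracts a rainbow--cloud decomposition and then carefully orients the new separations that arise when an edge deep in the rainbow is deleted. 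None of that machinery appears in your proposal beyond speculation about the keyword. So, as written, the proposal is a correct reduction to \cref{main:SplitterTheorem}, not a proof of \cref{thm:InductiveProofMethod}.

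One further point worth flagging: your description of how a tangle survives an edge deletion is imprecise in a way that matters. If $\{A,B\}$ is a separation of $G-e$ with the endpoints of $e$ on opposite strict sides, then $\{A,B\}$ is \emph{not} a separation of $G$ at all (of any order); only the shifted pairs $\{A\cup e,B\}$ and $\{A,B\cup e\}$ are, and these have order one larger. So a $k$-tangle $\tau$ in $G$ does not automatically give a ``candidate orientation'' of all low-order separations of $G-e$, and surviving is defined merely as the existence of some $k$-tangle $\tau'\supseteq\tau$ in $G-e$, which may be non-unique or fail to exist. Constructing such a $\tau'$ for a well-chosen $e$ is precisely what makes \cref{main:SplitterTheorem} hard; \cref{ex:noktangleafterdeletion} shows that for an arbitrary edge no extension need exist, which is why the paper needs vertex suppression as a third operation.
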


\noindent Before we proceed to explain the crucial term `survive' in this theorem, we remark that it is necessary to allow the suppression of a vertex in \cref{thm:InductiveProofMethod}. Indeed, there are connected graphs $G$ with arbitrarily many edges which have a $k$-tangle such that every graph obtained from $G$ by deleting an edge has no $k$-tangles at all (\cref{ex:noktangleafterdeletion}). 

What does it mean that the $k$-tangle $\tau$ `survives' as a $k$-tangle $\tau'$ in $G'$ obtained from $G$ by deleting an edge, suppressing a vertex, or taking a proper component?
Let us here consider the first case that $G' = G-e$ is obtained from $G$ by deleting an edge $e$ of $G$.
Then every separation of $G$ is also a separation of its subgraph $G'$.
But in general $G'$ admits more separations than $G$; namely those separations~$\{A,B\}$ of~$G'$ which have an endpoint of $e$ in each of the two sets $A \setminus B$ and $B \setminus A$.
A $k$-tangle $\tau$ in~$G$ \defn{extends} to a $k$-tangle $\tau'$ in~$G'$ if $\tau \subseteq \tau'$. 
Then we also say that the $k$-tangle $\tau$ \defn{survives} as the $k$-tangle~$\tau'$ in $G'$. 
We remark that such an extension~$\tau'$ of~$\tau$ may or may not exist in $G'$; if it exists, it need not be unique.
If such a~$\tau'$ exists and is unique, then we also say that~$\tau$ \defn{induces}~$\tau'$.

For the cases that $G'$ is a component of $G$ or obtained from $G$ by suppressing a vertex of $G$, we refer the reader to \cref{subsec:suppressingavertex} for the details.
For readers familiar with the fact that a tangle of order $k \geq 3$ in a minor of a graph $G$ `lifts' to a $k$-tangle in $G$ (cf. \cite{GMX}*{(6.1)} and \cite{characterising4tangles}*{Lemma~2.1}), we remark that `surviving' is the reverse notion.

\subsection{Overview of the proof of \texorpdfstring{\cref{thm:InductiveProofMethod}}{Theorem 4}}

It suffices to describe one step of the construction of the above sequence, that is to find $G_i$ and the $k$-tangle $\tau_i$ in it given $\tau_{i-1}$ and $G_{i-1}$.
We reduce the argument to several cases.
Let $\tau$ be a $k$-tangle in a graph $G$.
The following is an immediate consequence of the well-known correspondence of $1$- and $2$-tangles in~$G$ to the components and blocks of~$G$, respectively:
\begin{enumerate}
    \item \label{item:k1} if $k =1$, then $\tau$ extends to some $k$-tangle in $G-e$ for every edge $e$ of $G$ (\cref{lem:Smallk1}), and
    \item \label{item:k2} if $k=2$, then $\tau$ extends to a $k$-tangle in $G-e$ for some edge $e$ of $G$ (\cref{lem:Smallk2}).
\end{enumerate}

\noindent A rather simple analysis will yield that
\begin{enumerate}
\setcounter{enumi}{2}
    \item \label{item:component} if $G$ is disconnected, then $\tau$ induces to a $k$-tangle in a unique component of $G$ (\cref{prop:CarryOverToComponent}),
    \item \label{item:deg1} if $k \geq 3$ and $e$ is the unique edge incident to a vertex of degree $1$, then $\tau$ induces a $k$-tangle in $G-e$ (\cref{lemma:vertexofdegree1}), and
    \item \label{item:deg2} if $k \geq 3$, then $\tau$ induces a $k$-tangle in every graph obtained from $G$ by suppressing any vertex of degree $2$  (\cref{lem:VertexOfDegreeTwo}).
\end{enumerate}
\noindent We remark that in each of the above \cref{item:k1} to \cref{item:deg2}, the tangle $\tau$ survives as a $k$-tangle in a strictly smaller graph (\cref{sec:SmallSpecialCases}). 
It remains to consider the case that $\tau$ is a tangle of order $k \geq 3$ in a connected graph with minimum degree~$\geq 3$.
Recall that a suppressed vertex always has degree $2$.
So as soon as we restrict to connected graphs of minimum degree at least~$3$, the statement requires that we find an edge~$e$ to delete. 

The proof hinges on an argument that this will always be possible: by carefully picking the edge $e$ of $G$, we may always extend $\tau$ in $G'= G-e$ as long as $G$ is sufficiently large by a function in the tangle's order $k$.

\begin{mainresult} \label{main:SplitterTheorem}
    For every integer $k \geq 3$, there is some $M = M(k) \in O(3^{k^{k^5}})$ such that the following holds:
    
    For every $k$-tangle in a connected graph $G$ with minimum degree $\geq 3$ and at least $M$ edges, there is an edge $e$ of $G$ such that $\tau$ extends to a $k$-tangle in $G - e$.
\end{mainresult}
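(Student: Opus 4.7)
The plan is to begin by analysing precisely which separations of $G - e$ need to be oriented in any extension $\tau'$ of $\tau$. Deleting $e = xy$ preserves every separation of $G$, so a separation $\{A, B\}$ of $G - e$ is also a separation of $G$ unless $e$ crosses it, meaning $x \in A \setminus B$ and $y \in B \setminus A$. For such a crossing separation of order $<k$, the two \emph{lifts} $\{A \cup \{y\}, B\}$ and $\{A, B \cup \{x\}\}$ are separations of $G$ of order at most $k$. When $|A \cap B| \leq k - 2$, both lifts have order $<k$ and are already oriented by $\tau$; since both lifts respect the $A/B$-distinction, $\tau$'s orientations agree and uniquely force the orientation of $\{A,B\}$ in $\tau'$. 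The remaining \emph{critical} case is $|A \cap B| = k - 1$, where both lifts have order exactly $k$ and are invisible to $\tau$, leaving the orientation of $\{A,B\}$ a priori free.

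Thus the substance of the proof is to select $e$ so that the freedom on critical crossing separations can be resolved consistently with $\tau$. The strategy is to exploit a tangle-adapted structural decomposition of $G$ — the rainbow-cloud-decomposition advertised in the keywords — to locate an edge $e$ deep inside a large, structurally uniform piece whose attachment to the rest of $G$ is controlled by a single small separator already oriented by $\tau$. Intuitively, within such a piece the direction \emph{toward} the tangle is globally fixed by that attachment, so every critical crossing separation of $G - e$ straddling $e$ admits a canonical orientation agreeing with how the attachment is oriented by $\tau$. One then defines the extension $\tau'$ by giving every new separation either its forced orientation (non-critical case) or its canonical orientation (critical case), and verifies that no forbidden triple arises.

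The bound $M(k) \in O(3^{k^{k^5}})$ would then come from a nested counting argument: first one bounds the number of essentially distinct separators of order $<k$ relevant to $\tau$ in the decomposition; then one bounds the number of possible behaviour-profiles of $\tau$ around each such separator; and finally a pigeonhole step bounds the size a piece must have before it is forced to contain a safe edge $e$. Each layer contributes one level of exponentiation, explaining the iterated form of the bound. The principal obstacle I expect is verifying the consistency of $\tau'$: any putative obstruction would be a triple $(A_1, B_1), (A_2, B_2), (A_3, B_3) \in \tau'$ whose small sides $G[A_i]$ cover $G - e$, and one must pull such a triple back to an obstruction to $\tau$ in $G$ by lifting each new separation via $\{x\}$ or $\{y\}$. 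Ensuring the lifted small sides still cover $G$ — especially when two or three of the $(A_i, B_i)$ are new crossing separations — requires careful case analysis, and the minimum-degree hypothesis $\delta(G) \geq 3$ enters here to rule out degenerate lifts where an endpoint of $e$ becomes near-isolated in $G - e$ and artificially creates or destroys low-order separations around it.
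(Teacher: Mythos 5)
Your analysis of which separations of $G-e$ are genuinely new is right: for a crossing separation $\{A,B\}$ with $|A\cap B|\le k-2$ the two lifts $\{A\cup\{y\},B\}$ and $\{A,B\cup\{x\}\}$ have order $<k$, are oriented consistently by $\tau$ (this is exactly the paper's Lemma~\ref{lem:StdStuff-ShiftingAnEdge}), and force the orientation of $\{A,B\}$; the genuinely free case is $|A\cap B|=k-1$. You also correctly identify the rainbow-cloud-decomposition as the tool for placing $e$ and anticipate that verifying tangle-hood of $\tau'$ (pulling a forbidden triple back to $G$) is the main work.

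The real gap is the following case split, which your proposal never makes: the rainbow-cloud-decomposition of Theorem~\ref{thm:ExistenceOfRCDecomp} only exists when $G$ has \emph{no} $(k+1)$-tangle, because its construction begins with the tangle--tree duality theorem, which yields a wide tree-decomposition exactly in the absence of high-order tangles. If $G$ contains, say, a large clique or grid, it has tangles of high order and no useful RC-decomposition is available, so the argument you sketch cannot even get started. The paper handles this by a separate and quite different argument (Section~\ref{sec:ExistenceOfHighOrderTangle}): when a $(k+1)$-tangle $\tilde\tau$ exists, it orients all separations of order $\le k$, so one can use $\tilde\tau$ directly to resolve your critical case $|A\cap B|=k-1$ --- trivially if $\tau\subseteq\tilde\tau$ (Lemma~\ref{lem:TangleExtends}), and by a careful maximality-and-submodularity argument locating $e$ on the side of a maximal distinguishing separation otherwise (Lemma~\ref{lem:AnotherExtendingTangle}). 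Without this branch your proof only covers graphs of bounded tree-width.

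A smaller inaccuracy: the minimum-degree hypothesis is not used to prevent ``degenerate lifts'' near the endpoints of $e$. In the paper it enters only in Lemma~\ref{lemma:findedgeandRC}, to guarantee that inside the central bag of the rainbow one can find an edge $e$ off the foundational linkage whose deletion leaves that bag connected (via a degree argument producing a cycle in a stray component); globally, $\delta(G)\ge3$ is the residual case after degree-$1$ deletion and degree-$2$ suppression have been dispatched by Lemmas~\ref{lemma:vertexofdegree1} and~\ref{lem:VertexOfDegreeTwo}. Finally, the bound $M(k)$ does not come from a counting/pigeonhole argument over separator profiles as you suggest, but is propagated mechanically from the length $N(k,18k)$ required by the RC-decomposition existence theorem, which in turn composes the bounds $N_1(k,\cdot)$ from the tangle--tree decomposition and $M_1(\ell,\cdot)$ from the linkage-regularisation lemma (Lemma~\ref{thm:lemma2}).
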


Together \cref{item:k1} to \cref{item:deg2} along with \cref{main:SplitterTheorem} complete the proof, since in any given case one of them will ensure that the $k$-tangle $\tau$ in a sufficiently large graph survives in some smaller graph.

\subsection{Proof sketch of \texorpdfstring{\cref{main:SplitterTheorem}}{Theorem 5}}

Our task is to find a suitable edge $e$ of $G$ such that $\tau$ extends to some $k$-tangle $\tau'$ in $G' = G -e$.
For this, we consider two cases.
First, we assume that the graph $G$ contains a tangle $\tilde \tau$ of order $> k$ (\cref{sec:ExistenceOfHighOrderTangle}).
If $\tau \subseteq \tilde \tau$, then we will observe that for every edge~$e$ of~$G$ the $k$-tangle~$\tau$ extends to the $k$-tangle~$\tau'$ in~$G'$ which is essentially the restriction of $\tilde \tau$ to the separations of order $< k$ of~$G'$ (\cref{lem:TangleExtends}).
Else we may find an edge $e$ far away from $\tau$ and close to $\tilde \tau$ such that the high order of $\tilde \tau$ enables us to define a suitable extension~$\tau'$ of~$\tau$ in $G' = G - e$ (\cref{lem:AnotherExtendingTangle}).

Second, we assume that the graph $G$ contains no tangle of order $>k$ (\cref{sec:DeletingTheEdge}).
We remark that this case requires significantly more effort than the first.
In the analysis of this case we aim to decompose $G$ in such a way that we can control the separations which arise from the deletion of an edge $e$ in a suitable location (\cref{sec:RCDecompAndSeps}).
To obtain the desired decomposition, we start with the \td\ obtained from the tangle-tree duality theorem \cite{DiestelBook16noEE}*{Theorem~12.5.1} due to the absence of high-order tangles.
As $G$ is sufficiently large, the decomposition tree contains a very long path whose structure we may regularise to obtain a \defn{rainbow-cloud decomposition} of $G$ (\cref{thm:ExistenceOfRCDecomp}).

\begin{figure}[ht]
    \centering
    \pdfOrNot{\includegraphics[page=2]{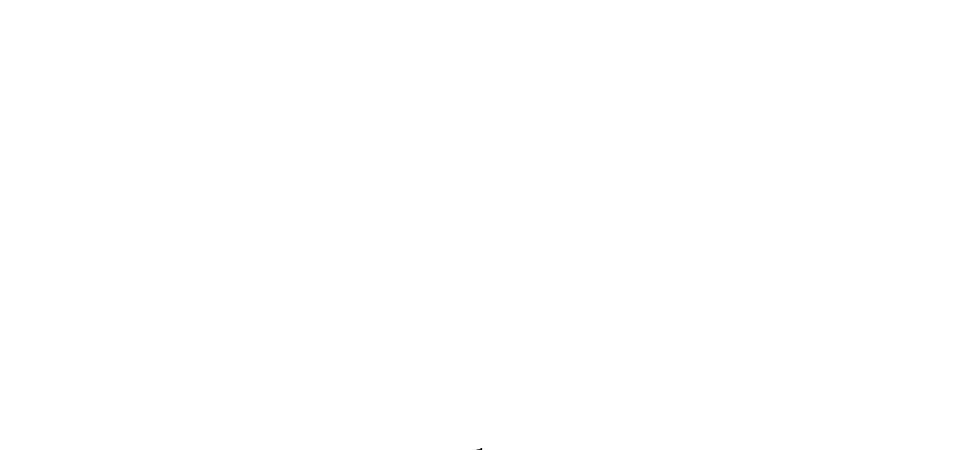}}{\includesvg[width=0.6\columnwidth]{svg/RCdecompositionschematic.svg}}
    \caption{Schematic drawing of a rainbow-cloud decomposition}
    \label{fig:schematicrcdecomp}
\end{figure}

Roughly speaking (see also \cref{fig:schematicrcdecomp}), a rainbow-cloud decomposition of $G$ consists of a long \defn{rainbow}, an induced subgraph $R$ of $G$ with a very regular and linear connectivity structure, which has many connections to the \defn{sun} $Z \subseteq V(G)$ and the remaining graph is gathered in a \defn{cloud}, another induced subgraph $C$ with $G = G[Z \cup V(R)] \cup C$ (see \cref{sec:ExistenceRCDecomp} for the precise definition).
We show that if we choose the edge $e$ deep inside the rainbow $R$ and far away from $\tau$, then the connectivity structure of the long rainbow $R$ ensures that the new separations which arise from the deletion of $e$ may be oriented in such a way that $\tau$ extends to a $k$-tangle $\tau'$ in the graph $G - e$ (\cref{thm:AtLeastOneRainbowPath}).

\subsection{How this paper is organised}
We recall the relevant terminology concerning tangles in~\cref{sec:Prelims}.
We then introduce in \cref{subsec:CarryOver} the notion of `survives' and prove \cref{thm:InductiveProofMethod} in several  special cases:~the case~$k \leq 2$, the case $\delta(G) \leq 2$,  and when the graph $G$ is disconnected.
In \cref{sec:ExistenceOfHighOrderTangle} we then prove \cref{main:SplitterTheorem}, the remaining case of~\cref{thm:InductiveProofMethod} when~$G$ contains a tangle of order~$k+1$.
In~\cref{sec:ExistenceRCDecomp}, we introduce the definition of `rainbow-cloud decompositions' and prove that such a decomposition exists in the absence of~$(k+1)$-tangles in~$G$, assuming that $G$ is sufficiently large.
Building on several lemmas which we prove in~\cref{sec:RCDecompAndSeps} about the interaction of rainbow-cloud decompositions and separations, we complete the proof of~\cref{main:SplitterTheorem} in~\cref{sec:DeletingTheEdge}.
In \cref{sec:Deciders} we collect all the individual cases to prove \cref{thm:InductiveProofMethod}, derive \cref{mainresult:reduction}, and deduce~\cref{main:MinimalCterex} and~\cref{main:BoundTotalWeight}.

\section{Preliminaries}\label{sec:Prelims}

In this section, we recall the relevant terms and definitions concerning tangles in graphs.
For general graph-theoretic concepts and notation, we follow~\cite{DiestelBook16noEE}.
In particular, we denote the set $\{1, \dots, n\}$ with $n \in \N$ by $[n]$.
A path is \defn{trivial} if it has length~$0$.
We remark that all graphs in this paper are finite.

\subsection{Separations of sets}\label{subsec:Separations}

While we will only work with separations of graphs in this paper, we formally introduce the notion of a separation on a set to transfer separations from one graph to another.

Given an arbitrary set $V$, an \defn{(unoriented) separation} of $V$ is an unordered pair $\{A,B\}$ of subsets $A, B$ of~$V$ such that $A \cup B = V$.
The sets~$A$ and~$B$ are the~\defn{sides} of the separation~$\{A, B\}$, and~$A \setminus B$ and~$B \setminus A$ are its~\defn{strict sides}.
The separation~$\{A,B\}$ \defn{separates} every two sets $X \subseteq A$ and $Y \subseteq B$.
The \defn{order} of the separation, denoted~$|A, B|$, is the cardinality of its \defn{separator} $A \cap B$. 
Every separation~$\{A, B\}$ of~$V$ has two \defn{orientations}, $(A,B)$ and $(B,A)$.
These orientations of~$\{A, B\}$ are \defn{oriented separations} of~$V$, that is, ordered pairs of subsets of~$V$ whose union equals~$V$.
We refer to $A$ as the \defn{small} side of an oriented separation~$(A,B)$ and to $B$ as its \defn{big} side.
Given a set~$V$, we write~$U(V)$ for the set of all (unoriented) separations of $V$, and let~$S_k(V) := \{\{A,B\} \in U(V) : \abs{A \cap B} < k\}$.
The set of all oriented separations of~$V$ is denoted by~$\vU(V)$ and the set of all oriented separations of $V$ of order less than~$k$ by~$\vS_k(V)$.
We will often also refer to `oriented separations' simply as `separations' if the meaning is clear from the context.
Similarly, we will transfer definitions for separations verbatim to oriented separations and vice-versa if the transfer is unambiguous.

The oriented separations of~$V$ have a natural partial order:
\begin{equation*}
    (A,B) \leq (C,D) :\Leftrightarrow A \subseteq C \text{ and } B \supseteq D.
\end{equation*}
With this, the set $\vU(V)$ is a lattice with \defn{infimum} $(A,B) \wedge (C,D) := (A \cap C, B \cup D)$ and \defn{supremum} $(A,B) \vee (C,D) := (A \cup C, B \cap D)$.
Moreover, $\vU(V)$ is \defn{distributive}, that is, $x \wedge (y \vee z) = (x \wedge y) \vee (x \wedge z)$ for all $x,y,z \in \vU(V)$.
It is well-known \cite{birkhoff}*{Chapter~IX~Corollary~1} that a lattice is distributive if and only if every element of the lattice is uniquely determined by its infimum and supremum with any given element.

If two unoriented separations of a set~$V$ have orientations which are comparable, %with respect to the partial order~$\le$, 
then they are called~\defn{nested}; if they are not nested, then the two separations \defn{cross}.

By double counting, we obtain that the order of the supremum and infimum of~$\{A, B\}$ and~$\{C, D\}$ sum up to
\begin{equation*}
    |(A,B) \wedge (C,D)| + |(A,B) \vee (C,D)| = |A, B| + |C, D|.
\end{equation*}
The inequality `$\leq$' implied by the equality above implies that $|\cdot|$ is a \defn{submodular} function on the lattice~$(\vU(V), \le)$.
In particular, this implies that for two separations~$\{A,B\}, \{C,D\}$ of order~$<k$, at least one of $\{A \cap C, B \cup D\}$ and $\{A \cup C, B \cap D\}$ has also order~$<k$.

\subsection{Tangles and separations of graphs}\label{subsec:Tangles}

Let~$G = (V, E)$ be a graph.
A \defn{separation~$\{A, B\}$ of~$G$} is a separation of its vertex set~$V$ such that no edge of~$G$ joins~$A \setminus B$ and~$B \setminus A$ (equivalently: $G = G[A] \cup G[B]$).
We write~$U(G)$ for the set of all unoriented separations of~$G$ and~$S_k(G) := \{\{A,B\} \in U(G) : \abs{A \cap B} < k\}$; as above, we then define $\vU(G)$ and $\vS_k(G)$ as the respective sets of all oriented separations.
Note that $U(G) \subseteq U(V)$ by definition.
Moreover, $(\vU(G), \le)$ is a sublattice of~$(\vU(V), \le)$,
as it is straight forward to check that given two separations $(A,B)$, $(C,D)$ of $G$, the ordered pairs $(A \cap C, B \cup D)$ and $(A \cup C, B \cap D)$ are again separations of $G$. 

Now let $k \ge 1$ be an integer.
An \defn{orientation} of~$S_k(G)$ is a set~$O \subseteq \vS_k(G)$ that contains precisely one orientation of every separation~$\{A, B\} \in S_k(G)$.%
\COMMENT{$(V, V)$ hat formal nur eine Orientierung, weil "beide Orientierungen" identisch sind.}
If an orientation $O$ of $S_k(G)$ contains $(A,B)$, we say that~$O$ \defn{orients} $\{A,B\}$ \defn{as} $(A,B)$.
A \defn{$k$-tangle} or \defn{tangle of order $k$} in~$G$ is an orientation~$\tau$ of~$S_k(G)$ which contains no element of
\begin{equation*}
    \cT := \cT(G) := \{\{(A_1, B_1), (A_2, B_2), (A_3, B_3)\} \subseteq \vU(G) : G[A_1] \cup G[A_2] \cup G[A_3] = G\}.
\end{equation*}
as a subset.
We say the $k$-tangle \defn{avoids}~$\cT$.
The elements of~$\cT$ are called \defn{forbidden triples}.  A forbidden triple of~$\cT$ such that two of the~$(A_i, B_i)$ are equal is a \defn{forbidden tuple}.
Tangles of unspecified order are referred to as \defn{tangles in~$G$}.
A~$k$-tangle $\tau$ in~$G$ \defn{extends} to a $k^*$-tangle $\tau^*$ in $G$ for $k^* \geq k$ if $\tau \subseteq \tau^*$.
We say that a separation $\{A,B\}$ of $G$ \defn{distinguishes} two tangles $\tau, \tau'$ in $G$ if $\tau$ and~$\tau'$ contain distinct orientations of~$\{A,B\}$.

It is well-known \cite{GMX}*{(2.6)} that tangles of order $1$ and $2$ are in a~one-to-one correspondence to the components and~blocks\footnote{Recall that a \defn{block} of a graph is a $\subseteq$-maximal connected subgraph $H$ such that $H-v$ is connected for every vertex $v \in H$. In particular, as the empty graph is not considered to be connected, every block contains some edge.}, respectively:
\begin{proposition} \label{thm:TanglesOfSmallOrder}
    For a tangle~$\tau$ in a graph~$G$, let~$X_\tau 
    := \bigcap_{(A, B) \in \tau} B$.
    Then the map~$\tau \mapsto G[X_\tau]$ is a bijection between the tangles of order~$1$ and the components of~$G$ as well as between the tangles of order~$2$ and the set of all blocks of~$G$. \qed
\end{proposition}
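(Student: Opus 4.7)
We treat the two orders separately.

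\textbf{Order $1$.}
First I would show the map is well-defined.  Given a $1$-tangle $\tau$, for every component $C$ of $G$ the pair $\{V(C), V(G) \setminus V(C)\}$ is a separation of order~$0$, so $\tau$ orients it.  Call $C$ \emph{chosen} by~$\tau$ if $(V(G) \setminus V(C), V(C)) \in \tau$. A short forbidden-triple argument shows that at most one component is chosen: if $C_1 \ne C_2$ were both chosen, then the small sides $V(G) \setminus V(C_1)$ and $V(G) \setminus V(C_2)$ of two separations from~$\tau$ together cover all vertices (since $V(C_1) \cap V(C_2) = \emptyset$) and all edges (each edge of $G$ lies in some component, hence outside one of $C_1, C_2$); repeating one of these separations yields a forbidden triple.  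A symmetric argument forces at least one component to be chosen.  Next I would show that the chosen $C$ satisfies $V(C) \subseteq B$ for \emph{every} $(A,B) \in \tau$: otherwise $V(C) \subseteq A$, and then the small sides $A$ and $V(G) \setminus V(C)$ of $(A,B)$ and $(V(G) \setminus V(C), V(C))$ again cover $G$, yielding a forbidden triple.  This proves $V(C) \subseteq X_\tau$. Conversely, for any other component $C'$, the orientation $(V(C'), V(G) \setminus V(C')) \in \tau$ gives $X_\tau \cap V(C') = \emptyset$, so $X_\tau = V(C)$.  Surjectivity is straightforward: for a component~$C$ the orientation $\tau_C := \{(A,B) \in \vS_1(G) : V(C) \subseteq B\}$ is a $1$-tangle because any three of its elements have $V(C) \cap A_i = \emptyset$, so their small sides miss $V(C)$ and cannot cover $G$.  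Injectivity is then automatic.

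\textbf{Order $2$.}
Given a $2$-tangle $\tau$, restrict it to $\vS_1(G)$; by the order-$1$ case this yields a unique component~$C$ with $V(C) \subseteq B$ for every $(A,B) \in \tau$ of order~$\leq 1$.  Now I would analyse the order-$1$ separations of~$G$.  Any such separation with both strict sides nonempty has separator $\{v\}$ for a vertex~$v$ that is a cut vertex of~$C$ (or the separation is entirely disjoint from~$C$, in which case the $1$-tangle analysis already fixes its orientation).  For each cut vertex~$v$ of~$C$, the components of $C - v$ correspond via the block-cut tree to the branches at~$v$, and each branch gives rise to an order-$1$ separation of~$G$.  The plan is to show, by the same forbidden-triple / disjoint-small-sides argument as above, that $\tau$ points towards exactly one branch at each cut vertex~$v$.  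Following these choices in the block-cut tree of~$C$ yields a unique block~$\beta$ of~$C$ which is `pointed to' by $\tau$ at every cut vertex on its boundary.

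Finally I would show $X_\tau = V(\beta)$.  For $V(\beta) \subseteq X_\tau$: pick any $(A,B) \in \tau$ with $|A \cap B| \leq 1$; since $\beta$ is $2$-connected or a single edge and $|V(\beta)| \geq 2$, the set $V(\beta)$ lies entirely in $A$ or entirely in~$B$.  If $V(\beta) \subseteq A$, I would use submodularity to take the supremum of $(A,B)$ with the branch-separations that $\tau$ points towards to produce an order-$\leq 1$ separation contradicting the chosen block.  For $X_\tau \subseteq V(\beta)$: for any vertex $u \in V(C) \setminus V(\beta)$, the branch-separation at the first cut vertex on the $\beta$-$u$ path in the block-cut tree places~$u$ on the small side, so $u \notin X_\tau$. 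Surjectivity uses $\tau_\beta := \{(A,B) \in \vS_2(G) : V(\beta) \subseteq B\}$; this is an orientation because $|V(\beta)| \geq 2 > |A \cap B|$ forces $V(\beta)$ into exactly one side, and it contains no forbidden triple because $V(\beta) \cap A_i \subseteq A_i \cap B_i$ has size $\leq 1$, so no $A_i$ can contain both endpoints of any edge of $\beta$.  Injectivity again follows.

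\textbf{Main obstacle.}  The delicate point is the order-$2$ case: one must show that $\tau$'s orientation of the many order-$1$ separations (including those whose separator is a cut vertex but which are not directly induced by a single branch of the block-cut tree) is fully compatible with a single block~$\beta$.  The key tool is submodularity of the order function combined with the forbidden-triple condition, which lets us repeatedly replace an awkwardly-oriented separation by the supremum or infimum with a well-behaved branch separation without exceeding order~$2$.
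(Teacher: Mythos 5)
The paper does not actually prove this proposition — it is stated as well-known and attributed to Robertson and Seymour \cite{GMX}*{(2.6)} — so there is no proof of record to compare your sketch against.  Your approach is the standard one, and most of it is sound: the ``well-definedness via chosen component/branch'' idea, the block--cut tree pointer argument, and the surjectivity arguments via $\tau_C$ and $\tau_\beta$ (using $2$-connectedness of $\beta$ to see the small sides cannot cover an edge of $\beta$) are all correct.

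There are, however, a few places where the sketch glosses over steps that genuinely need a different tool than the one you invoke.  (a) ``A symmetric argument forces at least one component to be chosen'' is not symmetric: if no component is chosen, the small sides of any \emph{three} separations $(V(C_i), V(G)\setminus V(C_i))$ fail to cover $G$ once $G$ has four or more components.  You need to iterate (use the profile property, or repeatedly merge two unchosen small sides into a new order-$0$ separation and orient it), and the same issue recurs for ``at least one branch is pointed towards'' at a cut vertex with $\geq 4$ branches.  (b) ``Following these choices in the block-cut tree yields a unique block $\beta$'' silently assumes the pointer orientation has a unique sink \emph{and} that this sink is a block node rather than a cut-vertex node.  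Non-divergence of two adjacent pointers does follow from a forbidden pair, but ruling out a cut vertex as the sink again requires combining all $\ge 3$ branch separations at that vertex (profile property / iterated supremum) together with regularity, i.e.\ $(\{v\}, V(G)) \in \tau$, and the component separation $(V(G)\setminus V(C), V(C))$ — it is not literally ``the same forbidden-triple argument as above.''  (c) For $V(\beta)\subseteq X_\tau$ you actually do not need submodularity: once you know $V(\beta)\subseteq A$, the branch $D$ of $C - v$ containing $\beta - v$ lies entirely in $A\setminus B$, and then $(A,B)$ together with the branch separation $(V(G)\setminus V(D), V(D)\cup\{v\})\in\tau$ already forms a forbidden tuple, since $G[A]\cup G[V(G)\setminus V(D)] = G$.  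Your supremum argument also works (it yields $(V(G),\{v\})\in\tau$, contradicting regularity), but the direct forbidden tuple is cleaner.  None of these are wrong approaches, but as written the sketch understates how much of the work rests on the profile property and iterated suprema rather than on single forbidden triples.
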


From its definition, tangles have several properties that we will frequently make use of.
First, every tangle~$\tau$ is \defn{consistent}: there are no two separations~$(A,B), (C,D) \in \tau$ such that $(D,C) \leq(A,B)$.
Secondly, if~$\tau$ has order~$k$, then~$(X, V(G)) \in \tau$ for all~$X \subseteq V(G)$ of size~$<k$; this property is sometimes called the~\defn{regularity} of~$\tau$.
Thirdly, $\tau$ has the \defn{profile property}: if~$(A,B), (C,D) \in \tau$ and their supremum~$(A \cup C, B \cap D)$ again has order~$<k$, then it is also in the tangle $\tau$.

Finally, let us make two observations about the interplay of tangles with the partial order~$\le$ on oriented separations.
First, if an orientation~$\tau$ of~$S_k(G)$ is not a tangle and thus contains a forbidden triple, then $\tau$ also contains a forbidden triple whose elements are $\leq$-maximal in $\tau$.
Secondly, if $\tau$ is a tangle in $G$, then every $\leq$-maximal separation $(A,B) \in \tau$ satisfies that $G[B \setminus A]$ is connected.
%Indeed, otherwise consider a component~$K$ of~$G[B \setminus A]$.
%The separations~$\{V(G) \setminus V(K), V(K) \cup (A \cap B)\}$ and~$\{A \cup V(K), B \setminus V(K)\}$ then have the same order as~$\{A, B\}$.
%Since~$(A, B)$ is maximal in~$\tau$ and the two separations each do not equal~$\{A, B\}$, we get~$(V(K) \cup (A \cap B), V(G) \setminus V(K)) \in \tau$ and~$(B \setminus V(K), A \cup V(K)) \in \tau$.
%But these two separations together with~$(A, B)$ form a forbidden triple in~$\tau$, a contradiction.

\subsection{Topological minors}

Let $G$ be a graph.
By \defn{suppressing} a degree-$2$ vertex $v$ of $G$, we obtain the graph $G' := G -v + uw$ where $u,w$ are the two neighbours of $v$ in $G$.
As this operation is only defined for vertices of degree $2$, we often just say that the graph $G'$ is obtained from~$G$ by suppressing a vertex.
A \defn{topological minor} $G'$ of $G$ is a graph obtained from $G$ by a sequence of deleting edges, suppressing vertices and deleting vertices.
Equivalently, $G'$ is obtained from~$G$ by a sequence of suppressing vertices in a subgraph of $G$.
We remark that if a topological minor $G'$ of $G$ is connected, we may obtain $G'$ also by a sequence of deleting edges, suppressing vertices and passing to proper components of the current graph.

\subsection{Lifts of tangles}

It is well-known that tangles in minors lift to tangles in the host graph (see \cite{GMX}*{(6.1)} or \cite{characterising4tangles}*{Lemma~2.1}). 
Here, we introduce the notion only for topological minors.

Let $\tau'$ be a $k$-tangle in a subgraph $G'$ of $G$.
Then the \defn{lift} $\tau$ of $\tau'$ to $G$ is the set consisting of precisely those $(A,B) \in \vS_k(G)$ whose restriction $(A \cap V(G'), B \cap V(G')) $ is in $\tau'$.
It is immediate that $\tau$ is a $k$-tangle in $G$, since $G'$ is a subgraph of $G$ and thus every forbidden triple in $\tau$ restricts to a forbidden triple in $\tau'$ by definition.

Now let $\tau'$ be a tangle of order $k \geq 3$ in a graph $G'$ obtained from $G$ by suppressing a vertex $v$.
Denote the two neighbours of $v$ in $G$ by $u_1,u_2$.
We define the \defn{lift} $\tau$ of $\tau'$ to $G$, as follows.

Let $(A,B) \in \vS_k(G)$ be arbitrary. We denote by $(A',B')$ the pair $(A \setminus \{v\}, B \setminus \{v\})$ and by $C'_i$ the set $C' \cup \{u_i\}$ for $C' =  A',B'$ and $i = 1,2$.
\begin{enumerate}
	\item If $u_1, u_2 \in A$ or $u_1, u_2 \in B$, then $(A',B')$ is a separation of $G'$, and we include $(A,B)$ in $\tau$ if $(A',B') \in \tau'$.
	\item \label{item:case2} If, for $\{i,j\} = \{1,2\}$, $u_i \in A \setminus B$ and $u_j \in B \setminus A$, then $(A',B'_i)$ and $(A'_j,B')$ are separations of $G'$, and we include $(A,B)$ in $\tau$ if at least one of $(A',B'_i)$ and $(A'_j,B')$ is in $\tau'$.
\end{enumerate}

\noindent Note that in \cref{item:case2} either both $(A', B'_i), (A'_j, B')$ are in $\tau'$ or both their inverses are in $\tau'$, which implies that~$\tau$ does not contain both orientations of a separation, and is hence an orientation of $S_k(G)$.
Indeed, we otherwise have $(A', B'_i), (B', A'_j) \in \tau$, as we cannot have~$(B'_i, A), (A'_j, B') \in \tau$ because of~$(A', B'_i) \leq (A'_j, B')$ and the consistency of the tangle $\tau'$.
But we have $(G'[A'] \cup G'[B']) + u_1u_2 = G'$, and thus $(A',B'_i)$, $(B', A'_j)$ and $(\{u_1,u_2\}, V(G'))$ would form a forbidden triple in $\tau'$, since $(\{u_1,u_2\}, V(G')) \in \tau$ due to the regularity of the $k$-tangle $\tau$ with $k \geq 3$.
From this it also follows that a forbidden triple in $\tau$ would correspond to a forbidden triple in $\tau'$, by replacing each $(A,B)$ in the forbidden triple by $(A'_1, B')$ or $(A'_2, B')$ if necessary.
Thus, the lift $\tau$ of the tangle $\tau'$ of~$G'$ of order $k \geq 3$ to $G$ as defined above is indeed a $k$-tangle in $G$.

Given a $k$-tangle $\tau'$ in a topological minor $G'$ of a graph $G$, we now obtain the \defn{lift} $\tau$ of $\tau'$ to $G$ by iteratively considering the lifts along the sequence of edge deletions, vertex suppressions and vertex deletions from which $G'$ originated from $G$.

\section{Definition of `survive' and special cases of \texorpdfstring{\cref{thm:InductiveProofMethod}}{Theorem 4}} \label{subsec:CarryOver} \label{sec:SmallSpecialCases}

In this section we define what it means for a tangle $\tau$ in a graph $G$ to `survive' in a topological minor $G'$ of $G$. 
This notion can be seen as a converse to lifting a tangle.
But let us emphasise that while a tangle of order at least $3$ in $G'$ always lifts to $G$, a tangle in $G$ need not survive in $G'$.
%Note that a topological minor of $G$ may be described by a sequence of edge- and vertex-deletion, i.e.\ taking subgraphs, and vertex-suppressions.
We first define `survive' for subgraphs $G'$ of $G$ in \cref{subsec:surviveinsubgraphs} and then for graphs $G'$ obtained from $G$ by suppressing a single vertex of degree $2$ in \cref{subsec:suppressingavertex}. 
Alongside these definitions, we prove several lemmas which all deal with special cases of \cref{thm:InductiveProofMethod}.
Finally, we provide \cref{ex:noktangleafterdeletion} which demonstrates that suppressing vertices of degree~$2$ needs to be allowed in~\cref{thm:InductiveProofMethod}.

\subsection{Extending and inducing tangles in subgraphs}\label{subsec:surviveinsubgraphs}

Recall that for every subgraph~$G'$ of a graph~$G$ on the same vertex set, a separation of~$G$ is also a separation of~$G'$, i.e.\ $U(G) \subseteq U(G')$ and~$S_k(G) \subseteq S_k(G')$ for all integers~$k \ge 1$.
We say that an orientation~$\tau$ of~$S_k(G)$ for some integer~$k \ge 1$ \defn{extends} to an orientation~$\tau'$ of~$S_k(G')$ if $\tau'$ orients every separation in~$S_k(G)$ in the same way as~$\tau$ (equivalently: $\tau \subseteq \tau'$).
In this situation, we also sometimes say that the $k$-tangle $\tau$ in $G$ \defn{survives} as the $k$-tangle $\tau'$ in $G'$.
If $\tau$ is a $k$-tangle in $G$ and there exists precisely one $k$-tangle $\tau'$ in $G'$ to which $\tau$ extends, then we also say that~$\tau$ \defn{induces} the $k$-tangle $\tau'$.
In this paper, $G'$ will often either be a component of $G$ or arise from~$G$ by the deletion of a single edge~$e \in G$.
We remark that if a $k$-tangle~$\tau$ in~$G$ extends to a $k$-tangle~$\tau'$ in a subgraph~$G'$ of~$G$, then~$\tau$ is obviously the lift of~$\tau'$ to~$G$.

\begin{lemma} \label{lem:Smallk1}
    Let $\tau$ be a $1$-tangle in a graph $G$.
    Then $\tau$ extends to a $1$-tangle $\tau'$ in $G - e$ for every edge~$e \in G$.
\end{lemma}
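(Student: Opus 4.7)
The plan is to reduce everything to \cref{thm:TanglesOfSmallOrder}, which identifies $1$-tangles in $G$ with components of $G$ via $\tau \mapsto G[X_\tau]$, where $X_\tau = \bigcap_{(A,B) \in \tau} B$. Thus the given $1$-tangle $\tau$ determines a unique component $C$ of $G$ with $V(C) = X_\tau$, and $\tau$ orients every separation $\{A,B\} \in S_1(G)$ (i.e., every separation of order $0$) as the pair $(A,B)$ with $V(C) \subseteq B$. (One way to see the ``reverse direction'' formally: any order-$0$ separation satisfies $A \cap B = \emptyset$, so the nonempty connected set $V(C)$ cannot be contained in both $A$ and $B$; hence $\tau$ is pinned down by requiring $V(C) \subseteq B$.)

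Next I would choose any vertex $v \in V(C)$ and let $C'$ denote the component of $G - e$ containing $v$. Because $V(G-e) = V(G)$ and $G - e$ is a spanning subgraph of $G$, every separation of $G$ is again a separation of $G-e$, so in particular $S_1(G) \subseteq S_1(G-e)$. I then define $\tau'$ to be the $1$-tangle in $G - e$ corresponding to $C'$ under the bijection of \cref{thm:TanglesOfSmallOrder}; explicitly, $\tau'$ orients each $\{A,B\} \in S_1(G-e)$ as the pair with $V(C') \subseteq B$.

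It remains to check that $\tau \subseteq \tau'$. Fix an arbitrary $(A,B) \in \tau$. Then $\{A,B\}$ is an order-$0$ separation of $G$, and thus also of $G-e$, so $A \cup B = V(G)$ and $A \cap B = \emptyset$. Since $V(C) \subseteq B$ we have $v \in B$. As $C'$ is a connected subgraph of $G - e$ and $(A,B)$ is a separation of $G - e$ with no edge between $A \setminus B$ and $B \setminus A$, the subgraph $C'$ lies entirely inside $A$ or entirely inside $B$; the vertex $v \in B \cap V(C')$ forces $V(C') \subseteq B$, so $(A,B) \in \tau'$. This establishes the desired extension. I do not foresee a genuine obstacle: once \cref{thm:TanglesOfSmallOrder} is invoked, the only substantive observation is that any component of $G - e$ meeting $V(C)$ is in fact contained in $V(C)$, which is immediate from the fact that $V(C)$ is the vertex set of a component of $G \supseteq G - e$.
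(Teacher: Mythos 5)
Your proof is correct and takes essentially the same approach as the paper: both invoke \cref{thm:TanglesOfSmallOrder} to identify $\tau$ with a component of $G$, pick a component of $G-e$ inside it, and define $\tau'$ as the corresponding $1$-tangle in $G-e$. The only cosmetic difference is that you realize that component by choosing a vertex $v \in X_\tau$ and taking its component in $G-e$, and you verify $\tau \subseteq \tau'$ directly via connectivity of $C'$, whereas the paper states the choice of component abstractly and uses the inclusion $X_{\tau'} \subseteq X_\tau$.
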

\begin{proof}
    Let $e$ be an arbitrary edge of~$G$, and let~$G' := G - e$.
    By~\cref{thm:TanglesOfSmallOrder}, the set~$X_\tau := \bigcap_{(A,B) \in \tau} B$ is a component of~$G$ corresponding to the~$1$-tangle~$\tau$.
    Let~$X'$ be a component of~$G'$ whose vertex set is contained in~$X_\tau$.
    By~\cref{thm:TanglesOfSmallOrder} the component~$X'$ of $G'$ corresponds to a~$1$-tangle $\tau'$ in $G'$ with $X_{\tau'} = \bigcap_{(A,B) \in \tau'} B = V(X')$.
    Since every vertex of~$G$ lies on precisely one side of every separation of~$G$ of order~$0$, a separation $(A,B)$ of $G$ of order~$0$ is contained in $\tau'$ if and only if $X_{\tau'} \subseteq B$.
    So for every~$(A,B) \in \tau$, we have~$X_{\tau'} \subseteq X_\tau \subseteq B$ and thus~$(A,B) \in \tau'$. 
    Hence, $\tau$ extends to~$\tau'$.
\end{proof}

\noindent We remark that in the proof of \cref{lem:Smallk1} the deletion of $e$ may give rise to (at most) two components $X'$ in $G[X_\tau]$. Thus, $\tau$ does not necessarily extend to a unique $1$-tangle in $G'$.

\begin{lemma} \label{lem:Smallk2}
    Let $\tau$ be a $2$-tangle in a graph $G$ with at least $2$ edges.
    Then $\tau$ extends to a $2$-tangle $\tau'$ in $G - e$ for some edge $e \in G$.
\end{lemma}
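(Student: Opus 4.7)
I plan to exploit the bijection from \cref{thm:TanglesOfSmallOrder}, under which the $2$-tangle $\tau$ corresponds to a unique block $B$ of $G$ with $X_\tau = V(B)$; extending $\tau$ to $G - e$ then amounts to finding a block $B'$ of $G - e$ whose vertex set lies on the same side as $V(B)$ for every $\{A, C\} \in S_2(G)$.

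The approach is a short case distinction on whether some edge of $G$ lies outside $B$. If such an edge $e$ exists, I would delete it. Then $B$ is unchanged, and its maximality as a subgraph with the block property transfers from $G$ to $G - e$: any strictly larger such subgraph of $G - e$ would also be one of $G$, contradicting maximality in $G$. Hence \cref{thm:TanglesOfSmallOrder} yields a $2$-tangle $\tau'$ in $G - e$ with $X_{\tau'} = V(B) = X_\tau$, which must orient every $\{A, C\} \in S_2(G)$ exactly as $\tau$ does.

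In the remaining case every edge of $G$ lies in $B$, so $|E(B)| \geq 2$; since a block is either a single edge or a $2$-connected subgraph on at least three vertices, $B$ must be $2$-connected with $|V(B)| \geq 3$. I would delete an arbitrary edge $e$ of $B$, pick any block $B'$ of the still-connected subgraph $B - e$, and note that $B'$ remains a block of $G - e$ (since every edge of $G - e$ already lies in $B - e$), satisfying $V(B') \subseteq V(B)$ and $|V(B')| \geq 2$. The main checkpoint is then the alignment step: verifying that the $2$-tangle of $B'$ extends $\tau$, equivalently that $V(B')$ sits on the same side as $V(B)$ in every $\{A, C\} \in S_2(G)$. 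This reduces to a size comparison: the $2$-connectivity of $B$ with $|V(B)| \geq 3$ against the separator size $|A \cap C| \leq 1$ pins $V(B)$ entirely into $A$ or entirely into $C$, the inclusion $V(B') \subseteq V(B)$ inherits this side, and $|V(B')| \geq 2 > |A \cap C|$ rules out $V(B')$ being trapped in the separator, so $V(B')$ is pinned to the same unique side.
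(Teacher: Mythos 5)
Your proof is correct and takes essentially the same approach as the paper: pass via \cref{thm:TanglesOfSmallOrder} to the block $B$ corresponding to $\tau$, find after deletion a block of $G - e$ whose vertex set lies inside $V(B)$, and conclude using that a separator of size at most $1$ cannot split a block. The paper streamlines your two cases into one by fixing an anchor edge $f \in E(B)$, deleting any $e \neq f$, and taking the block of $G - e$ containing $f$, but the underlying argument is the same.
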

\begin{proof}
    By~\cref{thm:TanglesOfSmallOrder}, $G[X_\tau]$ is a block of $G$; in particular, it contains an edge~$f$.
    Let $e \neq f$ be any other edge in $G$, and consider $G' := G - e$.
    Let~$X'$ be the vertex set of the~block of~$G'$ containing~$f$, and note that~$X' \subseteq X_\tau$ by the definition of~$G'$.
    By~\cref{thm:TanglesOfSmallOrder}, the block $X'$ corresponds to a~$2$-tangle~$\tau'$ of~$G'$; in particular, $f \in G'[X'] = G'[X_{\tau'}]$.
    Since every edge in~$G$ lies on precisely one side of every separation of $G$ of order~$\leq 1$, a separation $(A, B)$ of $G$ of order~$\leq 1$ is contained in $\tau'$ if and only if $f \in G[B]$.
    For every~$(A, B) \in \tau$, we have~$f \in G[X_\tau] \subseteq G[B]$ and hence~$(A, B) \in \tau'$.
    Thus, $\tau$ extends to~$\tau'$.
\end{proof}

\noindent Similarly, as in the proof of \cref{lem:Smallk1} also in the proof of \cref{lem:Smallk2} the block $X_\tau$ may split into two (or even more) blocks in $G'$. Thus, $\tau$ does not necessarily extend to a unique $2$-tangle in $G'$.

\begin{lemma} \label{prop:CarryOverToComponent}
    Let~$k \ge 1$ be an integer, and let~$G$ be a graph with a~$k$-tangle~$\tau$.
    Then there exists a (unique) component~$G'$ of~$G$ such that~$\tau$ extends to a~$k$-tangle~$\tau'$ in~$G'$.
    Moreover, $\tau$ induces the $k$-tangle $\tau'$ in $G'$.
\end{lemma}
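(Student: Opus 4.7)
The plan is to exploit that, for each component $G_i$ of $G$ with vertex set $V_i$, the separation $\{V_i, V \setminus V_i\}$ of $G$ has order $0$ and is therefore oriented by $\tau$. First, I would establish that $\tau$ ``points to'' a unique component $G'$, meaning there is a unique $V' := V(G')$ with $(V \setminus V', V') \in \tau$. For uniqueness, two such components $V_i \neq V_j$ would give by the profile property the supremum $(V, \emptyset) \in \tau$ (of order $0$ since $V_i \cap V_j = \emptyset$), contradicting the regularity element $(\emptyset, V) \in \tau$. For existence, assume to the contrary that $(V_i, V \setminus V_i) \in \tau$ for every component. The case $n = 1$ is immediate from regularity; for $n = 2$ the assumption forces both $(V_1, V_2)$ and $(V_2, V_1)$ into $\tau$, which is absurd; and for $n \ge 3$ iterating the profile property yields $(\bigcup_{i \in A} V_i, V \setminus \bigcup_{i \in A} V_i) \in \tau$ for every $\emptyset \neq A \subsetneq [n]$, so any tripartition of $[n]$ produces a forbidden triple in $\tau$ whose small sides jointly cover $G$.

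Next, I would define $\tau'$ on $G'$ by declaring $(A', B') \in \tau'$ iff $(A' \cup (V \setminus V'), B') \in \tau$. The extended pair is a separation of $G$ of the same order as $(A', B')$ because $(V \setminus V') \cap B' = \emptyset$, so $\tau'$ is a well-defined orientation of $S_k(G')$. To see $\tau'$ is a $k$-tangle, any putative forbidden triple $\{(A'_j, B'_j)\}_{j=1,2,3} \subseteq \tau'$ with $\bigcup_j G'[A'_j] = G'$ would correspond via this definition to separations $(A'_j \cup (V \setminus V'), B'_j) \in \tau$ whose $G$-induced small sides $G'[A'_j] \cup (G - G')$ jointly cover $G' \cup (G - G') = G$, giving a forbidden triple in $\tau$.

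Finally, I would verify that $\tau$ equals the lift $\tilde\tau$ of $\tau'$ to $G$. For $(A, B) \in \tau$, applying the profile property to $(A, B)$ and $(V \setminus V', V') \in \tau$ gives $(A \cup (V \setminus V'), B \cap V') \in \tau$; since $A \cup (V \setminus V') = (A \cap V') \cup (V \setminus V')$, the definition of $\tau'$ yields $(A \cap V', B \cap V') \in \tau'$, i.e., $(A, B) \in \tilde\tau$. As $\tau$ and $\tilde\tau$ are both orientations of $S_k(G)$, the containment $\tau \subseteq \tilde\tau$ forces equality. For uniqueness of $\tau'$, any $k$-tangle $\tau''$ in a component $G''$ whose lift is $\tau$ satisfies $(V \setminus V(G''), V(G'')) \in \tau$ (since $(\emptyset, V(G'')) \in \tau''$ by regularity), forcing $G'' = G'$ by the first step, whence the lift formula pins down $\tau'' = \tau'$. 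The main step to get right is the profile application combining $(A, B) \in \tau$ with the ``pointing'' separation $(V \setminus V', V') \in \tau$, which is precisely what enables the seamless translation of separations from $G$ to those of $G'$.
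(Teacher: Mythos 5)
Your proof follows essentially the same route as the paper's: identify the unique component $G'$ with $(V\setminus V', V')\in\tau$, define $\tau'$ on $G'$ by enlarging the small side with $V\setminus V'$, and verify the tangle and lift properties. The one cosmetic difference is that you prove existence and uniqueness of the ``pointing'' component directly from the profile property and forbidden triples, whereas the paper invokes the stated correspondence between $1$-tangles and components.

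One claim is under-justified as written: that $\tau'$ is a \emph{well-defined orientation} of $S_k(G')$. Your remark ``because $(V\setminus V')\cap B'=\emptyset$'' only shows the defining condition refers to a genuine separation of $G$ of the same order; it does not show that exactly one of $(A',B')$ and $(B',A')$ satisfies it. Observe that $(A'\cup(V\setminus V'),B')$ and $(B'\cup(V\setminus V'),A')$ are orientations of \emph{different} separations of $G$, so the fact that $\tau$ orients each of those two separations does not by itself settle how $\tau'$ orients $\{A',B'\}$. The fix is short and in the spirit of your other steps: if both $(A'\cup(V\setminus V'),B')$ and $(B'\cup(V\setminus V'),A')$ were in $\tau$ they would already form a forbidden pair (their small sides cover $G$), and if neither were then $(B',A'\cup(V\setminus V'))$, $(A',B'\cup(V\setminus V'))$ and $(V\setminus V',V')$ would all lie in $\tau$ and form a forbidden triple. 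Alternatively, your lift-verification step already shows that the restriction of every $(A,B)\in\tau$ lands in $\tau'$, hence every separation of $G'$ receives at least one orientation; combined with your forbidden-triple argument (which rules out both orientations), reordering the argument also closes the gap.
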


\begin{proof}
    Let~$\tau_1$ be the~$1$-tangle in~$G$ with~$\tau_1 \subseteq \tau$.
    By~\cref{thm:TanglesOfSmallOrder}, $G[X]$ with~$X := X_{\tau_1} = \bigcap_{(A, B) \in \tau_1} B$ is a component of~$G$.
    We claim that~$G' := G[X]$ is as desired.

    We define an orientation~$\tau'$ of~$S_k(G')$ based on~$\tau$.
    For any separation~$\{A', B'\}$ of order~$<k$ of~$G'$, we consider the separation~$\{A, B\}$ of~$G$ defined by~$A := A' \cup (V(G) \setminus X)$ and~$B := B'$.
    This again has order~$<k$ and is hence oriented by~$\tau$.
    So if~$(A, B) \in \tau$, then we put~$(A', B') \in \tau'$, and if~$(B, A) \in \tau$, we put~$(B', A') \in \tau'$.
    
    We now show that $\tau$ extends to $\tau'$.
    It suffices to check that if we included $(A',B')$ or $(B',A')$ in $\tau$ because of $(A,B)$ or $(B,A)$ in $\tau$, respectively, then there exists no $(C,D) \in \tau$ such that $(C \cap V(X), D \cap V(X)) = (B',A')$ or $= (A',B')$, respectively.
    Suppose for a contradiction that there is such a $(C,D) \in \tau$.
    The definition of $X := X_{\tau_1}$ ensures that the separation $((V(G) \setminus X), X)$ of order $0$ is in $\tau_1 \subseteq \tau$.
    Thus, the forbidden triple 
    \begin{equation*}
        \{(A, B), (C,D)), ((V(G) \setminus X), X)\} \text{ or } \{(B, A), (C,D)), ((V(G) \setminus X), X)\})
    \end{equation*}
    is contained in the tangle~$\tau$, respectively, which is a contradiction.
    Thus, $\tau$ extends to $\tau'$.

    It remains to show that the orientation~$\tau'$ of~$S_k(G')$ is even a tangle in~$G'$.
    Suppose for a contradiction that there exists a forbidden triple~$\{(A'_i, B'_i) : i \in [3]\}$ in~$\tau'$.
    As above, $(A_i, B_i) := (A'_i \cup (V(G) \setminus X), B'_i)$\ is a separation of~$G$ of order~$<k$ and by construction contained in~$\tau$ for all~$i \in [3]$.
    Since~$G'$ is a component of~$G$, it is immediate that~$\{(A_i, B_i) : i \in [3]\}$ forms a forbidden triple in~$\tau$, which is a contradiction.

    We remark that the proof immediately ensures that $G'$ is the unique component such that $\tau$ extends to a $k$-tangle in it, and also $\tau'$ is the unique such $k$-tangle.
\end{proof}

\begin{lemma}\label{lemma:vertexofdegree1}
    Let $\tau$ be a tangle in $G$ of order $k \geq 3$.
    Suppose that $G$ has a vertex $v$ of degree $1$ and let $e$ be the unique edge incident to $v$.
    Then $\tau$ induces a $k$-tangle $\tau'$ in $G-e$.
\end{lemma}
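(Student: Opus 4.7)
Let $w$ be the unique neighbour of $v$ in $G$, so $e = vw$, and set $G' := G - e$, in which $v$ is isolated. My plan is to construct the promised $\tau'$ via a detour through the subgraph $G - v$, and then to verify uniqueness directly by a forbidden-triple argument in $G'$.

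I would first introduce a candidate tangle $\tau''$ on $G - v$ as follows: for every $\{X, Y\} \in S_k(G - v)$, since $w \in X \cup Y$, one may extend $\{X, Y\}$ to a separation of $G$ by placing $v$ on the side containing $w$ (either side, if $w$ lies on the separator), and declare $(X, Y) \in \tau''$ precisely when $\tau$ orients this extension with the corresponding ($X$-)side as the small side. The first task is well-definedness when $w \in X \cap Y$: if the two possible extensions $(X \cup \{v\}, Y)$ and $(X, Y \cup \{v\})$ disagreed under $\tau$, then $\{(X \cup \{v\}, Y),\, (Y \cup \{v\}, X),\, (\{v, w\}, V(G))\}$ would be a forbidden triple in $\tau$, using that $(\{v, w\}, V(G)) \in \tau$ by regularity since $k \geq 3$. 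I would then prove $\tau''$ is itself a $k$-tangle: a purported forbidden triple $\{(X_i, Y_i)\}_{i=1}^3 \subseteq \tau''$ promotes to the triple of extensions $\{(X_i', Y_i')\}_{i=1}^3 \subseteq \tau$, and because $w \in \bigcup_i X_i$ some $X_{i_0}$ contains $w$, so the extension for $i_0$ puts $v$ into $X_{i_0}'$; then both endpoints of $e$ lie in $X_{i_0}'$, and together with the inherited vertex coverage this yields a forbidden triple in $\tau$, a contradiction.

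Next I would define $\tau'$ on $G'$ as the lift of $\tau''$ from the subgraph $G - v \subseteq G'$, that is, $(A, B) \in \tau'$ iff $(A \setminus \{v\}, B \setminus \{v\}) \in \tau''$. The standard lift construction recalled in the preliminaries preserves the tangle property, so $\tau'$ is a $k$-tangle in $G'$, and a brief case analysis on the position of $v$ in a separation of $G$ (with the separator subcase handled once more by the forbidden-pair trick above) shows that $\tau'$ agrees with $\tau$ on $S_k(G)$, so extends $\tau$. For uniqueness, take any $k$-tangle $\tilde{\tau}'$ in $G'$ extending $\tau$ and any $\{A, B\} \in S_k(G') \setminus S_k(G)$; by definition $v, w$ lie in distinct strict sides, say $v \in A \setminus B$ and $w \in B \setminus A$. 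Then $\{A \setminus \{v\}, B \cup \{v\}\}$ is a separation of both $G$ and $G'$ of order $|A \cap B|$ and is oriented by $\tau \subseteq \tilde{\tau}'$; taking that orientation to be $(A \setminus \{v\}, B \cup \{v\})$ (the other case is symmetric), the triple $\{(A \setminus \{v\}, B \cup \{v\}),\, (B, A),\, (\{v\}, V(G))\}$ is a forbidden triple in $G'$: its small-side union is $V(G)$, and as $v$ is isolated in $G'$ the three induced subgraphs already cover every edge of $G'$. Since $(\{v\}, V(G)) \in \tilde{\tau}'$ by regularity, this forces $(A, B) \in \tilde{\tau}'$, matching $\tau'$; hence $\tilde{\tau}' = \tau'$.

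I expect the main obstacle to be establishing existence. The naive approach of orienting each ``new'' separation $\{A, B\}$ of $G'$ by moving $v$ to $w$'s side in $G$ and then trying to pull a forbidden triple from $G'$ back to $G$ can break down: if no small side of the pulled-back triple contains both endpoints of $e$, the edge $e$ is never covered in $G$ and no forbidden triple in $\tau$ emerges. The detour through $\tau''$ in $G - v$ sidesteps this issue entirely, since $e$ is absent from $G - v$, and lifting back to $G'$ creates no new edge-covering obligations because $v$ is already isolated there.
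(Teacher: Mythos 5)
Your approach is correct in outline but follows a more roundabout path than the paper's proof, which defines $\tau'$ directly on $G' = G - e$ by declaring $(A,B) \in \tau'$ whenever one of $(A,B)$, $(A \setminus \{v\}, B \cup \{v\})$, or $(A \cup \{v\}, B \setminus \{v\})$ lies in $\tau$, and then verifies both the orientation property and the tangle property via forbidden-triple arguments pivoting on $(\{v,w\}, V(G)) \in \tau$. Your detour through a tangle $\tau''$ on $G - v$ followed by a lift to $G'$ is a legitimate alternative, and your uniqueness argument (via the forbidden triple with $(\{v\}, V(G'))$, exploiting that $v$ is isolated in $G'$) is clean and more explicit than the paper, which leaves uniqueness implicit. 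However, the worry you raise in your last paragraph about the ``naive'' direct approach is unfounded: the paper's proof observes that in any forbidden triple $\{(A_i', B_i')\}$ in $G'$, some small side $A_j'$ must contain $w$ (since they cover $G'$), and moving $v$ to that side produces a separation of $G$ whose small side contains both endpoints of $e$, so $e$ is covered and the pullback is a genuine forbidden triple in $\tau$.

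There is a gap in your well-definedness argument for $\tau''$ when $w \in X \cap Y$. A disagreement of the two extensions under $\tau$ can happen in two ways: either (a) $(X \cup \{v\}, Y) \in \tau$ and $(Y \cup \{v\}, X) \in \tau$, or (b) $(Y, X \cup \{v\}) \in \tau$ and $(X, Y \cup \{v\}) \in \tau$. Your exhibited triple $\{(X \cup \{v\}, Y), (Y \cup \{v\}, X), (\{v,w\}, V(G))\}$ is contained in $\tau$ only in case (a); in case (b) neither $(X \cup \{v\}, Y)$ nor $(Y \cup \{v\}, X)$ lies in $\tau$, so that triple yields no contradiction. The fix in case (b) is to use $\{(Y, X \cup \{v\}), (X, Y \cup \{v\}), (\{v,w\}, V(G))\}$ instead: its small sides $Y$, $X$, $\{v,w\}$ have union $V(G)$, every edge of $G - v$ lies in $G[X]$ or $G[Y]$ since $\{X,Y\}$ separates $G - v$, and $e = vw \in G[\{v,w\}]$, so it is a forbidden triple in $\tau$, giving the contradiction. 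With this case added, your proof is complete.
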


\begin{proof}
    Let $\tau'$ be the subset of $\vS_k(G')$ which contains $(A,B) \in \tau'$ if $(A,B) \in \tau$, $(A \setminus \{v\}, B \cup \{v\}) \in \tau$ or $(A \cup \{v\}, B \setminus \{v\}) \in \tau$.
    The regularity of the tangle $\tau$ of order $\geq 3$ ensures that $(e,V(G)) \in \tau$.
    Thus, $\tau'$ is an orientation of $S_k(G')$, as any violation would form together with $(e,V(G))$ a forbidden triple in $\tau$.

    Suppose for a contradiction that $\{(A_i',B_i') : i \in [3]\}$ is a forbidden triple in $\tau'$.
    Since $G' = \bigcup_{i \in [3]} G'[A_i']$, some $A_j'$ contains the other endvertex $u \neq v$ of $e$.
    Thus, $(A_j, B_j) := (A_j' \cup \{v\}, B_j' \setminus \{v\})$ is a separation of~$G$ and thus in~$\tau$, as~$\tau'$ is an orientation.
    Let $(A_i,B_i) \in \tau$ which witnesses that $(A_i',B_i') \in \tau'$ for $i \in [3] \setminus \{j\}$.
    Now the $(A_i,B_i)$ form a forbidden triple in $\tau$, which is a contradiction.
\end{proof}

\subsection{Inducing tangles in graphs obtained by vertex suppression} \label{subsec:suppressingavertex}

Recall that for a given vertex $v \in V(G)$ of degree $2$, the graph obtained from $G$ by \defn{suppressing the vertex $v$} is $G' := G - v + uw$ where $u,w$ are the two neighbours of $v$ in $G$.
Let $\{A,B\}$ be a separation of $G'$.
If $u,w \in A$, then $\{A \cup \{v\}, B\}$ is a separation of $G$, and analogously if $u, w \in B$, then $\{A, B \cup \{v\}\}$ is a separation of $G$. In particular, they have the same order as $\{A,B\}$.
We remark that at least one of $u,w \in A$ and $u,w \in B$ holds, as $uw$
is an edge of $G'$.

We say that a $k$-tangle~$\tau$ of $S_k(G)$ for some integer $k \geq 1$ \defn{induces} the subset $\tau' \subseteq \vS_k(G')$ consisting of those $(A,B) \in \vS_k(G')$ such that at least one of $(A \cup \{v\}, B)$ and $(A, B \cup \{v\})$ is in $\tau$.
In particular, if $\{A \cup \{v\}, B\}$ is a separation of $G$, then one of its orientations is contained in $\tau$, and thus at least one of $(A,B)$ or $(B,A)$ is in $\tau'$; but $\tau'$ might contain both if also $\{A, B \cup \{v\}\}$ is a separation of $G$.
\cref{lem:VertexOfDegreeTwo} below ensures that if $k \geq 3$, then not only the latter does not happen, but $\tau'$ is even a $k$-tangle.
In this situation, we also sometimes say that the $k$-tangle $\tau$ in $G$ \defn{survives} as the $k$-tangle $\tau'$ in the graph $G'$ obtained from $G$ by suppressing a vertex.
We remark that if a $k$-tangle $\tau$ in~$G$ extends to a $k$-tangle~$\tau'$ in the graph~$G'$ obtained from~$G$ by suppressing a vertex, then~$\tau$ is obviously the lift of~$\tau'$ to $G$.

\begin{lemma}\label{lem:VertexOfDegreeTwo}
    Let $G$ be a graph, and let $G' = G - v + uw$ be the graph obtained by suppressing a vertex $v$ of degree $2$, where its two neighbours in $G$ are $u,w$.
    Then for a given $k$-tangle $\tau$ with $k \geq 3$ in $G$ the set $\tau' \subseteq \vS_k(G')$ induced by $\tau$ is a $k$-tangle in $G'$.
\end{lemma}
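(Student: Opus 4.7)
The plan is to verify the two defining properties of a $k$-tangle for $\tau'$: that it is an orientation of $S_k(G')$ and that it contains no forbidden triple. The central preliminary observation, for which $k \geq 3$ is essential, is that $(\{u,v,w\}, V(G) \setminus \{v\}) \in \tau$. Indeed, were its inverse $(V(G) \setminus \{v\}, \{u,v,w\})$ in $\tau$ instead, then together with the two regularity separations $(\{u,v\}, V(G))$ and $(\{v,w\}, V(G))$ (both in $\tau$ since $k \geq 3$), the small sides $V(G) \setminus \{v\}$, $\{u,v\}$, $\{v,w\}$ would cover $V(G)$, and their induced subgraphs would cover every edge of $G$ (the first covers $G-v$, the others cover $uv$ and $vw$), producing a forbidden triple in $\tau$.

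For each $\{A', B'\} \in S_k(G')$, the edge $uw$ of $G'$ forces $u, w$ onto a common side, say $u, w \in A'$; then $\{A' \cup \{v\}, B'\}$ is a separation of $G$ of the same order, so $\tau$ orients it, putting at least one orientation of $\{A', B'\}$ into $\tau'$. For uniqueness, suppose both $(A',B')$ and $(B',A')$ lie in $\tau'$ and inspect the four possible pairs of witnesses in $\tau$. Two are pairs of inverse separations in $\tau$, giving immediate contradictions. A third places $v$ on the small side of both witnesses, whose supremum $(V(G), A' \cap B')$ then lies in $\tau$ by the profile property (order $|A' \cap B'| < k$), while its inverse lies in $\tau$ by regularity, a contradiction. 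The fourth puts $v$ on the big side of both witnesses, which forces $u, w \in A' \cap B'$ (otherwise one of the lifts fails to be a separation of $G$); these two witnesses then together with $(\{u,v,w\}, V(G) \setminus \{v\}) \in \tau$ form a forbidden triple in $\tau$, since the small sides $A', B', \{u,v,w\}$ cover $V(G)$ while $G[A'] \cup G[B']$ (because $\{A', B'\}$ is a separation of $G'$) covers every edge of $G - v$ and $G[\{u,v,w\}]$ covers $uv$ and $vw$.

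For the forbidden triple condition, suppose toward a contradiction that $\{(A'_i, B'_i) : i \in [3]\}$ is a forbidden triple in $\tau'$. Since $uw \in E(G')$ is covered by some $G'[A'_{i_0}]$, we have $u, w \in A'_{i_0}$, and so $\{A'_{i_0} \cup \{v\}, B'_{i_0}\}$ is a separation of $G$. I claim $(A'_{i_0} \cup \{v\}, B'_{i_0}) \in \tau$: otherwise $\tau$ contains the inverse $(B'_{i_0}, A'_{i_0} \cup \{v\})$, which by definition of $\tau'$ places $(B'_{i_0}, A'_{i_0}) \in \tau'$, contradicting the uniqueness of orientation together with $(A'_{i_0}, B'_{i_0}) \in \tau'$. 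Set $(A_{i_0}, B_{i_0}) := (A'_{i_0} \cup \{v\}, B'_{i_0})$ and pick for each $i \neq i_0$ an arbitrary witness $(A_i, B_i) \in \tau$ of $(A'_i, B'_i) \in \tau'$. Then $G[A_{i_0}]$ contains $uv$ and $vw$ (since $u, v, w \in A_{i_0}$), and $\bigcup_i G[A'_i] \supseteq G - v$ because $\{(A'_i, B'_i)\}$ is forbidden in $\tau'$, so $\bigcup_i G[A_i] = G$, contradicting that $\tau$ is a $k$-tangle. The main obstacle is the uniqueness in the fourth subcase above, where $v$ lands on the big side in both witnesses; this is precisely where the preliminary observation and the hypothesis $k \geq 3$ are genuinely needed.
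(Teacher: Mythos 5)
Your proof is correct and takes essentially the same approach as the paper: lift separations of $G'$ to $G$ by moving $v$ to an appropriate side, argue that the orientation is well defined by ruling out conflicting witness pairs, and lift forbidden triples back to $\tau$. One small point worth noting: the paper justifies $(\{u,v,w\}, V(G)\setminus\{v\})\in\tau$ by a direct appeal to regularity, but $|\{u,v,w\}|=3\not<k$ when $k=3$, so regularity does not literally apply; your derivation of this fact from the forbidden triple $\{(V(G)\setminus\{v\},\{u,v,w\}),(\{u,v\},V(G)),(\{v,w\},V(G))\}$ is the cleaner argument. Your four-way case analysis for uniqueness of orientation is a more verbose route to what the paper packages as the single claim ``$(A\cup\{v\},B)\in\tau$ iff $(A,B\cup\{v\})\in\tau$ whenever both are separations of $G$'' (proved there by consistency plus the same forbidden-triple trick), but the underlying mechanics --- consistency, the profile property, regularity, and the $(\{u,v,w\},V(G)\setminus\{v\})$ separation --- coincide.
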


\begin{proof}
    We claim that $(A \cup \{v\},B) \in \tau$ if and only if $(A, B \cup \{v\}) \in \tau$, if both $\{A \cup \{v\},B\}$ and $\{A, B \cup \{v\}\}$ are separations of $G$. 
    Suppose for a contradiction that this is not the case.
    The consistency of $\tau$ ensures that $(A,B \cup \{v\}) \in \tau$ and $(B, A\cup \{v\}) \in \tau$.
    Since $k \geq 3$ and every tangle is regular, $(\{u,v,w\}, V(G) \setminus \{v\}) \in \tau$.
    Now $\{(A,B \cup \{v\}), (B, A\cup \{v\}), (\{u,v,w\}, V(G) \setminus \{v\})\}$ forms a forbidden triple in the tangle $\tau$, which is a contradiction.

    The claim above shows that $\tau'$ is an orientation of $S_k(G')$.
    It remains to show that it is a $k$-tangle in~$G'$.
    Suppose that $\{(A'_i,B'_i) : i \in [3] \}$ is a forbidden triple in $\tau'$.
    Then there is $j \in [3]$ with $u,w \in A'_j$, since $uw$ is an edge in $G'$.
    Hence, $\{A'_j \cup \{v\}, B'_j\}$ is a separation of $G$, and thus $(A_j, B_j) := (A'_j \cup \{v\}, B'_j) \in \tau$ by definition of $\tau'$. 
    For every $i \in [3] \setminus \{i\}$, we let $(A_i, B_i) \in \tau$ be the separation which witnesses that $(A'_i,B'_i) \in \tau'$.
    Then $\{(A_i,B_i) : i \in [3] \}$ is a forbidden triple in $\tau$, as $\{(A'_i, B'_i) : i \in [3]\}$ is a forbidden triple in~$G'$ and $u,v,w \in A_j$.
    This is a contradiction to the $\tau$ being a tangle in~$G$.
\end{proof}

\subsection{Suppressing vertices of degree~\texorpdfstring{$2$}{2} in~\texorpdfstring{\cref{thm:InductiveProofMethod}}{Theorem 4}}

We conclude this section with an example that shows that there are connected graphs $G$ with arbitrarily many edges which have a $k$-tangle such that every graph obtained from $G$ by deleting an edge has no $k$-tangles at all. In particular, in \cref{thm:InductiveProofMethod} it is necessary to allow the suppression of a vertex.

\begin{example}\label{ex:noktangleafterdeletion}
    For every $k,M \in \N$  with $k \geq 3$, there exists a connected graph $G$ with at least $M$ edges and which has a $k$-tangle such that, for every edge $e$ of $G$, the graph $G-e$ does not have a $k$-tangle.
\end{example}

\begin{proof}
    Let $G'$ be some connected graph which has a $k$-tangle but which is such that, for every edge $e$ of $G'$, the graph $G'-e$ does not have any $k$-tangles. Such graphs exist: Take any graph $H$ that has a $k$-tangle, and let $H_0 := H \supset H_1 \supset \dots \supset H_n$ be a maximal sequence such that $H_{i+1}$ is obtained from $H_{i}$ by either deleting an edge or taking a proper component of $H_i$, and such that $H_n$ still has a $k$-tangle. 
    Set $G' := H_n$. 
    By the maximal choice of the sequence $(H_i)_{i \in [n]}$ and because of \cref{prop:CarryOverToComponent}, $G'$ is connected and no graph obtained from $G'$ by deleting an edge has a $k$-tangle. 
    
    Since edgeless graphs have no tangles of order $\geq 2$, the graph $G'$ contains an edge $uv$.
    Let $G$ be obtained from $G'$ replacing $uv$ by a $u$--$v$ path of length at least $M+1$. Then $G$ has at least $M$ edges, and it has a $k$-tangle as every tangle of order $k \geq 3$ in $G'$ lifts to a $k$-tangle in $G$.
    But $G-e$ has no $k$-tangle for every edge $e$ of $G$, since any such tangle would induce a $k$-tangle in $G'-e'$ by \cref{lemma:vertexofdegree1,lem:VertexOfDegreeTwo} where $e' := e$ if $e \in E(G)$ or $e' := uv$ otherwise.
\end{proof}

\section{Proof of \texorpdfstring{\cref{main:SplitterTheorem}}{Theorem 5} if \texorpdfstring{$G$}{G} has a higher-order tangle} \label{sec:ExistenceOfHighOrderTangle}

In this section we prove~\cref{main:SplitterTheorem} for graphs which contain a tangle of order~$>k$.

\begin{theorem} \label{thm:NoBigTangle}
    Let $\tau$ be a tangle in $G$ of order $k \geq 2$.
    Suppose further that there exists a $(k+1)$-tangle $\tau^*$ in $G$.
    Then there is an edge $e \in E(G)$ such that $\tau$ extends to some $k$-tangle $\tau'$ in~$G - e$.
\end{theorem}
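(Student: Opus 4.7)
The plan is to split into two cases depending on whether $\tau \subseteq \tau^*$. In both cases we fix an edge $e$ of $G$ to delete and construct $\tau' \subseteq \vS_k(G - e)$ by orienting each $\{A', B'\} \in S_k(G - e)$ as follows. If $\{A', B'\}$ is already a separation of $G$, orient it via $\tau$. If $\{A', B'\}$ is ``new'', i.e.\ $e = xy$ crosses it with (say) $x \in A' \setminus B'$ and $y \in B' \setminus A'$, then the promotions $\{A' \cup \{y\}, B'\}$ and $\{A', B' \cup \{x\}\}$ are separations of $G$ of order $|A', B'| + 1 \leq k < k + 1$; using the lattice inequality $(A', B' \cup \{x\}) \leq (A' \cup \{y\}, B')$ together with consistency of $\tau^*$, these two promotions receive matching $\tau^*$-orientations, which we adopt for $(A', B')$ in $\tau'$.

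In Case~1, with $\tau \subseteq \tau^*$, any edge $e$ works: the definition is unambiguous and $\tau \subseteq \tau'$ is immediate. To rule out forbidden triples in $\tau'$, suppose $\{(A'_j, B'_j) : j \in [3]\}$ is one. Some $A'_l$ contains $x$ (since the $A'$-sides cover $V(G)$), and the promotion $(A'_l \cup \{y\}, B'_l)$ lies in $\tau^*$ by the profile property (as the supremum of $(A'_l, B'_l) \in \tau^*$ and $(\{y\}, V) \in \tau^*$, the latter by regularity). The other two separations of the triple lie in $\tau^*$ either directly (if old) or via the above $\tau^*$-promotion (if new). The resulting triple in $\tau^*$ covers $G - e$ by the original triple and the edge $e$ via $A'_l \cup \{y\} \ni x, y$, yielding a forbidden triple in $\tau^*$ and hence a contradiction. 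In Case~2, with $\tau \not\subseteq \tau^*$, pick a separation $\{A, B\}$ with $(A, B) \in \tau$, $(B, A) \in \tau^*$, and $|A, B|$ minimum among such, and pick $e$ to be an edge of $G$ with both endpoints in $A$. Such an edge exists, for otherwise $G = G[B]$ forces $A \subseteq B$ and hence $|A| < k$, so $(A, V) \in \tau^*$ by regularity, contradicting $(B, A) \in \tau^*$. Defining $\tau'$ as above, the well-definedness and extension properties hold as in Case~1, and a putative forbidden triple in $\tau'$ is handled by first shifting each of its separations toward the $\tau^*$-side via taking the supremum with $(A, B)$, so that each shifted separation is oriented consistently by both $\tau$ and $\tau^*$, reducing matters to the Case~1 argument.

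The main obstacle is Case~2: one must verify that the supremum-shift by $(A, B)$ never lands on a low-order separation on which $\tau$ and $\tau^*$ disagree. Here the minimality of $|A, B|$, combined with the placement of $e$ inside $A$, is used via standard submodular corner manipulations in the lattice $\vU(V(G))$: any such disagreement would yield, by submodularity of the order function applied to the corners of the disagreeing separation and $\{A, B\}$, a distinguishing separation of order strictly less than $|A, B|$, contradicting minimality. Once this compatibility is established, the forbidden-triple analysis transfers from Case~1 essentially verbatim.
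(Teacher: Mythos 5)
Your Case~1 follows essentially the paper's Lemma~4.3 and is fine. Case~2, however, has a genuine gap. You orient every \emph{new} separation $\{A',B'\}$ of $G-e$ according to $\tau^*$, regardless of what $\tau$ already forces by consistency. Since $\tau \not\subseteq \tau^*$ in this case, the two tangles disagree on some separations of order $<k$, and it can happen that there is $(E,F) \in \tau$ with $(A',B') \leq (E,F)$ while $\tau^*$ orients the promotion of $\{A',B'\}$ in the way that makes you place $(B',A')$ into $\tau'$. Then $(E,F)$ and $(B',A')$ both lie in $\tau'$ and form a forbidden tuple, so $\tau'$ is not a tangle. The assertion that ``well-definedness and extension properties hold as in Case~1'' breaks precisely here: in Case~1, $\tau$ and $\tau^*$ agree on all of $S_k(G)$, but not in Case~2. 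The paper's proof avoids this by first orienting every separation of $G-e$ that is \emph{forced} by $\tau$ and only then falling back to the higher-order tangle on the remaining unforced new separations; moreover, the distinguishing separation $(B,A)$ is chosen $\leq$-\emph{maximal} in the $(k+1)$-tangle (not of minimum order), and that maximality is what drives the choice of $e$ inside $A\setminus B$ and the two submodularity sublemmas. Your ``supremum-shift by $(A,B)$'' is not developed enough to substitute for that machinery: you do not check that the shifted separations have order $<k$, that they are oriented compatibly by $\tau$ and $\tau^*$, or that the shifted triple still covers $G$; and the claim that minimality of $|A,B|$ plus submodular corner manipulations rules out all relevant disagreements is asserted, not proved, and is not what the paper's argument actually relies on.

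Additionally, the edge-existence step in Case~2 is wrong as written: if no edge of $G$ has both endpoints in $A$, one only concludes that every edge lies in $G[B]$ and hence that $A \setminus B$ consists of isolated vertices — not that $A \subseteq B$. (A contradiction can still be obtained, e.g.\ by repeatedly taking suprema of $(B,A) \in \tau^*$ with singleton separations $(\{v\}, V\setminus\{v\})$ for $v \in A\setminus B$ via the profile property, ending at the forbidden $(V, A\cap B)$; but the implication you state does not hold.) Note also that the later analysis needs an edge inside the \emph{strict} side $A\setminus B$, not merely inside $A$, and the paper extracts this from the maximality of $(B,A)$ in the $(k+1)$-tangle, which your minimum-order choice does not yield for free.
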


We distinguish two cases:
First, we show that if $\tau$ itself extends to a $(k+1)$-tangle in $G$, then $\tau$ extends to a $k$-tangle in $G - e$ for every edge $e \in E(G)$ (\cref{lem:TangleExtends}).
Otherwise, there exists a $(k+1)$-tangle $\tilde{\tau}$ in $G$ which does not extend $\tau$.
Here, $\tau$ does not extend to a $k$-tangle in $G - e$ for every edge~$e \in G$, but only for some such edges $e$ and we will need some care to find them (\cref{lem:AnotherExtendingTangle}).

For both cases, we will make use of the following observation.
\begin{lemma}\label{lem:StdStuff-ShiftingAnEdge}
    Let $k \geq 3$ be an integer, and let $G$ be a graph with a $k$-tangle $\tau$. 
    Further, let~$e$ be an edge of~$G$ and let~$\{A, B\}$ be a separation of~$G - e$ with~$e \in E_G(A \setminus B, B \setminus A)$.
    If $\abs{A \cap B} < k-1$, then $(A \cup e, B) \in \tau$ if and only if $(A, B \cup e) \in \tau$.
\end{lemma}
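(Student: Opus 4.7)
Let $e = xy$ with $x \in A \setminus B$ and $y \in B \setminus A$; this is possible since $e \in E_G(A \setminus B, B \setminus A)$. Under the paper's notation I interpret $A \cup e$ as $A \cup V(e) = A \cup \{y\}$ (since $x$ is already in $A$) and likewise $B \cup e = B \cup \{x\}$. The first thing to check is that these two shifted pairs are genuine separations of the full graph $G$ (not merely of $G-e$): adding $y$ to $A$ ``resolves'' the offending edge $e$, and similarly for the other shift. Their order equals $|A \cap B| + 1$, which by hypothesis is $< k$, so both lie in $S_k(G)$ and are oriented by $\tau$.

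My plan is then to rule out, by producing a forbidden triple in each case, the two ``disagreement'' configurations:
\begin{enumerate}
    \item[(1)] $(A \cup \{y\}, B) \in \tau$ together with $(B \cup \{x\}, A) \in \tau$;
    \item[(2)] $(B, A \cup \{y\}) \in \tau$ together with $(A, B \cup \{x\}) \in \tau$.
\end{enumerate}
Since $\tau$ orients both $\{A \cup \{y\}, B\}$ and $\{A, B \cup \{x\}\}$, ruling out (1) and (2) is precisely equivalent to the asserted equivalence $(A \cup \{y\}, B) \in \tau \iff (A, B \cup \{x\}) \in \tau$.

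The key auxiliary separation in both cases is $(\{x, y\}, V(G))$: its separator has size $2 < k$, so by regularity of the $k$-tangle $\tau$ (which is where $k \geq 3$ is used) it lies in $\tau$. For case~(1), the three small sides are $A \cup \{y\}$, $B \cup \{x\}$, and $\{x, y\}$; I will verify that $G[A \cup \{y\}] \cup G[B \cup \{x\}] \cup G[\{x,y\}] = G$. The inclusion is clear for edges inside $A$ or inside $B$, and the only edge with endpoints in $A \setminus B$ and $B \setminus A$ is $e = xy$ itself (because $\{A,B\}$ is a separation of $G - e$), which is captured by $G[\{x,y\}]$. Hence the three separations form a forbidden triple, contradicting that $\tau$ is a tangle. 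For case~(2), the three small sides are $A$, $B$, $\{x,y\}$, and the same argument gives $G[A] \cup G[B] \cup G[\{x,y\}] = (G-e) \cup e = G$, again producing a forbidden triple.

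I do not foresee a real obstacle: once one guesses that regularity must supply a third ``edge'' separation $(\{x,y\}, V(G))$ to turn a pair covering $G-e$ into a triple covering $G$, the verification is immediate. The hypothesis $|A \cap B| < k-1$ is used exclusively to ensure that the two shifted separations still have order strictly less than $k$, and the hypothesis $k \geq 3$ is used exclusively to invoke regularity for the separator $\{x,y\}$.
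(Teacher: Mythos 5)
Your proof is correct and takes essentially the same approach as the paper: the key step in both is that $(\{x,y\}, V(G)) \in \tau$ by regularity (needing $k \geq 3$) and that this turns the covering of $G-e$ into a forbidden triple. The only cosmetic difference is that for the direction corresponding to your case (1), the paper dispatches it more quickly via consistency of $\tau$ rather than a second forbidden triple, but both arguments are fine.
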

\begin{proof}
    The assumptions immediately imply that~$\{A \cup e, B\}$ and~$\{A, B \cup e\}$ are separations of~$G$ of order~$<k$, and so~$\tau$ orients both of them.
    The consistency of~$\tau$ together with $(A, B \cup e) \leq (A \cup e, B)$ yields the forwards implication.
    The backwards implication follows from the fact that~$\{(A, B \cup e), (B, A \cup e), (e, V(G))\}$ is a forbidden triple but~$(e, V(G)) \in \tau$, since~$\tau$ is a~$k$-tangle with~$k \ge 3$ and~$V(G)$ cannot be the small side of any separation in~$\tau$.
\end{proof}

We start with the case in which $\tau$ extends to a $(k+1)$-tangle in $G$:

\begin{lemma} \label{lem:TangleExtends}
    Let $k \geq 2$ be an integer, and let $G$ be a graph with a $k$-tangle $\tau$.
    If $\tau$ extends to a $(k+1)$-tangle~$\tilde{\tau}$ in $G$, then $\tau$ extends to a $k$-tangle $\tau'$ in $G - e$ for every edge $e \in E(G)$.
\end{lemma}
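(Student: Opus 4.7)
The plan is to define~$\tau'$ on~$G - e$ by copying orientations from the higher-order tangle~$\tilde\tau$, handling the new separations of~$G - e$ via the shifting \cref{lem:StdStuff-ShiftingAnEdge} applied to~$\tilde\tau$. Concretely, for a separation~$\{A, B\}$ of~$G - e$ of order~$<k$ that is also a separation of~$G$, I would orient it in~$\tau'$ as in~$\tau$. If~$\{A, B\}$ is a new separation of~$G - e$ (so~$e$ has endpoints $u \in A \setminus B$ and $v \in B \setminus A$), then both~$\{A \cup \{v\}, B\}$ and~$\{A, B \cup \{u\}\}$ are separations of~$G$ of order $|A \cap B| + 1 \leq k$, hence oriented by~$\tilde\tau$. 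Since~$\tilde\tau$ has order~$k + 1 \geq 3$ and $|A \cap B| < k$, \cref{lem:StdStuff-ShiftingAnEdge} applied to~$\tilde\tau$ ensures that both shifts are oriented in the same direction, so I would orient~$(A, B) \in \tau'$ accordingly. By construction $\tau \subseteq \tau'$ and $\tau'$ is an orientation of~$S_k(G - e)$.

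To verify that~$\tau'$ avoids forbidden triples, I would suppose for contradiction that $\{(A_i, B_i) : i \in [3]\} \subseteq \tau'$ is a forbidden triple in~$G - e$, i.e.\ $\bigcup_i (G - e)[A_i] = G - e$, and lift it to a forbidden triple in~$\tilde\tau$. For each~$i$, I would set $(A_i', B_i') := (A_i, B_i)$ if $\{A_i, B_i\}$ is already a separation of~$G$, and otherwise $(A_i', B_i') := (A_i \cup \{v_i\}, B_i)$ where $v_i$ is the endpoint of~$e$ in~$B_i \setminus A_i$; in either case $(A_i', B_i') \in \tilde\tau$ and $A_i \subseteq A_i'$, so $\bigcup_i G[A_i'] \supseteq \bigcup_i (G - e)[A_i] = G - e$. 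If some $\{A_i, B_i\}$ is a new separation, then the corresponding~$A_i'$ contains both endpoints of~$e$, whence $e \in G[A_i']$ and the lifted triple is forbidden in~$\tilde\tau$, the desired contradiction.

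In the remaining case all $\{A_i, B_i\}$ are separations of~$G$, so the triple lies in~$\tau$; as~$\tau$ is a tangle this triple is not forbidden in~$G$, and together with $\bigcup_i G[A_i] \supseteq G - e$ this forces that no~$A_i$ contains both endpoints~$u, v$ of~$e$. Because each $\{A_i, B_i\}$ is a separation of~$G$, neither endpoint of~$e$ can lie in any strict side $A_i \setminus B_i$ (else its neighbour along~$e$ would also lie in~$A_i$), so $u, v \in B_i$ for every~$i$. Since $\bigcup_i A_i = V(G)$, there is some~$j$ with $u \in A_j$; then $u \in A_j \cap B_j$, and $v \in B_j \setminus A_j$ by the assumption that~$A_j$ does not contain both endpoints. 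The separation $\{A_j \cup \{v\}, B_j\}$ of~$G$ has order $|A_j \cap B_j| + 1 \leq k$, and a short forbidden-triple argument shows $(A_j \cup \{v\}, B_j) \in \tilde\tau$: if its inverse were in~$\tilde\tau$, then combined with~$(A_j, B_j)$ and the trivial $(\emptyset, V(G)) \in \tilde\tau$ it would form a forbidden triple, since $G[A_j] \cup G[B_j] = G$. Replacing~$(A_j, B_j)$ with~$(A_j \cup \{v\}, B_j)$ in our triple now covers~$e$, giving the desired forbidden triple in~$\tilde\tau$.

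I expect this last case to be the main obstacle: the naïve strategy of shifting~$e$ into an arbitrary~$A_i$ can raise the separator size from $\leq k - 1$ up to $k + 1$ when both endpoints of~$e$ lie strictly in $B_i \setminus A_i$, exceeding the order~$\tilde\tau$ can orient. The trick is to exploit the covering condition $\bigcup_i A_i = V(G)$ to find an index~$j$ for which~$u$ already lies in the separator~$A_j \cap B_j$, so that only the single vertex~$v$ needs to be added to~$A_j$ during the shift.
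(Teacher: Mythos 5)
Your proof is correct and follows the same strategy as the paper: define $\tau'$ by inheriting $\tau$ on separations of $G$ and orienting the new separations of $G - e$ via $\tilde\tau$ together with \cref{lem:StdStuff-ShiftingAnEdge}, then lift a forbidden triple of $\tau'$ to one of $\tilde\tau$ in $G$.

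You are, however, more careful than the paper in the lifting step. The paper replaces those $(A_i, B_i)$ that are not separations of $G$ by $(A_i \cup e, B_i)$ and asserts the result is a forbidden triple for $\tilde\tau$ in $G$. This is immediate when at least one replacement actually occurs, for then $e$ is covered by the enlarged small side. But when all three $\{A_i, B_i\}$ are already separations of $G$, nothing is replaced, and — as you observe — $e$ is then necessarily uncovered (otherwise the unchanged triple would already be forbidden for $\tau$ in $G$), so the paper's lifted triple need not be forbidden in $G$. Your repair of this case is exactly what is needed and is correct: since each $\{A_i,B_i\}$ is a separation of $G$ with $e$ uncovered, both endpoints of $e$ lie in every $B_i$; by $\bigcup_i A_i = V(G)$ you find $j$ with $u \in A_j \cap B_j$ and $v \in B_j \setminus A_j$; the separation $\{A_j \cup \{v\}, B_j\}$ of $G$ has order at most $k < k+1$, and the forbidden-triple argument with $(A_j,B_j)$ and $(\emptyset, V(G))$ forces $(A_j \cup \{v\}, B_j) \in \tilde\tau$; substituting this for $(A_j, B_j)$ now covers $e$ and yields the forbidden triple in $\tilde\tau$. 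This cleanly closes a gap that the paper's terse argument leaves open.
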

\begin{proof}
    Consider $G' := G-e$.
    We define an orientation $\tau'$ of $S_k(G')$ as follows:
    If~$\{A, B\} \in S_k(G')$ is also a separation of~$G$, then we put $(A, B) \in \tau'$ if and only if $(A, B) \in \tau$.
    Otherwise, the edge~$e$ has one endvertex in~$A \setminus B$ and the other one in~$B \setminus A$.
    So both $\{A \cup e, B\}$ and $\{A, B \cup e\}$ are separations of $G$ of order~$|A, B| + 1 \le k$, and these two separations are oriented by the $(k + 1)$-tangle $\tilde{\tau}$, and we have $(A \cup e, B) \in \tau$ if and only if $(A, B \cup e) \in \tilde{\tau}$ by~\cref{lem:StdStuff-ShiftingAnEdge}.
    Then we put $(A, B) \in \tau'$ if and only if $(A \cup e, B) \in \tilde{\tau}$ (equivalently: $(A,B \cup e)$).
    The first part of the definition guarantees that $\tau$ extends to $\tau'$. 
    
    It remains to show that $\tau'$ is a $k$-tangle in~$G'$. 
    If there exists a forbidden triple $\{(A_i, B_i) : i \in [3]\}$ in~$G'$ for~$\tau'$, then we obtain a forbidden triple for $\tilde{\tau}$ in $G$ by replacing those $(A_i, B_i)$ that are not already separations of $G$ with $(A_i \cup e, B_i)$.
    The arising triple is then by the definition of $\tau'$ a forbidden triple in $\tilde{\tau}$. This contradicts that $\tilde{\tau}$ is a tangle in $G$ by assumption.
\end{proof}

\noindent We remark that one can also show a vertex-version of~\cref{lem:TangleExtends} along the same lines:
if a~$k$-tangle $\tau$ in $G$ extends to a $(k+1)$-tangle in $G$, then $\tau$ extends to a $k$-tangle in $G' := G-v$ for every vertex $v \in V(G)$. 
\COMMENT{Indeed, we obtain for every separation $\{A,B\}$ of $G'$ of order~$<k$ a separation $\{A \cup \{v\}, B \cup \{v\}\}$ of $G$ of order~$\leq k$ which is oriented by the $(k+1)$-tangle $\tilde{\tau}$.
By letting $(A, B) \in \tau'$ if and only if $(A \cup \{v\}, B \cup \{v\}) \in \tilde{\tau}$ we get an orientation $\tau'$ of $S_k(G')$ which is in fact a $k$-tangle in $G$:
If there exists a forbidden triple $((A_i, B_i))_{i\leq 3}$  for $\tau'$ in $G'$, then this defines a forbidden triple $((A_i \cup \{v\}, B_i \cup \{v\}))_{i \leq 3}$ for $\tilde{\tau}$ in $G'$.}

Now we turn to the case that~$G$ has a $(k+1)$-tangle $\tilde{\tau}$ which does not extend $\tau$.
Let us briefly describe our proof strategy:
First, we observe that there exists a separation~$(B, A) \in \tilde{\tau}$ which is $\leq$-maximal in~$\tilde{\tau} \cap \vS_k(G)$ and distinguishes~$\tau$ and~$\tilde{\tau}$.
We will then delete an arbitrary edge~$e$ on the side of~$\{A, B\}$ which is small with respect to~$\tau$, i.e.\,$e \in G[A \setminus B]$.
To define the desired $k$-tangle~$\tau'$ of~$G'$ to which~$\tau$ shall extend, we first orient all separations of~$G'$ that are `forced' by~$\tau$ in that they have an orientation which was either already in $\tau$ or which must be in $\tau'$ to achieve the desired consistency of $\tau'$.
The remaining separations are then oriented according to~$\tilde{\tau}$; for this, we draw on the fact that $\tilde{\tau}$ has order $k+1$ and hence naturally defines an orientation of all the separations in $S_k(G')$ using \cref{lem:StdStuff-ShiftingAnEdge}.
While $\tau$ extends to this orientation~$\tau'$ by construction, the main part of the proof is devoted to show that $\tau'$ is indeed a tangle.
Intuitively speaking, the construction of~$\tau'$ ensures that neither the separations forced by~$\tau$ nor those oriented according to~$\tilde{\tau}$ contain a forbidden triple.
The maximal choice of~$(B, A)$ together with submodularity arguments then ensures that there is also no forbidden triple consisting of both kinds of separations: it allows us to transfer any such forbidden triple in~$\tau'$ either into one in~$\tau$ or into one in~$\tilde{\tau}$.

\begin{lemma} \label{lem:AnotherExtendingTangle}
    Let $k \geq 2$ be an integer, and let $G$ be a graph with a $k$-tangle $\tau$.
    If there exists a $(k+1)$-tangle $\tilde{\tau}$ in $G$ with $\tau \nsubseteq \tilde{\tau}$, then there exists an edge $e \in E(G)$ such that $\tau$ extends to a $k$-tangle $\tau'$ in~$G - e$.
\end{lemma}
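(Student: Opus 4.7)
Proof plan. My overall strategy is to find a `distinguishing' separation $\{A,B\}$ with $(A,B) \in \tau$ but $(B,A) \in \tilde\tau$, to delete a carefully chosen edge $e \in G[A \setminus B]$, and to construct the sought extension $\tau'$ in $G' := G - e$ by copying the $\tau$-orientation on all `old' separations that persist in $G'$ and using the $\tilde\tau$-orientation, shifted via \cref{lem:StdStuff-ShiftingAnEdge}, on the `new' separations created by deleting $e$.

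The first task is to select the distinguishing separation carefully so that $G[A \setminus B]$ actually contains an edge. Among all $\{A,B\} \in S_k(G)$ with $(A,B) \in \tau$ and $(B,A) \in \tilde\tau$, I pick one so that $(B,A)$ is $\leq$-maximal in $\tilde\tau \cap \vS_k(G)$; such a choice exists since $G$ is finite. I then show that $G[A \setminus B]$ contains an edge $e$. If $A \setminus B$ were empty, then $A \subseteq B$ would give $|A| < k$, so regularity of the $(k+1)$-tangle $\tilde\tau$ forces $(A,V) \in \tilde\tau$, and the profile property applied to the supremum $(B,A) \vee (A,V)$ would also place $(V,A) \in \tilde\tau$, contradicting that $\tilde\tau$ orients $\{A,V\}$ only one way. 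If $A \setminus B$ were non-empty but induced no edges, then for any $v \in A \setminus B$ the pair $\{A \setminus v, B \cup v\}$ is a separation of $G$ of the same order as $\{A,B\}$; consistency of $\tau$ places $(A \setminus v, B \cup v) \in \tau$, and the maximality of $(B,A)$ forces $(A \setminus v, B \cup v) \in \tilde\tau$ as well (else $(B \cup v, A \setminus v) > (B,A)$ would lie in $\tilde\tau$). Its profile supremum with $(B,A)$ equals $(V \setminus v, (A \cap B) \cup \{v\}) \in \tilde\tau$, which together with $((A \cap B) \cup \{v\}, V) \in \tilde\tau$ from regularity forms a forbidden triple in $\tilde\tau$, since the independence of $A \setminus B$ and the separation property force every edge at $v$ to lie inside $(A \cap B) \cup \{v\}$. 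This contradiction yields an edge $e \in G[A \setminus B]$.

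With $e$ fixed, set $G' := G - e$ and define $\tau' \subseteq \vS_k(G')$ as follows: if $\{A',B'\} \in S_k(G')$ is already a separation of $G$, copy its orientation from $\tau$; otherwise $e$ has one endpoint in each of $A' \setminus B'$ and $B' \setminus A'$, both $\{A' \cup e, B'\}$ and $\{A', B' \cup e\}$ are separations of $G$ of order $\leq k$, and \cref{lem:StdStuff-ShiftingAnEdge} applied to $\tilde\tau$ (valid as $\tilde\tau$ has order $k+1 \geq 3$) ensures they are oriented consistently by $\tilde\tau$; put $(A',B') \in \tau'$ iff $(A' \cup e, B') \in \tilde\tau$. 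By construction $\tau \subseteq \tau'$, so it remains to verify that $\tau'$ is a $k$-tangle.

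Checking that $\tau'$ is a tangle is the main obstacle. Suppose for contradiction that $\{(A_i,B_i) : i \in [3]\}$ is a forbidden triple in $\tau'$. Lift each $(A_i,B_i)$ to a separation $(A_i^*,B_i^*)$ of $G$ by setting $(A_i^*,B_i^*) = (A_i,B_i)$ in the old case (so $(A_i^*,B_i^*) \in \tau$) and $(A_i^*,B_i^*) = (A_i \cup e, B_i)$ in the new case (so $(A_i^*,B_i^*) \in \tilde\tau$). Using that $e \in G[A \setminus B]$ and that the $A_i$ cover $V(G')$, one verifies that the lifted triple is still forbidden in $G$. If all three lifts lie in $\tau$, or all three lie in $\tilde\tau$, we immediately contradict that $\tau$ or $\tilde\tau$ is a tangle. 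In the mixed case, the plan is to replace each lift $(A_i^*,B_i^*) \in \tilde\tau \setminus \tau$ by a related separation in $\tau$ via submodularity with the distinguishing $(A,B)$: taking infima or suprema with $(A,B)$ produces separations of order $\leq k$ that keep the forbidden-triple condition intact, while the maximal choice of $(B,A)$ ensures that these replacements fall on the $\tau$ side. After performing all replacements we obtain a forbidden triple entirely in $\tau$, a contradiction. I expect this submodularity-driven replacement step, in particular the verification that each replacement preserves the forbidden-triple property without raising the order above $k$, to be the most technical part of the argument.
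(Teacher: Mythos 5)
The overall strategy — pick a $\leq$-maximal distinguishing separation $(B,A)\in\tilde\tau$, delete an edge in $G[A\setminus B]$, and build $\tau'$ by copying $\tau$ on surviving separations and using $\tilde\tau$ via the shifting lemma on the new ones — is the same as the paper's, and your argument for the existence of the edge $e$ is essentially the paper's (with minor repackaging). However, there are two genuine gaps in the remainder.

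First, your definition of $\tau'$ is not the paper's and is not obviously well-posed as a tangle candidate. The paper orients a separation $\{C,D\}$ of $G'$ by the $\tau$-forced orientation whenever one exists — that is, whenever there is $(E,F)\in\tau$ with $(C,D)\le(E,F)$ or $(D,C)\le(E,F)$ — and only falls back on $\tilde\tau$ for the remaining, unforced separations. You instead send \emph{every} separation that is new in $G'$ to $\tilde\tau$. If some new $\{C,D\}$ is forced to $(C,D)$ by some $(E,F)\in\tau$ but $\tilde\tau$ happens to orient $\{C\cup e,D\}$ the other way, your $\tau'$ contains both $(E,F)$ and $(D,C)$ with $(F,E)\le(D,C)$ — a forbidden pair, so $\tau'$ is not even consistent. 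Showing that this never happens is exactly the kind of claim that needs proof (for $(C,D)\le(A,B)$ it follows from the maximality of $(B,A)$, but for $(C,D)\le(E,F)$ with $\{E,F\}\ne\{A,B\}$, in particular when $\{C,D\}$ crosses $\{A,B\}$, it is not immediate), and your sketch does not address it. The paper sidesteps the issue entirely by building the forcing into the definition.

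Second, your plan for the forbidden-triple analysis — ``replace each lift lying in $\tilde\tau\setminus\tau$ by its infimum/supremum with $(A,B)$ and land in a forbidden triple in $\tau$'' — does not match how the paper's argument actually has to go, and it is not clear it can be made to work. The paper's proof splits into two cases: if every $\{C_i,D_i\}$ either crosses $\{A,B\}$ or satisfies $(C_i,D_i)\le(B,A)$, it produces a forbidden triple in $\tilde\tau$ (using Sublemma `sublem:CrossingSepsThatAreSepsOfG` to show that $\tau'$-maximal \emph{old} crossing separations already lie in $\tilde\tau$); only in the complementary case, where some $(C_1,D_1)\ge(A,B)$, does it produce a triple in $\tau$, via the infimum with $(B,A)$ (Sublemma `sublem:CrossingSepsThatAreNotSepsOfG`). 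Always landing in $\tau$ would require that every $\tilde\tau$-oriented member of the triple admits a replacement in $\tau$ that keeps order $<k$ (submodularity only guarantees one of inf/sup stays small, not necessarily the one you want) and preserves the covering property $G[A_1^*]\cup G[A_2^*]\cup G[A_3^*]=G$ (which requires the removed vertices to be absorbed by the other two sides — in the paper this is where $(A,B)\le(C_1,D_1)$ and $e\in G[C_1]$ are used). None of these verifications appear in your sketch, and they constitute the bulk of the paper's proof.
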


\begin{proof}
    We first find an edge~$e$ of~$G$ that we afterwards prove to be as desired.
    For this, let~$(B,A) \in \tilde{\tau}$ be a separation of $G$ which distinguishes $\tau$ and $\tilde{\tau}$ and is $\leq$-maximal in $\tilde{\tau}$ among all such distinguishing separations. 
    Note that~$(B,A)$ is even maximal in $\tilde{\tau} \cap \vS_{k}$: any separation $(C,D) \in \tilde{\tau} \cap \vS_{k}$ with~$(B,A) < (C,D)$ would also distinguish $\tau$ and $\tilde{\tau}$ since $(D,C) \in \tau$ by the consistency of $\tau$.
    We then choose an arbitrary edge $e$ in~$G[A \setminus B]$. Let us first show its existence.

    There exists a vertex~$v \in A \setminus B$, since otherwise~$(V(G), A) = (B, A) \in \tilde{\tau}$, contradicting that~$\tilde{\tau}$ is a tangle.
    If the vertex~$v$ has a neighbour in~$A \setminus B$, then the edge joining them is as desired.
    So suppose for a contradiction that all neighbours of~$v$ are in~$A \cap B$.
    We can then find a forbidden triple in~$\tilde{\tau}$:
    First, we can move~$v$ from~$A \setminus B$ to~$B \setminus A$ to obtain a new separation~$\{A \setminus \{v\}, B \cup \{v\}\}$ of~$G$, which has the same order as~$\{A, B\}$.
    Thus, $\tilde{\tau}$ contains an orientation of it, and we must have~$(A \setminus \{v\}, B \cup \{v\}) \in \tilde{\tau}$ due to the maximality of~$(B, A)$ in~$\tilde{\tau} \cap \vS_k(G)$.
    Secondly, since~$\abs{A \cap B} < k$ and~$\tilde{\tau}$ is a~$(k+1)$-tangle in~$G$, we have $((A \cap B) \cup \{v\}, V(G)) \in \tilde{\tau}$.
    Hence, $\{(B, A), (A \setminus \{v\}, B \cup \{v\}), ((A \cap B) \cup \{v\}, V(G))\}$ is contained in the tangle $\tilde{\tau}$, but it is also a forbidden triple, which is a contradiction.
    All in all, $v$ has a neighbour in $A \setminus B$; in particular, $G[A \setminus B]$ contains an edge. 

    From now on, we prove that the chosen edge~$e \in G[A \setminus B]$ is as desired.
    For this, we consider~$G' := G - e$ and construct an orientation~$\tau'$ of~$S_k(G')$ to which~$\tau$ extends.
    It then remains to show that~$\tau'$ is a tangle in~$G'$.
    
    For the construction of~$\tau'$, note that~$\tau'$ has to contain not only~$\tau$, but also all orientations of separations of~$S_k(G')$ that are `forced' by the request that~$\tau'$ shall again be a tangle and hence especially consistent.
    More formally, we say that~$\tau$ \defn{forces} an orientation of a separation~$\{C,D\}$ of~$G'$ if there exists a separation~$(E,F) \in \tau$ such that~$(C,D) \leq (E,F)$ or~$(D,C) \leq (E,F)$.
    In particular, $\tau$ forces an orientation of every separation in~$S_k(G')$ which is also a separation of $G$ and the separations in~$\vS_k(G')$ which are maximal among all those forced by $\tau$ are separations of $G$.
    Note that the consistency of $\tau$ ensures that at most one orientation of a separation in $S_k(G')$ is forced by $\tau$.
    
    We now define the orientation~$\tau'$ of~$S_k(G')$.
    If~$\tau$ forces an orientation of a separation~$\{C, D\} \in S_k(G')$, then we put the respective orientation in~$\tau'$. 
    Otherwise, $\{C,D\}$ is especially not a separation of~$G$, so~$e$ has one endvertex in~$C \setminus D$ and the other one in~$D \setminus C$.
    Then $\{C \cup e, D\}$ and $\{C, D \cup e\}$ are separations of~$G$ of order at most~$k$, as $\{C, D\}$ has order less than~$k$.
    Thus, both these separations are oriented by the $(k+1)$-tangle $\tilde{\tau}$ in $G$, and by~\cref{lem:StdStuff-ShiftingAnEdge}, we have $(C \cup e, D) \in \tilde{\tau}$ if and only if $(C, D \cup e) \in \tilde{\tau}$.
    Now if $(C \cup e, D) \in \tilde{\tau}$ (equivalently: $(C, D \cup e) \in \tilde \tau$), then we put $(C, D) \in \tau'$, and if $(D \cup e, C) \in \tilde{\tau}$ (equivalently: $(D, C \cup e) \in \tilde \tau$), then we put $(D, C) \in \tau'$. This definition of $\tau'$ ensures that $\tau'$ indeed is an orientation of~$S_k(G')$ and also that $\tau$ extends to $\tau'$.

    Thus, it remains to show that~$\tau'$ is indeed a tangle.
    Suppose for a contradiction that there is a forbidden triple $\{(C_i, D_i) : i \in [3]\}$ in $\tau'$.
    Without loss of generality, we may assume that all the $(C_i, D_i)$ are $\leq$-maximal in $\tau'$.
    We now aim to use $\{(C_i, D_i) : i \in [3]\}$ together with the construction of~$\tau'$ to find a forbidden triple in $G$ which is contained in either $\tau$ or $\tilde{\tau}$.
    This then yields a contradiction since both~$\tau$ and~$\tilde{\tau}$ are tangles in~$G$.
    Towards this, we first give a condition on the~$(C_i, D_i)$ which allows us to find a forbidden triple in~$\tilde{\tau}$ and prove afterwards that if this condition does not hold, then we can find a forbidden triple in~$\tau$.    

    First, suppose that each~$\{C_i, D_i\}$ either crosses~$\{A, B\}$ or satisfies~$(C_i, D_i) \le (B, A)$.
    In this case, we aim to find a forbidden triple in~$\tilde{\tau}$.
    Towards this, the following lemma shows that~$(C_i, D_i) \in \tilde{\tau}$ if~$\{C_i, D_i\}$ crosses~$\{A, B\}$ and is also a separation of~$G$.
    \begin{sublemma}\label{sublem:CrossingSepsThatAreSepsOfG}
        Let $\{C, D\}$ be a separation of $G'$ that is also a separation of $G$ and whose orientation $(C,D) \in \tau'$ is $\leq$-maximal in $\tau'$.
        If $\{C, D\}$ crosses $\{A, B\}$, then $(C, D) \in \tilde{\tau}$.
    \end{sublemma}
    \begin{claimproof}
        Assume that the infimum~$(A \cap C, B \cup D)$ of~$(A, B)$ and~$(C, D)$ has order less than ~$k$.
        By the maximality of~$(B, A)$ in~$\tilde{\tau} \cap \vS_k$, we then have~$(A \cap C, B \cup D) \in \tilde{\tau}$.
        Since~$\{(B, A), (A \cap C, B \cup D), (D, C)\}$ is a forbidden triple in~$G$, this then implies~$(C, D) \in \tilde{\tau}$, as desired.

        It remains to prove that~$\{A \cap C, B \cup D\}$ has order less than~$k$.
        By submodularity, it suffices to show that~$\{A \cup C, B \cap D\}$ has order at least~$k$.
        Suppose for a contradiction that it has order less than $k$.
        Then~$\tau$ contains an orientation of~$\{A \cup C, B \cap D\}$.
        Since~$\tau$ extends to~$\tau'$, we have $(C,D)$ in $\tau$.
        On the one hand, as $(C,D)$ is also $\leq$-maximal in $\tau$, we must have that its supremum~$(A \cup C, B \cap D)$ with $(A,B)$ is not in~$\tau$.
        On the other hand, the profile property of $\tau$ ensures that $(B \cap D, A \cup C) \notin \tau$, as $(A,B) \in \tau$.
        This is a contradiction.
    \end{claimproof}
    
    In this first case, where each~$\{C_i, D_i\}$ either crosses~$\{A, B\}$ or satisfies~$(C_i, D_i) \le (B, A)$, we can use~\cref{sublem:CrossingSepsThatAreSepsOfG} to obtain a forbidden triple~$\{(C_i', D_i') : i \in [3]\}$ in~$\tilde{\tau}$ as follows:
    For~$i \in [3]$, assume first that~$\{C_i, D_i\}$ is also a separation of~$G$.
    If~$(C_i, D_i) \le (B, A)$, then~$(C_i', D_i') := (C_i, D_i) \in \tilde{\tau}$ since $(B,A) \in \tilde{\tau}$ and~$\tilde{\tau}$ is consistent. 
    Otherwise, $\{C_i, D_i\}$ crosses~$\{A, B\}$ and then~$(C_i', D_i') := (C_i, D_i) \in \tilde{\tau}$ by~\cref{sublem:CrossingSepsThatAreSepsOfG}.
    Secondly, if $\{C_i, D_i\}$ is not a separation of $G$, then the maximality of~$(C_i, D_i)$ in~$\tau'$ implies that~$(C_i, D_i)$ cannot be forced by~$\tau$.
    Thus, we have~$(C_i', D_i') := (C_i \cup e, D_i) \in \tilde{\tau}$ by construction.
    Now $\{(C_i', D_i') : i \in [3]\}$ is a forbidden triple in the tangle~$\tilde{\tau}$ in $G$, which is a contradiction.

    So we assume that some~$\{C_i, D_i\}$, say~$\{C_1, D_1\}$, neither crosses~$\{A, B\}$ nor satisfies~$(C_i, D_i) \le (B, A)$.
    In this case, we aim to find a forbidden triple in~$\tau$.
    We claim that~$(A, B) \le (C_1, D_1)$ and that this yields~$(C_1, D_1) \in \tau$:
    By our assumptions, $\{C_1, D_1\}$ is nested with~$\{A, B\}$, but we do not have~$(C_1, D_1) \le (B, A)$ (equivalently: $(A,B) \leq (D_1, C_1)$). 
    If $(D_1,C_1) \leq (A,B)$, then $(D_1,C_1) \in \tau'$ is forced by $(A,B) \in \tau$ but $\tau'$ is an orientation which already contains $(C_1,D_1)$, which is a contradiction.
    Furthermore, we cannot have~$(C_1, D_1) < (A, B)$, since~$(C_1, D_1)$ is maximal in~$\tau'$.
    Thus, $(A, B) \le (C_1, D_1)$; in particular, $e \in G[A \setminus B] \subseteq G[C_1 \setminus D_1]$.
    Therefore, $(C_1, D_1)$ is not only a separation of~$G - e = G'$, but also one of~$G$.
    Since~$\tau$ extends to~$\tau'$ and~$(C_1, D_1) \in \tau'$, we obtain~$(C_1, D_1) \in \tau$.

    As shown, we have $e \in G[C_1]$.
    So if~$(C_2, D_2)$ and~$(C_3, D_3)$ are separations of~$G$, then they are not only in $\tau'$ but also in~$\tau$ as~$\tau$ extends to~$\tau'$ which yields our desired forbidden triple in~$\tau$.
    So suppose that~$(C_i, D_i)$ with~$i \in \{2, 3\}$ is not a separation of~$G$.
    We claim that~$\{C_i, D_i\}$ crosses~$\{A, B\}$.
    Indeed, if $\{C_i, D_i\}$ had an orientation that is greater than $(A, B)$, then $\{C_i, D_i\}$ would be a separation of $G$, as the deleted edge $e$ is contained in~$G[A \setminus B]$.
    If $(C_i, D_i) < (A, B)$, then $(C_i, D_i)$ would in contradiction to its choice not be maximal in $\tau'$, 
    since $(A,B)$ is also in $\tau'$, 
    If $(D_i,C_i) \leq (A,B)$, then $(D_i,C_i) \in \tau'$ is forced by $(A,B) \in \tau$, which is a contradiction to $(C_i,D_i) \in \tau'$.
    So~$\{C_i, D_i\}$ cannot be nested with~$\{A, B\}$, that is, they cross.
    But then the following lemma shows that the infimum of~$(C_i, D_i)$ and~$(B, A)$ is in~$\tau$.

    \begin{sublemma} \label{sublem:CrossingSepsThatAreNotSepsOfG}
        Let $\{C, D\}$ be a separation of $G'$ that is not a separation of $G$ and whose orientation $(C,D) \in \tau'$ is maximal in $\tau'$.
        If $\{C, D\}$ crosses $\{A, B\}$, then $(B \cap C, A \cup D) \in \tau$.
    \end{sublemma}
    \begin{claimproof}
        Since~$e \in G[A \setminus B]$, $\{B \cap C, A \cup D\}$ is a separation of~$G$.
        Assume that it has order less than~$k$.
        Then~$\tau$ contains an orientation of~$\{B \cap C, A \cup D\}$, and this orientation must not be~$(A \cup D, B \cap C)$ as~$\tau$ would otherwise force the orientation~$(D, C)$ of~$\{C, D\}$ to be in the orientation~$\tau'$ which already contains $(C,D)$.

        It remains to show that~$\{B \cap C, A \cup D\}$ indeed has order less than~$k$; suppose for a contradiction otherwise.
        Since both~$\{A, B\}$ and~$\{C, D\}$ have order at most~$k-1$, this implies that~$\{B \cup C, A \cap D\}$ has order less than~$k-1$ by submodularity.
        The edge~$e$ is in~$G[A \setminus B]$ by its choice.
        Additionally, it has one endvertex in~$C \setminus D$ and the other one in~$D \setminus C$ because~$\{C, D\}$ is not a separation of~$G$ by assumption.
        Therefore, the order of~$\{B \cup C \cup e, A \cap D\}$ increases compared to the order of~$\{B \cup C, A \cap D\}$ by exactly one.
        So~$\{B \cup C \cup e, A \cap D\}$ has order~$< k$.
        We now show that none of its orientations can be contained in the tangle~$\tilde{\tau}$, which then yields the desired contradiction.

        On the one hand, the maximality of~$(B, A)$ in~$\tilde{\tau} \cap \vS_k$ implies that~$(B \cup C \cup e, A \cap D) \notin \tilde{\tau}$.
        On the other hand, since~$\{C, D\}$ is not a separation of~$G$, but~$(C, D)$ is maximal in~$\tau'$, the orientation~$(C, D)$ of~$\{C, D\}$ cannot be forced by~$\tau$.
        Hence by construction of $\tau'$, we put~$(C, D) \in \tau'$ because of~$(C \cup e, D) \in \tilde{\tau}$.
        But~$\{(B, A), (C \cup e, D), (A \cap D, B \cup C \cup e)\}$ is forbidden triple in~$G$, so~$(A \cap D, B \cup C \cup e) \notin \tilde{\tau}$.
    \end{claimproof}
    
    Using~\cref{sublem:CrossingSepsThatAreNotSepsOfG}, we can now find a forbidden triple~$\{(C_1, D_1), (C_2', D_2'), (C_3', D_3')\}$ in~$\tau$ as follows:
    As shown above, $(C_1, D_1)$ is in~$\tau$ and satisfies~$(A, B) \le (C_1, D_1)$.
    If~$(C_i, D_i)$ with~$i \in \{2, 3\}$ is a separation of~$G$, then it also is in~$\tau$, as $\tau$ extends to $\tau'$, and we set~$(C_i', D_i') := (C_i, D_i)$.
    If it is not a separation of~$G$, then~$\{C_i, D_i\}$ must cross~$\{A, B\}$, as shown above~\cref{sublem:CrossingSepsThatAreNotSepsOfG}, and~\cref{sublem:CrossingSepsThatAreNotSepsOfG} yields~$(C_i', D_i') := (B \cap C_i, A \cup D_i) \in \tau$.
    To see that $\{(C_1, D_1), (C'_2, D'_2), (C'_3, D'_3)\} \subseteq \tau$ is indeed a forbidden triple in $G$, note that
	\begin{equation*}
        G[C_1] \cup G[C'_2] \cup G[C'_3] \supseteq G[C_1] \cup G[C_2 \cap B] \cup G[C_3 \cap B] \supseteq G[C_1] \cup G[C_2 \cap D_1] \cup G[C_3 \cap D_1] = G,   
	\end{equation*}
    where the last equation holds because $e \in G[A] \subseteq G[C_1]$ and $\{(C_i, D_i) : i \in \{1,2,3\}\}$ is a forbidden triple in $G' = G - e$.
    This concludes the proof.   
\end{proof}

\begin{proof}[Proof of \cref{thm:NoBigTangle}]
    This follows immediately from \cref{lem:TangleExtends} and~\cref{lem:AnotherExtendingTangle}.
\end{proof}

Our proof of \cref{thm:NoBigTangle}  heavily relies on the fact that the order of the additional tangle~$\tilde{\tau}$ is greater than the one of~$\tau$.
However, we do not know whether similar proof techniques could also work if the order of~$\tilde{\tau}$ does not exceed the one of~$\tau$:
\begin{problem}
    Let~$\tau$ be a tangle in~$G$ of order $k \geq 3$.
    Suppose further that there exists another $k$-tangle~$\tau^*$ in~$G$ with~$\tau^* \nsubseteq \tau$ and~$G$ has minimum degree at least $3$.
    Is there an edge~$e \in G$ such that~$\tau$ extends to a~$k$-tangle~$\tau'$ in~$G-e$?
\end{problem}

\section{Rainbow-Cloud-Decompositions in the absence of high-order tangles} \label{sec:ExistenceRCDecomp}

Recall that \cref{thm:NoBigTangle} in \cref{sec:ExistenceOfHighOrderTangle} immediately yields \cref{main:SplitterTheorem} if the graph has a $(k+1)$-tangle.
So from now on we work towards a proof of~\cref{main:SplitterTheorem} for graphs without tangles of high order.
In this section, we show that, in the absence of~$(k+1)$-tangles, a large graph admits a certain type of decomposition, which we will call `rainbow-cloud decomposition'; this decomposition is inspired by~\cite{StructureOf6ConnectedGraphs}.
We will later use that this decomposition exhibits a substructure of the graph, the `rainbow', which is a long linear structure that is fairly independent of the rest of the graph and internally made up of similar enough parts such that deleting an edge in one of the parts does not change the overall structure of the graph.
In particular, it will allow us to understand how the separations of the graph change after deleting such an edge and hence how to find a tangle of this smaller graph to which our given tangle extends.

We begin by building up to the definition of a `rainbow-cloud decomposition'.
Let~$G$ be a graph.
First, a \defn{linear decomposition}\footnote{Decompositions satisfying~\labelcref{itm:LinDecomp1,itm:LinDecomp2} are often known as \defn{path-decompositions} (cf.\ \cite{DiestelBook16noEE}*{\S 12.6}). In \cite{StructureOf6ConnectedGraphs} these are refereed to as \defn{linear decompositions}. In these paper, linear decompositions will always not only satisfy~\labelcref{itm:LinDecomp1,itm:LinDecomp2} but also~\labelcref{itm:LinDecomp3,itm:LinDecomp4}.} of~$G$ of \defn{length}~$M \in \N$ of~$G$ is a family $\cW = (W_0, W_1, \dots, W_M)$ of sets~$W_i$ of vertices of~$G$ such that
\begin{enumerate}[label=(L\arabic*)]
    \item\label{itm:LinDecomp1} $\bigcup_{i=0}^M G[W_i] = G$, 
    \item\label{itm:LinDecomp2} if $0 \leq i \leq j \leq k \leq M$, then $W_i \cap W_k \subseteq W_j$, and
    \item \label{itm:LinDecomp3} there is an integer $\ell$ such that $|W_{i-1} \cap W_i| = \ell$ for every $i \in [M]$, and
    \item\label{itm:LinDecomp4} $W_{i-1} \neq W_{i-1} \cap W_{i} \neq W_{i}$ for all~$i \in [M]$.
\end{enumerate}
\noindent We call the sets~$W_i$ the \defn{bags} and and the induced subgraphs $G[W_i]$ the \defn{parts} of the linear decomposition~$\cW$. Note that the bags of a linear decomposition of length at least $1$ are non-empty by \cref{itm:LinDecomp4}.
The \defn{adhesion sets} of a linear decomposition~$\cW$ are the sets~$U_i := W_{i-1} \cap W_i$ for $i \in [M]$.
The size of the adhesion sets~$U_i$ is the \defn{adhesion} of $\cW$.
We emphasise that adhesion~$0$ is allowed.
Whenever we introduce a linear decomposition as~$\cW$ without specifying its bags, then we will tacitly assume the bags to be denoted by~$W_0, \dots, W_M$ and the adhesion sets by~$U_1, \dots, U_M$.

Next we turn to the definition of `rainbow-decompositions' which are special linear decompositions whose adhesion sets are minimal $U_1$--$U_M$ separators of~$G$ as witnessed by respective families of disjoint paths.
To make this formal, a \defn{linkage} in a graph~$G$ is a set $\cP$ of disjoint paths in~$G$.
If $A$ and $B$ are sets of vertices of~$G$ such that~$\cP$ consists of $A$--$B$ paths, i.e.\,such paths that meet~$A$ precisely in one endvertex and~$B$ precisely in the other endvertex, then $\cP$ forms an \defn{$A$--$B$ linkage}.
A linear decomposition $\cW$ of adhesion $\ell$ and length~$M$ is called a \defn{rainbow-decomposition} of \defn{adhesion} $\ell$ and \defn{length} $M$ if it has the following three properties:
\begin{enumerate}[label=(R\arabic*)]
    \item\label{itm:RainbowDecompLinkage} There exists a $U_i$--$U_{i+1}$~linkage of cardinality $\ell$ in $G[W_i]$ for every $i \in [M-1]$.
    \item\label{itm:RainbowDecompConnected} Every part $G[W_i]$ of $\cW$ is connected.
    \item\label{itm:RainbowDecompDistinctBags} 
    Every two consecutive adhesion sets $U_i, U_{i+1}$ are disjoint.
\end{enumerate}
\noindent We may combine the linkages of cardinality~$\ell$ from~\cref{itm:RainbowDecompLinkage} to obtain a $U_1$--$U_M$ linkage~$\cP$ of cardinality $\ell$ in $G$.
We call such a linkage~$\cP$ a \defn{foundational linkage} of the rainbow-decomposition.

Finally, we define `rainbow-cloud-decompositions', which consist of a rainbow-decomposition of a subgraph of~$G$ that interacts with the remainder of~$G$, the `cloud', in a very controlled manner (see~\cref{fig:RC-Decomp} for an illustration).
Formally, a \defn{rainbow-cloud-decomposition} (or \defn{RC-decomposition} for short) of~$G$ of \defn{adhesion}~$\ell$ and \defn{length}~$M$ is a quadruple~$(R, \cW, Z, C)$ consisting of two induced subgraphs~$R$ and~$C$ of a graph~$G$, a vertex set~$Z \subseteq V(C)$ disjoint from $V(R)$ such that~$G[V(R) \cup Z] \cup C = G$ and a rainbow-decomposition~$\cW = (W_0, \dots, W_M)$ of~$R$ of adhesion~$\ell$ and length~$M$ with adhesion sets~$U_1, \dots, U_M$ and two additional adhesion sets~$U_0 := V(C) \cap W_0$ and~$U_{M+1} := V(C) \cap W_{M}$ such that
\begin{enumerate}[label=(RC\arabic*)]
    \item\label{itm:RCDecompRcapC} $V(R) \cap V(C) = U_0 \cup U_{M+1}$,
    \item\label{itm:RCDecompAdditionalCutsets} $\abs{U_0} = \ell = \abs{U_{M+1}}$ and~$U_0 \cap U_1 = \emptyset = U_M \cap U_{M+1}$,
    \item\label{itm:RCDecompAdditionalLinkages} there exists a~$U_0$--$U_1$~linkage in~$G[W_0]$ and a~$U_M$--$U_{M+1}$~linkage in~$G[W_M]$, both of cardinality~$\ell$, and
    \item\label{itm:RCDecompEdgesToZ} $Z \subseteq N_G(W_i)$ for every~$i \in \{0,\dots, M \}$.
\end{enumerate}
We refer to $R$ as the \defn{rainbow}, to $C$ as the \defn{cloud} of the RC-decomposition and to $Z$ as the \defn{sun} of the RC-decomposition.
Whenever we introduce an RC-decomposition $(R, \cW, Z, C)$, we tacitly assume that $\cW = (W_0, \dots, W_M)$ and $U_0, \dots, U_{M+1}$ are defined as above.%
\COMMENT{Observe that we do not require~$U_0$ and~$U_{M+1}$ to be connected, which preserves us from stating (RC1-3) via a longer rainbow-decomposition.}

\begin{figure}[ht]
    \centering
    \pdfOrNot{\includegraphics[page=2]{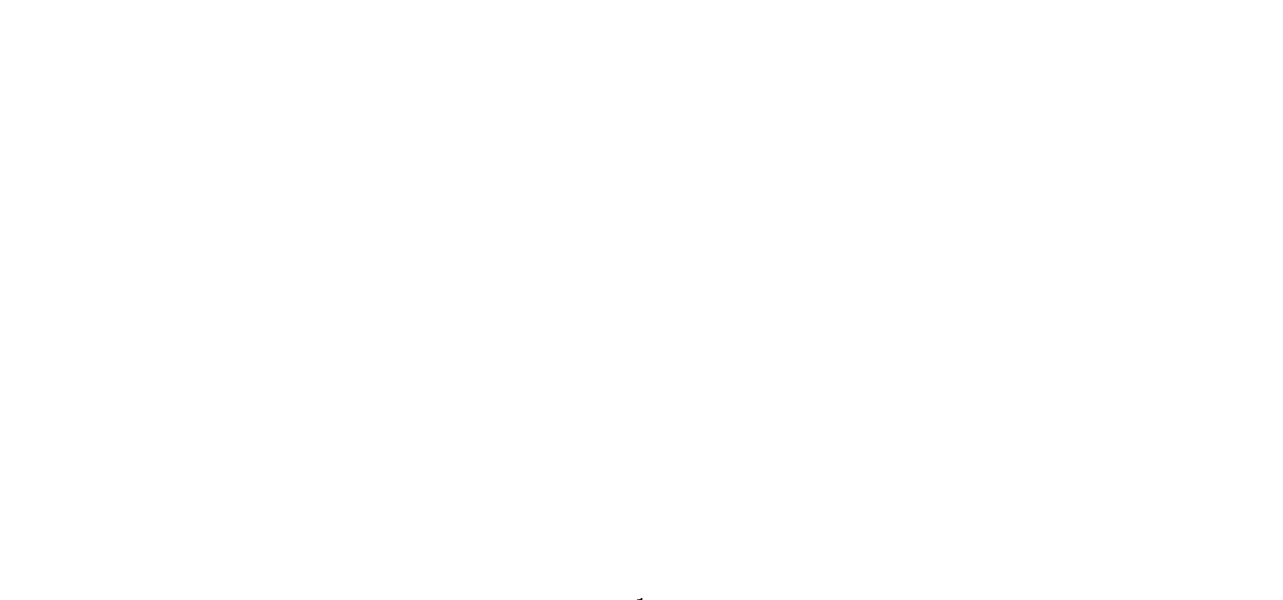}}{\includesvg[width=0.8\columnwidth]{svg/RCdecompositionschematic2.svg}}
    \caption{A rainbow-cloud decomposition $(R, \cW, Z, C)$: the rainbow $R$ together with its decomposition $\cW$ is indicated in grey, the foundational linkage $\cP$ of $\cW$ is depicted in rainbow colours, and the cloud $C$ is depicted in black. Further, indicated in yellow, is the sun $Z \subseteq V(C)$ together with the $Z$--$R$ edges required for \cref{itm:RCDecompEdgesToZ}.
    The adhesion sets $U_0, U_i, U_{M+1}$ are depicted in brown.}
    \label{fig:RC-Decomp}
\end{figure}

With the definition of rainbow-cloud-decompositions at hand, we can state the main result of this section:
\begin{theorem}\label{thm:ExistenceOfRCDecomp}
    For every two integers~$k, M \ge 1$, there exists some integer~$N = N(k, M) \ge 1$ such that every connected graph $G$ with at least $N$ vertices and no $(k+1)$-tangle admits an RC-decomposition $(R, \cW, Z, C)$ of length at least $M$ and adhesion $\ell$ such that $\abs{Z} + \ell \geq 1$.
\end{theorem}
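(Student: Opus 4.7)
The plan is to apply the tangle-tree duality theorem \cite{DiestelBook16noEE}*{Theorem~12.5.1}. Since $G$ has no $(k+1)$-tangle, $G$ admits a tree-decomposition $(T,(V_t)_{t\in V(T)})$ whose adhesion is at most $k$ and whose parts have bounded size $f(k)$. Because $|V(G)|\ge N$ is large, the tree $T$ must have many nodes. A standard argument shows that a tree with sufficiently many nodes either contains a long path or a vertex of sufficiently high degree; in the latter case one may pass to a long path hanging off the hub. Thus I obtain a long path $t_0t_1\cdots t_n$ in $T$.

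I would then regularise this path via iterated pigeonhole. First, pigeonhole on $|S_i|\le k$ (with $S_i:=V_{t_{i-1}}\cap V_{t_i}$) to find a long subpath of constant adhesion size $\ell$. Second, pigeonhole on the isomorphism type and labelling of each bounded-size bag $V_{t_i}$ with respect to its two adhesion sets $S_i,S_{i+1}$, as well as on the attachment pattern of the bounded-size subtrees branching off the path at $t_i$, to obtain a subpath along which the local structure is completely uniform. On this uniformised subpath, Menger's theorem inside each bag produces the $S_i$--$S_{i+1}$ linkages required by~\cref{itm:RainbowDecompLinkage}, and the uniformity ensures that after discarding a bounded number of parts one also achieves the disjointness condition~\cref{itm:RainbowDecompDistinctBags} and the connectedness of parts~\cref{itm:RainbowDecompConnected}. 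Setting $R$ to be the induced subgraph on the union of these bags, and $\cW$ to be the resulting sequence, yields a rainbow-decomposition of $R$ of adhesion $\ell$.

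The cloud $C$ is then taken to be the induced subgraph on $(V(G)\setminus V(R))\cup U_0\cup U_{M+1}$, giving~\cref{itm:RCDecompRcapC}; together with~\cref{itm:RCDecompAdditionalLinkages} (again provided by Menger in the two endpoint bags after a final trim) and~\cref{itm:RCDecompAdditionalCutsets}, this yields the bulk of the RC-decomposition. For the sun $Z$, one exploits the uniformised structure: any vertex outside the rainbow which, via the constant attachment pattern, is adjacent to a common column of vertices appearing in every bag along the path is placed in $Z$. If no such common attachment exists, then the rainbow's adhesion $\ell$ must be at least $1$, since otherwise the path-like structure could not transmit the connectedness of $G$. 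In either case $|Z|+\ell\ge 1$ holds.

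The main obstacle is the iterated Ramsey/pigeonholing: each level of regularisation (adhesion size, bag isomorphism type, external attachment pattern, and final trimming) shrinks the usable path by a function of $k$, so to guarantee a final rainbow of length $\ge M$ the initial tree, and hence $G$, must have $N(k,M)$ vertices growing as an iterated tower in $k$. Careful bookkeeping through the nested pigeonholes, tracking how each step reduces the path length, should yield a bound consistent with the $O(3^{k^{k^5}})$ growth targeted elsewhere in the paper.
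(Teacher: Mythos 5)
Your high-level plan — tangle-tree duality to get a bounded-width tree-decomposition, extract a long path, and then regularise — matches the paper's outline, but the regularisation step conceals a real gap that the paper spends most of the argument closing.

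The main issue is the linkage property~\cref{itm:RainbowDecompLinkage}. You invoke Menger inside each bag to produce the $S_i$--$S_{i+1}$ linkages, but Menger only yields a linkage whose size equals the \emph{minimum} $S_i$--$S_{i+1}$ cut inside that part. Nothing about pigeonholing on $|S_i|=\ell$ or on labelled bag isomorphism types guarantees that this minimum is $\ell$: even on a fully uniformised subpath, the (common) bag type could admit a strictly smaller cut between its two adhesion sets, in which case every bag fails~\cref{itm:RainbowDecompLinkage} simultaneously. One then has to \emph{refine} the increasing chain of separations to insert that smaller cut, possibly drop down to a lower order, and iterate. This is exactly what the paper does in~\cref{thm:sequence_seps}: it chooses, among all long strictly increasing chains, one that is maximal with respect to the lexicographic order on the profile $(n_0,n_1,\dots)$ of how many separations of each order occur, and shows that this maximality forbids any separation of order $<\ell$ from sitting between two consecutive members. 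That maximality argument is the technical heart of the proof, and your isomorphism-type pigeonhole does not reproduce it.

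There are also two concrete errors in how you handle the sun and the disjointness. First, $Z$ is not ``a vertex outside the rainbow adjacent to a common column''; in the paper $Z$ \emph{is} (a subset of) the common column, i.e.\ the intersection $\bigcap_i U_i$ of all adhesion sets, which corresponds to the trivial paths of the foundational linkage. These vertices are then \emph{removed} from the bags and placed in the cloud, after which~\cref{itm:RCDecompEdgesToZ} becomes a nontrivial claim that is deduced from the foundational-linkage property (FL2). Second, ``discarding a bounded number of parts'' does not make consecutive adhesion sets disjoint~\cref{itm:RainbowDecompDistinctBags}: an overlap $U_i \cap U_{i+1} \neq \emptyset$ is a property of the bag, not of its position, and the fix is precisely to peel off the common column. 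Finally, you leave~\cref{itm:RainbowDecompConnected} and~\cref{itm:RCDecompRcapC} essentially unaddressed: the rainbow bags $W_i$ are not the tree-decomposition bags but carefully chosen connected components of modified bags, obtained via an auxiliary graph on the linkage paths; this is needed both to make each part connected and to ensure the cloud touches the rainbow only at $U_0 \cup U_{M+1}$.
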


The remainder of this section is devoted to the proof of~\cref{thm:ExistenceOfRCDecomp}, which roughly proceeds as follows:
We will start by using the tangle-tree duality theorem, one of the two central tangle theorems, to get a long nested sequence of separations which will allow us to construct a still long linear decomposition. 
This linear decomposition can subsequently be refined into an RC-decomposition, by putting the `unnecessary bits' of its bags into the cloud.

As a first step, \cref{thm:sequence_seps} asserts that every sufficiently long sequence of separations contains a long subsequence which induces a linear decomposition of some adhesion~$\ell$ satisfying~\ref{itm:RainbowDecompLinkage}.
In other words, we find a subsequence such that all its elements have the same order and such that for every two successive separations, there exists a linkage between its separators.
By Menger's theorem (e.g. \cite{DiestelBook16noEE}*{Theorem~3.3.1}), this second property is equivalent to the absence of any separation of smaller order between two successive separations.

We start with a lemma about sequences of positive integers, which we will later apply to the order of the separations in the sequence.
In what follows all sequences will be finite.

\begin{lemma}\label{thm:sequence}
    Let $n, m, p \ge 1$ be integers with $p \geq n^m$.
    Then every sequence~$(a_1, \dots, a_p)$ of integers of length~$p$ with~$a_i \in \{0, \dots, m-1\}$ for every~$i \in [p]$ has a subsequence $(a_{i_1}, \dots, a_{i_n})$ of length~$n$ such that
    \begin{enumerate}
        \item \label{item:sequence:i} $\ell := a_{i_1} = \dots = a_{i_n}$, and
        \item \label{item:sequence:ii} $a_j \geq \ell$ for all~$i_1 \leq j \leq i_n$. 
    \end{enumerate}
\end{lemma}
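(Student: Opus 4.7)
The plan is to argue by induction on $m$. For the base case $m = 1$, every entry $a_i$ must equal $0$, and since $p \geq n^1 = n$, any $n$ positions form a valid subsequence: (i) holds with $\ell = 0$ and (ii) is automatic because all entries are $\geq 0$.

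For the inductive step with $m \geq 2$, I would split according to how often the minimum value $0$ appears. If at least $n$ of the $a_i$ equal $0$, then those positions give the required subsequence directly, with $\ell = 0$ and (ii) holding trivially. Otherwise the at most $n - 1$ zero positions break the sequence into at most $n$ maximal intervals on which all values lie in $\{1, \dots, m - 1\}$. These intervals jointly contain at least $p - (n - 1) \geq n^m - n + 1$ entries, so by pigeonhole one such interval has length at least $(n^m - n + 1)/n > n^{m-1} - 1$, and hence at least $n^{m-1}$ since interval lengths are integers. Restricting to this interval and subtracting $1$ from every entry produces a sequence of length at least $n^{m-1}$ with values in $\{0, \dots, m - 2\}$, to which I would apply the inductive hypothesis. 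Translating the resulting subsequence back by adding $1$ to its common value $\ell'$ yields a subsequence of length $n$ of the original sequence with common value $\ell = \ell' + 1 \geq 1$; condition (ii) is inherited, since every position inside the chosen interval already has value $\geq 1$ and the inductive hypothesis supplies $a_j \geq \ell$ throughout the range between the first and last selected index.

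The main point to double-check will be the pigeonhole bound, in particular that $(n^m - n + 1)/n$ rounds up to $n^{m-1}$; this is straightforward for $n \geq 2$, and the case $n = 1$ is immediate because any single term of the sequence already forms a valid subsequence of length $1 = n^{m-1}$. Apart from this elementary calculation, the recursion closes cleanly and I do not anticipate any real obstacle in turning this sketch into a full proof.
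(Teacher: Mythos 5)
Your proposal is correct and follows essentially the same induction on $m$ as the paper: split on whether $0$ appears at least $n$ times, and otherwise find a long zero-free block and recurse on it after shifting down by one. The only cosmetic difference is that you phrase the length bound on the long block via an averaging/pigeonhole estimate, while the paper derives the same conclusion by a short contradiction computation; both are fine, and your explicit check of condition (ii) after unshifting is valid.
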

\begin{proof}
    We proceed by induction on~$m$.
    For~$m = 1$, we have~$a_i = 0$ for all~$1 \le i \le n \le p$ which immediately yields the statement.
    So consider~$m \ge 2$.
    If at least~$n$ of the~$a_i$ equal~$0$, then any~$n$ of them form the desired sequence.
    So suppose that at most~$n' < n$ of the~$a_i$ equal~$0$ and let~$i_1, \dots, i_{n'}$ be the respective indices.
    Now consider the~$n' + 1$ subsequences
    \begin{equation}\label{subsequences}
        (a_1, \dots, a_{i_1-1}), (a_{i_1+1}, \dots, a_{i_2-1}), \dots, (a_{i_{n'}+1}, \dots, a_{p})
    \end{equation}
    of consecutive~$a_1, \dots, a_p$; note that some of these subsequences may be empty.
    If each of these subsequences has length less than~$n^{m-1}$, then we obtain a contradiction via
    \begin{equation*}
        n^{m} \leq p \leq (n'+ 1)(n^{m-1}-1) + n' \leq n(n^{m-1}-1) + (n-1) = n^{m} - 1.
    \end{equation*}
    So one of the subsequences in \cref{subsequences}, let us call it~$(b_1, \dots, b_{p'})$, must have length at least~$p' \ge n^{m-1}$, and we thus can apply the induction hypothesis to~$(b_1 - 1, \dots, b_{p'} - 1)$ with integers $n, m-1, p'$ to obtain a subsequence of $(b_1, \dots, b_{p'})$ which is as desired.
\end{proof}

Given a graph~$G$, a sequence~$((A_i, B_i))_{i \in [p]})$ of (oriented) separations is~\defn{strictly increasing} if~$(A_i, B_i)< (A_j, B_j)$ for every two elements $i < j$ of $[p]$.%
\COMMENT{The $<$ in particular implies that all~$s_i$ are distinct.}

\begin{lemma}\label{thm:sequence_seps}
    Let $G$ be a graph and $n, m, p \ge 1$ be integers with $p \geq n^m$.
    If there is a strictly increasing sequence of length $p$ in~$\vS_m(G)$, then there is also a strictly increasing sequence $((A_i, B_i))_{i \in [n]}$ of length~$n$ in~$\vS_m(G)$ such that
    \begin{enumerate}
        \item\label{itm:sequence_seps_1} $\ell := |A_1, B_1| = \dots = |A_n, B_n|$, and
        \item\label{itm:sequence_seps_2} for every separation~$(A, B) \in \vS_\ell(G)$ there is no $i \in [n-1]$ with $(A_i, B_i) < (A, B) < (A_{i+1}, B_{i+1})$.
    \end{enumerate}
\end{lemma}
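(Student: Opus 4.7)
The plan is to proceed by induction on $m$. The base case $m = 1$ is immediate: every separation in $\vS_1(G)$ has order $0$, and $\vS_0(G) = \emptyset$, so picking any $n \leq p$ elements from the given chain gives a subsequence of common order $\ell = 0$ satisfying~(ii) vacuously.

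For the inductive step, let $((A_i, B_i))_{i \in [p]}$ be the given strictly increasing sequence with $p \geq n^m$. I would split into two cases according to the number of elements of order at most $m - 2$. If at least $n^{m-1}$ of them have such low order, then these form a strictly increasing subsequence of length $\geq n^{m-1}$ entirely inside $\vS_{m-1}(G)$, and the induction hypothesis applied with $m-1$ in place of $m$ yields a length-$n$ subsequence of some common order $\ell \leq m-2$ satisfying~(ii) with respect to $\vS_\ell(G)$, which is exactly the desired conclusion.

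Otherwise fewer than $n^{m-1}$ elements have order at most $m-2$, so more than $n^m - n^{m-1}$ of them have order exactly $m-1$. Passing to this subchain yields $c_1 < c_2 < \dots < c_q$ of common order $m-1$ with $q \geq n^m - n^{m-1} + 1$. I would then call a consecutive pair $(c_s, c_{s+1})$ \emph{bad} if there exists some $d \in \vS_{m-1}(G)$ with $c_s < d < c_{s+1}$, and \emph{good} otherwise. Observe that if $s < s'$ are two bad indices with chosen witnesses $d_s, d_{s'}$, then $d_s < c_{s+1} \leq c_{s'} < d_{s'}$, so the witnesses automatically form a strictly increasing chain in $\vS_{m-1}(G)$. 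Hence if at least $n^{m-1}$ pairs are bad, the witnesses yield a chain of length $\geq n^{m-1}$ in $\vS_{m-1}(G)$ to which the induction hypothesis applies. Otherwise the bad pairs partition the $q - 1$ consecutive pairs into at most $n^{m-1}$ maximal runs of good pairs of total length $\geq n^{m-1}(n - 2) + 1$; by pigeonhole some run has length $\geq n - 1$, and the corresponding $n$ consecutive $c_s$'s form the desired subsequence of common order $\ell = m - 1$ satisfying~(ii).

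The main obstacle is calibrating the thresholds so that the single hypothesis $p \geq n^m$ is precisely enough to keep the three-way case split going: the cutoff $n^{m-1}$ between ``few'' and ``many'' low-order elements has to simultaneously support the direct recursion in the first case, the witness-chain recursion when bad pairs abound, and the pigeonhole count when they do not; this ultimately rests on the identity $n^m - 2n^{m-1} + 1 = n^{m-1}(n-2) + 1$, which squeezes out a run of exactly the required length $n-1$.
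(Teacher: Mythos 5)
Your proof is correct, and it takes a genuinely different route from the paper's. The paper first proves an auxiliary lemma about integer sequences (Lemma~5.4) and applies it to the sequence of separation orders; because that lemma only controls intermediate elements \emph{of the chosen chain} and not all separations of order $<\ell$, the paper then closes the gap with an extremal argument, choosing the original chain to lexicographically maximise the vector of order-counts and showing that any violation of~(ii) would allow one to splice in a longer chain via infima and suprema with the violating separation — a step that relies on $\vU(G)$ being a distributive lattice. Your induction on $m$ absorbs the ``all separations of order $<\ell$'' requirement directly into the recursion through the good/bad dichotomy on consecutive pairs: if many pairs are bad, the witnesses themselves form a long chain of strictly smaller order to recurse on; if few are bad, pigeonhole isolates a long run of good pairs that already satisfies~(ii) by definition. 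This eliminates both the extremal argument and any use of infima, suprema, or distributivity — your proof only invokes the partial order and the order function, so it would transfer verbatim to abstract separation systems where the paper's lattice-theoretic step need not apply. The arithmetic checks out: with fewer than $n^{m-1}$ low-order elements and fewer than $n^{m-1}$ bad pairs, the $q-1 \ge n^m - n^{m-1}$ consecutive pairs retain at least $n^{m-1}(n-2)+1$ good positions across at most $n^{m-1}$ runs, forcing a run of length $n-1$ as you say.
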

\begin{proof}
    With each strictly increasing sequence $T$ in~$\vS_m(G)$, we associate a sequence~$(n_i(T))_{i \in [m-1]}$ of integers where~$n_i(T)$ denotes the number of separations in the strictly increasing sequence that have exactly order~$i$.
    Whenever we will compare sequences of integers in this proof, we do so with respect to the lexicographic order.
    Let~$\cT$ denote the set of all strictly increasing sequence in~$\vS_m(G)$ of length at least~$p$, and let~$T = ((C_i, D_i))_{i \in [q]}$ be an element in~$\cT$ whose associated sequence of integers is maximal among all such sequences associated with elements in $\cT$. By assumption there exists some such strictly increasing sequence of length at least $p$ and, as the graph $G$ is finite, there is a maximal sequence of integers among such associated to elements in $\cT$.

    Applying~\cref{thm:sequence} to~$(|C_i, D_i|)_{i \in [q]}$, we obtain a subsequence~$((C_{i_j}, D_{i_j}))_{j \in [n]}$ which we will show to be as desired. A subsequence of a strictly increasing sequence is again strictly increasing. It also satisfies \cref{itm:sequence_seps_1}, as all the separations in the subsequence have the same order $\ell$ by~\cref{thm:sequence}.
    It remains to show that~\cref{itm:sequence_seps_2} holds as well.
    To do so, we show that if~\cref{itm:sequence_seps_2} does not hold for~$((C_{i_j}, D_{i_j}))_{j \in [n]}$, then we find an element in~$\cT$ whose associated sequence of integers is larger than the one associated to~$((C_i, D_i))_{i \in [q]}$, which contradicts our choice.

    So suppose that there exists a separation~$(A, B) \in \vS_m(G)$ of order less than~$\ell$ and an integer~\mbox{$j \in [n-1]$} such that~$(C_{i_j}, D_{i_j}) < (A, B) < (C_{i_{j+1}}, D_{i_{j+1}})$.
    Consider the subsequence~$$((C_{i_j}, D_{i_j})), (C_{i_j +1}, D_{i_j +1}), \dots, (C_{i_{j+1}}, D_{i_{j+1}}))$$ of~$((C_i, D_i))_{i \in [p']}$; for notational simplicity, we also denote this subsequence by~$R = ((A_i, B_i))_{i \in [r]}$.
    Then we can obtain a new strictly increasing sequence~$R' = ((A'_i, B'_i))_{i \in [r']}$ by removing duplicates, which will only appear consecutively, from the sequence
    \begin{equation*}
        (A_1 \cap A, B_1 \cup B), \dots, (A_q \cap A, B_q \cup B), (A, B), (A_1 \cup A, B_1 \cap B), \dots, (A_r \cup A, B_r \cap B).
    \end{equation*}

    We now consider the strictly increasing sequence~$T'$ which is obtained from~$T$ by replacing its subsequence~$R$ with~$R'$.
    As every element of the distributive lattice $\vU(G)$ is uniquely determined by its infimum and supremum with any given other element in $\vU(G)$, $T'$ has length at least $q +1 \geq p$; thus, $T'\in \cT$.
    Moreover, the sequence of integers associated to~$T'$ is larger than the one associated with~$T$, which contradicts the choice of $T$.
    Indeed by the choice of $R$, all separations~$(C_g, D_g)$ with $i_j \le g \le i_{j+1}$ have order at least~$\ell$.
    Hence, $n_i(T') \geq n_i(T)$ for every~$i < \ell$ and moreover~$n_{|A,B|}(T') \geq n_{|A,B|}(T) + 1$, since the new sequence $T'$ additionally contains~$(A, B)$.
\end{proof}

To proceed to the proof of~\cref{thm:ExistenceOfRCDecomp}, we recall the tangle-tree duality theorem and all necessary definition:
For a tree $T$ and two nodes or edges $x,y$ of $T$, we denote by $xTy$ the (unique) $\subseteq$-minimal path in $T$ which contains $x$ and $y$.
A \defn{\td} of a graph $G$ is a pair $(T,\cV)$ of a tree $T$ and a family $\cV$ of subsets $V_t$ of $V(G)$ indexed by the nodes of $T$ such that
\begin{enumerate}[label=(T\arabic*)]
    \item\label{itm:TD1} $\bigcup_{t \in V(T)} G[V_t] = G$, and
    \item\label{itm:TD2} for every three nodes $r,s,t \in T$ with $s \in rTt$ we have $V_r \cap V_t \subseteq V_s$.
\end{enumerate}
The maximum of the sizes of the $V_t$ minus $1$ is the \defn{width} of $(T,\cV)$.
The \defn{adhesion set} $V_e$ corresponding to an edge $e = t_1t_2$ of $T$ is $V_{t_1} \cap V_{t_2}$.
The maximum of the sizes of the $V_e$ is the \defn{adhesion} of $(T, \cV)$.
Every orientation $(t_1,t_2)$ of an edge $e$ of $T$ \defn{induces} a separation $(\bigcup_{t \in T_1} V_t, \bigcup_{t \in T_2} V_t)$ of $G$ with separator $V_e$ where $T_1 \ni t_1$ and $T_2 \ni t_2$ are the two components of $T-t_1t_2$ (cf.\, \cite{DiestelBook16noEE}*{Lemma 12.3.1}).
It is immediate from the definition that the set of separations induced by a \td\ is nested.
Conversely, it is well-known (e.g.\,proof of \cite{InfiniteSplinters}*{Lemma~2.7} or~\cites{TangleTreeGraphsMatroids,TreeSets}) that every nested set $N$ of separations of a finite graph \defn{induces} a \td\ whose induced separations are in bijection with $N$.
We remark that \td s $(T,\cV)$ where $T$ is a path of length $M$ correspond precisely to the families $\cW = (W_0,\dots, W_M)$ which satisfy \cref{itm:LinDecomp1} and \cref{itm:LinDecomp2}.
We now recall the tangle-tree duality theorem, rephrased here in the version which we need later:

\begin{theorem}[e.g. \cite{DiestelBook16noEE}*{Theorem 12.5.1}]\label{theorem:TTD}
    Every graph $G$ with no $(k+1)$-tangle admits a \td\ $(T, \cV)$ of adhesion at most $k$ such that every two induced separations are distinct and for every node $t \in T$ the set of (oriented) separations induced by the oriented edges $(s,t)$ of $T$ is in $\cT$.
    In particular, every node of $T$ has degree at most $3$ and the width of $(T, \cV)$ is less than $3k$.
\end{theorem}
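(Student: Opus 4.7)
The plan is to derive the theorem from the abstract tangle-tree duality theorem of Diestel and Oum, applied to the submodular universe of separations $\vS_{k+1}(G)$ and the forbidden set $\cT = \cT(G)$ introduced in~\cref{subsec:Tangles}. First I would verify the hypotheses. That $(\vS_{k+1}(G), \le, |\cdot|)$ is a submodular universe follows from the submodularity of $|\cdot|$ on $\vU(V(G))$ noted in \cref{subsec:Separations}, together with the fact that the infimum and supremum of two separations of $G$ are again separations of $G$, so $\vS_{k+1}(G)$ is a sublattice. I would then check that $\cT$ is \emph{standard} in the technical sense needed: that it is closed under shifting an entry along a nested separation, and that its star-subset suffices to characterise tangles. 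Shift-closure is a direct covering check: if $\{(A_i, B_i) : i \in [3]\}$ covers $G$ via its small sides and $(C,D) \ge (A_1, B_1)$ is another separation of $G$, then $G[C] \cup G[A_2 \cap D] \cup G[A_3 \cap D] = G$ as well. Equivalence of avoiding $\cT$ with avoiding only the star-subset of $\cT$ follows by replacing each entry $(A_i,B_i)$ of a forbidden triple in $\tau$ by a $\le$-maximal element of $\tau$ above it; $\le$-maximal elements of a consistent orientation with the profile property are pairwise nested, so yield a star.

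With the hypotheses in place, the abstract duality theorem supplies the dichotomy: either $S_{k+1}(G)$ admits an orientation avoiding $\cT$, which is by definition a $(k+1)$-tangle in $G$, or there exists an $\cT$-\emph{tree}, that is, a tree-decomposition $(T, \cV)$ of $G$ of adhesion at most $k$ such that at every node $t$ the set of separations induced by the oriented edges $(s,t)$ entering $t$ is an element of $\cT$. Since $G$ has no $(k+1)$-tangle by assumption, we obtain an $\cT$-tree.

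It then remains to refine $(T, \cV)$ to meet the additional structural requirements. To make the induced separations pairwise distinct, I would replace $(T,\cV)$ by the tree-decomposition associated to a $\subseteq$-minimal nested set of separations inducing the same star-data; concretely, any edge $st \in T$ whose induced separation coincides with that of another edge gets contracted. Since every element of $\cT$ has at most three entries, the star property at each node $t$ forces the degree of $t$ in $T$ to be at most three. The star property also yields $V_t \subseteq \bigcup_{s \sim t} V_{st}$, where $V_{st}$ denotes the adhesion set of the edge $st$: any $v \in V_t$ lies on the big side of every separation induced by an in-edge at $t$, and by the covering property in~$\cT$ it lies on the small side of at least one, hence on its separator. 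Since each $V_{st}$ has size at most $k$ and $t$ has degree at most three, $|V_t| \le 3k$, giving width less than $3k$.

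The main conceptual obstacle is the abstract duality theorem itself, whose proof proceeds by induction on carefully chosen maximal nested subsets of $\vS_{k+1}(G)$, using submodular uncrossing together with the shift-closure of $\cT$ to extend either the nested set or a partial $\cT$-avoiding orientation towards a tangle. For the purposes of this paper I would invoke the cited Diestel-Oum result as a black box; the work specific to graphs is concentrated in the routine verifications of submodularity and standardness sketched above.
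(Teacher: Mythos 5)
The paper does not prove this statement; it simply cites it as a known black-box result (Diestel's \textit{Graph Theory}, Theorem~12.5.1, a form of the tangle-tree duality theorem for graphs). There is therefore no ``paper's own proof'' to compare against — your task was to reconstruct a proof that the authors deliberately outsourced. Given that, your sketch is a sound and fairly standard account of how one actually establishes the cited theorem from the abstract Diestel--Oum duality: verifying submodularity of $\vS_{k+1}(G)$, checking that $\cT$ is closed under shifting, reducing to stars via $\le$-maximal elements of a consistent orientation, and then reading off the degree and width bounds from the star condition together with the cover $V_t \subseteq \bigcup_{s \sim t} V_{st}$. The width argument in particular is exactly right and is presumably what the paper's ``in particular'' clause is alluding to.

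The one step you treat too lightly is the passage to a decomposition in which all induced separations are distinct. Saying ``any edge whose induced separation coincides with another gets contracted'' does not by itself preserve the $\cT$-star condition at every node, and it also glosses over the possibility of non-adjacent edges inducing the same separation. The usual fix is to note that the set of separations induced by the $\cT$-tree is nested, pass to the tree-decomposition canonically associated with that nested set so that induced separations are in bijection with it, and then check (using shift-closure and the structure of $\cT$) that the star condition is retained at each node. Since the degree $\le 3$ and width $< 3k$ conclusions both hinge on distinctness, this is the one place where your sketch is genuinely incomplete rather than merely compressed. Everything else is in order, and in a write-up one would, as you say, cite Diestel--Oum for the inductive core and only spell out the graph-specific verifications.
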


The next lemma assets that in the absence of high-order tangles, the graph admits a certain type of linear decomposition.
To prove it, we first apply the tangle-tree duality to obtain a tree-decomposition of small width and then sort out a suitable linear decomposition by~\cref{thm:sequence_seps}.

\begin{lemma}\label{thm:lemma1}
    For every two integers $k, M \ge 1$, there exists an integer~$N_1 = N_1(k, M) \ge 1$ such that if a graph $G$ with more than~$N_1$ vertices has no~$(k+1)$-tangle, then there exists a linear decomposition $\cW$ of~$G$ of length at least~$M$ and adhesion at most $k$ such that~$\cW$ satisfies~\cref{itm:RainbowDecompLinkage}.
\end{lemma}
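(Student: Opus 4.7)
The proof falls into three steps: producing a tree-decomposition of $G$ via tangle-tree duality, extracting from a long path in this tree a uniform subsequence of separations, and converting this subsequence into the desired linear decomposition.

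I first apply \cref{theorem:TTD} to $G$ to obtain a tree-decomposition $(T, \cV)$ of adhesion at most $k$, width less than $3k$, with $T$ of maximum degree at most $3$ and all induced separations distinct. A short reduction step (contracting any edge whose endpoint bags are nested, keeping the larger bag) turns $(T, \cV)$ into a reduced decomposition while preserving width, adhesion and the distinctness of induced separations, and only mildly enlarging the maximum degree of $T$. Since bags of size less than $3k$ cover $V(G)$, one has $|V(T)| \geq N_1/(3k)$, and the bounded degree forces $T$ to contain a path of length at least a constant multiple of $\log |V(T)|$. Orienting its edges yields a strictly increasing sequence in $\vS_{k+1}(G)$ of the same length, and \cref{thm:sequence_seps} with $m = k+1$ then produces, provided $N_1$ is chosen of order $k \cdot 2^{M^{k+1}}$, a strictly increasing subsequence $(A_1, B_1) < \dots < (A_n, B_n)$ of length $n \geq M$, of common order $\ell \leq k$, and with no separation of order less than $\ell$ strictly between any two consecutive members.

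From this subsequence I build the linear decomposition by setting $W_0 := A_1$, $W_i := A_{i+1} \cap B_i$ for $1 \leq i \leq n-1$, and $W_n := B_n$. Coverage \cref{itm:LinDecomp1} follows inductively from $W_0 \cup \dots \cup W_i = A_{i+1}$, the laminar property \cref{itm:LinDecomp2} is a direct computation, and each adhesion set is $U_i = A_i \cap B_i$ of size $\ell$, establishing \cref{itm:LinDecomp3}. The crux of \cref{itm:LinDecomp4} for middle indices is the observation that between consecutive members of the subsequence both inclusions must be strict: if $A_j = A_{j+1}$, then $U_{j+1} \subseteq U_j$ both of size $\ell$ forces $U_j = U_{j+1}$, and any $v \in B_j \setminus B_{j+1}$ then satisfies $v \in V \setminus B_{j+1} \subseteq A_{j+1} \cap B_j = U_j = U_{j+1} \subseteq B_{j+1}$, a contradiction. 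These strict inclusions directly give $W_{i-1} \supsetneq U_i \subsetneq W_i$ for middle $i$; the boundary bags $W_0, W_n$ are handled by the reduction step, since in a reduced tree-decomposition each leaf bag contains a vertex appearing in no other bag, so choosing the path in $T$ to start and end at leaves yields $A_1 \not\subseteq B_1$ and $B_n \not\subseteq A_n$.

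Finally, I verify \cref{itm:RainbowDecompLinkage} by Menger's theorem. If some $G[W_i]$ admits no $U_i$--$U_{i+1}$ linkage of cardinality $\ell$, then there is a separator $S \subseteq W_i$ with $|S| < \ell$, which splits $W_i$ into sides $X \supseteq U_i$ and $Y \supseteq U_{i+1}$ with $X \cap Y \subseteq S$ along components of $G[W_i] - S$. Then $(A, B) := (A_i \cup X, B_{i+1} \cup Y)$ is a separation of $G$: the edge check uses that $\{A_i, B_i\}$ is a separation of $G$ and that $S$ separates $X$ from $Y$ in $G[W_i]$. Since any vertex of $U_i \cap U_{i+1}$ lies in $X \cap Y \subseteq S$, a short set-theoretic computation gives $A \cap B \subseteq S$, and so $(A, B)$ has order less than $\ell$ and, by the order gap, lies strictly between $(A_i, B_i)$ and $(A_{i+1}, B_{i+1})$, contradicting the second conclusion of \cref{thm:sequence_seps}. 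The main obstacle I expect is the careful bookkeeping for the boundary condition \cref{itm:LinDecomp4}, which is what motivates the reduction step and the leaf-to-leaf path choice; the remainder of the argument is a direct application of \cref{theorem:TTD}, the pigeonhole lemma \cref{thm:sequence_seps}, and Menger's theorem.
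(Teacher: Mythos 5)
Your proof follows the same three-step architecture as the paper's: tangle-tree duality (\cref{theorem:TTD}) yields a bounded-degree tree-decomposition of small adhesion with distinct induced separations; a long path in the tree gives a strictly increasing sequence of separations; \cref{thm:sequence_seps} with $m = k+1$ extracts a uniform subsequence; and the construction $W_0 := A_1$, $W_i := B_i \cap A_{i+1}$, $W_n := B_n$ plus the Menger-based argument for \cref{itm:RainbowDecompLinkage} is exactly the paper's. The verification of \cref{itm:LinDecomp1}--\cref{itm:LinDecomp3}, the strictness of inclusions giving \cref{itm:LinDecomp4} for middle indices, and the contradiction via an order-$<\ell$ separation $(A_i \cup X, B_{i+1} \cup Y)$ sandwiched between consecutive members are all sound and match the paper.

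The genuine gap is in your boundary handling of \cref{itm:LinDecomp4}. You introduce a reduction step, contracting every edge of $T$ whose endpoint bags are nested, and assert that this ``only mildly enlarg[es] the maximum degree of $T$.'' That claim is unjustified and is, in general, false. Contracting a connected subtree $T'$ of nested-bag edges produces a node whose degree equals the number of edges leaving $T'$, which in a maximum-degree-$3$ tree can be as large as $|T'|+2$; and $T'$ can be made arbitrarily long (for instance, a long path with alternating bag containments $V_{p_0} \subseteq V_{p_1} \supseteq V_{p_2} \subseteq \cdots$, with a pendant node off each $p_i$ supplying a distinct induced separation). Since your choice of $N_1$ rests on the bounded degree of $T$ to guarantee a long path, this hole would need to be filled before your argument closes — for example, by deleting only \emph{leaves} whose bag is contained in their unique neighbour's bag (which never raises degree), rather than contracting arbitrary nested edges. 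The paper avoids the issue altogether: it applies \cref{thm:sequence_seps} with target length $M+2$ instead of $M$, and if $W_0 \subseteq W_1$ or $W_{M_1} \subseteq W_{M_1-1}$ it simply discards the offending boundary bag(s), retaining length at least $M$. That trick requires no reduction of the tree-decomposition and no leaf-to-leaf path choice, and it is the cleaner route here.
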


We remark that the proof will show that $N_1(k,M) = 3k \cdot 3^{(M+2)^{k+1}}$ suffices.

\begin{proof}[Proof of \cref{thm:lemma1}]
    We set~$M_1 := M + 2$, $M_2 := {M_1}^{k+1}$, $M_3 := 3^{M_2}$ and~$N_1:= N_1(k, M) := 3k M_3$. 
    Let~$G$ be a graph with more than $N_1$ vertices and no $(k+1)$-tangle.
    Since~$G$ has no~$(k+1)$-tangle, it admits a tree-decomposition~$(T, \cV)$ of width at most $3k$ and adhesion at most $k$ such that $T$ has maximum degree $\leq 3$.
    Moreover, we may choose $(T,\cV)$ such that all its induced separations are distinct.
    Then~$T$ contains at least~$|G|/3k \geq N_1 / 3k \geq M_3$ vertices.
    Hence, $T$ contains a path of length at least~$M_2$, as all the nodes of $T$ have degree at most $3$.
    
    Fix a path~$P = p_0 \dots p_{M_2}$ in~$T$, and let~$(A'_i, B'_i)$ be the separation of~$G$ induced by the oriented edge $(p_{i-1},p_i)$ of~$T$ for all~$i \in [M_2]$.
    As $(T, \cV)$ has adhesion at most $k$, all these separations have order at most~$k$.
    It is also immediate from the definition of inducing a separation that~$(A'_i, B'_i) < (A'_{i+1}, B'_{i+1})$ for all~$i \in [M_2-1]$.
    Thus, $((A'_i, B'_i))_{i \in [M_2]}$ is a strictly increasing sequence of length~$M_2$ in~$\vS_k(G)$.

    Hence, by~\cref{thm:sequence_seps}, we obtain a new strictly increasing sequence~$((A_i, B_i))_{i \in [M_1]}$ of length~$M_1$ in~$\vS_k(G)$ whose elements all have the same order~$\ell \le k$ and such that there is no separation~$(A, B) \in \vS_\ell(G)$ with~$(A_i, B_i) < (A, B) < (A_{i+1}, B_{i+1})$ for all~$i \in [M_1-1]$.
    From this sequence, we now construct a linear decomposition~$\cW = (W_0, \dots, W_{M_1})$ via
    \begin{equation*}
        W_0 := A_1, W_i := B_i \cap A_{i+1} \text{ for } i \in [M_1-1], \text{ and } W_{M_1} := B_{M_1}.
    \end{equation*}
    As we have noted above for tree-decompositions, $\cW$ indeed satisfies~\cref{itm:LinDecomp1} and~\cref{itm:LinDecomp2}. Note that the adhesion set~$U_i$ equals~$W_{i-1} \cap W_{i} = A_i \cap B_i$; thus, \cref{itm:LinDecomp3} holds as well. %
    \COMMENT{Of course, this could be done formally here. But there seems nothing too surprising. It is just a matter of calculation and invoking the definition of~$\le$ as well as the definition of separations.}
    Moreover, $\cW$ has length~$M_1$ and adhesion~$\ell \le k$.

    Before we prove~\cref{itm:LinDecomp4}, let us check that~$\cW$ satisfies~\cref{itm:RainbowDecompLinkage}.
    By Menger's theorem (e.g. \cite{DiestelBook16noEE}*{Theorem~3.3.1}), it suffices to show that for~$i \in [M_1-1]$, the part~$G[W_i]$ contains no separation $(C',D')$ of order less than~$\ell$ with $U_i \subseteq C'$ and $U_{i+1} \subseteq D'$.
    Suppose for a contradiction that such a separation exists for some~$i \in [M_1-1]$.
    Since~$\cW$ satisfies~\cref{itm:LinDecomp1} and~\cref{itm:LinDecomp2}, the ordered pair
    \begin{equation*}
        (C, D) := (C' \cup \bigcup_{j < i} W_j, D' \cup \bigcup_{j > i} W_j)
    \end{equation*}
    is a separation of~$G$ and has the same order as $(C',D')$; in particular, its order is less than~$\ell$.
    The construction of~$\cW$ ensures that for~$i \in [M_1]$ we have
    \begin{equation*}
        (A_i, B_i) = (\bigcup_{j < i} W_j, \bigcup_{j \ge i} W_j).
    \end{equation*}
    Thus, $(A_{i}, B_{i}) < (C, D) < (A_{i+1}, B_{i+1})$, which contradicts our choice of the~$(A_j, B_j)$ via~\cref{thm:sequence_seps}.

    We finally turn to~\cref{itm:LinDecomp4}.
    If~$W_{i-1} \subseteq W_i$ for some~$i \in \{2, \dots, M_1\}$, then~$W_{i-1} \subseteq A_i \cap B_i$, as we have noted above that $W_i \subseteq A_i \cap B_i$.
    Since~$\ell = |A_{i-1}, B_{i-1}| = |A_i, B_i|$, we thus have~$A_{i-1} \cap B_{i-1} = W_{i-1} = A_i \cap B_i$.
    Together with $W_{i-1} = B_{i-1} \cap A_i$, this implies~$(A_{i-1}, B_{i-1}) = (A_i, B_i)$, which contradicts that the $(A_i,B_i)$ are distinct by choice.
    A symmetrical argument shows that~$W_{i-1} \nsupseteq W_i$ for all~$i \in \{1, \dots, M_1-1\}$.
    Thus, we have~$W_i \neq W_i \cap W_{i+1} \neq W_{i+1}$ for all~$i \in \{1, \dots, M_1 - 2\}$.
    However, we still might have~$W_0 \subseteq W_1$ and~$W_{M_1 - 1} \supseteq W_{M_1}$.
    In any of these cases, we remove~$W_0$ or~$W_{M_1}$, respectively, from~$\cW$.%
    \COMMENT{If we have length~$2$ and both bags are equal, then we should technically only drop one of them.}
    This operation does not affect any of the properties of~$\cW$, except that its length might decrease by at most~$2$; however, we have~$M_1 = M + 2$, so the length of the obtained linear decomposition of~$G$ is still at least $M_1$, as desired.
    This completes the proof.
\end{proof}

We will now transform a linear decomposition such as the one in~\cref{thm:lemma1} into the desired rainbow-cloud-decomposition.
As an intermediate step, let us find a linear decomposition $\cW = (W_0, W_1, \ldots, W_M)$ with a foundational linkage~$\cP$ which satisfies two additional properties\footnote{\cref{itm:FoundationalLinkage1} and \cref{itm:FoundationalLinkage2} are (L7) and (L8) in \cite{StructureOf6ConnectedGraphs}, respectively.}:
\begin{enumerate}[label=(FL\arabic*)]
    \item \label{itm:FoundationalLinkage1} For every~$P \in \cP$, if there exists~$i \in [M-1]$ such that $P[W_i] = P \cap G[W_i]$ is a trivial path, then $P[W_i]$ is a trivial path for all $i \in [M-1]$.
    \item \label{itm:FoundationalLinkage2} For every two distinct~$P, P' \in \cP$, if there exists~$i \in [M-1]$ such that there is path in~$G[W_i]$ with one endvertex in~$P$ and the other in~$P'$ and whose internal vertices avoid every path in $\cP$, then this holds for every $i \in [M-1]$.
\end{enumerate}

The next lemma yields that the existence of a linear decomposition of some suitable length whose foundational linkage satisfies~\cref{itm:FoundationalLinkage1} and~\cref{itm:FoundationalLinkage2} is ensured by a long enough linear decomposition as in \cref{thm:lemma1}.

\begin{lemma}[\cite{StructureOf6ConnectedGraphs}*{Lemma~3.5}\footnote{More precisely, it follows from the proof of \cite{StructureOf6ConnectedGraphs}*{Lemma~3.5}, as (L6) is only assumed to guarantee that the resulting decomposition also satisfies (L6).}]\label{thm:lemma2}
    For every two integers~$M \ge 1$ and~$\ell \ge 0$, there exists an integer~$M_1 = M_1(\ell, M) \ge 1$ such that if a linear-decomposition $\cW = (W_0, \dots, W_{M_1})$ of a graph~$G$ has length $M_1$, adhesion~$\ell$ and pairwise distinct~$W_i$, and~$\cW$ satisfies~\cref{itm:RainbowDecompLinkage}, then~$G$ has a linear decomposition $\cW'$ of length at least $M$ which additionally has a foundational linkage satisfying~\cref{itm:FoundationalLinkage1} and~\cref{itm:FoundationalLinkage2}.
\end{lemma}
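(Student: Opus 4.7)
The plan is to apply a Ramsey-type argument: we color pairs of indices by a type capturing the qualitative behavior of a foundational linkage across the corresponding interval of bags, and then extract a long monochromatic clique to define $\cW'$.

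First, using \cref{itm:RainbowDecompLinkage}, we concatenate the $U_i$--$U_{i+1}$ linkages in the parts $G[W_i]$ (matching up endpoints in each shared adhesion set) to obtain a $U_1$--$U_{M_1}$ linkage $\cP = \{P_1, \dots, P_\ell\}$ of cardinality $\ell$ in $G$; disjointness of the concatenated paths follows because distinct parts only overlap in adhesion sets by \cref{itm:LinDecomp2}. For each pair $0 \leq i < j \leq M_1$, we define a type $\chi(i,j) \in \{0,1\}^{\ell} \times \{0,1\}^{\binom{[\ell]}{2}}$ whose first coordinate records, for each $t \in [\ell]$, whether $P_t \cap U_{i+1} = P_t \cap U_{j+1}$ (equivalently, whether $P_t[W_k]$ is trivial for every $k$ with $i < k \leq j$), and whose second coordinate records, for each pair $\{t, t'\}$, whether there exists a path in $G[\bigcup_{i < k \leq j} W_k]$ with one endvertex on $P_t$, the other on $P_{t'}$, and all internal vertices outside $\bigcup_s V(P_s)$.

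Since there are at most $K := 2^{\ell + \binom{\ell}{2}}$ types, classical Ramsey theory for edge-colored complete graphs yields an integer $M_1 = M_1(\ell, M)$ such that the coloring $\chi$ on the vertex set $\{0, 1, \dots, M_1\}$ admits a monochromatic clique $0 \leq a_0 < a_1 < \cdots < a_M \leq M_1$ of size $M + 1$. We define $\cW' = (W'_0, \dots, W'_M)$ by $W'_0 := \bigcup_{j \leq a_0} W_j$ and $W'_k := \bigcup_{a_{k-1} < j \leq a_k} W_j$ for $k \in [M]$. A routine computation based on \cref{itm:LinDecomp2} shows that $U'_k := W'_{k-1} \cap W'_k = U_{a_{k-1}+1}$, so $\cW'$ has adhesion $\ell$; pairwise distinctness of the original $W_j$ carries over to $\cW'$ to establish \cref{itm:LinDecomp4}. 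The restriction of each $P_t$ to $W'_k$ is a $U'_k$--$U'_{k+1}$ path, so the family $\cP'$ of these subpaths forms a foundational linkage of $\cW'$ witnessing \cref{itm:RainbowDecompLinkage}.

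Monochromaticity of the clique now directly yields \cref{itm:FoundationalLinkage1} and \cref{itm:FoundationalLinkage2}: for each $t \in [\ell]$ and each $k \in [M-1]$, triviality of $P_t[W'_k]$ is recorded by the $t$-th bit of $\chi(a_{k-1}, a_k)$, which is constant in $k$ by monochromaticity; the analogous statement for each pair $\{t, t'\}$ handles \cref{itm:FoundationalLinkage2}, using that the internal vertices of the connecting paths lie in $W'_k$, so avoiding $\cP$ inside $G[W'_k]$ amounts to avoiding $\cP'$. The main obstacle is crafting the coloring so that monochromaticity faithfully captures both properties in the merged bags; the Ramsey bound then provides an explicit (though exponentially large) value of $M_1(\ell, M)$.
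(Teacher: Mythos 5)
The paper does not actually prove this lemma; it simply cites the proof of Lemma~3.5 from \cite{StructureOf6ConnectedGraphs}, and the remark that follows extracts from that proof the explicit bound $M_1(\ell,M)=(M^{\binom{\ell}{2}+1}\cdot\binom{\ell}{2}!)^{\ell+1}\cdot\ell!$, which is polynomial in $M$ for fixed $\ell$ and almost certainly obtained by iterated pigeonhole. Your proof is an independent argument via multicolour Ramsey, and its structure is sound: you correctly observe that a foundational $U_1$--$U_{M_1}$ linkage $\cP$ can be built by matching up the $\ell$ endvertices at each $U_{i+1}$; that for $k\in[M-1]$ one has $V(\cP)\cap W'_k = V(\cP')\cap W'_k$, so both the triviality of $P_t[W'_k]$ and the existence of $\cP'$-avoiding connecting paths inside $G[W'_k]$ are exactly encoded by your colour $\chi(a_{k-1},a_k)$; and that monochromaticity of the consecutive pairs then yields~\cref{itm:FoundationalLinkage1} and~\cref{itm:FoundationalLinkage2}. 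The trade-off is that your bound (a $K$-colour Ramsey number with $K=2^{\ell+\binom{\ell}{2}}$ colours) is exponential in $M$, much worse than the cited bound, though this does not matter for the qualitative statement.

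Two small points need fixing. First, your last bag $W'_M = \bigcup_{a_{M-1}<j\le a_M}W_j$ omits $W_{a_M+1},\dots,W_{M_1}$ whenever $a_M<M_1$, so~\cref{itm:LinDecomp1} fails; set $W'_M := \bigcup_{a_{M-1}<j\le M_1}W_j$ instead, which is harmless since \cref{itm:FoundationalLinkage1} and \cref{itm:FoundationalLinkage2} only constrain $i\in[M-1]$. Secondly, \cref{itm:LinDecomp4} for $\cW'$ is not a consequence of "pairwise distinctness carrying over"; what you actually need is that $U'_k = U_{a_{k-1}+1}$ is a proper subset of $W'_{k-1}$ and of $W'_k$, which follows because $W'_{k-1}\supseteq W_{a_{k-1}}\supsetneq U_{a_{k-1}+1}$ and $W'_k\supseteq W_{a_{k-1}+1}\supsetneq U_{a_{k-1}+1}$ by \cref{itm:LinDecomp4} for $\cW$.
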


\noindent We remark that the proof of~\cref{thm:lemma2} in~\cite{StructureOf6ConnectedGraphs} shows that~$M_1(\ell, M) = (M^{\binom{\ell}{2} + 1} \cdot \binom{\ell}{2}!)^{\ell+1} \cdot l!$ suffices.

We now turn to the proof of~\cref{thm:ExistenceOfRCDecomp}.
For this, we use the previous lemmas to find a linear decomposition with strong structural properties, which we then refine into the desired rainbow-cloud-decomposition.

\begin{proof}[Proof of~\cref{thm:ExistenceOfRCDecomp}]
    Suppose that~$G$ has no~$(k+1)$-tangle.
    We set $N(k, M) := N_1(k, M_1(k, M+2))$, where~$N_1$ and~$M_1$ are as in~\cref{thm:lemma1} and~\cref{thm:lemma2}, respectively.
    Then~\cref{thm:lemma1} yields a linear-decomposition of~$G$ of length at least~$M_1(k, M+2)$ and adhesion~$\ell \le k$ which satisfies~\cref{itm:RainbowDecompLinkage}; by merging the first~$i$ bags for some suitable integer $i$, we may assume that this linear decomposition has length exactly~$M_1(\ell, M+2) \le M_1(k, M+2)$.%
    \COMMENT{Here, we use the sentence after~\cref{thm:lemma2}, which asserts that~$M_1(\ell, M+2) \le M_1(k, M+2)$ for~$\ell \le k$.}
    By~\cref{thm:lemma2} ,there is a linear-decomposition~$\cW''$ of~$G$ of length~$M' \ge M + 2$ which additionally has a foundational linkage~$\cP''$ satisfying~\cref{itm:FoundationalLinkage1} and~\cref{itm:FoundationalLinkage2}.
    We note that~$\ell > 0$ since~$G$ is a connected graph and~$M' \ge 1$.

    Consider the adhesion sets~$U_i''$ of~$\cW''$ and let~$Z' := \bigcap_{i \in \{1, \dots, M'\}} U_i''$.
    By~\cref{itm:FoundationalLinkage1}, the set~$Z'$ consists precisely of all trivial paths in~$\cP''$.
    Now $\cW''$ induces the linear decomposition~$\cW'$ of~$G - Z$ by setting~$W_i' := W_i'' \setminus Z'$ together with its foundational linkage~$\cP' \subseteq \cP''$.
    Now the adhesion $\ell'$ of~$\cW'$ satisfies~$\ell = \ell' + |Z'| > 0$; in particular, we might have~$\ell' = 0$. 
    Clearly, $\cP''$ again satisfies~\cref{itm:FoundationalLinkage1} and~\cref{itm:FoundationalLinkage2}.
    Moreover, $\cW'$ still satisfies~\cref{itm:RainbowDecompLinkage}, and it now also satisfies~\cref{itm:RainbowDecompDistinctBags} by~\cref{itm:FoundationalLinkage1}.
    Finally, \cref{itm:FoundationalLinkage2} implies that if~$z' \in Z'$ is adjacent to some~$W_i$ in~$G$, then it is adjacent to all~$W_i$.

    We now define an auxiliary graph~$H_{\cP'}$ with vertex set~$\cP'$ and an edge joining two distinct paths~$P, P' \in \cP'$ if there exists a path in some, and by~\cref{itm:FoundationalLinkage2} hence every, $G[W_i']$ with one endvertex in~$P$ and the other in~$P'$ and whose internal vertices avoid every path in $\cP'$.
    Let~$C_H$ be an arbitrary component of~$H_{\cP'}$.
    For~$i \in \{1, \dots, M' - 1 \} = [M'-1]$ we then let~$W_i$ be the vertex set of the  component of~$G[W_i']$ which contains all~$V(P) \cap W_i'$ for~$P \in C_H$; note that this is well-defined by the construction of~$H_{\cP'}$.
    If~$C_H$, and thus~$H_{\cP'}$, is empty, we let~$Z := Z'$; otherwise, we let~$Z$ consist of all those~$z \in Z'$ which are adjacent to some~$W_i$, and hence all~$W_i$ by~\cref{itm:FoundationalLinkage2}.
    We further set
    \begin{equation*}
        R := G[\bigcup_{i \in [M'-1]} W_i]
    \end{equation*}
    and
    \begin{equation*}
        C := G[Z' \cup \bigcup_{i \in [M'-1]} (W_i' \setminus W_i) \cup W_0' \cup W_{M'}'].
    \end{equation*}
    This ensures that~$Z \subseteq V(C)$ as well as~$G = G[V(R) \cup Z] \cup C$, as we chose the~$W_i$ as components of the~$G[W_i']$.
    We now claim that~$(R, \cW, Z, C)$ where~$\cW := (W_1, \dots, W_{M' - 1})$ is the desired rainbow-cloud-decomposition of~$G$.

    Clearly, $\cW$ is a linear decomposition of~$R$ of adhesion~$|C_H|$ and length~$M' - 2 \ge M$; note that~$|C_H| > 0$ if~$\ell' > 0$ and hence~$|Z| + |C_H| > 0$ by the choice of~$Z$.
    We now verify that~$\cW$ is even a rainbow-decomposition of~$R$.
    The foundational linkage~$\cP'$ of~$\cW'$ induces a foundational linkage of~$\cW$, as we have chosen a component~$C_H$ of~$H_{\cP'}$; thus, \cref{itm:RainbowDecompLinkage} holds.
    For~\cref{itm:RainbowDecompConnected}, note that the~$W_i$ are components of the~$G[W_i']$ and hence connected by construction.
    Finally, \cref{itm:RainbowDecompDistinctBags} transfers from~$\cW'$.

    It remains to check~\labelcref{itm:RCDecompRcapC,itm:RCDecompEdgesToZ,itm:RCDecompAdditionalCutsets,itm:RCDecompAdditionalLinkages}.
    So let~$U_1 := V(C) \cap W_1$ and~$U_{M'} := V(C) \cap W_{M' - 1}$; note that our construction implies~$U_1 = W_0' \cap W_1$ and~$U_{M'} = W_{M'}' \cap W_{M' - 1}$.
    By definition, we have~$V(R) \cap V(C) = (W_1 \cup W_{M' - 1}) \cap V(C) = U_1 \cup U_{M'}$, so~\cref{itm:RCDecompRcapC} holds, and~\cref{itm:RainbowDecompDistinctBags} for~$\cW'$ implies~\cref{itm:RCDecompAdditionalCutsets} as well as~\cref{itm:RainbowDecompLinkage} implies~\cref{itm:RCDecompAdditionalLinkages}.
    For~\cref{itm:RCDecompEdgesToZ}, we recall that this holds by the definition of~$Z$.
    This completes the proof.
\end{proof}

\section{RC-decompositions and separations} \label{sec:RCDecompAndSeps}

In this section, we investigate how a fixed RC-decomposition of a graph~$G$ interacts with the separations of~$G$.
On the one hand, we will analyse in what ways a separation of~$G$ may meet the rainbow of an RC-decomposition.
On the other hand, we look at the separations of $G$ induced by the structure of the RC-decomposition.
With this we build a set of tools that we will later apply in the proof of our structural main result, \cref{main:SplitterTheorem}. 
These tools will allow us to control the new low-order separations that arise when we delete an edge deep inside the rainbow.

\subsection{Separations and bags}

In this subsection, we prove two general lemmas that describe in which ways a separation of a graph meets the bags of a linear decomposition or an RC-decomposition.
The first lemma asserts that if a linear decomposition satisfies~\cref{itm:RainbowDecompConnected} and~\cref{itm:RainbowDecompDistinctBags}, then the strict sides of a separation contain most of its bags.

\begin{lemma}\label{prop:SomePartLiesOnAStrictSide}
    Let~$\{A,B\}$ be a separation of a graph~$G$ of order~$k$ and let~$\cW$ be a linear decomposition of a subgraph $R$ of~$G$ that satisfies \cref{itm:RainbowDecompDistinctBags}. 
    Then the separator~$A \cap B$ meets at most $2k$ bags of~$\cW$.
    Moreover, if~$\cW$ additionally satisfies \cref{itm:RainbowDecompConnected}, then at most~$2k$ bags of~$\cW$ are not contained in either $A \setminus B$ or $B \setminus A$.
\end{lemma}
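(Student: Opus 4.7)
The plan is to prove both parts by tracking how vertices of the separator $A \cap B$ distribute across the bags of $\cW$, exploiting that \cref{itm:LinDecomp2} forces the set of bags containing any fixed vertex to form a contiguous interval.

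For the first part, I would argue that every vertex $v \in A \cap B$ lies in at most two bags of $\cW$. Indeed, by \cref{itm:LinDecomp2}, if $v \in W_i \cap W_j$ with $i < j$, then $v \in W_\ell$ for all $i \le \ell \le j$. If $v$ were in three consecutive bags $W_{i-1}, W_i, W_{i+1}$, then $v$ would lie simultaneously in $U_i = W_{i-1} \cap W_i$ and $U_{i+1} = W_i \cap W_{i+1}$, contradicting \cref{itm:RainbowDecompDistinctBags}. Hence each of the $|A \cap B| = k$ vertices in the separator meets at most two bags, giving at most $2k$ bags in total that intersect $A \cap B$.

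For the second part, assume additionally \cref{itm:RainbowDecompConnected}. Consider any bag $W_i$ that is not contained in either strict side $A \setminus B$ or $B \setminus A$. I claim $W_i$ must meet $A \cap B$; once this is shown, the conclusion follows from the first part. Suppose for contradiction that $W_i \cap (A \cap B) = \emptyset$. Then $W_i \subseteq (A \setminus B) \cup (B \setminus A)$, and by assumption $W_i$ meets both strict sides. Since $G[W_i]$ is connected by \cref{itm:RainbowDecompConnected}, there is a path in $G[W_i]$ from a vertex of $W_i \cap (A \setminus B)$ to a vertex of $W_i \cap (B \setminus A)$. This path lies in $G$ and joins $A \setminus B$ to $B \setminus A$; since $\{A,B\}$ is a separation of $G$, no edge joins these strict sides, so the path must traverse a vertex of $A \cap B$. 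But then $W_i$ contains a vertex of $A \cap B$, contradicting our assumption.

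I do not expect any real obstacle here: the argument is a direct combination of (a) the interval property of linear decompositions together with the disjointness of consecutive adhesion sets, and (b) the elementary fact that a connected subgraph meeting both strict sides of a separation must touch its separator. The only thing to be careful about is that $\{A,B\}$ is a separation of $G$, not of $R$, but this is immaterial because connected paths inside $G[W_i] \subseteq R \subseteq G$ are still paths in $G$ and so must respect the separator $A \cap B$.
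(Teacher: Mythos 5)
Your proof is correct and follows essentially the same two-step structure as the paper's: each separator vertex lies in at most two bags because it can appear in at most one adhesion set by~\cref{itm:RainbowDecompDistinctBags} together with the interval property~\cref{itm:LinDecomp2}, and a bag disjoint from $A\cap B$ must lie entirely in one strict side by~\cref{itm:RainbowDecompConnected} since a separation of $G$ admits no edge between its strict sides. You merely spell out the two underlying observations in a bit more detail than the paper does.
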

\begin{proof}
    Each vertex in~$R$ is contained in at most one adhesion set of~$\cW$ by~\cref{itm:RainbowDecompDistinctBags} and thus in at most two bags of~$\cW$.
    So since $\{A, B\}$ has order~$k$, the separator $A \cap B$ meets at most $2k$ bags of $\cW$. 
    As each part~$R[W_i]$ of~$\cW$ is connected by~\cref{itm:RainbowDecompConnected}, every bag of~$\cW$ that is disjoint from $A \cap B$ is included in precisely one of $A \setminus B$ and $B \setminus A$.
    Hence, at most $2k$ bags of~$\cW$ are not contained in either $A \setminus B$ or $B \setminus A$.
\end{proof}

The second lemma states that if a strict side of a separation has a component which meets both the cloud and a bag~$W_i$ of a fixed RC-decomposition, then it also contains most bags of the rainbow along one of the two connections from~$W_i$ to the cloud.

\begin{lemma}\label{lem:SepsThatMeetRAndCContainAllBagsOnTheirSide}
    Let~$(R, \cW, Z, C)$ be an RC-decomposition of a graph~$G$ of length~$M$, and let~$\{A, B\}$ be a separation of~$G$ of order~$k$.
    Suppose that~$D \subseteq G[A \setminus B]$ is a component of~$G - (A \cap B)$ which meets both~$C$ and some bag~$W_i$.
    Then~$D$ contains all but at most~$2k$ of~$W_0, \dots, W_i$ or~$W_i, \dots, W_M$.
    Moreover, $D$ meets either~$Z$ or every bag of~$W_0, \dots, W_i$ or of~$W_i, \dots, W_M$, respectively.
\end{lemma}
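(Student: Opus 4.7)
The plan is to split on whether $D$ meets the sun $Z$, and to leverage in both cases the following basic observation: by \cref{prop:SomePartLiesOnAStrictSide}, at most $2k$ bags of $\cW$ fail to lie in one of the strict sides $A\setminus B$ or $B\setminus A$, and whenever $W_j\subseteq A\setminus B$, the part $G[W_j]$ is a connected subgraph of $G-(A\cap B)$, so $W_j\cap D\neq\emptyset$ already forces $W_j\subseteq D$. Thus it will suffice to exhibit a long range $\{W_0,\dots,W_i\}$ or $\{W_i,\dots,W_M\}$ of bags all of which are met by $D$.

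In the easy case where $D\cap Z\neq\emptyset$, fix any $z\in D\cap Z$. By \cref{itm:RCDecompEdgesToZ}, $z$ has a neighbour $w_j\in W_j$ for every $j\in\{0,\dots,M\}$. If $W_j\subseteq A\setminus B$, then the edge $zw_j$ avoids $A\cap B$, so $w_j\in D$ and hence $W_j\subseteq D$ by the basic observation. Consequently $D$ omits at most $2k$ bags among all of $\{W_0,\dots,W_M\}$, which yields both conclusions simultaneously (the moreover clause being witnessed by $D\cap Z\neq\emptyset$).

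In the remaining case $D\cap Z=\emptyset$, the crucial intermediate step is to argue that every rainbow-to-cloud transition inside $D$ must pass through $V(R)\cap V(C)=U_0\cup U_{M+1}$ (see \cref{itm:RCDecompRcapC}). Since $V(G)=V(R)\cup V(C)$ (as $Z\subseteq V(C)$), any edge $uv$ with $u\in V(R)\setminus V(C)$ and $v\in V(C)\setminus V(R)$ does not lie in $C$; for it to belong to $G=G[V(R)\cup Z]\cup C$ we would need $v\in Z$, which is excluded by $D\cap Z=\emptyset$. Now fix $v\in W_i\cap D$ and $c\in V(C)\cap D$ (which exists because $D$ meets $C$), and consider a path $P$ in $D$ from $v$ to $c$. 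Traverse $P$ from $v$ until the first vertex $x_t\in V(C)$; by the previous sentence $x_t\in U_0\cup U_{M+1}$, so $x_0,\dots,x_t$ is a path in $R$ from $v\in W_i$ to some vertex in $U_0\subseteq W_0$ or $U_{M+1}\subseteq W_M$. Invoking the linear-decomposition property \cref{itm:LinDecomp2}, any such path in $R$ meets every bag $W_j$ with $0\le j\le i$ (respectively $i\le j\le M$), so $D$ meets each such bag, and combined with the basic observation this yields the required $2k$-bound on the bags of $\{W_0,\dots,W_i\}$ (respectively $\{W_i,\dots,W_M\}$) missed by $D$, together with the second alternative of the moreover clause.

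The main obstacle is isolating precisely how $D$ is allowed to interact with the cloud in the second case: one must combine $G=G[V(R)\cup Z]\cup C$, $V(R)\cap V(C)=U_0\cup U_{M+1}$ and the hypothesis $D\cap Z=\emptyset$ to show that $D$ cannot use any `direct' $V(R)\setminus V(C)$-to-$V(C)\setminus V(R)$ edge. Once this reduction is in place the rest is a clean application of the linear-decomposition axioms and the bag-counting lemma.
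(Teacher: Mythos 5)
Your proof is correct and follows essentially the same route as the paper's: split on whether $D$ meets $Z$; in the first case use \cref{itm:RCDecompEdgesToZ} plus the connectedness of each $G[W_j]$, and in the second case take a path in $D$ from $W_i$ to $C$, show its first vertex in $V(C)$ lies in $U_0\cup U_{M+1}=V(R)\cap V(C)$, and conclude via the separator property of linear decompositions that the initial segment in $R$ meets every intermediate bag. One small point you leave implicit in your ``basic observation'' and its applications: a bag $W_j$ that is met by $D$ and avoids $A\cap B$ cannot satisfy $W_j\subseteq B\setminus A$ (since $D\subseteq G[A\setminus B]$), hence must satisfy $W_j\subseteq A\setminus B$; this is the one-line step needed to pass from ``$D$ meets every bag in the range'' to ``all but at most $2k$ of them lie in $D$,'' and again in the $D\cap Z\neq\emptyset$ case to rule out bags in $B\setminus A$ when invoking \cref{itm:RCDecompEdgesToZ}. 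Worth stating, but not a structural gap.
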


\begin{proof}
    By \cref{prop:SomePartLiesOnAStrictSide} applied to the restriction $\{A \cap V(R), B \cap V(R)\}$ of $\{A,B\}$ to the rainbow $R$ at most $2k$ of the bags $W_i$ meet $A \cap B$.
    If $D$ contains a vertex $z$ of $Z$, then every bag which does not meet $A \cap B$ is contained in $D$ by \cref{itm:RCDecompEdgesToZ} and \cref{itm:RainbowDecompConnected} because $D$ is a component of $G - (A \cap B)$, as desired. 
    So let us now assume that $V(D)$ is disjoint from $Z$.
    Since $D$ is connected and meets both $C$ and $W_i$, the component $D$ contains a path~$P$ from $W_i$ to $C$. 
    This path $P$ has to first enter $C$ either through $U_0$ or~$U_{M+1}$,
    as $V(D) \cap Z = \emptyset$ and also $G[V(R) \cup Z] \cup C = G$ and $V(R) \cap V(C) = U_0 \cup U_{M+1}$  by the definition of an RC-decomposition. 
    Thus, we may assume that the only vertex of $P$ in $C$ is its endvertex which is in~$U_0$, as the other case is symmetrical.
    Hence, it meets all bags $W_j$ with $0 \le j \leq i$, because~$\cW$ is a linear decomposition of $R$.
    Thus, every bag~$W_j$ with $0 \leq j \leq i$ which avoids $A \cap B$ is contained in the component~$D$ of $G- (A \cap B)$ by~\cref{itm:RainbowDecompConnected}, as desired.
\end{proof}

\subsection{Separating along the rainbow}

We can easily turn an RC-decomposition~$(R, \cW, Z, C)$ of a graph~$G$ of length~$M$ into a new one of shorter length and of the same adhesion by restricting the linear decomposition~$\cW$ of~$R$ to some interval in~$\{0, \dots, M\}$ and adding the remaining bags to~$C$ as follows.
For $0 \leq i \leq j \leq M$, we set 
\begin{align*}
    R_{i,j} &:= G\left[ \bigcup_{h = i}^j W_h\right] = \bigcup_{h = i}^j G[W_h],\\
    \cW_{i,j} &:= (W_i, \dots, W_j),\\ 
    C_{i,j} &:= G\left[V(C) \cup \left(\bigcup_{h = 0}^{i-1} W_h\right) \cup \left(\bigcup_{h = j+1}^M W_h\right)\right]\!, \text{ and}\\%\COMMENT{Hier gehen die Kanten der~$W_s$ zu~$Z$ verloren, wenn man nicht über alles induziert, sondern die induzierten $G[W_s]$ mit $C$ vereinigt.}
    (R, \cW, Z, C)_{i,j} &:= (R_{i,j}, \cW_{i,j}, Z, C_{i,j}).
\end{align*}
\noindent We remark that we follow the convention that an empty union, such as $\bigcup_{h = 0}^{-1} W_h$ or $\bigcup_{h=M+1}^M W_h$, is the empty set.

It is straight-forward from the definition that~$(R, \cW, Z, C)_{i,j}$ is again an~$RC$-decomposition of~$G$:

\begin{lemma}\label{prop:ShorteningTheRainbow}
    Let $G$ be a graph with an RC-decomposition $(R, \cW, Z, C)$ of length $M$ and of adhesion $\ell$.
    Then $(R, \cW, Z, C)_{i,j}$ is an RC-decomposition of $G$ of length~$j - i$ and of adhesion $\ell$ for all~$0 \leq i \leq j \leq M$. \qed
\end{lemma}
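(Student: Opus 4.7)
The plan is to verify each defining property of an RC-decomposition for $(R_{i,j}, \cW_{i,j}, Z, C_{i,j})$, letting the properties of the original decomposition $(R, \cW, Z, C)$ do most of the work. Before starting, I would record one structural consequence of \cref{itm:LinDecomp2} together with \cref{itm:RainbowDecompDistinctBags}: for any two indices $h, h' \in \{0, \dots, M\}$ with $|h - h'| \geq 2$, the intersection $W_h \cap W_{h'}$ is empty. Indeed, for $h + 2 \leq h'$, two applications of \cref{itm:LinDecomp2} sandwich $W_h \cap W_{h'}$ into $U_{h+1} \cap U_{h+2}$, which is empty by \cref{itm:RainbowDecompDistinctBags}. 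This single fact collapses all the intersection calculations that appear later.

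With this in hand, the bulk of the conditions are essentially immediate. Both $R_{i,j}$ and $C_{i,j}$ are induced subgraphs of $G$ by construction, $Z \subseteq V(C) \subseteq V(C_{i,j})$, and $Z$ remains disjoint from $V(R_{i,j}) \subseteq V(R)$. The identity $G[V(R_{i,j}) \cup Z] \cup C_{i,j} = G$ follows since the vertex sets satisfy $V(R_{i,j}) \cup V(C_{i,j}) = V(R) \cup V(C)$, and every edge of $R$ lies in some $R[W_h]$ by \cref{itm:LinDecomp1}, so both of its endpoints land together either inside $R_{i,j}$ (if $h \in [i,j]$) or inside $C_{i,j}$ (otherwise). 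As a contiguous sub-sequence of bags of $\cW$, the tuple $\cW_{i,j} = (W_i, \dots, W_j)$ inherits \cref{itm:LinDecomp1,itm:LinDecomp2,itm:LinDecomp3,itm:LinDecomp4,itm:RainbowDecompLinkage,itm:RainbowDecompConnected,itm:RainbowDecompDistinctBags}, so it is a rainbow-decomposition of $R_{i,j}$ of length $j-i$ whose internal adhesion sets $U_{i+1}, \dots, U_j$ have common size $\ell$. Likewise \cref{itm:RCDecompEdgesToZ} for the new decomposition is merely a restriction of the original.

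The crucial verification is that of \cref{itm:RCDecompRcapC} and \cref{itm:RCDecompAdditionalCutsets}. I would compute $V(R_{i,j}) \cap V(C_{i,j})$ by splitting it into two kinds of contributions: cross-bag intersections $W_h \cap W_{h'}$ with $h \in [i,j]$ and $h' \notin [i,j]$, and intersections $W_h \cap V(C)$ for $h \in [i,j]$. The structural observation above kills every cross-bag intersection except when $h'$ is adjacent to $[i,j]$, contributing $U_i$ whenever $i \geq 1$ and $U_{j+1}$ whenever $j \leq M-1$. For the $V(C)$-term, \cref{itm:RCDecompRcapC} of the original forces $W_h \cap V(C) \subseteq U_0 \cup U_{M+1} \subseteq W_0 \cup W_M$, and by the same structural observation combined with \cref{itm:RCDecompAdditionalCutsets} this vanishes unless $h \in \{0, M\}$; so it contributes $U_0$ precisely when $i = 0$ and $U_{M+1}$ precisely when $j = M$. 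In each case the total matches $U'_0 \cup U'_{j-i+1}$, where $U'_0 = V(C_{i,j}) \cap W_i$ equals $U_i$ for $i \geq 1$ and $U_0$ for $i = 0$, and symmetrically for $U'_{j-i+1}$. Both sets have size $\ell$, and their disjointness from $U_{i+1}$, respectively $U_j$, comes from \cref{itm:RainbowDecompDistinctBags} (or from \cref{itm:RCDecompAdditionalCutsets} of the original in the boundary cases). Finally, \cref{itm:RCDecompAdditionalLinkages} is supplied either by the $U_i$--$U_{i+1}$ linkage inside $G[W_i]$ from \cref{itm:RainbowDecompLinkage} or, at the boundary, by the linkage guaranteed by \cref{itm:RCDecompAdditionalLinkages} for the original decomposition.

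The main obstacle is not conceptual but bookkeeping: one has to track the four symmetric case-splits induced by whether $i = 0$ or $i \geq 1$ and whether $j = M$ or $j \leq M-1$, and confirm that each boundary case merely reproduces the correct original adhesion set $U_0$ or $U_{M+1}$ in place of $U_i$ or $U_{j+1}$. Once the single structural observation $W_h \cap W_{h'} = \emptyset$ for $|h-h'| \geq 2$ is in place, however, every case collapses to matching definitions.
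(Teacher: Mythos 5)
The paper offers no proof of this lemma (the statement ends with a \qed\ and is introduced as ``straight-forward from the definition''), so a direct verification from the definitions is the only available route, and yours is correct. Your single structural observation --- that $W_h \cap W_{h'} = \emptyset$ once $|h - h'| \geq 2$, obtained by sandwiching the intersection into two consecutive adhesion sets via~\cref{itm:LinDecomp2} and~\cref{itm:RainbowDecompDistinctBags} --- is exactly the right tool, and your fourfold case split by $i=0$ versus $i\geq 1$ and $j=M$ versus $j\leq M-1$ correctly identifies the new boundary adhesion sets in each case; (RC1)--(RC4) then follow as you describe.

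One caveat, which is an issue with the lemma as stated rather than with your argument: at the degenerate endpoint $j=i$ (with $\ell>0$ and $M\geq 1$), the first and last bag of $\cW_{i,i}$ coincide, so the single boundary adhesion set $V(C_{i,i})\cap W_i$ picks up both $U_i$ and $U_{i+1}$ (or $U_0$ and $U_1$ when $i=0$, $U_M$ and $U_{M+1}$ when $i=M$) and hence has size $2\ell$, violating~\cref{itm:RCDecompAdditionalCutsets}. Your case analysis silently assumes $i<j$ when it assigns $U_i$ to the left end and $U_{j+1}$ to the right, which is where the $j=i$ subtlety hides. The paper only ever applies the lemma with $j-i$ large, so nothing downstream is affected, but a fully rigorous write-up should either restrict to $i<j$ or note that $\ell=0$ is required for $i=j$.
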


It follows immediately that we obtain separations of $G$ along the rainbow of a fixed RC-decomposition:

\begin{lemma}\label{prop:SlicesOfRAreSepsOfG}
    Let $G$ be a graph with an RC-decomposition $(R, \cW, Z, C)$ of length~$M$ and adhesion~$\ell$, and let $0 \le i \leq j \le M$.
    Then $\{V(R_{i,j}) \cup Z, V(C_{i,j})\}$ is a separation of $G$ with separator~$U_i \cup U_{j+1} \cup Z$. In particular, its order is~$2\ell + \abs{Z}$.
    \qed
\end{lemma}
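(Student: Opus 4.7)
The plan is to deduce everything from~\cref{prop:ShorteningTheRainbow}, which asserts that the shortened quadruple $(R, \cW, Z, C)_{i,j}$ is itself an RC-decomposition of $G$. Applying the definition of an RC-decomposition to this shortened one gives $G = G[V(R_{i,j}) \cup Z] \cup C_{i,j}$, from which it is immediate that $\{V(R_{i,j}) \cup Z, V(C_{i,j})\}$ is a separation of $G$: both sides cover $V(G)$ because $Z \subseteq V(C_{i,j})$, and the equation $G = G[V(R_{i,j}) \cup Z] \cup C_{i,j}$ rules out edges between the strict sides.

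To identify the separator, observe that $(V(R_{i,j}) \cup Z) \cap V(C_{i,j}) = (V(R_{i,j}) \cap V(C_{i,j})) \cup Z$, and by~\cref{itm:RCDecompRcapC} applied to $(R, \cW, Z, C)_{i,j}$, the first summand equals $U'_0 \cup U'_{j-i+1}$, where $U'_0 := V(C_{i,j}) \cap W_i$ and $U'_{j-i+1} := V(C_{i,j}) \cap W_j$ denote the additional adhesion sets of the shortened decomposition. The central step is to identify $U'_0$ with $U_i$ and $U'_{j-i+1}$ with $U_{j+1}$: in each case a direct inclusion suffices, e.g.\ $U_i = W_{i-1} \cap W_i \subseteq V(C_{i,j}) \cap W_i = U'_0$ for $i \geq 1$ (since $W_{i-1} \subseteq V(C_{i,j})$) and $U_0 = V(C) \cap W_0 \subseteq U'_0$ for $i = 0$. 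Combined with the equality of cardinalities provided by~\cref{itm:LinDecomp3} and~\cref{itm:RCDecompAdditionalCutsets} (applied to both the original and the shortened RC-decomposition), this forces equality. A symmetric argument handles $U'_{j-i+1} = U_{j+1}$.

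Finally, to compute the order we verify pairwise disjointness of $U_i$, $U_{j+1}$ and $Z$. The intersections with $Z$ vanish because $Z$ is disjoint from $V(R)$ by the definition of an RC-decomposition and both $U_i, U_{j+1} \subseteq V(R)$. For $U_i \cap U_{j+1}$, the inclusions $U_i \subseteq W_i$ and $U_{j+1} \subseteq W_j$ combined with~\cref{itm:LinDecomp2} (which forces $W_i \cap W_j \subseteq W_{i+1}$ when $i < j$) place the intersection inside $W_i \cap W_{i+1} = U_{i+1}$, which is disjoint from $U_i$ by~\cref{itm:RainbowDecompDistinctBags} (or by~\cref{itm:RCDecompAdditionalCutsets} for the boundary index $i = 0$); the case $i = j$ is handled directly by the same two axioms. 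Summing gives $|U_i \cup U_{j+1} \cup Z| = 2\ell + |Z|$. The only mildly subtle point of the whole argument is the identification of the shortened RC-decomposition's additional adhesion sets with $U_i$ and $U_{j+1}$; once this is noted, the remainder is just unpacking definitions.
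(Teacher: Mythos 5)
Your plan to route the argument through \cref{prop:ShorteningTheRainbow} works cleanly when $j > i$, but there is a genuine gap at $i = j$, and that case is needed later: the paper applies the present lemma to singleton slices $\{V(R_{h,h}) \cup Z, V(C_{h,h})\}$ in the proof of \cref{lem:StrengtheningOfCondition1ThatTauLivesInR}, and to possibly singleton subintervals inside the proof of \cref{prop:ObtainingNewSepsFromRainbowCrossingSeps}. When $i = j$ and $\ell > 0$, the two additional adhesion sets of the shortened quadruple $(R, \cW, Z, C)_{i,i}$ coincide, $U'_0 = U'_1 = V(C_{i,i}) \cap W_i$, and the very inclusion computation you perform shows that this set is all of $U_i \cup U_{i+1}$, hence of cardinality $2\ell$, not $\ell$. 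In particular \cref{itm:RCDecompAdditionalCutsets} fails for the shortened quadruple, so \cref{prop:ShorteningTheRainbow} as stated does \emph{not} hold at $i = j$ when $\ell > 0$, and you cannot invoke it to deduce $|U'_0| = \ell$. At $i = j$ your identification step therefore collapses: the inclusion $U_i \subseteq U'_0$ is strict.

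The separator claim is nevertheless correct, because what is actually needed is only the union identity $U'_0 \cup U'_{j-i+1} = U_i \cup U_{j+1}$, which for $i = j$ reads $U'_0 = U_i \cup U_{i+1}$ and is true. But this must be verified directly rather than by splitting into two set-equalities forced by a cardinality count. You clearly sensed the subtlety, since you handle $i = j$ by a separate argument in the disjointness step; the same special treatment is required earlier, in the separator identification. As the paper presents the lemma without proof, the intended route is presumably a direct computation of $V(R_{i,j}) \cap V(C_{i,j})$ from the definitions (using \cref{itm:LinDecomp2}, \cref{itm:RCDecompRcapC}, \cref{itm:RainbowDecompDistinctBags} and \cref{itm:RCDecompAdditionalCutsets}), which handles $i = j$ uniformly and does not depend on \cref{prop:ShorteningTheRainbow} being literally true in this borderline case.
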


\subsection{Rainbow-crossing separations}

We now investigate separations that `cross' the rainbow of a given RC-decomposition in that both strict sides of the separation contain bags of the linear decomposition of the rainbow:
Let~$(R, \cW, Z, C)$ be an~RC-decomposition of a graph~$G$ of length~$M$.
    An oriented separation~$(A,B)$ of~$G$ of order~$k$ \defn{crosses the rainbow~$R$ clockwise} if there exist integers $i \in \{0, \dots, 2k\}$ and $j \in \{M-2k, \dots,M\}$ such that $W_i \subseteq A \setminus B$ and $W_j \subseteq B \setminus A$.
    We denote the minimal such $i$ by $i_{A,B}$ and the maximal such $j$ by $j_{A,B}$.
    If an oriented separation $(A,B)$ crosses the rainbow clockwise, we say that its other orientation $(B,A)$ \defn{crosses the rainbow~$R$ counter-clockwise} and the underlying unoriented separation $\{A,B\}$ \defn{crosses the rainbow~$R$}.
We remark that a separation $\{A,B\}$ which crosses the rainbow contains $Z$ in its separator $A \cap B$ by \cref{itm:RCDecompEdgesToZ}, $W_i \subseteq A \setminus B$ and $W_j \subseteq B \setminus A$.
Note that in the above definition, we may have~$j \le i$, since we do not assume any lower bound on~$M$.
In most applications, however, we will have~$M \ge 4k$ and thus~$i < j$.

Given a separation which crosses the rainbow, we may construct several separations that separate the cloud in the same way, but split the rainbow along the adhesion sets of its linear decomposition (see \cref{fig:RainbowCrossingSeps}):
\begin{figure}[ht]
    \centering
    \pdfOrNot{\includegraphics[page=2]{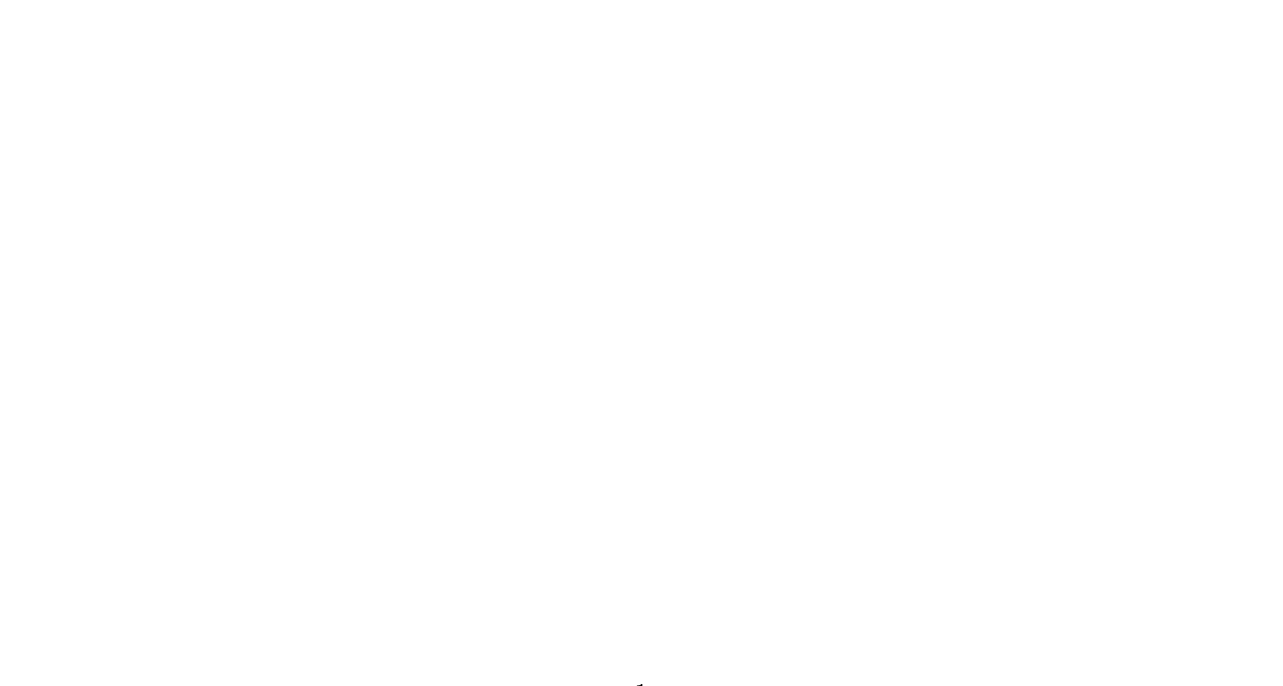}}{\includesvg[width=0.8\columnwidth]{svg/RCdecompositionschematic2seps.svg}}
    \caption{A rainbow-crossing separation $(A,B)$ and the arising separations $(A^0, B^0)$, $(A^h, B^h)$ and $(A^M, B^M)$.}
    \label{fig:RainbowCrossingSeps}
\end{figure}
More precisely, let $(R, \cW, Z, C)$ be an RC-decomposition of a graph~$G$.
For a separation~$(A, B)$ of~$G$ of order~$k$ that crosses the rainbow clockwise, we let $i := i_{A,B} \in \{0, \dots, 2k\}$ be the smallest and $j := j_{A,B} \in \{M-2k, \dots, M\}$ the largest number such that $W_{i} \subseteq A\setminus B$ and $W_{j} \subseteq B\setminus A$, which are well-defined by the definition of crossing the rainbow clockwise. 
For $h \in \{i+1, \dots, j\}$ we set 
\begin{equation*}
    (A^h, B^h) := ((A \cap V(C_{i,j})) \cup V(R_{i,h-1}),\;  V(R_{h,j}) \cup (B \cap V(C_{i,j}))).
\end{equation*}
If $(A,B)$ crosses the rainbow counter-clockwise, we use the fact that $(B,A)$ crosses the rainbow clockwise to define $(A^h, B^h)$ accordingly.

It is immediate from the definition that $(A^{i+1}, B^{i+1}) \leq (A,B) \leq (A^{j}, B^{j})$ and that the $(A^h,B^h)$ form a strictly increasing sequence.
Indeed, the $(A^h, B^h)$ also are separations of~$G$:

\begin{lemma} \label{prop:ObtainingNewSepsFromRainbowCrossingSeps}
    Let~$(R, \cW, Z, C)$ be an~RC-decomposition of a graph~$G$ of adhesion~$\ell$, and let~$(A,B)$ be a separation of~$G$ of order~$k$ which crosses the rainbow~$R$ clockwise. 
    Then $(A^h, B^h)$ is a separation of $G$ of order $k$ which crosses the rainbow $R$ clockwise for every~$h \in \{i_{A,B} + 1, \dots, j_{A,B}\}$.
\end{lemma}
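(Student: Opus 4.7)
My plan is to verify, in turn, that $(A^h, B^h)$ is a separation of $G$, that its order is (at most) $k$, and that it crosses $R$ clockwise. Write $i := i_{A,B}$ and $j := j_{A,B}$. The argument rests on the intersection identities
\begin{equation*}
 V(R_{i,h-1}) \cap V(R_{h,j}) = U_h, \ V(R_{i,h-1}) \cap V(C_{i,j}) = U_i, \ V(R_{h,j}) \cap V(C_{i,j}) = U_{j+1},
\end{equation*}
and therefore $V(R_{i,j}) \cap V(C_{i,j}) = U_i \cup U_{j+1}$. These follow from iterated use of the linear decomposition property \labelcref{itm:LinDecomp2} combined with the disjointness of consecutive adhesion sets \labelcref{itm:RainbowDecompDistinctBags,itm:RCDecompAdditionalCutsets}, which force any double intersection $W_m \cap W_{m'}$ with $m < m'$ to collapse into $W_i \cap W_{i+1} = U_{i+1}$ and thence, after one further step, into $U_{i+1} \cap U_{i+2} = \emptyset$ whenever $|m - m'| \geq 2$.

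Using these identities, the union $A^h \cup B^h = V(R_{i,j}) \cup V(C_{i,j}) = V(G)$ is immediate from $A \cup B = V(G)$ and \cref{prop:SlicesOfRAreSepsOfG}, and a short computation gives
\begin{equation*}
    A^h \cap B^h = U_h \sqcup (A \cap B \cap V(C_{i,j})),
\end{equation*}
with the two summands disjoint because $U_h \cap V(C_{i,j}) = \emptyset$, $A \cap U_{j+1} = \emptyset$, and $B \cap U_i = \emptyset$. To bound the order, the foundational linkage supplied by \labelcref{itm:RainbowDecompLinkage,itm:RCDecompAdditionalLinkages} restricts to a $U_i$--$U_{j+1}$ linkage of cardinality $\ell$ in $R_{i,j}$, and since $\{A \cap V(R_{i,j}), B \cap V(R_{i,j})\}$ is a $U_i$--$U_{j+1}$ separation of $R_{i,j}$, Menger's theorem forces $|A \cap B \cap V(R_{i,j})| \geq \ell$; hence $|A \cap B \cap V(C_{i,j})| \leq k - \ell$ and $|A^h \cap B^h| \leq k$.

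To verify the separation property, I rule out edges between $A^h \setminus B^h$ and $B^h \setminus A^h$ by partitioning $V(G) = X_1 \sqcup X_2 \sqcup X_3$ into $X_1 := V(R_{i,j}) \setminus V(C_{i,j})$, $X_2 := V(C_{i,j}) \setminus V(R_{i,j})$, and $X_3 := U_i \cup U_{j+1}$, characterising the membership of $v \in X_1$ in $A^h$ by $V(R_{i,h-1})$ and in $B^h$ by $V(R_{h,j})$; of $v \in X_2$ directly by $A$ and $B$; and of $v \in X_3$ using $U_i \subseteq A^h \setminus B^h$ and $U_{j+1} \subseteq B^h \setminus A^h$. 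A hypothetical bad edge within $X_1$ would have to jump from $V(R_{i,h-1})$ to $V(R_{h,j})$ without passing through $U_h$, violating \labelcref{itm:LinDecomp1,itm:LinDecomp2}; within $X_2$ it would contradict $\{A, B\}$ being a separation of $G$; within $X_3$ it would require a common bag for $U_i \subseteq W_i$ and $U_{j+1} \subseteq W_j$, which is impossible when $i < j$; and across regions the decomposition $G = G[V(R) \cup Z] \cup C$ combined with $Z \subseteq A \cap B \subseteq A^h \cap B^h$ (a consequence of $(A, B)$ crossing the rainbow, since each $z \in Z$ has neighbours in $W_i \subseteq A \setminus B$ and in $W_j \subseteq B \setminus A$) routes any rainbow-to-cloud edge through $Z$, which lies in $A^h \cap B^h$.

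Finally, for the clockwise-crossing property, the identities above imply $W_i \cap B^h \subseteq U_{i+1}$ and, for $h \geq i + 2$, even $W_i \cap B^h = \emptyset$, so $W_i \subseteq A^h \setminus B^h$; symmetrically $W_j \subseteq B^h \setminus A^h$ when $h \leq j - 1$. Together with the hypothesis $i \leq 2k$ and $j \geq M - 2k$ this supplies the required indices $i' = i$ and $j' = j$. The main obstacle throughout will be the careful bookkeeping in the intersection identities of the first paragraph: each simplification hinges on iteratively applying \labelcref{itm:LinDecomp2} to collapse double intersections through consecutive adhesion sets, whose mutual disjointness is then decisive. The boundary values $h = i + 1$ and $h = j$ require a slightly refined choice of bag, obtainable by applying \cref{prop:SomePartLiesOnAStrictSide} to $(A^h, B^h)$ and shifting to an adjacent bag index lying strictly inside the desired strict side.
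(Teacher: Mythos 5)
Your proof is essentially correct and follows the same broad outline as the paper's, but with some technical choices worth comparing.

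For the \emph{separation property}, the paper does not do a region-by-region edge analysis. Instead it invokes \cref{prop:SlicesOfRAreSepsOfG} twice: $U_i \cup Z \cup U_h$ separates $R_{i,h-1}$ from the rest of $G$, and $U_h \cup Z \cup U_{j+1}$ separates $R_{h,j}$; since these two sets are contained in $A^h$ and $B^h$ respectively, $\{A^h, B^h\}$ is a separation. That is terser than your partition into $X_1, X_2, X_3$, but both work. Note that your summary that rainbow-to-cloud edges ``route through $Z$'' is correct only after the additional observation that the $X_2$-endpoint cannot lie in $\bigcup_{t<i} W_t \cup \bigcup_{t>j} W_t$ (one rules this out as you did for the in-$X_1$ case via \labelcref{itm:LinDecomp2} and disjointness of consecutive adhesion sets); you do not say this explicitly, but your $X_1$-analysis already contains the needed lemma.

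For the \emph{order bound}, the paper uses the $U_{i+1}$--$U_j$ linkage inside $R_{i+1,j-1}$, which is entirely disjoint from $V(C_{i,j})$, so its $\ell$ hit-vertices of $A\cap B$ are automatically outside $V(C_{i,j})$ and no Menger is needed. You use the $U_i$--$U_{j+1}$ linkage in $R_{i,j}$ instead; that is fine, but then you need the extra observation that $A\cap B$ meets neither $U_i \subseteq A\setminus B$ nor $U_{j+1}\subseteq B\setminus A$, so the $\ell$ hit-vertices are again in $V(R_{i,j})\setminus V(C_{i,j})$. Your exact decomposition $A^h\cap B^h = U_h \sqcup (A\cap B\cap V(C_{i,j}))$ is a nice reformulation of the paper's four-term estimate. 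Also, Menger is overkill: just using that the $\ell$ paths are pairwise disjoint suffices.

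For the \emph{crossing claim}, the paper's proof actually omits this entirely (it ends after the order bound), and your attempt is the more careful one — but it has a genuine gap at the boundary. For $h = i+1$ and $\ell \geq 1$, every bag $W_t$ with $t < i$ satisfies $W_t \cap B^{i+1} = W_t \cap B \neq \emptyset$ by the minimality of $i = i_{A,B}$, and $W_i \cap B^{i+1} = U_{i+1} \neq \emptyset$, so no bag with index at most $2k$ lies in $A^{i+1}\setminus B^{i+1}$. Your proposed fix of applying \cref{prop:SomePartLiesOnAStrictSide} and ``shifting to an adjacent bag'' does not produce such a bag. Since the paper shares this gap (and its applications of the lemma in fact only use the order bound, separation property, and the containment $V(R_{2k'+1,M-2k'-1})\subseteq B^{i+1}, A^j$), this does not affect the downstream arguments, but you should be aware that the literal crossing claim at $h = i_{A,B}+1$ and $h = j_{A,B}$ is not established by either your argument or the paper's.
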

\begin{proof}
    We abbreviate~$i := i_{A,B}$ and~$j := j_{A,B}$.
    As $C_{i,j}$ is a subgraph of~$G$, the separation~$\{A,B\}$ of~$G$ induces the separation $\{A \cap V(C_{i,j}), B \cap V(C_{i,j})\}$ of $C_{i,j}$.
    Note that~$Z \subseteq A \cap B$, since~$Z$ is contained in the neighbourhood of both~$W_i \subseteq A \setminus B$ and~$W_j \subseteq B \setminus A$ by~\cref{itm:RCDecompEdgesToZ}.
    Moreover, by~\cref{prop:SlicesOfRAreSepsOfG}, both $U_i \cup Z \cup U_h$ and $U_h \cup Z \cup U_{j+1}$ separate $R_{i,h-1}$ and $R_{h,j}$, respectively, from the rest of $G$.
    As $U_i \cup Z \cup U_h \subseteq A^h$ and $U_h \cup Z \cup U_{j+1} \subseteq B^h$, it follows that $\{A^h, B^h\}$ is a separation of $G$.%
    
    It remains to show that $\abs{A^h \cap B^h} \le k$.
    The foundational linkage of the RC-decomposition induces a $U_{i+1}$--$U_{j}$ linkage of cardinality $\ell$ in $R_{i+1,j-1}$.
    Since~$\{A, B\}$ is a separation of~$G$ and~$U_{i+1} \subseteq W_i \subseteq A \setminus B$ and~$U_j \subseteq W_j \subseteq B \setminus A$, the separator $A \cap B$ has to contain one vertex of every $U_{i+1}$--$U_j$~paths.
    But since $V(C_{i,j}) \cap V(R_{i+1, j-1}) = \emptyset$, these $\ell$~vertices are contained in~$(A \cap B) \setminus V(C_{i,j})$. 
    Hence, we obtain~$\abs{(A \cap B) \cap V(C_{i,j})} \leq \abs{A \cap B} - \ell = k - \ell$.
    With this, we may now  bound the order of $\{A^h,B^h\}$:
    \begin{align*}
        &\abs{A^h \cap B^h} 
            = \abs{((A \cap V(C_{i,j})) \cup V(R_{i,h-1})) \cap (V(R_{h,j}) \cup (B \cap V(C_{i,j})))} \\
        &\leq \abs{(A \cap B) \cap V(C_{i,j})} + \abs{A \cap (V(C_{i,j}) \cap V(R_{h,j}))} + \abs{B \cap (V(C_{i,j}) \cap V(R_{i,h-1}))} + \abs{V(R_{i,h-1}) \cap V(R_{h,j})} \\
        &\le (k-\ell) + \abs{A \cap W_j} + \abs{B \cap W_i} + \abs{U_h} =  (k - \ell) + 0 + 0 + \ell = k,
    \end{align*}
    where the penultimate equation holds
    because $W_i \subseteq A\setminus B$ and $W_j \subseteq B\setminus A$.
    This completes the proof.
\end{proof}

\subsection{Rainbow-slicing separations}
In this section, we study separations that `slice' the rainbow in that they cut bags in the middle of the rainbow away:

Let~$(R, \cW, Z, C)$ be an RC-decomposition of a graph~$G$ of length~$M$.
A separation~$\{A,B\}$ of~$G$ of order~$k$ \defn{slices the rainbow~$R$} if there are integers~$i < h < j$ with~$i \in \{0, \dots, 2k\}$ and $j \in \{M-2k, \dots, M\}$ such that $W_i, W_j \subseteq A\setminus B$ and $W_h \subseteq B\setminus A$ or such that $W_i, W_j \subseteq B\setminus A$ and $W_h \subseteq A \setminus B$.

We now prove a lower bound on the order of separations that slice the rainbow.
Towards this, we first show such a lower bound for a slightly more general class of separations for later use, which in particular yields the desired bound for rainbow-slicing separations. 
\begin{lemma} \label{prop:ConditionForACapBGeq2lPlust}
    Let $G$ be a graph with an RC-decomposition~$(R,\cW,Z,C)$ of adhesion~$\ell$, and let $\{A,B\}$ be a separation of~$G$.
    Suppose that there are integers $0 \leq i < h < j \leq M+1$ such that~$B\setminus A$ contains~$W_h$ and~$A$ contains the adhesion sets~$U_i$ and~$U_j$.
    Then $\abs{A \cap B \cap V(R)} \geq 2 \ell$. Moreover, if additionally $Z \subseteq A$, then $\abs{A \cap B} \geq 2\ell + \abs{Z}$.
\end{lemma}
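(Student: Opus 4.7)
The plan is to exhibit an explicit $U_i$--$U_j$ linkage of cardinality $\ell$ inside the rainbow that is forced to traverse $W_h$, and then show that each of its $\ell$ paths contributes at least two distinct vertices to the separator $A \cap B$.

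First, I would construct the linkage by concatenation. For each $k \in \{i, i+1, \ldots, j-1\}$, property \cref{itm:RainbowDecompLinkage} (if $1 \le k \le M-1$) together with property \cref{itm:RCDecompAdditionalLinkages} (if $k \in \{0, M\}$) provides a $U_k$--$U_{k+1}$ linkage $\cP_k$ of cardinality $\ell$ inside $G[W_k]$. Since each $\cP_k$ consists of $U_k$--$U_{k+1}$ paths, it induces a bijection between $U_k$ and $U_{k+1}$. Moreover, the intersection $W_k \cap W_{k+1}$ equals $U_{k+1}$, while the internal vertices of the paths in $\cP_k$ lie in $W_k \setminus (U_k \cup U_{k+1})$ and are therefore disjoint from every other bag. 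Hence gluing the $\ell$ paths of $\cP_i, \cP_{i+1}, \ldots, \cP_{j-1}$ along matching endpoints in the intermediate adhesion sets yields $\ell$ pairwise vertex-disjoint $U_i$--$U_j$ paths inside $G[W_i \cup \cdots \cup W_{j-1}] \subseteq V(R)$. Because $i < h < j$, each concatenated path contains a subsegment coming from $\cP_h$, whose vertices all lie in $W_h$.

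Next, I would turn this linkage into the desired bound using the separation $\{A,B\}$. Each of the $\ell$ paths starts in $U_i \subseteq A$, ends in $U_j \subseteq A$, and contains at least one vertex in $W_h \subseteq B \setminus A$. Since $\{A,B\}$ is a separation, no edge joins $A \setminus B$ with $B \setminus A$, so a short case analysis on whether each endpoint of a given path lies in $A \setminus B$ or in $A \cap B$ shows that the path must meet $A \cap B$ in at least two distinct vertices: once to enter $B$ on the way toward $W_h$ and once to leave $B$ on the way back toward the other endpoint. As the $\ell$ paths are pairwise vertex-disjoint and entirely contained in $V(R)$, we obtain $\abs{A \cap B \cap V(R)} \geq 2\ell$.

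Finally, for the moreover part, suppose $Z \subseteq A$. By \cref{itm:RCDecompEdgesToZ}, every $z \in Z$ has a neighbour in $W_h \subseteq B \setminus A$; since $\{A,B\}$ is a separation, this forces $z \in B$ and hence $z \in A \cap B$. As the sun $Z$ is disjoint from $V(R)$ by the definition of an RC-decomposition, the vertices witnessing the first bound and those in $Z$ are disjoint, so $\abs{A \cap B} \geq \abs{A \cap B \cap V(R)} + \abs{Z} \geq 2\ell + \abs{Z}$. I anticipate no real obstacles here: the only point that needs a touch of care is checking that the concatenated linkage paths are genuinely disjoint, which follows directly from the fact that consecutive bags of the rainbow-decomposition meet only in their adhesion set.
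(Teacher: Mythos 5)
Your proof is correct and takes essentially the same approach as the paper: use the foundational linkage to produce $2\ell$ disjoint forced crossings of the separator inside $V(R)$, and add $Z$ if $Z\subseteq A$. The only bookkeeping difference is that the paper works with a $U_i$--$U_h$ linkage and a $U_h$--$U_j$ linkage (each path crossing $A\cap B$ once, with the two families meeting only in $U_h\subseteq B\setminus A$), while you concatenate into a single $U_i$--$U_j$ linkage and argue each path crosses $A\cap B$ at least twice; the two versions are equivalent, though the paper's avoids having to verify that the concatenation yields genuine paths.
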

\begin{proof}
    The fundamental linkage induces a $U_i$--$U_h$ linkage $\cP_{i}$ and a $U_h$--$U_j$ linkage $\cP_{j}$; both have cardinality $\ell$ and are in $R$.
    We remark that the paths in $\cP_i$ meet the paths in $\cP_j$ only in $U_h$.
    Hence, $W_h \subseteq B\setminus A$ and~$U_i, U_j \subseteq A$ yields that the separator~$A \cap B$ contains at least one vertex of each of these~$2 \ell$ paths.
    As every two paths in $\cP_i$ and also every two paths in $\cP_j$ are pairwise disjoint, we thus have $\abs{A \cap B \cap V(R)} \geq 2\ell$.

    If additionally~$Z \subseteq A$, then~$Z \subseteq A \cap B$ since~$Z \subseteq N_G(W_h)$ by~\ref{itm:RCDecompEdgesToZ} and~$W_h \subseteq B \setminus A$.
    As all the above~$2 \ell$ paths lie in~$R$ and hence avoid~$Z$, the claim follows.
\end{proof}

Since every rainbow-slicing separation contains~$Z$ in its separator by~\cref{itm:RCDecompEdgesToZ}, we obtain the following lower bound on their order:
\begin{corollary} \label{lem:OrderOfSlicingSeps}
    Let $G$ be a graph with an RC-decomposition~$(R,\cW,Z,C)$ of adhesion~$\ell$.
    If a separation $\{A,B\}$ of $G$ slices the rainbow~$R$, then $\abs{A \cap B \cap V(R)} \geq 2\ell$ and $\abs{A \cap B} \geq 2\ell + |Z|$. \qed
\end{corollary}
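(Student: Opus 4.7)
The plan is to deduce this corollary almost directly from Lemma~\ref{prop:ConditionForACapBGeq2lPlust}. By the symmetry in the definition of slicing, I may assume, possibly after swapping the roles of $A$ and $B$, that the witnessing indices $i < h < j$ satisfy $W_i, W_j \subseteq A \setminus B$ and $W_h \subseteq B \setminus A$.

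Next I invoke Lemma~\ref{prop:ConditionForACapBGeq2lPlust} with parameters $(i+1, h, j)$ in place of its $(i,h,j)$. Since $W_i \subseteq A \setminus B$ we have $U_{i+1} = W_i \cap W_{i+1} \subseteq W_i \subseteq A$, and likewise $U_j = W_{j-1} \cap W_j \subseteq W_j \subseteq A$, so the adhesion-set hypothesis is met. The index condition $i+1 < h < j$ also holds, except in the degenerate case $h = i+1$: there $U_{i+1}$ would be contained in both $W_i \subseteq A \setminus B$ and $W_h \subseteq B \setminus A$, forcing $U_{i+1} = \emptyset$ and hence $\ell = 0$. For $\ell \geq 1$ Lemma~\ref{prop:ConditionForACapBGeq2lPlust} thus yields $|A \cap B \cap V(R)| \geq 2\ell$, while for $\ell = 0$ this inequality is trivial.

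For the sharper bound $|A \cap B| \geq 2\ell + |Z|$ I aim to apply the moreover-part of Lemma~\ref{prop:ConditionForACapBGeq2lPlust}, for which I additionally need $Z \subseteq A$. This is immediate from~\ref{itm:RCDecompEdgesToZ}: any $z \in Z$ has a neighbour in $W_i \subseteq A \setminus B$, and since no edge of $G$ joins $A \setminus B$ to $B \setminus A$, necessarily $z \in A$. In the degenerate case $\ell = 0$ (where the moreover-part adds nothing beyond a triviality) the symmetric argument with $W_h \subseteq B \setminus A$ also gives $Z \subseteq B$, so $Z \subseteq A \cap B$; because $Z$ is disjoint from $V(R)$ by the definition of an RC-decomposition, the bound $|A \cap B| \geq |Z| = 2\ell + |Z|$ still holds.

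I expect no real obstacle here: the corollary is essentially a translation of the slicing hypothesis into the hypotheses of Lemma~\ref{prop:ConditionForACapBGeq2lPlust}, and the only bookkeeping is the mild index shift $i \mapsto i+1$ together with the degenerate case $\ell = 0$.
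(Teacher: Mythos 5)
Your proposal is correct and takes the same approach as the paper, which reads the corollary off from \cref{prop:ConditionForACapBGeq2lPlust} together with the observation (via \cref{itm:RCDecompEdgesToZ}) that $Z$ lies in the separator of any rainbow-slicing separation. As a small simplification, you could apply the lemma with $(i,h,j)$ directly---since $U_i = W_{i-1}\cap W_i \subseteq W_i \subseteq A$ already---which sidesteps the degenerate case $h=i+1$ altogether.
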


We conclude this section of preparatory lemmas by showing that every separation which separates two bags of an RC-decomposition either crosses or slices its rainbow.
\begin{lemma}\label{lem:CrossingOrSlices}
    Let $G$ be a graph with an RC-decomposition~$(R,\cW,Z,C)$ of length~$M$, and let~$\{A,B\}$ be a separation of~$G$. 
    If there are bags~$W_i$ and~$W_j$ of~$\cW$ such that $W_i \subseteq A\setminus B$ and~$W_j \subseteq B \setminus A$, then $\{A,B\}$ either crosses or slices the rainbow.
\end{lemma}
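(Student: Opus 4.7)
The plan is to combine \cref{prop:SomePartLiesOnAStrictSide} with a case distinction on where bags near the two ends of the rainbow sit with respect to $\{A,B\}$. Throughout let $k := |A \cap B|$.

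First, I would apply \cref{prop:SomePartLiesOnAStrictSide} to the restriction of $\{A,B\}$ to the rainbow $R$: since the rainbow-decomposition $\cW$ satisfies \cref{itm:RainbowDecompConnected} and \cref{itm:RainbowDecompDistinctBags}, at most $2k$ of the bags $W_0, \dots, W_M$ fail to be contained in either $A \setminus B$ or $B \setminus A$. Hence among the $2k+1$ bags with indices in $\{0, \dots, 2k\}$, at least one, call it $W_{i^\ast}$, is entirely contained in one of the two strict sides; the symmetric argument yields some $W_{j^\ast}$ with $j^\ast \in \{M-2k, \dots, M\}$ on a single strict side. In the degenerate regime where $M < 2k$, the two bags $W_i$ and $W_j$ supplied by the hypothesis themselves play the roles of $W_{i^\ast}$ and $W_{j^\ast}$ and immediately witness that $\{A,B\}$ crosses.

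The key step is then a case split on whether $W_{i^\ast}$ and $W_{j^\ast}$ sit on the same strict side or on opposite strict sides. If they are on opposite sides, the definition of crossing the rainbow is directly satisfied and we are finished. Otherwise I assume without loss of generality $W_{i^\ast}, W_{j^\ast} \subseteq A \setminus B$. In this branch I will observe that no bag with index in $\{0, \dots, 2k\} \cup \{M-2k, \dots, M\}$ can sit in $B \setminus A$: any such bag would, together with $W_{j^\ast}$ or $W_{i^\ast}$, already witness a crossing and send us back to the previous branch. But the hypothesis furnishes some $W_j \subseteq B \setminus A$, and by the preceding observation its index must satisfy $2k < j < M - 2k$. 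Therefore $i^\ast \le 2k < j < M-2k \le j^\ast$, and the triple $(W_{i^\ast}, W_j, W_{j^\ast})$ exhibits $\{A,B\}$ as slicing the rainbow.

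The argument is in essence a short bookkeeping exercise around \cref{prop:SomePartLiesOnAStrictSide}. The only point that I expect to require a little care, and hence the main (though still minor) obstacle, is verifying in the slicing branch that the index of the opposite-side bag $W_j$ lies strictly between $i^\ast$ and $j^\ast$. This is forced by the assumption that $\{A,B\}$ does not cross the rainbow: that hypothesis upgrades the conclusion $W_{i^\ast}, W_{j^\ast} \subseteq A \setminus B$ to the stronger statement that every bag in the two end intervals that sits on one strict side must sit on the \emph{same} side $A \setminus B$, which squeezes the opposite-side bag promised by the hypothesis into the middle portion $\{2k+1, \dots, M-2k-1\}$ of the index set.
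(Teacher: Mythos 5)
Your proposal is correct and follows essentially the same route as the paper's proof: apply \cref{prop:SomePartLiesOnAStrictSide} to locate bags near both ends of the rainbow that lie entirely on a strict side, split on whether they lie on the same or opposite sides, and in the same-side case use the hypothesis bag $W_j$ to force either a crossing (if $j$ falls in an end interval) or a slice (if $j$ falls in the middle). The only cosmetic difference is that you phrase the final step as "no end-interval bag can lie in $B \setminus A$" whereas the paper simply case-splits on the location of $j$; both are the same argument.
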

\begin{proof}
    Let~$k := \abs{A, B}$.
    If $M \leq 2k$, then $W_i$ and $W_j$ witness that $\{A,B\}$ crosses the rainbow. 
    So we may assume $M \geq 2k$.
    By~\cref{prop:SomePartLiesOnAStrictSide}, all but at most~$2k$ bags of~$\cW$ are contained in either~$A \setminus B$ or~$B \setminus A$.
    In particular, there exist bags~$W_h$ with~$h \in \{0, \dots, 2k\}$ and~$W_s$ with~$s \in \{M-2k, \dots, M\}$ that are each contained in either~$A \setminus B$ or~$B \setminus A$.
    
    If one of~$W_h$ and~$W_s$ is contained in $A\setminus B$ and the other one in $B\setminus A$, then $\{A,B\}$ crosses the rainbow.
    Otherwise, both~$W_h$ and~$W_s$ are contained in the same side of $\{A,B\}$.
    By symmetry, we may assume $W_h, W_s \subseteq A \setminus B$.
    Now if~$j \in \{0, \dots, 2k\}$, then $W_j$ and~$W_s$ witness that $(B,A)$ crosses the rainbow clockwise, and if $j \in \{M-2k, \dots, M\}$, then $W_h$ and~$W_j$ witness that $(A,B)$ crosses the rainbow clockwise.
    Otherwise, $j \in \{2k+1, \dots, M-2k-1\}$; in particular~$h < j < s$.
    Therefore~$\{A,B\}$ slices the rainbow. 
\end{proof}

\section{Proof of \texorpdfstring{\cref{main:SplitterTheorem}}{Theorem 5} if \texorpdfstring{$G$}{G} has no higher-order tangle} \label{sec:DeletingTheEdge}
In this section, we complete the proof of~\cref{main:SplitterTheorem}.
Based on the results in~\cref{sec:SmallSpecialCases,sec:ExistenceOfHighOrderTangle,sec:ExistenceRCDecomp}, it suffices to consider graphs that admit an RC-decomposition with certain properties.
More precisely, the technical main result of this section, which will allow us to finish the proof of~\cref{main:SplitterTheorem}, reads as follows:

\begin{theorem}\label{thm:AtLeastOneRainbowPath}
    Let $k \ge 1$ be an integer, let $G$ be a graph of minimum degree at least $3$, and let~$\tau$ be a $k$-tangle in~$G$.
    Suppose that~$G$ admits an RC-decomposition with sun~$Z$ which has length $ \geq 18k$ and adhesion~$\ell$ such that $\ell + \abs{Z} \geq 1$. 
    Then there exists an edge $e \in E(G)$ such that~$\tau$ extends to a $k$-tangle $\tau'$ in~$G - e$. 
\end{theorem}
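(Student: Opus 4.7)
The strategy is to pick an edge $e$ deep inside the rainbow of the RC-decomposition, away from where $\tau$ ``lives'' in the rainbow, and to extend $\tau$ to $G - e$ by orienting each new low-order separation using the rainbow structure. For $k \in \{1,2\}$ the result is immediate from \cref{lem:Smallk1} and \cref{lem:Smallk2} applied to any edge of $G$, so we henceforth assume $k \geq 3$.

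\emph{Locating $\tau$ and choosing $e$.} The plan is to consider the slicing separations $\rho_i := \{V(R_{i,M}) \cup Z,\, V(C_{i,M})\}$ from \cref{prop:SlicesOfRAreSepsOfG}, each of order $2\ell + \abs{Z}$. If $2\ell + \abs{Z} < k$, then all $\rho_i$ are oriented by $\tau$, and the consistency of $\tau$ should yield a transition index $i^*$ such that $(V(C_{i,M}), V(R_{i,M}) \cup Z) \in \tau$ precisely when $i < i^*$. The symmetric analysis of $\sigma_j := \{V(R_{0,j}) \cup Z,\, V(C_{0,j})\}$ supplies a second transition index $j^*$. If instead $2\ell + \abs{Z} \geq k$, then by \cref{lem:OrderOfSlicingSeps} no rainbow-slicing separation of $G$ of order below $k$ lies in $\tau$, simplifying what follows. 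Since $M \geq 18k$, I would then pick a bag-index $h$ in the middle of the rainbow, far from both $i^*$ and $j^*$; the connectedness of $G[W_h]$ (\cref{itm:RainbowDecompConnected}) together with the minimum-degree-$3$ hypothesis then furnishes an edge $e$ inside $G[W_h]$ (or, in the degenerate case $\abs{W_h} = 1$, an edge incident to $W_h$ and $Z$).

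\emph{Defining $\tau'$ on $G' := G-e$.} The plan is to orient every separation of $G$ of order $< k$ in $\tau'$ as it is oriented in $\tau$. For a new separation $(A,B) \in \vS_k(G') \setminus \vS_k(G)$, the edge $e = uv$ has one endpoint in each strict side, so both $\{A \cup e, B\}$ and $\{A, B \cup e\}$ are separations of $G$. When $\abs{A,B} \leq k-2$, both have order $\leq k-1$ and are oriented identically by $\tau$ via \cref{lem:StdStuff-ShiftingAnEdge} (using $k \geq 3$), and we put $(A,B) \in \tau'$ accordingly. When $\abs{A, B} = k-1$, both extensions have order exactly $k$, so $\tau$ does not orient them; here we appeal to \cref{lem:CrossingOrSlices} applied to $\{A \cup e, B\}$ to see that the separation must cross or slice $R$, and then use \cref{prop:ObtainingNewSepsFromRainbowCrossingSeps} together with \cref{prop:ConditionForACapBGeq2lPlust} to compare $(A,B)$ with slicing separations near $h$. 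The chosen position of $h$ relative to $i^*$ and $j^*$ should then single out a unique big side for $(A,B)$, which we place in $\tau'$.

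\emph{Verification and main obstacle.} To see that $\tau'$ is a $k$-tangle, I would suppose that a $\leq$-maximal forbidden triple $\{(A_i, B_i) : i \in [3]\}$ exists in $\tau'$. Each $(A_i, B_i)$ which is not a separation of $G$ is to be modified, by shifting an endpoint of $e$ (for $\abs{A_i, B_i} \leq k - 2$) or by taking suprema with an appropriately oriented $\rho_h$ or $\sigma_h$ (for $\abs{A_i, B_i} = k - 1$), into a separation lying in $\tau$; submodularity keeps the order $< k$, while our choice of $h$ together with the consistency of $\tau$ keeps the result in $\tau$. The $G$-covering property of the triple is preserved because $e \in G[W_h]$ sits inside the modified sides, producing a forbidden triple in $\tau$, a contradiction. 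The hardest part will be the order-$(k-1)$ case: orienting these new separations coherently, and transferring forbidden triples that involve them back into $\tau$, is exactly what the rainbow-crossing and rainbow-slicing lemmas of \cref{sec:RCDecompAndSeps} are designed for, and it is here that the length bound $M \geq 18k$ is essential in order to separate the site of $e$ from $\tau$'s localisation so that the new separations are genuinely controlled by $\rho_h$ and $\sigma_h$ rather than by the cloud.
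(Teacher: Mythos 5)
Your overall strategy—delete an edge deep inside the rainbow, away from where $\tau$ is concentrated, and use the rainbow's regular structure to orient the newly arising separations—is the same as the paper's. But there are two genuine gaps in the execution.

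First, your plan for the hard case of new separations $\{A,B\} \in S_k(G') \setminus S_k(G)$ of order exactly $k-1$ is broken. You propose to apply \cref{lem:CrossingOrSlices} to $\{A\cup e, B\}$, but that lemma has a nontrivial hypothesis: both strict sides must fully contain some bag of $\cW$. There is no reason this holds for an arbitrary new separation of order $k-1$. In fact, one of the two components of $G' - (A\cap B)$ containing an endvertex of $e$ can easily sit entirely inside $W_{M/2}$, or more generally meet only a bounded number of bags without fully containing any, so neither $\{A,B\}$ nor $\{A\cup e, B\}$ crosses or slices the rainbow and none of your lemmas from \cref{sec:RCDecompAndSeps} apply. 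The paper avoids this by a different dichotomy: it first checks whether $\tau$ \emph{forces} an orientation of $\{A,B\}$ (i.e.\ some separation of $\tau$ is $\ge$ one of its orientations), and proves in \cref{lem:SepsWithForcedOrientations} that in the remaining case exactly one of the two components containing an endvertex of $e$ meets the cloud $C$; this cloud-membership criterion, not crossing/slicing, is what orients the residual separations.

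Second, your ``transition index'' idea only tracks the slicing separations $\rho_i, \sigma_j$, which corresponds to the condition \cref{itm:TauLivesInRBecauseOf1Sep} in the paper's definition of $\tau$ living in the rainbow. But you are missing the second condition \cref{itm:TauLivesInRBecauseOf2Seps}: that $\tau$ orients every rainbow-crossing separation \emph{monotonically} over the rainbow. Rainbow-crossing separations are oriented by $\tau$ as separations of $G$, so they are trivially ``forced'' in the weak sense, but what you really need in the verification step (as in \cref{lem:OrientationOfCrossingSeps}) is that the forcing separation has the additional property that a large middle portion $V(R_{2k-1, M-2k+1})$ lies on its small side—and this only follows from monotonicity. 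The paper establishes this by the dedicated shortening argument in \cref{thm:ExistenceOfAShorterRainbowThatDoesNotContainTau}, which crucially shows (via the sublemma) that all such turning points coincide, so that a single cut produces a rainbow segment free of both phenomena. Without this step, you have no way to guarantee the forcing separations used in transferring a forbidden triple back to $\tau$ all cooperate on the same side of the cut.

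A smaller issue: you also do not argue that $(R,\cW,Z,C)$ remains an RC-decomposition of $G-e$ after deleting your edge; the paper's \cref{lemma:findedgeandRC} merges three consecutive middle bags into one and then, using the minimum-degree-$3$ hypothesis, carefully picks $e$ either off the foundational linkage and on a cycle (so $G[W_{M/2}]-e$ stays connected) or as one of at least two parallel $Z$--$W_{M/2}$ connections.
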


\noindent Most of this section is devoted to the proof of~\cref{thm:AtLeastOneRainbowPath}. In the very end we combine the previous results to prove~\cref{main:SplitterTheorem}.

\subsection{Living in the rainbow}

In the proof of~\cref{thm:AtLeastOneRainbowPath}, we will delete an edge~$e$ deep inside the rainbow~$R$ of the given long RC-decomposition.
With this, we aim to make use of the regular structure of~$R$ to orient the newly arising separations in~$G' := G - e$ in such a way that we find a~$k$-tangle~$\tau'$ in~$G'$ to which~$\tau$ extends.
As we want to orient these new separations of~$G'$ in line with~$\tau$, the tangle $\tau$ should ideally be `regular' or `monotonic' along~$R$.
Then we could use this monotonicity of~$\tau$ along~$R$ to orient the newly arising separations in~$G'$ by consistency.

In this section, we extract from a given RC-decomposition another RC-decomposition such that $\tau$ has the desired monotonic behaviour along the rainbow.
More precisely, the tangle~$\tau$ will `point away' from the rainbow, which will turn out to be equivalent to the desired monotonicity, as we see below.
Formally, we enclose this in a definition of when a tangle~$\tau$ does not `live in the rainbow~$R$' of an RC-decomposition.
The intuition behind this definition is as follows.

Clearly, a tangle~$\tau$ should live in the rainbow~$R$, if there is a separation~$(A, B) \in \tau$ whose strict big side~$B \setminus A$ is contained in the rainbow $R$ and does not meet the cloud $C$.
For our proof of~\cref{thm:AtLeastOneRainbowPath}, this case is not the only relevant one:
we also need to regard~$\tau$ as living in~$R$ if it orients two separations in such a way that they point towards each other and to a piece of~$R$.
It turns out that for this second case, we can even restrict our attention to rainbow-crossing separations, as follows.

Let $\tau$ be a $k$-tangle in a graph $G$.
Consider a given RC-decomposition~$(R, \cW, Z, C)$ of~$G$ and a separation~$(A, B)$ of~$G$ of order~$k' < k$ that crosses the rainbow~$R$ clockwise or counter-clockwise.
Then $\tau$ \defn{orients~$\{A, B\}$ monotonically over the rainbow} if~$\tau$ orients all the $\{A^h, B^h\}$ with $h \in \{i_{A,B} +1, \dots, j_{A,B}\}$ such that they are pairwise comparable, i.e.\,either all  as $(A^h, B^h)$ or all as $(B^h, A^h)$.
We remark that if $\tau$ does not orient a rainbow-crossing separation $\{A,B\}$ of order less than $k$ monotonically over the rainbow $R$, then the consistency of $\tau$ ensures that there is a (unique) index $h^* \in \{0, \dots, M-1\}$ such that $(A^{h^*}, B^{h^*}) \in \tau$ and $(B^{h^*+1}, A^{h^*+1}) \in \tau$, as the $(A^h,B^h)$ form an increasing or decreasing sequence as $(A,B)$ crosses the rainbow clockwise or counter-clockwise, respectively.
We then call ${h^*}$ its \defn{turning point}; note that~$h^* \in \{0, \dots, M-1\}$.
Moreover, we say that~$\tau$ \defn{lives in the rainbow $R$} if at least one of the following holds:
    \begin{enumerate}[label=(LR\arabic*)]
        \item \label{itm:TauLivesInRBecauseOf1Sep} there is a separation $(A,B) \in \tau$ such that $B\setminus A \subseteq V(R) \setminus V(C)$, or
        \item \label{itm:TauLivesInRBecauseOf2Seps} $\tau$ orients at least one rainbow-crossing separation of order less than~$k$ not monotonically over the rainbow.
    \end{enumerate}

The next theorem asserts that we can shorten an RC-decomposition of sufficient length so that a given tangle does not live in the shortened rainbow:
\begin{theorem} \label{thm:ExistenceOfAShorterRainbowThatDoesNotContainTau}
    Let $k\ge 1$, $M \geq 6k$ and $\ell \ge 0$ be integers, and let $\tau$ be a $k$-tangle in a graph $G$. 
    If $G$ admits an RC-decomposition $(R, \cW, Z, C)$ of $G$ of length at least $M$ and of adhesion $\ell$, then there exist~$0 \le i < j \le M$ with~$j - i \ge M/2-k$ such that $\tau$ does not live in the rainbow of the RC-decomposition~$(R, \cW, Z, C)_{i,j}$.
\end{theorem}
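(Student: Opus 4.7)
The plan is to identify a short ``core'' interval $I^* \subseteq \{0,\ldots,M\}$ of size at most $2k$ such that whenever $\tau$ lives in the rainbow of $(R,\cW,Z,C)_{i,j}$, the inclusion $I^* \subseteq [i,j]$ holds. Once this is done, we set $[i,j]$ to be the longer of the two intervals $[0,\min I^* - 1]$ and $[\max I^* + 1, M]$; since $|I^*| \le 2k$ and $M \ge 6k$, the chosen interval has length $j-i \ge (M-|I^*|)/2 \ge M/2 - k$, as required.

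To locate the core I introduce a direction indicator. For each $h \in \{0,\ldots,M\}$, consider the separation $\rho_h := \{V(R_{h,M}) \cup Z,\, V(C_{h,M})\}$ from \cref{prop:SlicesOfRAreSepsOfG}, of order $2\ell + |Z|$. Writing $\rho_h = \{S_h, T_h\}$ with $S_h := V(C_{h,M})$ and $T_h := V(R_{h,M}) \cup Z$, the family is nested: $(S_h, T_h) \le (S_{h'}, T_{h'})$ whenever $h \le h'$. Consistency of $\tau$ then forces a monotonicity on its orientations: once $\tau$ contains $(T_h, S_h)$, it cannot contain $(S_{h'}, T_{h'})$ for any $h' > h$ (otherwise $(S_{h'}, T_{h'}) \le (S_h, T_h)^{-1}$ would violate consistency). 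In the main case $2\ell + |Z| < k$, all $\rho_h$ are oriented by $\tau$, so this yields a unique turning index $h^*$.

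The structural heart is a localisation lemma: if $\tau$ lives in $(R,\cW,Z,C)_{i,j}$, then $h^* \in [i,j]$ (up to a slack of $O(k)$ that will be built into $I^*$). For LR1 with a $\le$-maximal witness $(A,B) \in \tau$ and extremal bag indices $h^-, h^+$ for $B \setminus A$, one first notes that $B \setminus A \subseteq S_{h^+ + 1}$; together with the fact that the separator of $\rho_{h^+ + 1}$ lies inside $A$ (since $U_{h^+ + 1}, U_{M+1}$ and $Z$ are all disjoint from $B \setminus A$ by maximality), this forces $B \cap T_{h^+ + 1} \subseteq A \cap B$ to have size less than $k$. The supremum $(A,B) \vee (S_{h^+ + 1}, T_{h^+ + 1})$ then equals $(V(G),\, B \cap T_{h^+ + 1})$, a separation of order below $k$; the profile property combined with regularity of $\tau$ rules out $(S_{h^+ + 1}, T_{h^+ + 1}) \in \tau$, so $(T_{h^+ + 1}, S_{h^+ + 1}) \in \tau$. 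A symmetric argument gives $(S_{h^-}, T_{h^-}) \in \tau$, whence $h^- \le h^* \le h^+$ by monotonicity, placing $h^* \in [i,j]$. For LR2 with turning point $t$ under $\tau$, an analogous forbidden-triple argument using $(A^t, B^t), (B^{t+1}, A^{t+1}) \in \tau$ together with $\rho_{t+1}$ and $\rho_{q+1}$ (where $q = j_{A,B}$) pins $t$ within $O(k)$ of $h^*$, which is absorbed into $I^*$.

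The main obstacle will be the degenerate case $2\ell + |Z| \ge k$, in which the $\rho_h$ are not oriented by $\tau$ and $h^*$ is not directly defined. However, \cref{lem:OrderOfSlicingSeps} then eliminates all rainbow-slicing separations of order below $k$, so any LR2-witness must be a rainbow-crossing separation, and by \cref{itm:RCDecompEdgesToZ} every such crossing already contains $Z$ in its separator. One can then replace $\rho_h$ by an asymmetric variant that cuts only one boundary of the rainbow (of smaller order $\ell + |Z|$) to recover a direction indicator oriented by $\tau$, and carry out the same monotonicity and localisation arguments.
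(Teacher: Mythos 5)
Your high-level strategy — locate a small ``core'' interval via a monotone family of rainbow-slicing separations $\rho_h = \{V(C_{h,M}),\,V(R_{h,M})\cup Z\}$, extract a turning index $h^*$ from the consistency of $\tau$, and then argue that any surviving LR1- or LR2-witness must involve a bag index near $h^*$ — is a genuinely different organising principle from the paper's, which treats LR1 (via a $\leq$-maximal witness, Sublemma 6.3) and LR2 (via the uniqueness of turning points, Sublemma 6.4) separately. Your profile/regularity argument for LR1 in the regime $2\ell + |Z| < k$ is correct, and the LR2 step, though stated vaguely, can in fact be made precise and even strengthened: using the covering identity $G = G[A^t]\cup G[B^{t+1}]\cup G[W_{i+t}]$ together with $W_{i+t} \subseteq S_{h^*}$ (resp.\ $\subseteq T_{h^*+1}$), one gets a forbidden triple whenever $i+t < h^*$ (resp.\ $>h^*$), so actually $i+t = h^*$ exactly and no $O(k)$ slack is needed. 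Note also that you want $I^* \cap [i,j]\neq\emptyset$, not $I^* \subseteq [i,j]$, as the localisation consequence, but this is just a slip.

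The degenerate case $2\ell + |Z| \ge k$ is a genuine gap. Your proposed fix — ``replace $\rho_h$ by an asymmetric variant that cuts only one boundary of the rainbow (of smaller order $\ell + |Z|$)'' — cannot work: the cloud attaches to the rainbow at \emph{both} $U_0$ and $U_{M+1}$ (see \cref{itm:RCDecompRcapC}), so any separation of $G$ that puts some bag $W_a$ and some bag $W_b$ with $a < h \le b$ on opposite strict sides must hit a $U_0$--$U_M$ linkage of size $\ell$ on each side of $U_h$, plus all of $Z$ (by \cref{itm:RCDecompEdgesToZ}), giving order $\ge 2\ell + |Z|$ as in \cref{prop:ConditionForACapBGeq2lPlust}; there is no separation of order $\ell + |Z|$ that could serve as a directed indicator. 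With no turning index available, your localisation argument has no starting point, and you say nothing about how to localise LR1-witnesses in this regime (your degenerate-case paragraph only discusses LR2-witnesses). The paper handles exactly this case in the proof of \cref{lem:StrengtheningOfCondition1ThatTauLivesInR}: when $2\ell + |Z| \ge k$, the $\le$-maximal LR1-witness $(A,B)$ cannot have $B\setminus A$ containing a full bag (by \cref{prop:ConditionForACapBGeq2lPlust}), hence $B\setminus A$ meets at most $2k-2$ bags and, since $G[B\setminus A]$ is connected, these are consecutive, yielding the short interval directly from connectivity rather than from a nested slicing family. You would need an argument of this kind — independent of the $\rho_h$ — to close the gap.
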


\begin{proof}
    If $\tau$ does not live in $R$, then we are done by setting $i := 0$ and $j := M$; so suppose that $\tau$ lives in $R$.
    By definition, there are two possible reasons for that, \cref{itm:TauLivesInRBecauseOf1Sep} and~\cref{itm:TauLivesInRBecauseOf2Seps}, which we will treat separately.

    First, suppose that~$\tau$ lives in~$R$ because of \cref{itm:TauLivesInRBecauseOf1Sep}, i.e.\,there is a separation $(A, B) \in \tau$ with $B \setminus A \subseteq V(R\setminus C)$.
    Then there also exists such a separation in $\tau$ with an even stronger property:
    \begin{sublemma} \label{lem:StrengtheningOfCondition1ThatTauLivesInR}
        There is a separation~$(X,Y) \in \tau$ such that~$Y \setminus X \subseteq \bigcup_{t = r}^s W_t$ for integers~$r \leq s$ with~$s - r < 2k-2$. 
    \end{sublemma}
    \begin{proof}
        First, suppose that $2\ell + \abs{Z} < k$.
        Then $\{V(C), V(R) \cup Z\}$ is a separation of $G$ of order~$2\ell + |Z| < k$ and hence the $k$-tangle $\tau$ has to contain one of its orientations.
        By the consistency of the tangle $\tau$, $(V(C), V(R) \cup Z) \leq (A, B) \in \tau$ yields $(V(C), V(R) \cup Z) \in \tau$. 
        Every `slice' $\{V(R_{h,h}) \cup Z, V(C_{h,h})\}$ of the rainbow~$R$ with~$h \in \{0, \dots, M\}$ is also a separation of $G$ of order~$2\ell + |Z| < k$ by~\cref{prop:SlicesOfRAreSepsOfG}.
        
        Suppose for a contradiction that they are all oriented as~$(V(R_{h,h}) \cup Z, V(C_{h,h}))$ by~$\tau$.
        By iteratively using the fact that the tangle $\tau$ avoids the triples in $\cT$ and for every $0\leq h \leq i \leq j \leq M$ the separations $(V(R_{h,i} \cup Z, V(C_{h,i}))$, $(V(R_{i,j}) \cup Z, V(C_{i,j}))$ and $(V(C_{h,j}), V(R_{h,j} \cup Z)\}$ of order $2 \ell + |Z|$ form a triple in $\cT$, we obtain that~$(V(R_{0,M} \cup Z, V(C_{0,M})) = (V(R) \cup Z, V(C))$ is in~$\tau$, which contradicts that its other orientation is also in $\tau$.
        Therefore, there exists $h \in \{0,\dots,M\}$ with $(V(C) \cup (V(R) \setminus W_h) \cup U_{h} \cup U_{h+1}, W_h \cup Z) \in \tau$, which is the desired separation~$(X, Y)$ with $r := h =: s$.
        
        Secondly, suppose that $2\ell + \abs{Z} \geq k$, and let $(A,B) \in \tau$.
        We may assume that the separation $(A,B) \in \tau$ with $B \setminus A \subseteq V(R) \setminus V(C)$ is $\leq$-maximal in $\tau$ with that property; in particular, $G[B \setminus A]$ is connected.
        Then $B \setminus A$ cannot contain a bag of $\cW$ by~\cref{prop:ConditionForACapBGeq2lPlust}, since $B \setminus A \subseteq V(R) \setminus V(C)$ yields that $U_0, U_{M+1}, Z \subseteq V(C) \subseteq A$, and~$\{A, B\}$ has order~$< k \leq 2\ell + \abs{Z}$.
        Thus, every bag which meets $B \setminus A$ also meets the separator $A \cap B$.
        Thus, \cref{prop:SomePartLiesOnAStrictSide} ensures that at most~$2k-2$ bags of~$\cW$ meet~$B \setminus A$.
        Further, since~$G[B \setminus A] \subseteq R$ is connected and each bag $W_i$ separates $R$, the bags which are met by $B \setminus A$ are consecutive bags of~$\cW$.
        Thus, $(X, Y) := (A, B)$ is the desired separation, as witnessed by the respective indices~$r$ and~$s$ of the first and last bag of~$\cW$ which are met by~$B\setminus A$.
    \end{proof}

    Let~$(X, Y) \in \tau$ and~$r \leq s$ with $s - r < 2k-2$ be given by~\cref{lem:StrengtheningOfCondition1ThatTauLivesInR}.
    Now if $r > \frac{M}{2}-k$, then we set~$i := 0$ and~$j := r-1$, and otherwise, if $r \leq \frac{M}{2}-k$, then we set~$i := s+1$ and~$j := M$.
    We remark that in the latter case $s < \frac{M}{2} + k$.
    By~\cref{prop:ShorteningTheRainbow}, $(R, \cW, Z, C)_{i,j}$ is an RC-decomposition of~$G$ of adhesion~$\ell$ and length~$j - i \geq \frac{M}{2}-k$. 
    We claim that~$\tau$ does not live in~$R_{i,j}$, as desired.
    
    Since $Y \setminus X \subseteq \bigcup_{t = r}^s W_t$ and therefore $Y\setminus X$ is disjoint from $R_{i,j}$ by definition of $i,j$, we have~$V(R_{i,j}) \subseteq X$. 
    Thus, every two separations~$(X_1, Y_1), (X_2, Y_2)$ of~$G$ with $(Y_1 \cap Y_2) \setminus (X_1 \cup X_2) \subseteq V(R_{i,j})$ form a forbidden triple together with~$(X,Y)$.
    But every separation as in \cref{itm:TauLivesInRBecauseOf1Sep} (taken as both~$(X_i, Y_i)$) as well as every pair of separations witnessing the non-monotonicity in~\cref{itm:TauLivesInRBecauseOf2Seps} are such two separations.
    Therefore, $\tau$ cannot live in $R_{i,j}$, as desired. 
    
    Secondly, suppose that $\tau$ does live in the rainbow~$R$ because of~\cref{itm:TauLivesInRBecauseOf2Seps}, i.e.\,there is a separation of $G$ of order~$<k$ that crosses the rainbow $R$ and $\tau$ does not orient it monotonically over the rainbow $R$.
    It turns out that all such separations $\{A,B\}$ have the same turning point: 
    \begin{sublemma}\label{lem:AboutTheTurningPointsOfRainbowCrossingSeps}
        If a rainbow-crossing separation $\{A,B\}$ which $\tau$ does not orient monotonically over the rainbow~$R$ has turning point~$h^*$, then the turning point of every such separation is ${h^*}$.
    \end{sublemma}
    \begin{proof}
        Suppose for a contradiction that there is another rainbow-crossing separation~$\{X,Y\}$ which $\tau$ does not orient monotonically over the rainbow~$R$ and has turning point~$h' \neq h^*$. 
        Let $(A,B)$ and $(X,Y)$ be the orientations which cross the rainbow clockwise.
        By possibly interchanging $\{A,B\}$ and $\{X,Y\}$, we may assume~$h^* < h'$.
        Now the definition of the $(X^h,Y^h)$ immediately yields that~$W_{h^*} \subseteq X^{h'}$.
        But it also guarantees that $G = G[A^{h^*}] \cup G[B^{h^*+1}] \cup G[W_{h^*}]$.
        Thus $(X^{h'},Y^{h'})$ together with $(A^{h^*}, B^{h^*})$ and $(B^{h^*+1}, A^{h^*+1})$ forms a forbidden triple in $\tau$, which is a contradiction.
    \end{proof}

    If ${h^*} \ge M/2$, then we set $i = 0$ and~$j = {h^*}-1$, and if ${h^*} < M/2$, then we set $i = {h^*}+1$ and~$j = M$.
    By~\cref{prop:ShorteningTheRainbow}, $(R, \cW, Z, C)_{i,j}$ then is an RC-decomposition of~$G$ of length~$j - i \geq M/2 - 1$ and adhesion~$\ell$.
    We claim that~$\tau$ does not live in~$R_{i,j}$.
    Since~$\tau$ does not live in~$R$ because of~\cref{itm:TauLivesInRBecauseOf1Sep} by assumption, $\tau$ does neither live in~$R_{i,j} \subseteq R$ because of~\cref{itm:TauLivesInRBecauseOf1Sep}, either.
    Moreover, $\tau$ can also not live in $R_{i,j}$ because of~\cref{itm:TauLivesInRBecauseOf2Seps}: 
    Indeed, every separation that crosses~$R_{i,j}$ also crosses~$R$, and if $\tau$ does not orient it monotonically over~$R_{i,j}$, it does not do so over~$R$, as well.
    But since all such separations have turning point~${h^*}$ with respect to~$R$ by~\cref{lem:AboutTheTurningPointsOfRainbowCrossingSeps}, the choice of~$i$ and~$j$ ensure that $\tau$ orients them all monotonically over~$R_{i,j}$ by the choice of~$i$ and~$j$, as desired.
\end{proof}

\subsection{Deleting an edge and orienting the new separations} \label{subsec:ProofOfAtLeastOneRainbowPathThm}
This subsection is dedicated to the proof of~\cref{thm:AtLeastOneRainbowPath}.
We remark that this section is the only part of the proof of \cref{main:SplitterTheorem} for graphs~$G$ which do not have a $(k+1)$-tangle in which we make use of the assumption that $G$ has no vertex of degree $\leq 2$.
In the remainder of this section, we will always assume that the given graph satisfies the premise of \cref{thm:AtLeastOneRainbowPath}:

\begin{setting}\label{setting}
    Let $G$ be a graph of minimum degree at least $3$ that admits an RC-decomposition with sun $Z$ which has length $M_0$ and adhesion $\ell$ such that $\ell + \abs{Z} \geq 1$.
    Let $\tau$ be a $k$-tangle in $G$ with $k \geq 1$.
\end{setting}

First we find our desired edge deep inside some rainbow such that the RC-decomposition is unaffected by its deletion:

\begin{lemma}\label{lemma:findedgeandRC}
    If we assume \cref{setting} with $M_0 \geq 6k$, then there is an edge $e$ of $G$ and an RC-decomposition $(R,\cW,Z,C)$ of $G$ which has even length $M \geq M_0/2 - k -2$ and adhesion $\ell$ such that $\tau$ does not live in the rainbow and $e$ has one endvertex in $W_{M/2}$ and one in $W_{M/2}$ or $Z$.
    Moreover, $(R,\cW,Z,C)$ is also an RC-decomposition of $G-e$ after deleting $e$ from $R$, if $e$ has both its endvertices in $W_{M/2}$.
\end{lemma}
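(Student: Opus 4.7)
First, I would apply~\cref{thm:ExistenceOfAShorterRainbowThatDoesNotContainTau} to obtain, from the given RC-decomposition of length $M_0 \ge 6k$, indices $0 \le i < j \le M_0$ with $j - i \ge M_0/2 - k$ such that $\tau$ does not live in the rainbow of $(R,\cW,Z,C)_{i,j}$. If $j - i$ is odd, I would drop one further bag from one end to obtain an even length $M \ge M_0/2 - k - 1$, and verify that $\tau$ still does not live in the rainbow of this further-shortened decomposition: condition~\cref{itm:TauLivesInRBecauseOf1Sep} transfers downward because $V(R') \setminus V(C') \subseteq V(R) \setminus V(C)$ when we shorten, and any rainbow-crossing separation of the shorter rainbow that is oriented non-monotonically would, using the same witnessing bags, also be a rainbow-crossing separation of the longer rainbow oriented non-monotonically, contradicting the property for~$(R,\cW,Z,C)_{i,j}$.

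With the RC-decomposition $(R,\cW,Z,C)$ of even length $M \ge M_0/2 - k - 2$ in hand, I would locate $e$ by a case distinction on the sun $Z$. If $Z \neq \emptyset$, then by~\cref{itm:RCDecompEdgesToZ} any $z \in Z$ is adjacent to some $w \in W_{M/2}$, and $e := zw$ is as required; the moreover clause is then vacuous since $e$ does not have both endvertices in $W_{M/2}$. Otherwise $Z = \emptyset$ and $\ell \ge 1$, and I would pick $e$ inside $R[W_{M/2}]$ whose deletion preserves both the connectedness of $R[W_{M/2}]$ and the existence of a $U_{M/2}$--$U_{M/2+1}$ linkage of cardinality~$\ell$. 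The minimum-degree hypothesis is decisive here: any vertex $v \in W_{M/2} \setminus (U_{M/2} \cup U_{M/2+1})$ lies in no other bag of $\cW$ and has no $Z$-neighbours, so all its $G$-neighbours sit in $W_{M/2}$, forcing $\deg_{R[W_{M/2}]}(v) \ge 3$. Were $R[W_{M/2}]$ a tree, all its leaves would have to lie in $U_{M/2} \cup U_{M/2+1}$ (an interior leaf would have degree one in~$G$), and the handshake lemma applied to the tree together with the degree-three condition on interior vertices produces a contradiction whenever $W_{M/2}$ has at least one interior vertex.

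The main obstacle is the degenerate case $|W_{M/2}| = 2\ell$, where $W_{M/2}$ has no interior vertex and $R[W_{M/2}]$ may indeed be a tree. I would handle this by exploiting the slack in the length bound: $M \ge M_0/2 - k - 2$ permits several choices for the central index, and the minimum-degree constraint propagates along the rainbow (a minimal bag of size $2\ell$ forces its neighbouring bags to be strictly larger to absorb the missing neighbours), so by shifting the middle index within the admissible range one can guarantee that the chosen middle bag contains an interior vertex. Then the tree argument delivers a cycle in $R[W_{M/2}]$, and I would select $e$ to be a non-cut edge on that cycle which additionally lies off some minimum $U_{M/2}$--$U_{M/2+1}$ vertex-cut of $R[W_{M/2}]$, so that Menger's theorem preserves the $\ell$-linkage in $R[W_{M/2}] - e$. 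All RC-decomposition properties other than~\cref{itm:RainbowDecompLinkage} and~\cref{itm:RainbowDecompConnected} depend only on the vertex sets $W_i$, $Z$, and the cloud $C$, and are therefore untouched by deleting an edge inside $R[W_{M/2}]$; these two remaining properties are preserved for $W_{M/2}$ by the careful choice of $e$ and are unchanged for every other bag, which together yields the moreover statement.
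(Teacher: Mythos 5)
The first half of your proposal is fine (applying \cref{thm:ExistenceOfAShorterRainbowThatDoesNotContainTau} and noting that `not living in the rainbow' survives further shortening by \cref{prop:ShorteningTheRainbow}). But the $Z=\emptyset$ case contains several genuine gaps, all traceable to the fact that you skipped the paper's key manoeuvre: after shortening to even length $M'$, the paper \emph{merges} the three middle bags $W'_{M'/2-1}, W'_{M'/2}, W'_{M'/2+1}$ into a single bag $W_{M/2}$. This merging guarantees that $W_{M/2}$ has interior vertices --- namely all of $U'_{M'/2}$ and $U'_{M'/2+1}$, which are disjoint from the new $U_{M/2} = U'_{M'/2-1}$ and $U_{M/2+1} = U'_{M'/2+2}$ by \cref{itm:RainbowDecompDistinctBags} and \cref{itm:LinDecomp2}. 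So the ``degenerate case'' you worry about never occurs, and the whole second half of your argument is obviated.

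Without merging, the degenerate case $|W_{M/2}|=2\ell$ genuinely can occur, and your attempt to rule it out fails in two places. First, the claim that ``a minimal bag of size $2\ell$ forces its neighbouring bags to be strictly larger'' is false: with $Z=\emptyset$ a vertex $v\in U_i$ has all its $G$-neighbours in $W_{i-1}\cup W_i = U_{i-1}\cup U_i\cup U_{i+1}$, a set of size $3\ell-1$ excluding $v$, which is $\geq 3$ as soon as $\ell\geq 2$; so an entire stretch of bags can have size exactly $2\ell$ while respecting minimum degree $3$. Second, the tree-plus-handshake argument is broken: a tree can certainly have a single interior vertex of degree $\geq 3$ as long as it has $\geq 3$ leaves, and with up to $2\ell$ potential leaves in $U_{M/2}\cup U_{M/2+1}$ the constraint $\sum_{\deg\geq 3}(\deg v - 2) = L-2$ only gives a contradiction when the number of interior vertices exceeds $2\ell-2$, not when it is $\geq 1$.

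Finally, even granting a cycle in $R[W_{M/2}]$, choosing $e$ ``off some minimum $U_{M/2}$--$U_{M/2+1}$ vertex-cut'' does not obviously preserve the $\ell$-linkage: deleting an edge can change which vertex sets are cuts, so Menger gives no control. The paper sidesteps this entirely by choosing $e$ off a fixed foundational linkage $\cP$, so $\cP$ itself survives in $G[W_{M/2}]-e$; it then handles the possibility that $e$ is a cut-edge of $G[W_{M/2}]$ via a separate argument (the component of $G[W_{M/2}]-e$ missing $V(\cP)$ is actually a full component of $G-e$, which by minimum degree $3$ contains a cycle on which one can instead choose $e$). None of these subtleties appear in your proposal.
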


\begin{proof}
    \cref{thm:ExistenceOfAShorterRainbowThatDoesNotContainTau} ensures that $G$ admits an RC-decomposition~$(R', \cW', Z, C')$ of length $M' \geq M_0/2 -k$ and adhesion~$\ell$ such that~$\tau$ does not live in the rainbow $R'$; by~\cref{prop:ShorteningTheRainbow}, we may assume without loss of generality that~$M'$ is even.%
    \COMMENT{The WLOG part is justified because $8k-6$ is even, so if $M'$ is odd, then $M' \geq 8k-5$.}
    To find the desired edge $e$, we merge the middle bags $W'_{M'/2 - 1}$, $W'_{M'/2}$, $W'_{M'/2+1}$  of $\cW'$ into one bag~$W_{M/2}$ in $\cW$ to make it robust against the deletion of an edge, and keep all other bags of~$\cW'$ in~$\cW^*$.
    It is immediate that this yields an RC-decomposition $(R,\cW, Z, C)$ in whose rainbow $\tau$ does not live, where $R := R', C := C'$; in particular, it has length $M = M'-2$ and adhesion~$\ell$.
    
    First, assume that~$|Z| \ge 1$.
    We fix an edge~$E$ of $G$ between~$Z$ and~$W_{M/2}$, which exists by~\cref{itm:RCDecompEdgesToZ}.
    Note that we still have~$Z \subseteq N_{G'}(W_{M/2})$:
    Since~$(R', \cW', Z, C')$ is an RC-decomposition, each of the~$Z \subseteq N_G(W'_{M'/2+i})$ for every $i \in \{-1,0,+1\}$ by \cref{itm:RCDecompEdgesToZ} and~$W'_{M'/2-1} \cap W'_{M'/2+1} = \emptyset$ by~\cref{itm:RainbowDecompDistinctBags}.
    Thus, each vertex of $Z$, in particular the endvertex of $e$ in $Z$, is joined to~$W_{M/2}$ by two distinct edges of~$G$ of which one persists in~$G-e$.
    Thus, it is now easy to see that $(R, \cW, Z, C)$ is also an RC-decomposition of $G-e$.
    
    Secondly, assume that~$Z = \emptyset$.
    Then $|Z| + \ell \geq 1$ ensures that~$\ell \ge 1$.
    By~\cref{itm:RainbowDecompLinkage}, there exists a~$U_{M/2}$--$U_{M/2+1}$ linkage~$\cP$ of cardinality~$\ell$ in~$G[W_{M/2}]$; note that $U'_{M'/2+i} \subseteq V(\bigcup \cP)$ for every $i \in \{-1,0,+1,+2\}$ by the construction of~$W_{M/2}$ and since~$(R', \cW', Z', C')$ also has adhesion~$\ell$.
    We aim to fix an edge~$e$ of $G[W_{M/2}] - E(\bigcup \cP)$ such that~$G[W_{M/2}] - e$ is still connected. 
    If this is possible, it is easy to check that $(R - e, \cW, Z, C)$ is an RC-decomposition of $G-e$, i.e.\,~$e$ is as desired.
    We claim that such an edge~$e$ exists: 
    
    Consider any vertex~$v \in U'_{M'/2}$, which exists as $\ell \geq 1$. 
    All its neighbours and itself are contained in~$W'_{M'/2 - 1} \cup W'_{M'/2} \subseteq W_{M/2}$, as $Z = \emptyset$ and by the definition of RC-decompositions, in particular by~\cref{itm:RainbowDecompDistinctBags}.
    Since $v$ has degree at least $3$ by assumption on $G$ and every vertex has at most~$2$ incident edges in the linkage~$\cP$, there is a neighbour~$w$ of~$v$ with $vw \notin E(\cP)$.
    If~$G[W^*_{M^*/2}] - vw$ is connected, $e := vw$ is as desired.
    So assume that $G[W_{M/2}] - vw$ is disconnected.
    We note that the linkage~$\cP$ is contained in one component of~$G[W_{M/2}] - vw$, since~$G[W'_{M'/2+1}] \subseteq G[W_{M/2}]$ is connected by~\cref{itm:RainbowDecompConnected}, meets every path in~$\cP$ as it contains $U'_{M'/2 +1}$ and also avoids $vw$ by the choice of $vw$.
    Thus, the component~$C$ of~$G[W_{M/2}] - vw$ containing~$w$ does not meet~$V(\cP)$, as~$v \in U'_{M'/2} \subseteq V(\cP)$.
    Now~$U_{M/2} \cup U_{M/2 - 1} = U'_{M'/2 + 2} \cup U'_{M'/2 -1} \subseteq V(\cP)$ is the separator of a separation of~$G$ with precisely~$W_{M/2}$ on one of its sides by~\cref{prop:SlicesOfRAreSepsOfG}.
    Thus, $C$ is not only a component of~$G[W_{M/2} - vw]$, but also a component of~$G - vw$, since $C$ avoids $\cP$.
    In particular, all vertices in~$C$ have degree at least~$2$ in $C$ as~$G$ has minimum degree~$3$.
    Hence, $C$ contains a cycle, and we fix~$e$ as an edge on this cycle.
    Then~$C-e$ is still connected which by the previous description implies that also~$G[W_{M/2}] - e$ is connected, as desired.
\end{proof}

We emphasise that in \cref{lemma:findedgeandRC} the possibly removed edge~$e$ from~$R$ is the only difference between the two RC-decompositions of~$G$ and~$G' := G-e$; in particular, all corresponding vertex sets such as~$V(R)$ and the~$V(R_{i,j})$ are the same, and a separation crosses or slices the rainbow in~$G'$ if and only if it does so in~$G$.

For the proof of~\cref{thm:AtLeastOneRainbowPath}, we have to construct an orientation~$\tau'$ of~$S_k(G')$ that is a~$k$-tangle in~$G'$ and extends~$\tau$.
Clearly, any such extension~$\tau'$ has to contain not only~$\tau$, but also all orientations of separations of~$S_k(G')$ that are forced by the request that~$\tau'$ shall again be a tangle and hence especially consistent.
More formally, recall that~$\tau$ \defn{forces} an orientation of a separation~$\{A,B\}$ of~$G'$ if there exists a separation~$(E,F) \in \tau$ such that~$(A,B) \leq (E,F)$ or~$(B,A) \leq (E,F)$ (cf.\ the proof of~\cref{lem:AnotherExtendingTangle}).
Let us also recall that $\tau$ forces an orientation of every separation in $S_k(G')$ that is also a separation of $G$.

Towards a suitable construction of~$\tau'$, we now prove that rainbow-crossing and rainbow-slicing separations are forced by~$\tau$ (\cref{lem:OrientationOfCrossingSeps} and~\cref{lem:OrientationOfSlicingSeps}, respectively).
This then allows us to show that an even broader class of separations are forced by~$\tau$ (\cref{lem:SepsWithForcedOrientations}).
For all these subsequent three lemmas, we assume the following setting:

\begin{setting}\label{nextsetting}
    Assume \cref{setting} with $M_0 \geq 18k$.
    Fix some edge $e$ of $G$ and some RC-decomposition $(R, \cW, Z, C)$ of $G$ of length $M$ which \cref{lemma:findedgeandRC} yields.
    In particular, $M \geq 8k$.
\end{setting}

\begin{lemma} \label{lem:OrientationOfCrossingSeps}
    Assume \cref{nextsetting}.
    If~$\{A,B\} \in S_k(G')$ crosses the rainbow, then there is~$(X,Y) \in \tau$ with~$(A,B) \leq (X,Y)$ or~$(B,A) \leq (X,Y)$ such that~$V(R_{2k-1, M-2k+1}) \subseteq X$.
	In particular, $\tau$ forces an orientation of~$\{A,B\}$.
\end{lemma}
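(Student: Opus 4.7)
I would assume without loss of generality that $(A,B)$ crosses the rainbow clockwise, and set $i := i_{A,B}$ and $j := j_{A,B}$. Since $|A,B| \leq k-1$, the definition of rainbow-crossing forces $i \leq 2(k-1) \leq 2k-1$ and $j \geq M-2(k-1) \geq M-2k+2$, which will give the crucial inclusions $V(R_{2k-1, M-2k+1}) \subseteq V(R_{i, j-1}) \subseteq A^j$ and $V(R_{2k-1, M-2k+1}) \subseteq V(R_{i+1, j}) \subseteq B^{i+1}$ needed for the conclusion. My plan is to show that the shifted family $(A^h, B^h)$ for $h \in \{i+1, \ldots, j\}$ from \cref{sec:RCDecompAndSeps} consists of separations of $G$ of order $|A,B| < k$ that $\tau$ orients monotonically, and then to take $(X,Y) := (A^j, B^j)$ in the clockwise case or $(X,Y) := (B^{i+1}, A^{i+1})$ in the counter-clockwise case.

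The first task is to show that each $\{A^h, B^h\}$ is a separation of $G$. If $\{A,B\}$ is itself a separation of $G$, this is exactly \cref{prop:ObtainingNewSepsFromRainbowCrossingSeps}. Otherwise $\{A,B\}$ is a separation only of $G' = G-e$, so $e = uv$ with $u \in A\setminus B$ and $v \in B\setminus A$, and by \cref{lemma:findedgeandRC} both endpoints of $e$ lie in $W_{M/2} \cup Z$. I would argue that the case of an endpoint lying in $Z$ is vacuous: since $(R,\cW,Z,C)$ remains an RC-decomposition of $G'$, \cref{itm:RCDecompEdgesToZ} gives $Z \subseteq N_{G'}(W_i) \cap N_{G'}(W_j)$, so such a $Z$-endpoint would have $G'$-neighbours in both strict sides of $\{A,B\}$, contradicting $\{A,B\}$ being a separation of $G'$. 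Hence both endpoints of $e$ lie in $W_{M/2}$; as $M \geq 8k$ ensures $i < M/2 < j$, the bag $W_{M/2}$ is contained in $V(R_{i,h-1}) \subseteq A^h$ whenever $h > M/2$ and in $V(R_{h,j}) \subseteq B^h$ whenever $h \leq M/2$, so both endpoints of $e$ sit on the same side of $\{A^h, B^h\}$ and the latter is indeed a separation of $G$.

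The second task is monotonicity. When $\{A,B\}$ is a separation of $G$, this is immediate from \cref{itm:TauLivesInRBecauseOf2Seps} since $\tau$ does not live in the rainbow. When $\{A,B\}$ is only a separation of $G'$, a hypothetical turning point $h^* \in \{i+1, \ldots, j-1\}$ would yield $(A^{h^*}, B^{h^*}) \in \tau$ and $(B^{h^*+1}, A^{h^*+1}) \in \tau$, but $(A^{h^*+1}, B^{h^*+1})$ is itself a rainbow-crossing separation of $G$ of order $<k$, and a direct calculation shows that its own shifted family agrees with the $(A^h, B^h)$ at adjacent indices (up to a small piece living in $V(R_{i,h^*}) \cap V(C_{i,j})$), so applying \cref{itm:TauLivesInRBecauseOf2Seps} to $(A^{h^*+1}, B^{h^*+1})$ reproduces the same forbidden non-monotonicity. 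Once monotonicity is in hand, the separations $(A^j, B^j)$ and $(B^{i+1}, A^{i+1})$ satisfy the forcing and inclusion requirements by construction, and the ``in particular'' clause follows by definition of forcing. The main obstacle I anticipate is precisely this last monotonicity step in the case where $\{A,B\}$ is only a separation of $G'$, since \cref{itm:TauLivesInRBecauseOf2Seps} is phrased for rainbow-crossing separations of $G$, so one must carefully verify the coincidence of shifted families and control the tiny cloud-adhesion discrepancy.
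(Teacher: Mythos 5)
Your proposal is correct and follows essentially the same route as the paper: show the $(A^h,B^h)$ are separations of $G$ (endpoints of $e$ land on the same side since $W_{M/2}$ sits on one side and $Z$ in the separator), invoke the non-living-in-the-rainbow hypothesis for monotonicity, then take $(A^j,B^j)$ or $(B^{i+1},A^{i+1})$ as $(X,Y)$. The one point you flag — that when $\{A,B\}\notin S_k(G)$ the monotonicity hypothesis \labelcref{itm:TauLivesInRBecauseOf2Seps} must be applied to some $(A^h,B^h)$ and one must check its shifted family agrees with the original chain — is indeed a step the paper elides (it simply asserts ``the separation $\{A,B\}$ of $G$ is oriented monotonically''), and your calculation verifies this coincidence exactly, with no residual cloud-adhesion discrepancy.
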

\begin{proof}
    By possibly reversing the rainbow, we may assume that~$(A,B)$ crosses the rainbow clockwise.
    Let~$k' < k$ be the order of~$\{A,B\}$, and set~$i := i_{A,B} \in \{0, \dots, 2k'\}$ and~$j := j_{A,B} \in \{M-2k', \dots, M\}$. 
    By~\cref{prop:ObtainingNewSepsFromRainbowCrossingSeps}, $(A^{i+1}, B^{i+1}), (A^j, B^j)$ are again separations of~$G'$ of order at most~$k'$ which cross the rainbow clockwise.
    They are also separations of~$G$ by the choice of~$e$:
    The bag~$W_{M/2}$ is in~$B^{i+1} \cap A^j$, since~$i \leq 2k' \le M/2 \le M - 2k' \leq j$ because~$M \geq 4k$ and $k' \leq k-1$.
    As both $(A^{i+1}, B^{i+1}), (A^j, B^j)$ cross the rainbow, $Z$ is contained in each of their separators.
    Thus, both endvertices of~$e$ are in any of the cases contained in~$B^{i+1} \cap A^j$, as claimed.

    Since~$\tau$ does not live in the rainbow by the choice of~$(R,\cW, Z, C)$, the separation~$\{A, B\}$ of~$G$ is oriented by~$\tau$ monotonically over the rainbow.
    In particular, we have either~$(A^{i+1}, B^{i+1}), (A^j, B^j) \in \tau$ or~$(B^{i+1}, A^{i+1}), (B^j, A^j) \in \tau$.
    By definition of the $(A^h,B^h)$ and the choice of $i,j$, we have $(A^{i+1}, B^{i+1}) \le (A, B) \le (A^j, B^j)$ and~$V(R_{2k'+1,M-2k'-1}) \subseteq B^{i+1}, {A}^j$.
    So either~$({A}^j, {B}^j)$ or~$({B}^{i+1}, {A}^{i+1})$ can be chosen as the desired separation~$(X,Y) \in \tau$, as $k' < k$.%
    \COMMENT{Formally, we also need that~$R_{2k'+1,M-2k'+1} \supseteq R_{2k+1,M-2k+1}$.}
\end{proof}
	
\begin{lemma}\label{lem:OrientationOfSlicingSeps}
    Assume \cref{nextsetting}.
    If~$\{A,B\} \in S_k(G')$ slices the rainbow, then there is~$(X,Y) \in \tau$ with~$(A,B) \leq (X,Y)$ or~$(B,A) \leq (X,Y)$ such that~$V(R) \subseteq X$.
	In particular, $\tau$ forces an orientation of~$\{A,B\}$.
\end{lemma}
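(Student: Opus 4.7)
The strategy is to construct the required $(X, Y)$ as the supremum of $(A, B)$ and the `total cut' $(V(R) \cup Z, V(C))$ in the separation lattice of $G'$, after first showing that this total cut lies in $\tau$. By \cref{prop:SlicesOfRAreSepsOfG}, $\{V(R) \cup Z, V(C)\}$ is a separation of $G$ of order $2\ell + |Z|$, and \cref{lem:OrderOfSlicingSeps} gives $|A, B| \geq 2\ell + |Z|$, so $2\ell + |Z| < k$ and $\tau$ orients $\{V(R) \cup Z, V(C)\}$. If it were oriented as $(V(C), V(R) \cup Z)$, then using $Z \subseteq V(C)$ its strict big side would be $V(R) \setminus V(C)$, contradicting \cref{itm:TauLivesInRBecauseOf1Sep} since $\tau$ does not live in the rainbow by the choice of the RC-decomposition from \cref{lemma:findedgeandRC}. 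Hence $(V(R) \cup Z, V(C)) \in \tau$.

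After relabelling if necessary so that $W_i, W_j$ lie in the $A$-side of the slicing, I would set $(X, Y) := (A \cup V(R) \cup Z, B \cap V(C))$, which is the supremum of $(A, B)$ and $(V(R) \cup Z, V(C))$ in $\vU(G')$, and bound $|X, Y|$ via the sup-inf identity $|X, Y| + |A \cap (V(R) \cup Z), B \cup V(C)| = |A, B| + |V(R) \cup Z, V(C)|$. Note that $Z \subseteq A \cap B$: by \cref{itm:RCDecompEdgesToZ}, $Z$ is adjacent in $G$ to both $W_i \subseteq A \setminus B$ and $W_h \subseteq B \setminus A$, and these adjacencies survive in $G'$ by the choice of $e$ in \cref{lemma:findedgeandRC}, so the $G'$-separation property of $\{A, B\}$ forces $Z \subseteq A \cap B$. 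Then the infimum separation of $G'$ satisfies the hypotheses of \cref{prop:ConditionForACapBGeq2lPlust} (with the same indices $i < h < j$, and with $Z$ contained in the small side $A \cap (V(R) \cup Z)$), whose argument applies to $G'$ since the foundational linkage is preserved in $G'$ by the choice of $e$. This yields that the infimum has order at least $2\ell + |Z|$, hence $|X, Y| \leq |A, B| < k$.

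Next I would observe that $\{X, Y\}$ is in fact a separation of $G$, not just of $G'$: both endvertices of $e$ lie in $W_{M/2} \cup Z \subseteq V(R) \cup Z \subseteq X$ by \cref{lemma:findedgeandRC}, so $e \in G[X]$. Since $(V(R) \cup Z, V(C)) \leq (X, Y)$ and $(V(R) \cup Z, V(C)) \in \tau$, the consistency of $\tau$ yields $(X, Y) \in \tau$. Finally, $V(R) \subseteq X$ and $(A, B) \leq (X, Y)$ hold by construction, so $(X, Y)$ is the desired separation in $\tau$; in particular, $\tau$ forces the orientation $(A, B)$ of $\{A, B\}$.

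The main obstacle is establishing the order bound $|X, Y| < k$: this requires applying \cref{prop:ConditionForACapBGeq2lPlust} to an infimum that is only known to be a separation of $G'$, which relies on the preservation of the foundational linkage in $G'$ as guaranteed by the specific choice of $e$ in \cref{lemma:findedgeandRC}. Once this is in place, the remaining steps are routine consequences of the lattice identity for sup and inf together with the consistency of the tangle $\tau$.
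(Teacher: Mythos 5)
The structural idea — taking the supremum of $(A,B)$ with the total cut $(V(R)\cup Z, V(C))$ — matches the paper, and your order bound via submodularity together with \cref{prop:ConditionForACapBGeq2lPlust} applied to the infimum in $G'$ is a valid alternative to the paper's direct computation. But the key step is wrong: you conclude $(X,Y)\in\tau$ from $(V(R)\cup Z,V(C))\in\tau$, $(V(R)\cup Z,V(C))\le(X,Y)$, and ``consistency''. Consistency of a tangle is \emph{downward} closure: if $(C,D)\in\tau$ and $(P,Q)\le(C,D)$ with $\{P,Q\}$ of order $<k$, then $(P,Q)\in\tau$. It does \emph{not} propagate upward. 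Indeed, take a long path and the $2$-tangle $\tau$ at the block $\{v_1,v_2\}$: $(\{v_1\},V(G))\in\tau$ by regularity, yet $(\{v_1,v_2,v_3\},\{v_3,\dots,v_n\})\notin\tau$ even though the former is $\le$ the latter. So from your two facts alone it can well happen that $(Y,X)\in\tau$.

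The paper repairs exactly this. It replaces $(A,B)$, which is only a separation of $G'$ and thus not oriented by $\tau$, with the $G$-separation $(A',B'):=(A\cup V(D), B\setminus V(D))$, where $D$ is the component of $G'[B\setminus A]$ containing the offending endvertex of $e$. Since $\{A',B'\}$ has order $<k$, $\tau$ orients it, and then the \emph{profile property} — not consistency — applies: the supremum of $(A',B')$ (or of $(B',A')$, whichever is in $\tau$) with $(V(R)\cup Z,V(C))$ has order $<k$ and is hence in $\tau$. This is where the second gap in your argument appears: you relabel $\{A,B\}$ once and for all so that $W_i,W_j\subseteq A\setminus B$, and then only produce a candidate with $(A,B)\le(X,Y)$. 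But if $(B',A')\in\tau$ rather than $(A',B')$, the separation you obtain is $(B\cup V(R),\,A\cap V(C))$, which satisfies $(B,A)\le(X,Y)$, not $(A,B)\le(X,Y)$. The lemma's disjunction is genuinely needed, and your $(X,Y)=(A\cup V(R)\cup Z,\,B\cap V(C))$ may simply not lie in $\tau$ in that case. You need the case split driven by how $\tau$ orients $\{A',B'\}$, and in both branches membership in $\tau$ must come from the profile property applied to a supremum of two elements of $\tau$, not from an upward-closure that tangles do not possess.
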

\begin{proof}
    As the separation $\{A,B\}$ of order $k' < k$ slices the rainbow, we may, by possibly renaming the sides of $\{A,B\}$, assume that there exists integers $i < h < j$ with $i \in \{0, \dots, 2k'\}$ and $j \in \{ M-2k', \dots, M\}$ with $W_i, W_j \subseteq A \setminus B$ and $W_h \subseteq B \setminus A$.
    If~$\{A, B\}$ is only a separation of~$G'$, but not a separation of~$G$, we let~$D$ be the component of~$G'[B \setminus A]$ that contains an endvertex of $e$; otherwise, we let~$D$ be the empty graph.
    Since $\{A,B\}$ slices the rainbow, we have~$Z \subseteq A \cap B$ and thus~$Z \cap V(D) = \emptyset$.
    So by the choice of the edge~$e$ that we deleted from~$G$ to get~$G'$, we observe that if~$V(D) \neq \emptyset$, then it meets~$W_{M/2}$.
    By~\cref{lem:SepsThatMeetRAndCContainAllBagsOnTheirSide} applied in~$G'$, we thus obtain that either $V(D) \subseteq  V(R) \setminus V(C)$ (which is in particular the case if~$V(D) = \emptyset$) or $D$ meets every bag of either $W_0, \dots, W _{M/2}$ or $W_{M/2}, \dots, W_{M}$. 
    But since $W_i, W_j \subseteq A \setminus B$ and $i \leq 2k' \leq M/2 \leq M -2k' \leq j$, the second case cannot occur; so we have~$V(D) \subseteq V(R)\setminus V(C)$.
    
    Further, \cref{lem:OrderOfSlicingSeps} yields~$2\ell + \abs{Z} \leq \abs{A,B} < k$ and therefore $\vert V(R) \cup Z, V(C) \vert = 2\ell + \abs{Z} < k$.
    Thus,~$\tau$ orients~$\{V(R) \cup Z, V(C)\}$, and we have~$(V(R) \cup Z, V(C)) \in \tau$ since~$\tau$ does not live in the rainbow by construction of the RC-decomposition $(R, \cW, Z, C)$.

    By the choice of~$D$, the pair $(A',B') := (A \cup V(D), B\setminus V(D))$ is a separation of $G$, which again has the same order~$< k$ as~$\{A,B\}$.
    Thus, $\tau$ contains an orientation of~$\{A', B'\}$.    
    If $(A', B') \in \tau$, we define
    \begin{align*}
	    (E, F)
            &:= (A',B') \vee (V(R) \cup Z, V(C))\\
            &= ((A \cup V(D)) \cup V(R), (B \setminus V(D)) \cap V(C)) \\
            &= (A \cup V(R), B \cap V(C)).
	\end{align*}
    If $(B',A') \in \tau$, we define
	\begin{align*}
	    (E, F)
            &:= (B',A') \vee (V(R) \cup Z, V(C))\\
            &= ((B\setminus V(D)) \cup V(R), (A \cup V(D)) \cap V(C)) \\
            &= (B \cup V(R), A \cap V(C)).
	\end{align*}
    We remark that in both definition the second equality holds due to $Z \subseteq A \cap B \setminus V(D)$ and the third holds due to $V(D) \subseteq V(R)\setminus V(C)$.
    We remark that the proof in the case of $(A', B') \in \tau$ is analogous to the one below in the case of $(B',A') \in \tau$ by swapping $A$ and $B$; thus, one obtains $(A,B) \leq (A \cup V(R), B \cap V(C) = (E,F) \in \tau$, if $(A',B') \in \tau$.
    
    So let us assume that $(B',A') \in \tau$. 
    The pair $(E,F)$ is again a separation of~$G$, since it is the supremum of two separations of~$G$. 
	We claim that $\{E, F\}$ has order~$<k$.
    Indeed, we have
    \begin{align*}
    	\vert E, F \vert
            &=  \vert (B \cup V(R)) \cap (A \cap V(C))\vert
                = \vert (B \cap A \cap V(C)) \cup (V(R) \cap A \cap V(C))\vert \\
    	    &= \vert B \cap A \cap (V(C)\setminus V( R))\vert + \vert V(R) \cap V(C) \cap A\vert 
    	        \leq \vert B \cap A \cap (V(C)\setminus V( R)) \vert + \vert V(R) \cap V(C)\vert \\
    	    &= \vert B \cap A \cap (V(C)\setminus V( R)) \vert + 2 \ell
    	        \leq \vert B \cap A \cap (V(C)\setminus V( R))\vert + \vert B \cap A \cap V(R)\vert \\
    	    &= \vert B \cap A\vert = \vert B, A \vert < k,
	\end{align*}
	where $2 \ell \le \vert B \cap A \cap V(R)\vert$ holds by~\cref{lem:OrderOfSlicingSeps}.
    Hence, $\tau$ has to orient $\{E, F\}$ and it does so as~$(E, F)$ by the profile property of the tangle~$\tau$, since~$(E, F)$ is the supremum of the two separations~$(B', A')$ and $(V(R) \cup Z, V(C))$, which are both contained in $\tau$.
    Thus,~$(B, A) \leq (B \cup V(R), A \cap V(C)) = (E, F) \in \tau$ and $V(R) \subseteq E$, so $(X,Y) := (E,F)$ is as desired.
\end{proof}

\begin{lemma}\label{lem:SepsWithForcedOrientations}
    Assume \cref{nextsetting}.
	Let~$\{A,B\} \in S_k(G') \setminus S_k(G)$, and let~$C_A \subseteq G'[A\setminus B]$ and~$C_B \subseteq G'[B\setminus A]$ be the two components of~$G' - (A\cap B)$ that contain an endvertex of~$e$. 
	If either both or none of~$C_A$ and~$C_B$ meet~$C$, then~$\tau$ forces an orientation of~$\{A,B\}$.
\end{lemma}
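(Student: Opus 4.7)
The plan is to construct two natural ``fixes'' of $\{A,B\}$ that become separations of $G$ (not merely of $G'$), namely
\[
(A', B') := (A \cup V(C_B),\; B \setminus V(C_B)) \quad\text{and}\quad (A'', B'') := (A \setminus V(C_A),\; B \cup V(C_A)).
\]
A routine check shows that each has separator $A \cap B$, hence the same order as $\{A,B\}$, and that each is a separation of $G$: in either case both endpoints of $e$ land on the same strict side, so $e$ no longer crosses. Thus both lie in $\vS_k(G)$, and $\tau$ orients each of them. Since $(A'', B'') \leq (A,B) \leq (A', B')$ in the separation lattice, we are done if either $(A', B') \in \tau$ (which forces $(A,B)$) or $(B'', A'') \in \tau$ (which forces $(B,A)$). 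The only remaining possibility is the ``bad case'' $(B', A'),\,(A'', B'') \in \tau$, which I will rule out using the hypothesis on $C_A, C_B$.

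In the bad case I apply the profile property of $\tau$ to $(B', A')$ and $(A'', B'')$: their supremum
\[
(E, F) := (B' \cup A'',\; A' \cap B'') = \bigl(V(G) \setminus V(C_A) \setminus V(C_B),\; (A \cap B) \cup V(C_A) \cup V(C_B)\bigr)
\]
satisfies $|E, F| = |A, B| < k$ (since $V(C_A), V(C_B)$ are disjoint from $A \cap B$), so $(E, F) \in \tau$. Its strict big side is $F \setminus E = V(C_A) \cup V(C_B)$. When \emph{neither} $C_A$ nor $C_B$ meets $C$, the set $V(C_A) \cup V(C_B)$ is disjoint from $V(C)$; combining this with $Z \subseteq V(C)$ and $V(G) = V(R) \cup Z \cup V(C)$ yields $F \setminus E \subseteq V(R) \setminus V(C)$. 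Hence $(E, F)$ witnesses \cref{itm:TauLivesInRBecauseOf1Sep}, contradicting the choice of $(R, \cW, Z, C)$ from \cref{lemma:findedgeandRC}. Thus the bad case cannot occur in this branch, and $\{A, B\}$ is forced.

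The case where \emph{both} $C_A$ and $C_B$ meet $C$ is the main obstacle, since then $F \setminus E$ genuinely intersects $V(C)$ and the direct \cref{itm:TauLivesInRBecauseOf1Sep}-violation for $(E, F)$ is unavailable. My plan here is to form a forbidden triple in $\tau$ out of $(B', A')$, $(A'', B'')$, and the cloud separation $(V(R) \cup Z, V(C))$: by \cref{prop:SlicesOfRAreSepsOfG} the latter has order $2\ell + |Z|$, and when this is less than $k$ its reverse orientation would witness \cref{itm:TauLivesInRBecauseOf1Sep}, so $(V(R) \cup Z, V(C)) \in \tau$. The small sides $B', A'', V(R) \cup Z$ cover $V(G)$ precisely when $V(C_A) \cup V(C_B) \subseteq V(R) \cup Z$, and the edge-analysis from Step~1 (using that the only $G$-edge between $A\setminus B$ and $B\setminus A$ is $e$, whose endpoints lie in $V(C_A) \cup V(C_B)$, together with the RC-decomposition structure of edges between $V(R)\setminus V(C)$ and $V(C)$) completes the forbidden triple. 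The residual technical hurdles are (i) the subcase where some $V(C_{A/B})$ contains a ``deep cloud'' vertex in $V(C) \setminus V(R) \setminus Z$, which I plan to resolve by enlarging $V(C_A), V(C_B)$ with the components of $C - (V(C) \cap A \cap B)$ that they meet --- these lie entirely in the corresponding strict side of the induced separation $\{A \cap V(C),\, B \cap V(C)\}$ of $C$ --- and then rerunning the profile-and-triple argument in this enlarged setting, and (ii) the boundary case $2\ell + |Z| \geq k$ in which the cloud separation falls outside $\vS_k(G)$, which I expect to handle by applying \cref{lem:OrderOfSlicingSeps} to shrink to a restricted sub-rainbow where the cloud separation does become available.
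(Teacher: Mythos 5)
Your opening two steps match the paper: you correctly form the two ``fixed'' separations $(A',B')$ and $(A'',B'')$, note that either immediately forces an orientation of $\{A,B\}$, and in the bad case you compute the supremum $(E,F)$ via the profile property. Your treatment of the ``neither meets $C$'' branch is exactly right and coincides with the paper's logic: $F \setminus E \subseteq V(R) \setminus V(C)$ there, so $(E,F)$ would witness \cref{itm:TauLivesInRBecauseOf1Sep}, contradicting that $\tau$ does not live in $R$.

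However, the ``both meet $C$'' branch is the real content of this lemma, and your plan there has a genuine gap. The forbidden-triple construction with $(V(R) \cup Z, V(C))$ requires $V(C_A) \cup V(C_B) \subseteq V(R) \cup Z$, which is false in general --- the components $C_A, C_B$ can penetrate arbitrarily deeply into the cloud. Your proposed repair (enlarging $C_A, C_B$ by cloud components and ``rerunning the profile-and-triple argument'') is not a proof, and the case $2\ell + \abs{Z} \geq k$ is left entirely open. What the paper actually does here is different in kind: it does not try to build a forbidden triple at all. Instead it observes that (after the ``both meet $C$'' conclusion) each of $C_A, C_B$ meets both the cloud and the bag $W_{M/2}$, so by \cref{lem:SepsThatMeetRAndCContainAllBagsOnTheirSide} each contains an entire bag of $\cW$. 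Then \cref{lem:CrossingOrSlices} says $\{A,B\}$ crosses or slices the rainbow, whence \cref{lem:OrientationOfCrossingSeps} or \cref{lem:OrientationOfSlicingSeps} already tell us that $\tau$ forces an orientation of $\{A,B\}$. These two lemmas are the engine, and your proposal never invokes them.

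You also skip a preliminary reduction that the paper needs in order to apply \cref{lem:SepsThatMeetRAndCContainAllBagsOnTheirSide}: one must first rule out the case that $e$ has an endvertex in $Z$. Under the lemma's hypothesis, if $e$ had an endvertex $z \in Z$, then $C_A$ (say) containing $z$ meets $C$, so $C_B$ also meets $C$, and since $C_B$ holds the other endvertex in $W_{M/2}$, \cref{lem:SepsThatMeetRAndCContainAllBagsOnTheirSide} forces $C_B$ to contain a whole bag $W_i$ --- but then \cref{itm:RCDecompEdgesToZ} gives a $z$--$W_i$ edge in $G'$ joining $A\setminus B$ to $B\setminus A$, a contradiction. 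Only after this reduction do both $C_A$ and $C_B$ meet $W_{M/2}$, which is what makes the later bag-containment step go through. Without it, your argument has no handle on the rainbow at all.
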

\begin{proof}
    Let $k' <k$ be the order of $\{A,B\}$.
    We first argue that we are in the case in which both endvertices of~$e$ are in~$W_{M/2}$.
    Suppose for a contradiciton that $e$ otherwise has one endvertex in~$W_{M/2}$ and the other one in~$Z$.
    Then one of~$C_A$ and~$C_B$, say~$C_A$, contains the endvertex~$z$ of $e$ in $Z$ and thus meets~$V(C) \supseteq Z$.
    Then $C_B$ meets~$C$ as well by assumption.
    Since $C_B$ also contains the other endvertex of $e$, i.e.\,the one in $W_{M/2}$, and $M \geq 4k'$,  \cref{lem:SepsThatMeetRAndCContainAllBagsOnTheirSide} yields that $C_B$ contains a bag $W_i$.
    By~\cref{itm:RCDecompEdgesToZ}, $G'$ contains an edge joining $z \in V(C_A) \subseteq A\setminus B$ and $W_i\subseteq V(C_B) \subseteq B\setminus A$, which is a contradiction.
    So we may assume by the choice of $e$ that both endvertices of~$e$ lie in~$W_{M/2}$; in particular, both~$C_A$ and~$C_B$ meet~$W_{M/2}$.
 
    The assumptions on~$C_A$ and~$C_B$ ensure that $\{A \cup V(C_B), B \setminus V(C_B)\}$ and $\{A \setminus V(C_A), B \cup V(C_A)\}$ are both separations of $G$.
    Also their order~$\abs{A \cap B}$ is $< k$; thus, $\tau$ orients them.
    If $\tau$ orients one of them as $(A \cup V(C_B), B \setminus V(C_B)) \geq (A,B)$ or $(B \cup V(C_A), A \setminus V(C_A)) \geq (B,A)$, then $\tau$ forces an orientation of~$\{A,B\}$, as desired. 
    So assume that $\tau$ orients them as $(B\setminus V(C_B), A \cup V(C_B))$ and$(A \setminus V(C_A), B \cup V(C_A))$. 
    Then their supremum~$(V(G) \setminus (V(C_A) \cup V(C_B)), (A \cap B) \cup V(C_A) \cup V(C_B))$ is also contained in~$\tau$, as its order is the same as $|A,B| = k' <k$ and the tangle $\tau$ has the profile property.
    Since $\tau$ does not live in $R$ by construction pf $(R,\cW,Z,C)$, it follows that $(V(C_A) \cup V(C_B)) \cap V(C) \neq \emptyset$. 
    As either both or none of~$C_A$ and~$C_B$ meets~$C$ by assumption, both~$C_A$ and~$C_B$ meet~$C$.
    
    All in all, both~$C_A$ and~$C_B$ meet~$C$ and~$W_{M/2}$.
    Hence, \cref{lem:SepsThatMeetRAndCContainAllBagsOnTheirSide} yields that each of $C_A$ and $C_B$ contains all but at most $2k'$ bags of $W_0, \dots, W_{M/2}$ or of $W_{M/2}, \dots, W_M$; in particular, they contain some $W_i$ and some $W_j$ respectively, as $M \geq 4k'$.
    Now \cref{lem:CrossingOrSlices} ensures that $\{A,B\}$ either crosses or slices the rainbow in~$G'$.
    Thus, the previous~\cref{lem:OrientationOfCrossingSeps,lem:OrientationOfSlicingSeps} guarantee taht $\tau$ forces an orientation of $\{A,B\}$.
\end{proof}

With these tools at hand, we are now ready to prove~\cref{thm:AtLeastOneRainbowPath}:

\begin{proof}[Proof of~\cref{thm:AtLeastOneRainbowPath}]
    We may assume \cref{nextsetting}, as \cref{setting} with $M_0 \geq 18k$ is precisely the premise of \cref{thm:AtLeastOneRainbowPath}.
    We claim that $\tau$ extends to a $k$-tangle in $G'$.
    For this, we begin by defining an orientation~$\tau'$ of~$S_k(G')$.
    So let~$\{A,B\}$ be an arbitrary separation in~$S_k(G')$.
    If $\{A,B\}$ is not a separation of $G$, then we let $C_A \subseteq G'[A \setminus B]$ and $C_B \subseteq G'[B \setminus A]$ be the components of $G- (A \cap B)$ which contain the respective endvertex of $e$.
    \begin{enumerate}[label=(\arabic*)]
        \item \label{itm:OrientationOfSepsThatAreForcedByTau}
            If~$\tau$ forces an orientation of~$\{A,B\}$, then we let~$(A,B) \in \tau'$ if and only if~$(A,B)$ is forced by~$\tau$.
        \item \label{itm:OrientationOfSepsThatAreNotForcedByTau}
            If~$\tau$ does not force an orientation of~$\{A,B\}$, then $\{A,B\} \notin S_k(G)$.
            If~$C_B$ meets $C$, then we let~$(A, B) \in \tau'$, and if~$C_A$ meets $C$, we let~$(B, A) \in \tau'$.
    \end{enumerate}
    Note that $\tau'$ contains at least one orientation of every separation in $S_k(G')$. Moreover, $\tau'$ is an orientation of $S_k(G')$: for a separation in case \cref{itm:OrientationOfSepsThatAreForcedByTau} $\tau$ forces at most one of its orientations due to the consistency of~$\tau$, and \cref{lem:SepsWithForcedOrientations} ensures that precisely one of $C_A$ and $C_B$ meet $C$ for a separation $\{A,B\}$ in case \cref{itm:OrientationOfSepsThatAreNotForcedByTau}.
    It is immediate form \cref{itm:OrientationOfSepsThatAreForcedByTau}that $\tau'$ contains~$\tau$; so $\tau$ extends to $\tau'$.
    It remains to show that~$\tau'$ is indeed a~$k$-tangle in~$G'$.
    
    Suppose for a contradiction that~$\tau'$ is not a tangle in~$G'$, i.e.\,the orientation $\tau'$ of $S_k(G')$ contains a forbidden triple~$\{(A'_i, B'_i) : i \in [3]\} \in \cT(G')$. 
	By the definition of~$\cT(G')$, we may assume without loss of generality that all the~$(A'_i, B'_i)$ are $\leq$-maximal in~$\tau'$.
	In the remainder of the proof, we will construct from the forbidden triple~$\{(A'_i,B'_i) : i \in [3]\} \subseteq \tau'$ a forbidden triple~$\{(A_i, B_i) : i \in [3]\} \subseteq \tau$, which then contradicts that~$\tau$ is a tangle in~$G$.
		
    To this end, we will make use of the~RC-decomposition~$(R, \cW, Z, C)$.
    We first show that one of the~$(A'_i, B'_i)$ contains~$V(R_{2k-1, M-2k+1}) \cup Z$ in its respective small side $A'_i$; in particular, the edge~$e$ is contained in~$G[A'_i]$ and thus it is a separation of $G$. 
    We then show for the other two separations~$(A_i', B_i')$ in the forbidden triple that either they are separations of~$G$, too, or the component~$C_{A'_i}$ of~$G[A'_i \setminus B'_i]$ containing an endvertex of~$e$ in fact contains also~$V(R_{2k-1,M-2k+1})$. 
    Moving the~$C_{A'_i}$ to the respective big side~$B'_i$ if necessary, these three separations of~$G$ will then yield a forbidden triple in~$\tau$.
 
    So we first show the following:
	\begin{sublemma}\label{sublem:ProofOfAtLeastOneRainbowPath-BigSep}
        If~$\{(A'_i, B'_i) : i \in [3]\} \in \cT(G')$ is contained in~$\tau'$ where each~$(A'_i, B'_i)$ is $\leq$-maximal in~$\tau'$, then some~$(A'_j, B'_j)$ is also a separation of~$G$ with~$V(R_{2k-1, M-2k+1}) \cup Z \subseteq A'_j$.
        In particular, $(A'_j, B'_j) \in \tau$.
	\end{sublemma}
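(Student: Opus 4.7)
The plan is to analyse each separation in the forbidden triple via its interaction with the rainbow, using the rainbow-crossing and rainbow-slicing lemmas together with the $\leq$-maximality of the $(A'_i, B'_i)$ in $\tau'$.

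The central case is when some $\{A'_i, B'_i\}$ crosses or slices the rainbow. Applying \cref{lem:OrientationOfCrossingSeps} or \cref{lem:OrientationOfSlicingSeps} accordingly, one obtains a separation $(X, Y) \in \tau$ with $V(R_{2k-1, M-2k+1}) \subseteq X$ and with either $(A'_i, B'_i) \leq (X, Y)$ or $(B'_i, A'_i) \leq (X, Y)$. A direct inspection of those proofs shows that $Z \subseteq X$ as well: any rainbow-crossing or rainbow-slicing separation $\{A, B\}$ has $W_p \subseteq A \setminus B$ and $W_q \subseteq B \setminus A$ for some $p, q$, and hence $Z \subseteq N_G(W_p) \cap N_G(W_q) \subseteq A \cap B \subseteq X$ by \cref{itm:RCDecompEdgesToZ}. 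The alternative $(B'_i, A'_i) \leq (X, Y)$ is incompatible with the $\leq$-maximality of $(A'_i, B'_i)$ in $\tau'$: since $B'_i \subseteq X$, the supremum $(A'_i \cup X, B'_i \cap Y)$ has separator $B'_i \cap Y \subseteq A'_i \cap B'_i$ of order $<k$, so $\tau'$ must orient it; by $(X, Y) \in \tau \subseteq \tau'$ and a short consistency argument, this supremum lies in $\tau'$ and strictly exceeds $(A'_i, B'_i)$, contradicting maximality. Hence $(A'_i, B'_i) \leq (X, Y)$, and by maximality $(A'_i, B'_i) = (X, Y) \in \tau$, so $j := i$ works.

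If no $\{A'_i, B'_i\}$ crosses or slices the rainbow, then by \cref{lem:CrossingOrSlices} all bags of $\cW$ disjoint from $A'_i \cap B'_i$ lie on a single strict side of $\{A'_i, B'_i\}$ for each $i$. Since the forbidden triple gives $A'_1 \cup A'_2 \cup A'_3 = V(G')$ and the $M - 4k + 3 \geq 4k + 3$ middle bags $W_{2k-1}, \dots, W_{M-2k+1}$ must be covered while each separator meets at most $2(k-1)$ bags by \cref{prop:SomePartLiesOnAStrictSide}, pigeonhole produces an index $j$ and a middle bag $W_h \subseteq A'_j \setminus B'_j$. The non-crossing/slicing assumption on $\{A'_j, B'_j\}$ then forces every bag disjoint from $A'_j \cap B'_j$ to lie in $A'_j \setminus B'_j$. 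Propagating membership via the connectedness of consecutive bags (through adhesion sets when $\ell \geq 1$, or through $Z$ when $|Z| \geq 1$) and \cref{itm:RCDecompEdgesToZ} yields $V(R_{2k-1, M-2k+1}) \cup Z \subseteq A'_j$; any would-be middle-bag vertex in $B'_j \setminus A'_j$ would, together with its connected component in $G' - (A'_j \cap B'_j)$, produce a separation in $\vS_k(G')$ strictly exceeding $(A'_j, B'_j)$ in $\tau'$, contradicting maximality. Since $W_{M/2} \cup Z \subseteq A'_j$ contains both endpoints of $e$ by \cref{lemma:findedgeandRC}, the separation $(A'_j, B'_j)$ is a separation of $G$, and as $\tau$ extends to $\tau'$, we conclude $(A'_j, B'_j) \in \tau$.

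The hardest step will be this second case: carefully executing the pigeonhole argument on the middle bags and propagating strict-side membership along the rainbow via adhesion sets and $Z$-adjacencies while accounting for the up to $2(k-1)$ bags that intersect each separator. The interplay of \cref{itm:RainbowDecompConnected}, \cref{itm:RainbowDecompDistinctBags}, and \cref{itm:RCDecompEdgesToZ} should suffice to ensure that no vertex of the middle rainbow can escape into $B'_j \setminus A'_j$ without contradicting the maximality of $(A'_j, B'_j)$ in $\tau'$.
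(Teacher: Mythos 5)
Your Case 1 is essentially sound in spirit, but the justification for ruling out $(B'_i,A'_i)\leq(X,Y)$ is wrong: you invoke the ``consistency'' of $\tau'$, yet at this point in the argument $\tau'$ is only known to be an orientation of $S_k(G')$, not a tangle, so consistency of $\tau'$ is not available. The correct and simpler route is to observe that $(B'_i,A'_i)\leq(X,Y)\in\tau$ would mean $\tau$ forces $(B'_i,A'_i)$, hence $(B'_i,A'_i)\in\tau'$ by construction, contradicting $(A'_i,B'_i)\in\tau'$. (Also, having verified $Z\subseteq X$ by inspecting the lemma proofs is fine, but note that $(A'_i,B'_i)=(X,Y)$ only follows because $(X,Y)\in\tau\subseteq\tau'$ and $(A'_i,B'_i)$ is maximal in $\tau'$.)

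The real gap is in Case 2. You assert that a middle-bag vertex $v\in B'_j\setminus A'_j$ together with its component $D$ of $G'-(A'_j\cap B'_j)$ ``produces a separation in $\vS_k(G')$ strictly exceeding $(A'_j,B'_j)$ in $\tau'$,'' but this is unjustified and in fact fails: if $\{A'_j,B'_j\}$ turns out to be a separation of $G$ with $(A'_j,B'_j)\in\tau$ maximal, then $(A'_j\cup V(D),B'_j\setminus V(D))$ is a strictly larger separation of the same order, so its \emph{other} orientation is forced by $\tau$ and hence lies in $\tau'$; no contradiction to maximality arises this way. What is actually needed — and what the paper does — is quite different: first show $\{A'_j,B'_j\}$ is a separation of $G$ (by contradiction, using that otherwise it is oriented by rule~(2) so that $C_{B'_j}$ meets the cloud $C$, and then \cref{lem:SepsThatMeetRAndCContainAllBagsOnTheirSide} forces $C_{B'_j}$ to contain a bag, contradicting non-crossing/slicing); then $(A'_j,B'_j)\in\tau$ is $\leq$-maximal in $\tau$, so $G[B'_j\setminus A'_j]$ is connected; by the crucial fact that $\tau$ does not live in the rainbow of $(R,\cW,Z,C)$ (from \cref{lemma:findedgeandRC}), this connected piece must meet $C$; and since it contains no bag, \cref{lem:SepsThatMeetRAndCContainAllBagsOnTheirSide} confines it to the first and last $O(k)$ bags, giving $V(R_{2k-1,M-2k+1})\subseteq A'_j$. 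Your proposal never invokes ``$\tau$ does not live in the rainbow,'' which is indispensable here. A smaller, fixable issue: your pigeonhole over only the $M-4k+3$ middle bags against a $6k-6$ bound fails numerically once $k\geq 5$ under $M\geq 8k$; one should pigeonhole over all $M+1$ bags, as the bag found need not be a middle one.
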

    \begin{proof}
        For every~$(A_i', B'_i)$, at most~$2k-2$ bags of~$\cW$ are contained neither in~$A'_i \setminus B'_i$ nor in~$B'_i \setminus A'_i$ by~\cref{prop:SomePartLiesOnAStrictSide}.
        Thus, all but at most~$6k-6$ bags are contained in one of the strict sides of every~$(A'_i, B'_i)$. 
        As~$\{(A'_i, B'_i) : i \in [3]\}$ is a forbidden triple in~$G'$, we in particular have that~$R[A'_1]$, $R[A'_2]$ and~$R[A'_3]$ together cover the rainbow~$R$ in~$G'$.
        Thus, no such bag may be contained in every big side $B'_i$.
        Since~$M \geq 6k -6$, there thus must be some bag~$W_h$ of~$\cW$ that is contained in the strict small side~$A'_j \setminus B'_j$ of some~$(A'_j,B'_j)$; by renaming the $(A_i',B_i')$, we may assume that~$j=1$.
        Note that as~$W_h \subseteq A'_1 \setminus B'_1$ and~$Z \subseteq N_{G'}(W_h)$ by~\cref{itm:RCDecompEdgesToZ}, we have~$Z \subseteq A'_1$.
        We claim that~$(A'_1, B'_1)$ is as desired, i.e.\,it is also a separation of~$G$ and~$V(R_{2k-1,M-2k+1}) \subseteq A'_1$.

        Assume that~$B'_1 \setminus A'_1$ also contains some bag~$W_s$.
        By~\cref{lem:CrossingOrSlices}, $(A'_1, B'_1)$ then either crosses or slices the rainbow in~$G'$, and thus, the orientation~$(A'_1, B'_1)$ of~$\{A'_1, B'_1\}$ was forced by~$\tau$ by \cref{lem:OrientationOfCrossingSeps,lem:OrientationOfSlicingSeps}. 
        But if a separation is both $\leq$-maximal in~$\tau'$ and forced by~$\tau$, then it is also contained in~$\tau$, and thus a separation of~$G$.
        Hence, $(A'_1, B'_1)$ is a separation of~$G$ and also $\leq$-maximal in~$\tau$.
        Moreover, then~\cref{lem:OrientationOfCrossingSeps,lem:OrientationOfSlicingSeps} yield~$V(R_{2k-1, M-2k+1}) \subseteq A'_1$, as desired.

        So now assume that no bag of~$\cW$ is contained in~$B'_1 \setminus A'_1$.
        By~\cref{prop:SomePartLiesOnAStrictSide}, the strict side $A'_1 \setminus B'_1$ then contains all but at most~$2k-2$ many bags of~$\cW$.
        Suppose for a contradiction that~$\{A'_1, B'_1\}$ was not already a separation of~$G$.
        Since~$(A'_1, B'_1)$ is $\leq$-maximal in~$\tau'$, the tangle $\tau$ did thus not force an orientation of~$\{A'_1, B'_1\}$.
        Hence we have~$(A'_1, B'_1) \in \tau'$ due to~\cref{itm:OrientationOfSepsThatAreNotForcedByTau}.
        Hence, $C_{B'_1}$ meets $C$.
        Since we have seen above that~$Z \subseteq N_{G'}(W_m) \subseteq A'_1$, the choice of $e$ yields that the endvertex of~$e$ contained in~$C_{B'_1} \subseteq B'_1\setminus A'_1$ lies in~$W_{M/2}$. 
        Thus, \cref{lem:SepsThatMeetRAndCContainAllBagsOnTheirSide} yields that $C_{B'_1} \subseteq B'_1 \setminus A'_1$ contains some bag of $\cW$, which contradicts our assumption on $B'_1 \setminus A'_1$.
        Thus, $\{A_1', B_1'\}$ is also a separation of $G$.
        It remains to show~$V(R_{2k-1,M-2k+1}) \subseteq A'_1$.
        Since $(A'_1, B'_1)$ is a separation of $G$ and $\leq$-maximal in $\tau'$, it is also $\leq$-maximal in the tangle $\tau \subseteq \tau'$.
        Thus, $G[B'_1\setminus A'_1]$ is connected, and thus equal to $C_{B'_1}$.
        As~$C_{B'_1}$ meets~$C$ but~$G[B'_1 \setminus A'_1] = C_{B'_1}$ does not contain a bag of~$\cW$, \cref{lem:SepsThatMeetRAndCContainAllBagsOnTheirSide} yields that $C_{B'_1} = G[B'_1 \setminus A'_1]$ does meet at most the first and last $2k-2$ bags of~$\cW$.
        Thus, $V(R_{2k-1,M-2k+1}) \subseteq A'_1$, as desired.
    \end{proof}	  

    So by \cref{sublem:ProofOfAtLeastOneRainbowPath-BigSep}, there is some $j \in [3]$ such that $(A'_j, B'_j) \in \tau$ and $V(R_{2k-1, M-2k+1}) \cup Z \subseteq A'_j$; by symmetry we may assume $j = 1$.
    Next, we aim to obtain from the other two separations~$(A'_2, B'_2)$ and~$(A'_3, B'_3)$ of the forbidden triple two separations~$(A_2, B_2)$ and~$(A_3, B_3)$ of~$G$ that are contained in~$\tau$ and differ from~$(A'_2, B'_2)$ or~$(A'_3, B'_3)$, respectively, only in a subset of~$A'_1$. 
    This will then be the desired forbidden triple in~$\tau$.
    \begin{sublemma}\label{sublem:ProofOfAtLeastOneRainbowPath-MaxSeps}
        Every $\leq$-maximal separation~$(A, B)$ in~$\tau'$ is either also contained in~$\tau$ or we have $(A \setminus V(C_{A}), B \cup V(C_{A})) \in \tau$~and $V(C_{A}) \subseteq V(R_{2k-1, M-2k+1})$.
    \end{sublemma}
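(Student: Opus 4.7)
The plan is to split into cases according to whether $\{A,B\}$ is a separation of $G$. If it is, then since $\tau \subseteq \tau'$ and $\tau$ orients every element of $S_k(G)$, we must have $(A,B) \in \tau$, yielding the first disjunct of the statement. So assume from now on that $\{A,B\} \in S_k(G') \setminus S_k(G)$, so that both $C_A$ and $C_B$ are well-defined, and aim for the second disjunct.

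First I would establish $(A \setminus V(C_A), B \cup V(C_A)) \in \tau$. The pair $(A \setminus V(C_A), B \cup V(C_A))$ is obtained from $(A,B)$ by moving the component $V(C_A) \subseteq A \setminus B$ (which contains one endvertex of $e$) to the big side; since both endvertices of $e$ now lie in $B \cup V(C_A)$, this is a separation of $G$ of the same order as $\{A,B\}$, hence oriented by $\tau$. The opposite orientation $(B \cup V(C_A), A \setminus V(C_A))$ cannot lie in $\tau$: otherwise $(B,A) \leq (B \cup V(C_A), A \setminus V(C_A))$ would, by rule~\cref{itm:OrientationOfSepsThatAreForcedByTau}, place $(B,A) \in \tau'$, contradicting that $\tau'$ already contains $(A,B)$.

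Next I would observe that $V(C_A) \cap V(C) = \emptyset$. Indeed, the $\leq$-maximality of $(A,B)$ in $\tau'$ together with $\{A,B\} \notin S_k(G)$ shows that $\tau$ does not force an orientation of $\{A,B\}$: any witnessing $(E,F) \in \tau$ would strictly exceed $(A,B)$, lie in $\tau' \supseteq \tau$, and violate maximality. By rule~\cref{itm:OrientationOfSepsThatAreNotForcedByTau} together with~\cref{lem:SepsWithForcedOrientations}, exactly one of $C_A, C_B$ meets $V(C)$; since $(A,B) \in \tau'$ by construction means $C_B$ does, we get $V(C_A) \cap V(C) = \emptyset$, so $V(C_A) \subseteq V(R) \setminus V(C)$ and the endvertex $x$ of $e$ in $C_A$ lies in $W_{M/2}$ (rather than in $Z$).

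For the remaining claim $V(C_A) \subseteq V(R_{2k-1, M-2k+1})$, I would argue by contradiction. Suppose $V(C_A)$ meets $W_h$ for some $h < 2k - 1$ (the case $h > M - 2k + 1$ is symmetric). Since $C_A$ is connected in $G'$ and contained in $V(R) \setminus V(C) \subseteq V(R)$, any $C_A$-path joining $W_h$ and $x \in W_{M/2}$ stays inside $V(R)$, and by the linear decomposition structure (\cref{itm:LinDecomp1}, \cref{itm:LinDecomp2}) it meets every bag $W_{h'}$ with $h \leq h' \leq M/2$. I would then apply~\cref{lem:SepsThatMeetRAndCContainAllBagsOnTheirSide} to $C_B$. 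When this lemma yields that $C_B$ either contains a vertex of $Z$ or covers every bag of the left half $W_0, \dots, W_{M/2}$, every bag $W_{h'}$ with $h \leq h' \leq M/2$ is met by both $V(C_A)$ and $V(C_B)$; their disjointness together with $R[W_{h'}]$ being connected (\cref{itm:RainbowDecompConnected}) then forces each such bag to meet $A \cap B$, giving at least $M/2 - h + 1 > 2k$ bags meeting a separator of size less than $k$, which contradicts the at-most-$2k$ bound from~\cref{prop:SomePartLiesOnAStrictSide} using $M \geq 8k$. The hardest subcase, which I expect to be the main obstacle, is when $C_B$ instead lies only on the right half $W_{M/2}, \dots, W_M$ and avoids $Z$; here I would produce a rainbow-crossing (or rainbow-slicing) separation of $G$ of order less than $k$ by combining $(A \setminus V(C_A), B \cup V(C_A)) \in \tau$ with a suitable slice of $R$ furnished by~\cref{prop:SlicesOfRAreSepsOfG}, and then invoke~\cref{itm:TauLivesInRBecauseOf2Seps} (or~\cref{itm:TauLivesInRBecauseOf1Sep}) to contradict that $\tau$ does not live in the rainbow.
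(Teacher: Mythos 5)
Your steps 1--3 essentially match the paper: for $\{A,B\} \in S_k(G)$ the $\leq$-maximality forces $(A,B)\in\tau$, and for $\{A,B\}\notin S_k(G)$ you correctly get $(A\setminus V(C_A), B\cup V(C_A))\in\tau$ and $V(C_A)\cap V(C)=\emptyset$, whence the endvertex of $e$ in $C_A$ lies in $W_{M/2}$. The divergence, and the real gap, is in step~4.

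Your case analysis via \cref{lem:SepsThatMeetRAndCContainAllBagsOnTheirSide} applied to $C_B$ misses the cleaner observation the paper makes: once $C_A$ is known to meet $W_h$ and $W_{M/2}$ with $h<2k-1$, connectedness of $C_A$ inside $R$ gives it meets the $>2k$ consecutive bags $W_h,\dots,W_{M/2}$, of which fewer than $2k$ can meet $A\cap B$ (by \cref{prop:SomePartLiesOnAStrictSide} applied to the order-$(k'<k)$ separation $\{A,B\}$); by \cref{itm:RainbowDecompConnected}, any such bag avoiding $A\cap B$ is entirely contained in $V(C_A)$, so $C_A$ contains a full bag $W_{h'}\subseteq A\setminus B$. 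Independently, $C_B$ also contains a bag (the paper proves this first, handling both possible locations of the endvertex of $e$ in $C_B$, a point your proposal implicitly skips before applying \cref{lem:SepsThatMeetRAndCContainAllBagsOnTheirSide}). With a full bag on each strict side, \cref{lem:CrossingOrSlices} forces $\{A,B\}$ to cross or slice the rainbow, and then \cref{lem:OrientationOfCrossingSeps,lem:OrientationOfSlicingSeps} yield that $\tau$ forces an orientation of $\{A,B\}$ --- contradicting what you established in step~3. This one argument disposes of all three of your subcases at once, including the ``hardest subcase'' which your proposal leaves essentially open (the sketch about combining $(A\setminus V(C_A), B\cup V(C_A))$ with a slice and invoking \cref{itm:TauLivesInRBecauseOf2Seps} is not a worked-out argument and is not the route the paper takes). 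I'd call this a genuine gap: the needed contradiction is within reach from the lemmas you cite, but the key step --- that $C_A$ cannot contain a bag, because together with $C_B$ containing one this forces the orientation --- is absent from your write-up.
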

    \begin{proof}
        If the tangle~$\tau$ forces the orientation~$(A, B)$ of~$\{A, B\}$, then we already have~$(A, B) \in \tau$ by the $\leq$-maximality of~$(A, B)$ in~$\tau'$.
        Thus, we may assume that 
        $(A,B) \in \tau'$ due to~\cref{itm:OrientationOfSepsThatAreNotForcedByTau}, i.e.\,$C_A$ avoids $C$ and $C_B$ meets $C$.
        As each of $C_A$ and $C_B$ contains one endvertex of $e$, $\{A \setminus V(C_A), B \cup V(C_A)\}$ is a separation of~$G$.
        It also has separator~$A \cap B$, and thus is oriented by the~$k$-tangle~$\tau$ in~$G$.
        Since~$\tau$ does not force an orientation of~$\{A,B\}$, it follows from the consistency of the tangle $\tau$ that~$(B \cup V(C_A), A \setminus V(C_A)) \notin \tau$.
        Hence, $(A \setminus V(C_A), B \cup V(C_A)) \in \tau$.
        It remains to show~$V(C_A) \subseteq V(R_{2k-1, M-2k+1})$.
        
        First we claim that~$C_B$ contains a bag of~$\cW$:
        Since~$C_B$ contains an endvertex~$x$ of the edge~$e$ by definition, the choice of~$e$ ensures that either~$x \in W_{M/2}$ or~$x \in Z$.
        If $x \in W_{M/2}$, then \cref{lem:SepsThatMeetRAndCContainAllBagsOnTheirSide} yields that $C_B$ contains a bag of~$\cW$, as~$C_B$ meets~$C$ and~$M \ge 4k-4$.
        So assume that $x \in Z$.
        By~\cref{itm:RCDecompEdgesToZ}, $x$ has neighbours in every bag of~$\cW$.
        In particular, every bag of~$\cW$ is adjacent to~$V(C_B)$.
        At most $2k-2$ many bags of~$\cW$ meet $A \cap B$ by \cref{prop:SomePartLiesOnAStrictSide}.
        Thus, the component $C_B$ of $G' - (A \cap B)$ contains some bag of $\cW$, since $M \geq 2k-2$ all the $G'[W_i]$ are connected by~\cref{itm:RainbowDecompConnected}.

        Secondly, we claim that~$C_{A}$ does not contain a bag of~$\cW$.
        Suppose that $C_A$ contains a bag of~$\cW$.
        Then~$\{A, B\}$ would either cross or slice the rainbow by~\cref{lem:CrossingOrSlices}.
        Thus, $\tau$ forces an orientation of~$\{A,B\}$ by~\cref{lem:OrientationOfCrossingSeps,lem:OrientationOfSlicingSeps}, which contradicts our assumption on $\{A,B\}$.

        Hence, as all the $G'[W_i]$ is connected by~\cref{itm:RainbowDecompConnected},
        every bag which meets $C_A$ also meets $A \cap B$. 
        \cref{prop:SomePartLiesOnAStrictSide} shows that there are at most $2k-2$ such bags of $\cW$ .
        Since~$C_A$ contains an endvertex of~$e$ but avoids~$V(C) \supseteq Z$, it contains the endvertex of $e$ in~$W_{M/2}$.
        Hence, $C_A$ can only meet~$W_i$ with~$|M/2 - i| \le 2k-3$, as~$C_A$ is connected and~$\cW$ is a linear decomposition of~$R$.
        Thus, $M \geq 8k$ yields $V(C_A) \subseteq V(R_{2k-1, M-2k+1})$, as desired
    \end{proof}

    If a separation~$(A, B)$ of~$G'$ is also a separation of~$G$, let~$C_A$ be the empty graph.
    Recall that we have obtained earlier from \cref{sublem:ProofOfAtLeastOneRainbowPath-BigSep} 
    that~$(A'_1, B'_1) \in \tau$ and $V(R_{2k-1,M-2k+1}) \cup Z \subseteq A'_1$.
    By~\cref{sublem:ProofOfAtLeastOneRainbowPath-MaxSeps}, we now also have~$(A_i, B_i) := (A'_i \setminus V(C_{A'_i}), B'_i \cup V(C_{A'_i})) \in \tau$ for~$i = 2, 3$.
    
    We claim that the triple $\{(A'_1,B'_1), (A_2,B_2), (A_3, B_3)\} \subseteq \tau$ is a forbidden triple in~$G$.
    Since~$\{(A'_i, B'_i) : i \in \{1, 2, 3\}\}$ is a forbidden triple in~$G'$, we have
    \begin{equation*}
        G[A'_1] \cup G[A_2] \cup G[A_3] = G[A'_1] \cup G[A'_2 \setminus V(C_{A'_2})] \cup G[A'_3 \setminus V(C_{A'_3})] \supseteq G-e-C_{A'_2}-C_{A'_3}.
    \end{equation*}
    Since the endvertices of~$e$ are contained in~$W_{M/2} \cup Z$, and~$W_{M/2} \cup Z \subseteq V(R_{2k-1, M-2k+1}) \cup Z \subseteq A'_1$ as $M \geq 4k$, we have that 
    \begin{equation*}
        G[A'_1] \cup G[A_2] \cup G[A_3] \supseteq G-C_{A'_2}-C_{A'_3}.
    \end{equation*}
    		
    By~\cref{sublem:ProofOfAtLeastOneRainbowPath-MaxSeps}, each of~$V(C_{A'_2})$ and~$V(C_{A'_3})$ is either empty or contained in~$V(R_{2k-1,M-2k+1})$.
    So since~$V(R_{2k-1,M-2k+1}) \subseteq A'_1$, it follows that both~$C_{A'_2}$ and~$C_{A'_3}$ are subgraphs of~$G[A'_1]$ and hence~$G[A'_1] \cup G[A_2] \cup G[A_3] = G$.
    Thus, $\{(A'_1,B'_1), (A_2,B_2), (A_3,B_3)\}$ is a forbidden triple in~$\tau$, which contradicts that~$\tau$ is a~$k$-tangle in~$G$.
\end{proof}

\begin{proof}[Proof of \cref{main:SplitterTheorem}]
    Choose~$M(k)$ to be~$N(k, 18k)$ as in~\cref{thm:ExistenceOfRCDecomp}.
    Let $k \ge 1$ be an integer, let $G$ be a connected graph with at least $M(k)$~edges, and let $\tau$ be a $k$-tangle in $G$.
    We may assume $k \geq 3$; otherwise, we are done by \cref{lem:Smallk1,lem:Smallk2}.
    If there exists a $(k+1)$-tangle in $G$, then we are done by~\cref{thm:NoBigTangle}.
    Therefore, we may assume that there is no tangle in $G$ of order~$> k$.
    Then, the connected graph $G$ admits an RC-decomposition with sun $Z$ which has length $\geq 18k$ and adhesion $\ell$ such that $\abs{Z} + \ell \geq 1$ by \cref{thm:ExistenceOfRCDecomp}.
    If $G$ has a vertex of degree at most $2$, then we are done by~\cref{lemma:vertexofdegree1,lem:VertexOfDegreeTwo}.
    Thus, \cref{thm:AtLeastOneRainbowPath} concludes the proof.
\end{proof}

\noindent We remark that one may calculate that $M(k) \in O(3^{k^{k^5}})$.

\section{The inductive proof method and its applications} \label{sec:Deciders}

This section consists of three parts: we first collect all above auxiliary results to conclude the formal proof of our inductive proof method, \cref{thm:InductiveProofMethod}.
Next, we deduce from~\cref{thm:InductiveProofMethod} our reduction of~\cref{conj:Decider} to small graphs, \cref{mainresult:reduction}, and then derive \cref{main:MinimalCterex,main:BoundTotalWeight} from it.
Finally, we present a further application of~\cref{thm:InductiveProofMethod} in~\cref{thm:BoundedWitnessingSet}, which bounds the size of a subgraph `witnessing' a $k$-tangle.

Let us first prove our inductive proof method, which we restate here for the reader's convenience:
\begin{customthm}{Theorem~4} \label{thm:InductiveProofMethod:Proof}
    For every integer $k \geq 1$ there is some $M(k) \in O(3^{k^{k^5}})$ such that the following holds:
    Let $\tau$ be a $k$-tangle in a graph $G$. 
    Then there exists a sequence $G_0, \dots, G_m$ of graphs and $k$-tangles $\tau_i$ in $G_i$ for every $i \in \{0,\dots,m\}$ such that
    \begin{itemize}
        \item $G_0 = G$, $\tau_0 = \tau$;
        \item $G_i$ is obtained from $G_{i-1}$ by deleting an edge, suppressing a vertex, or taking a proper component;
        \item the $k$-tangle $\tau_{i-1}$ in $G_{i-1}$ survives as the $k$-tangle $\tau_i$ in $G_i$ for every $i \in [m]$;
        \item $G_m$ is connected and has less than $M(k)$ edges.
    \end{itemize}
\end{customthm}

\begin{proof}
    Let $M(k)$ be given by \cref{main:SplitterTheorem}.
    Suppose that $G_0, \dots, G_{i-1}$ and $\tau_0, \dots, \tau_{i-1}$ are already defined.
    If $G_{i-1}$ is disconnected, then we apply \cref{prop:CarryOverToComponent} to obtain $G_{i}$ and $\tau_i$.
    If $k \leq 2$, then we apply \cref{lem:Smallk1} or \cref{lem:Smallk2}.
    If $k \geq 3$, but $G_{i-1}$ has a vertex of degree $\leq 2$, then we apply \cref{lemma:vertexofdegree1} or \cref{lem:VertexOfDegreeTwo}.
    Thus, we may assume that $G_{i-1}$ is a connected graph with minimum degree $\geq 3$ and $k \geq 3$.
    In this case, we apply \cref{main:SplitterTheorem}, if $G_{i-1}$ has at least $M(k)$ edges.
    Otherwise, we set $m := i-1$, completing the proof.
\end{proof}

\subsection{Application I: sets and functions inducing tangles}

In this section, we address~\cref{conj:Decider}.
\begin{customthm}{Problem~1.1} \label{conj:Decider:copy}
    Is every tangle in a graph $G$ induced by some set~$X \subseteq V(G)$?
\end{customthm}
\noindent 
We will use \cref{thm:InductiveProofMethod:Proof} to prove \cref{mainresult:reduction}, our reduction of \cref{conj:Decider:copy} for $k$-tangles to graphs of size bounded in~$k$.
Let us briefly recall the relevant definitions, for which we mostly follow~\cite{Focus}.

Given a tangle~$\tau$ in a graph~$G$, a set~$X \subseteq V(G)$ \defn{induces}~$\tau$ if for every separation~$(A, B) \in \tau$, we have~$|X \cap A| < |X \cap B|$; in this case, we also say that $X$ \defn{induces} the orientation $(A,B)$ of the separation $\{A,B\}$.
As a natural relaxation, a \defn{weight function} on~$V(G)$ is a map~$w \colon V(G) \to \N$, and we say that it \defn{induces}~$\tau$ if $w(A) < w(B)$ for all~$(A, B) \in \tau$; in this case, we also say that $w$ \defn{induces} the orientation $(A,B)$ of the separation $\{A,B\}$. 
We remark that a set $X \subseteq V(G)$ induces a tangle~$\tau$ if and only if its indicator function $\mathds{1}_X$ induces $\tau$.
This allows us to focus on weight functions in what follows.

Recall that \cref{mainresult:reduction} reads as follows:

\begin{customthm}{Theorem~1} \label{mainresult:Reduction:Proof}
    For every integer $k \geq 1$, there exists $M = M(k) \in O(3^{k^{k^5}})$ such that for every $k$-tangle $\tau$ in a graph $G$, there exists a $k$-tangle $\tau'$ in a connected topological minor $G'$ of $G$ with less than $M$ edges such that
    if a weight function $w'$ on $V(G')$ induces the tangle $\tau'$, then the weight function $w$ on $V(G)$ which extends $w'$ by zero induces the tangle $\tau$.
    In particular, a set of vertices which induces $\tau'$ also induces~$\tau$.
\end{customthm}

For the proof of \cref{mainresult:Reduction:Proof} via \cref{thm:InductiveProofMethod:Proof}, we need to consider the following setting: 
Let $G'$ be a graph which arises from a graph $G$ by deleting an edge, suppressing a vertex or by passing to a component such that a tangle $\tau$ in $G$ survives as a tangle~$\tau'$ in $G'$.
We now aim to transfer a weight function inducing the tangle~$\tau'$ of~$G'$ to a weight function inducing the tangle~$\tau$ of~$G$.
The subsequent three lemmas show that the extension by zero always works.

\begin{lemma} \label{lem:DeciderTransfersDeletedEdge}
    If a $k$-tangle~$\tau$ in a graph~$G$ extends to a $k$-tangle~$\tau'$ in $G-e$ for an edge $e \in G$, then every weight function~$w$ on $V(G) = V(G')$ which induces~$\tau'$ also induces~$\tau$.
    In particular, a set of vertices which induces $\tau'$ also induces $\tau$.
\end{lemma}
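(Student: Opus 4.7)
The proof plan is essentially immediate from unpacking the definitions. The key point is that $G$ and $G' = G - e$ share the same vertex set, so any weight function $w \colon V(G) \to \N$ is simultaneously a weight function on $V(G')$. Moreover, every separation $\{A,B\}$ of $G$ is also a separation of $G'$ with the same order, since $G' \subseteq G$ and the vertex sets $A$, $B$ together cover all of $V(G)$, with no edges of $G'$ (let alone $G$) going between $A \setminus B$ and $B \setminus A$.

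First, I would note that, by the assumption that $\tau$ extends to $\tau'$, we have $\tau \subseteq \tau'$ as subsets of $\vU(V(G))$. Second, let $w$ be a weight function on $V(G) = V(G')$ that induces $\tau'$; that is, $w(A) < w(B)$ for every $(A,B) \in \tau'$. I would then fix an arbitrary $(A,B) \in \tau$, observe via the inclusion $\tau \subseteq \tau'$ that $(A,B) \in \tau'$ as well, and conclude $w(A) < w(B)$. Since $(A,B) \in \tau$ was arbitrary, this shows $w$ induces $\tau$.

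For the \emph{in particular} statement, I would simply recall from the introduction that a set $X \subseteq V(G)$ induces a tangle precisely if its indicator function $\mathds{1}_X$ on $V(G)$ induces that tangle; applying the weight-function statement to $w := \mathds{1}_X$ then yields the conclusion. There is no real obstacle here — the lemma is a direct bookkeeping consequence of the definitions of \emph{extends} and \emph{induces} — so the proof will be just a few lines long.
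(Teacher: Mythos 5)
Your proof is correct and matches the paper's argument exactly: the paper likewise observes that $\tau \subseteq \tau'$ by the definition of extends, whence $w$ inducing $\tau'$ trivially gives $w$ inducing $\tau$. Nothing further to add.
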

\begin{proof}
    As $\tau$ extends to the tangle $\tau'$ in $G-e$, we have $\tau \subseteq \tau'$.
    Thus, $w$ induces~$\tau$ as well.
\end{proof}

\begin{lemma} \label{lem:DeciderTransfersContractedEdge}
    Let $\tau$ be a $k$-tangle in a graph $G$ with $k \geq 3$, and let $\tau'$ be the induced $k$-tangle in a graph $G' = G - v + xy$ obtained by suppressing a vertex $v$ with its two neighbours $x,y$ in $G$.
    If a weight function~$w'$ on~$V(G')$ induces~$\tau'$, then the weight function~$w$ on~$V(G)$ which extends $w'$ by zero induces~$\tau$.
    In particular, a set of vertices which induces $\tau'$ also induces $\tau$.
\end{lemma}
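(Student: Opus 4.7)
The plan is to fix any $(A, B) \in \tau$ and exhibit a separation of $G'$ belonging to $\tau'$ whose sides approximate $A, B$ closely enough that the inequality from $w'$ still certifies $w(A) < w(B)$. Since $w(v) = 0$ and $v$ is the only vertex of $G$ not in $G'$, removing or adding $v$ to a set does not change its total weight, so whenever the exhibited separation differs from $\{A, B\}$ only in the placement of $v$ the transfer is automatic; a little extra care will be required when the separation also shifts one of the neighbours $x, y$ of $v$ across the cut.

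If $v$ lies in a strict side of $\{A, B\}$, say $v \in A \setminus B$, then the two neighbours $x, y$ of $v$ must both lie in $A$, so $\{A \setminus \{v\}, B\}$ is a separation of $G'$ of the same order as $\{A, B\}$. By the definition of the induced tangle, $(A \setminus \{v\}, B) \in \tau'$ because $(A, B) \in \tau$; applying $w'$ then gives $w(A) = w'(A \setminus \{v\}) < w'(B) = w(B)$. The case $v \in B \setminus A$ is symmetric.

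The main case is $v \in A \cap B$, which I would split further by whether the neighbours $x, y$ lie on the same side of $\{A, B\}$. If $x, y \in A$ (the subcase $x, y \in B$ is symmetric), then $\{A \setminus \{v\}, B \setminus \{v\}\}$ is a separation of $G'$, and I would argue that its orientation $(A \setminus \{v\}, B \setminus \{v\})$ lies in $\tau'$. By the inducing definition this amounts to showing $(A, B \setminus \{v\}) \in \tau$; if instead the reverse orientation $(B \setminus \{v\}, A)$ were in $\tau$, then combining it with $(A, B) \in \tau$ and the regular separation $(\{v\}, V(G)) \in \tau$ (in $\tau$ by regularity, since $|\{v\}| = 1 < k$) would produce a forbidden triple, because $x, y \in A$ ensures that the edges $vx, vy$ are absorbed by $G[A]$. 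Applying $w'$ and using $w(v) = 0$ then yields $w(A) < w(B)$.

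The hardest subcase, and the one where the hypothesis $k \geq 3$ is genuinely required, is $v \in A \cap B$ with $x \in A \setminus B$ and $y \in B \setminus A$. Here $\{A \setminus \{v\}, B \setminus \{v\}\}$ fails to be a separation of $G'$ because the new edge $xy$ crosses its strict sides, so I would instead use $\{(A \setminus \{v\}) \cup \{y\}, B \setminus \{v\}\}$, which is a separation of $G'$ of the same order $|A \cap B|$ as $\{A, B\}$. An analogous forbidden-triple argument, now with the regular separation $(\{v, y\}, V(G)) \in \tau$ whose validity demands $|\{v, y\}| = 2 < k$ and thus precisely $k \geq 3$, shows that this separation of $G'$ lies in $\tau'$ oriented as $((A \setminus \{v\}) \cup \{y\}, B \setminus \{v\})$. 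Applying $w'$ yields $w(A) + w(y) < w(B)$, and since $w(y) \geq 0$ this gives $w(A) < w(B)$. The "in particular" follows by specialising to $w' = \mathds{1}_X$.
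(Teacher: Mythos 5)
Your proof is correct and follows essentially the same route as the paper: fix $(A,B) \in \tau$, locate a separation of $G'$ in $\tau'$ that differs from $\{A,B\}$ only by shifting $v$ (and in the hardest case one neighbour) across the cut, then transfer the inequality using $w(v)=0$. The only cosmetic difference is that in the subcase $v \in A\cap B$, $x,y\in A$ you form a forbidden triple with the auxiliary separation $(\{v\},V(G))$, whereas the paper observes that $\{(A,B),(B\setminus\{v\},A)\}$ is already a forbidden tuple on its own; both are valid.
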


\begin{proof}
    Throughout this proof, we will use that, by definition of $\tau'$, a separation $(A ',B')$ of $G'$ is in $\tau'$ if and only if at least one of $(A' \cup \{v\}, B')$ or $(A', B' \cup \{v\})$ is in $\tau$.
    Let $(A,B) \in \tau$.
    Our aim is to find a separation $(A', B') \in \tau'$ such that $A \subseteq A' \cup \{v\}$ and $B' \subseteq B \cup \{v\}$, which then implies that $w(A) \leq w'(A') + w(v) = w'(A') < w'(B') = w'(B') + w(v) \leq w(B)$; so $w$ induces the orientation $(A,B)$ of~$\{A,B\}$ as desired.
    
    If $x,y \in A$ and $v \notin B$, then $(A',B') := (A \setminus \{v\}, B)$ is a separation of $G'$, and $(A,B) \in \tau$ witnesses $(A',B') \in \tau'$, as desired.
    Similarly, if $x,y \in B$ and $v \notin A$, we have $(A',B') := (A, B \setminus \{v\}) \in \tau'$, as desired. 
    
    Let us now assume that $x,y \in A$ and $v \in B$.
    Then $(A , B \setminus \{v\})$ is a separation of $G$.
    Since the only neighbours $x,y$ of $v$ are in $A$, the separations $(A,B)$ and $(B \setminus \{v\}, A)$ form a forbidden tuple in $G$.
    Hence, $(A , B \setminus \{v\})$ is in the tangle $\tau$.
    Now the above described case yields $(A',B') := (A \setminus \{v\}, B \setminus \{v\}) \in \tau'$, as desired.
    Similarly, if $u,w \in B$ and $v \in A$, then $(A',B') := (A \setminus \{v\}, B \setminus \{v\}) \in \tau'$.

    So to conclude the proof, we may assume that $x \in A \setminus B$ and $y \in B \setminus A$ by possibly renaming $x,y$; in particular $v \in A \cap B$, as $xv,vy \in E(G)$.
    Hence, $(C,D) := (A \cup \{y\}, B \setminus \{v\})$ is a separation of $G$.
    It suffices to show that $(C,D) \in \tau$ because the above described case for $x,y \in C$ and $v \notin D$ yields that $(A',B') := ((A \setminus \{v\}) \cup \{y\}, B \setminus \{v\}) = (C \setminus \{v\}, D) \in \tau'$, as desired.
    We now show that $(C,D) \in \tau$:
    The regularity of the tangle $\tau$ of order $k \geq 3$ yields that the separation $(\{v,y\}, V(G))$ is in $\tau$.
    Since the separation $(D,C)$ of $G$ together with $(A,B)$ and $(\{v,y\}, V(G))$ forms a forbidden triple in $G$, we have $(C,D) \in \tau$. 
\end{proof}

\begin{lemma} \label{lem:DeciderTransfersComponent}
    Let $\tau$ be a $k$-tangle~$\tau$ in a graph~$G$, and let $\tau'$ be its induced~$k$-tangle $\tau'$ in some component~$G'$ of~$G$.
    If a weight function~$w'$ on~$V(G')$ induces~$\tau'$, then the weight function~$w$ on~$V(G)$ which extends~$w'$ by zero induces~$\tau$.
    In particular, a set of vertices which induces~$\tau'$ also induces~$\tau$.
\end{lemma}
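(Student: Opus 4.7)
The plan is to exploit the explicit construction of $\tau'$ from $\tau$ given in the proof of \cref{prop:CarryOverToComponent} to transfer an arbitrary $(A,B) \in \tau$ to an orientation in $\tau'$ whose sides have the same $w$-weight as $A, B$. This should be straightforward because $w$ only sees vertices of $V(G')$ anyway.

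Concretely, set $X := V(G')$. Since $G'$ is the component of $G$ corresponding to the $1$-tangle $\tau_1 \subseteq \tau$, the separation $(V(G) \setminus X,\,X)$ has order $0$ and is contained in $\tau$. Given any $(A,B) \in \tau$, consider the restriction $(A',B') := (A \cap X, B \cap X)$. This is a separation of $G'$ (since $G'$ is a subgraph of $G$), and it has order $|A \cap B \cap X| \le |A \cap B| < k$, so it is oriented by the $k$-tangle $\tau'$.

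The main step is to check that $\tau'$ orients it as $(A',B')$ rather than $(B',A')$. For this, I would consider the separation $(A \cup (V(G) \setminus X),\, B \cap X)$ of $G$; it is a separation because $X$ is a component of $G$, so no edges join $X$ to its complement. Its order is $|A \cap B \cap X| < k$, so $\tau$ orients it, and by the profile property applied to $(A,B),(V(G)\setminus X, X) \in \tau$ the orientation in $\tau$ is $(A \cup (V(G) \setminus X),\, B \cap X)$. Now the construction of $\tau'$ in the proof of \cref{prop:CarryOverToComponent} is exactly that a separation of $G'$ of the form $(A',B')$ is in $\tau'$ iff $(A' \cup (V(G) \setminus X),\, B') \in \tau$. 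Setting $A' := A \cap X$ and $B' := B \cap X$, this gives $(A',B') \in \tau'$.

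Since $w'$ induces $\tau'$, we conclude $w'(A') < w'(B')$. Because $w$ extends $w'$ by zero, the vertices in $V(G) \setminus X$ contribute $0$, so $w(A) = w'(A \cap X) = w'(A')$ and $w(B) = w'(B \cap X) = w'(B')$; hence $w(A) < w(B)$, showing that $w$ induces $(A,B)$. The in-particular statement follows by applying this to the indicator function $w' = \mathds{1}_{X'}$ of an inducing set $X' \subseteq V(G')$, whose extension by zero is $\mathds{1}_{X'}$ viewed on $V(G)$. I do not expect any serious obstacle here; the only subtlety is remembering to invoke the profile property to get into the form matching the construction of $\tau'$.
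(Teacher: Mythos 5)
Your proposal is correct and takes essentially the same approach as the paper: both reduce to showing that the restriction $(A\cap V(G'), B\cap V(G'))$ of any $(A,B)\in\tau$ lies in $\tau'$, after which the weight inequality is immediate because $w$ vanishes off $V(G')$. The paper states this membership in $\tau'$ as an immediate consequence of ``$\tau$ induces $\tau'$'' and stops there; you fill in the verification explicitly via the profile property and the construction of $\tau'$ from the proof of \cref{prop:CarryOverToComponent}, which is a fine (and more self-contained) way to justify the same step.
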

\begin{proof}
    Consider an arbitrary separation~$(A, B) \in \tau$.
    Then~$(A', B') := (A \cap V(G'), B \cap V(G')) \in \tau'$, as $\tau$ induces the tangle $\tau'$ in the component $G'$ of $G$.
    Since~$w'$ induces~$\tau'$ and due to the definition of~$w$, we have~$w(A) = w'(A') < w'(B') = w(B)$, as desired.
\end{proof}

\begin{proof}[Proof of \cref{mainresult:Reduction:Proof}]
    Let $\tau_m$ be the $k$-tangle in the graph $G_m$ with less than $M(k)$ edges as described in \cref{thm:InductiveProofMethod:Proof}.
    Let $w'$ be a weight function on $V(G')$ which induces the tangle $\tau_m$.
    Iteratively applying \cref{lem:DeciderTransfersDeletedEdge,lem:DeciderTransfersContractedEdge,lem:DeciderTransfersComponent} yields that the extension of the weight function $w'$ by zero induces the tangle $\tau_0 = \tau$ in the graph $G_0 = G$.
\end{proof}

As direct consequences of~\cref{mainresult:Reduction:Proof}, we now deduce~\cref{main:MinimalCterex:Proof} and~\cref{main:BoundTotalWeight:Proof}:

\begin{customthm}{Corollary~2} \label{main:MinimalCterex:Proof}
    For $k \geq 1$, there exists $M = M(k) \in O(3^{k^{k^5}})$ such that 
    \cref{conj:Decider:copy} holds for $k$ if it holds for all $k$-tangles in connected graphs $G$ with fewer than $M$ edges.
\end{customthm}

\begin{proof}
    This follows immediately from \cref{mainresult:Reduction:Proof}.
\end{proof}

\begin{customthm}{Corollary~3} \label{main:BoundTotalWeight:Proof}
    For every integer $k \geq 1$, there exists $K = K(k)$ such that for every $k$-tangle $\tau$ in a graph~$G$ there exists a weight function $V(G) \to \N$ which induces~$\tau$ and whose total weight $w(V(G))$ is bounded by $K$.
    In particular, the support of $w$ has size $\leq K$.

    Moreover, if~\cref{conj:Decider:copy} holds for~$k$, then every~$k$-tangle in a graph is induced by a set of at most~$M(k)$ vertices, where $M(k)$ is given by \cref{mainresult:reduction}.
\end{customthm}

\begin{proof}
    We enumerate all the finitely many non-isomorphic connected graphs $G_1, \dots, G_m$ with fewer than~$M$ edges. 
    As every finite graph has at most finitely many tangles, there are only finitely many $k$-tangles~$\tau$ in any such $G_i$.
    Elbracht, Kneip and Teegen showed with~\cref{thm:TanglesDecidedWeightedVertexSets} that every tangle in a graph is induced by a weight function, so we may fix for every such $k$-tangle $\tau$ a weight function $w_\tau$ which induces $\tau$.
    We then set $K(k)$ to be the maximum over all the total weights of these weight functions $w_\tau$.
    \cref{mainresult:Reduction:Proof} yields that every $k$-tangle in a graph $G$ is induced by some weight function which extends one of the weight functions~$w_\tau$ by zero, and thus has total weight $\leq K(k)$.

    The moreover-part follows immediately from \cref{mainresult:Reduction:Proof} by choosing the weight functions as indicator functions of the inducing sets given by the assumed positive answer to \cref{conj:Decider:copy}.
\end{proof} 

\noindent We remark that the proof of \cref{main:BoundTotalWeight:Proof} in fact shows that the support of the weight function inducing a $k$-tangle may actually be bounded by $M(k)$ as given in \cref{mainresult:Reduction:Proof}. 

\subsection{Application II: subgraphs witnessing a tangle}

In this section, we demonstrate another application of our inductive proof method \cref{thm:InductiveProofMethod:Proof} by bounding the size of a subgraph `witnessing' a tangle.
We say that a subgraph $H$ of a graph $G$ \defn{witnesses} that an orientation $\tau$ of $S_k(G)$ is a tangle if $H \not\subseteq \bigcup_{i=1}^3 G[A_i]$ for every three (not necessarily distinct) $(A_1, B_1), (A_2, B_2), (A_3, B_3) \in \tau$.
Indeed, $\tau$ is a tangle if and only if such a witnessing subgraph $H$ exists, since every tangle in $G$ is witnessed by $G$ itself.

Grohe and Schweitzer \cite{grohe2015isomorphism}*{Lemma 3.1}\footnote{We remark that triple covers in their paper are precisely the witnessing sets here. In fact, they proved a more general result about tangles on bipartitions in a more general setting. However, every $k$-tangle $\tau$ in $G$ induces a tangle $\tau'$ on the set of bipartitions of the edge set $E(G)$ of order $< k$: let $(C,D) \in \tau'$ if and only if there exists a separation $(A,B) \in \tau$ with $C \subseteq E(G[A])$ and $D \subseteq E(G[B])$. Then their result yields the described conclusion.} proved that every $k$-tangle is witnessed by a set of edges whose size can be bounded in $k$. However, their bound is defined recursively and yields a power tower of height~$k-1$. By \cref{thm:InductiveProofMethod:Proof}, we obtain a new bound which is significantly better for sufficiently large $k$:  

\begin{corollary} \label{thm:BoundedWitnessingSet}
    For every integer $k \geq 1$, there is an integer $M' = M'(k) \in O(3^{k^{k^5}})$ such that every $k$-tangle in a graph $G$ is witnessed by some subgraph $H$ of $G$ of size at most $M'$.
\end{corollary}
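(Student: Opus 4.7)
The plan is to apply the inductive proof method of \cref{thm:InductiveProofMethod:Proof} and then lift a trivial witness back through the resulting sequence. Given the $k$-tangle $\tau$ in $G$, \cref{thm:InductiveProofMethod:Proof} provides a sequence $G_0 = G, G_1, \dots, G_m$ of graphs and $k$-tangles $\tau_0 = \tau, \tau_1, \dots, \tau_m$ such that each $\tau_{i-1}$ survives as $\tau_i$, and the final graph $G_m$ is connected with fewer than $M(k)$ edges. Since $G_m$ is connected, it has at most $M(k)$ vertices, and so $H_m := G_m$ has size at most $2M(k)$. Trivially $H_m$ witnesses $\tau_m$: as $\tau_m$ is a tangle, no forbidden triple exists, so $G_m$ is not covered by $\bigcup_{j=1}^3 G_m[A_j]$ for any triple $\{(A_j,B_j)\}_{j=1}^3 \subseteq \tau_m$.

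I will then lift the witness step by step: for $i = m, m-1, \dots, 1$, I construct $H_{i-1} \subseteq G_{i-1}$ witnessing $\tau_{i-1}$ from $H_i$. If $G_i$ is obtained from $G_{i-1}$ by deleting an edge or by taking a component, I set $H_{i-1} := H_i$, noting that $H_i$ is already a subgraph of $G_{i-1}$; the witness property transfers via $\tau_{i-1} \subseteq \tau_i$ in the former case or via the correspondence of \cref{prop:CarryOverToComponent} in the latter, by contrapositive arguments analogous to those in \cref{lem:DeciderTransfersDeletedEdge} and \cref{lem:DeciderTransfersComponent}. If $G_i = G_{i-1} - v + uw$ is obtained by suppressing a degree-$2$ vertex $v$ with neighbours $u, w$, I define $H_{i-1}$ from $H_i$ by replacing any occurrence of the new edge $uw$ by the path $u$-$v$-$w$ in $G_{i-1}$, that is, by adding the vertex $v$ and the edges $uv, vw$ and deleting $uw$.

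The main obstacle will be verifying that $H_{i-1}$ indeed witnesses $\tau_{i-1}$ in the suppression case. My strategy is to translate a given triple $\{(A_j, B_j)\}_{j=1}^3 \subseteq \tau_{i-1}$ into a projected triple $\{(A_j^*, B_j^*)\}_{j=1}^3 \subseteq \tau_i$ using the induced-tangle construction in \cref{lem:VertexOfDegreeTwo}, chosen so that $A_j^* \subseteq A_j$ for each $j$. By the witness property of $H_i$, some element of $H_i$ lies outside $\bigcup_j G_i[A_j^*]$. If this element is a vertex or an edge different from $uw$, it lies in $H_{i-1}$ and the identity $A_j^* = A_j \setminus \{v\}$ (off $\{u,w\}$) shows that it still witnesses the original triple in $G_{i-1}$. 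The subtle case is when the only such element is the edge $uw$; here a case analysis on the positions of $u, v, w$ with respect to the sides $A_j$ and $B_j$, combined with the regularity and consistency of $\tau_{i-1}$, will exhibit a witness among $v$, $uv$, $vw$ or else yield a contradiction to the witness property of $H_i$.

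Each lifting step preserves $|H_i|$ up to a constant additive term --- at most one new vertex and one net edge in the suppression case --- and with careful bookkeeping, in particular by avoiding unnecessary expansions when $uw$ is not essential for the witness property of $H_i$, the total size of $H_0$ will be bounded by a constant multiple of $M(k)$, yielding $M'(k) \in O(3^{k^{k^5}})$.
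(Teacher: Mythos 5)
Your approach is genuinely different from the paper's, and it has a gap that is not resolved by the bookkeeping you gesture at. The paper does not lift a witness step by step: it builds $H$ in one shot from $G_m =: G'$ by taking the branch vertices $V(G')$ together with \emph{one} edge from each $V(G')$-path of the subdivision $H'\subseteq G$, chosen incident with a branch vertex, and this immediately gives $|H|\le |V(G')|+2|E(G')|\le 2M(k)$. Your step-by-step lift instead replaces the suppressed edge $uw$ by the full path $u$-$v$-$w$ at each suppression step. Iterating this along a single $V(G')$-path $P_{xy}$ of length $n$ in $G$ (whose $n-1$ internal vertices are all suppressed during the sequence) reconstitutes all of $P_{xy}$ in $H_0$; since $n$ is unbounded in $k$, so is $|H_0|$. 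The bound "$|H_{i-1}|\le |H_i|+O(1)$" is not enough, because the number of suppression steps $m$ is unbounded in $k$.

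The phrase "avoiding unnecessary expansions when $uw$ is not essential" is where the gap lives, and it is not an argument. You would need to show that at each suppression step you may discard one of $uv,vw$ (together with the then-isolated endpoint) and still retain the witness property — that is precisely the non-trivial content of keeping only one edge per path, which the paper uses and your proposal does not establish. Without that, there is nothing preventing the expansion from being forced at every step along a long subdivided path. Separately, the "subtle case" dichotomy (one of $v,uv,vw$ is uncovered, or one derives a contradiction with $H_i$ being a witness) is asserted but not verified; e.g.\ the configuration $u,v\in A_{j_1}$, $w\notin A_{j_1}$ and $v,w\in A_{j_2}$, $u\notin A_{j_2}$ is compatible with $uw$ being the unique uncovered element of $H_i$, and it is not clear how regularity and consistency alone rule this out. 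These details would need to be worked out even for the corrected, one-edge-per-path lifting.
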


\begin{proof}[Proof of \cref{thm:BoundedWitnessingSet}]
    Let $M(k)$ be given by \cref{thm:InductiveProofMethod:Proof}, and set $M'(k) := 2 M(k)$.
    Let $\tau$ be a $k$-tangle in some graph $G$.
    \cref{thm:InductiveProofMethod} yields a $k$-tangle $\tau' := \tau_m$ in a topological minor $G' := G_m$ of $G$ which is connected and has fewer than $M(k)$ edges; in particular, $G'$ has at most~$M(k)$ vertices.
    Now the lift of $\tau'$ to $G$ is indeed $\tau$, as one checks by following the lifts along the inductive structure given by \cref{thm:InductiveProofMethod:Proof}.
    
    Let $H'$ be the subdivision of $G'$ in $G$, i.e.\ the subgraph of $G$ from which we obtain $G'$ by vertex sup\-pres\-sions.
    Consider the subgraph $H$ of $H'$ consisting of the branch vertices $V(G')$ together with precisely one edge of $H'$ per $V(G')$-path in $H'$.
    We may choose these edges such that they are incident with at least one branch vertex.
    Since $G'$ witnesses $\tau'$, one again checks along the inductive structure given by \cref{thm:InductiveProofMethod:Proof} that $H$ witnesses $\tau$.
    Note that $H$ has at most $|E(G')|$ edges and, as each edge of $H$ is incident to a branch vertex of $H'$, $H$ has at most $|V(G')| + |E(G')| \le M'(k)$ vertices.
\end{proof}

\section*{Acknowledgements}

The third named author gratefully acknowledges support by doctoral scholarships of the Studienstiftung des deutschen Volkes and the Cusanuswerk -- Bisch\"{o}fliche Studienf\"{o}rderung.
The fourth named author gratefully acknowledges support by a doctoral scholarship of the Studienstiftung des deutschen Volkes.

\bibliographystyle{amsplain}
\arXivOrNot{\bibliography{references_arxiv}}{\bibliography{references}}

@book{DiestelBook16noEE,
    author = {Diestel, Reinhard},
	edition = {5th},
	publisher = {Springer},
	title = {{Graph Theory}},
	year = {2017}
}

@book{birkhoff,
title = {Lattice theory},
series = {Colloquium publications},
author = {Birkhoff, Garrett},
publisher = {American Mathematical Society},
year = {1948},
edition = {revised edition},
}

@article{AbstractSepSys,
  title={Abstract separation systems},
  author={Diestel, Reinhard},
  journal={Order},
  volume={35},
  pages={157--170},
  year={2018},
  publisher={Springer},
  eprint = {1406.3797}
}

@article{TreeSets,
  title={Tree sets},
  author={Diestel, Reinhard},
  journal={Order},
  volume={35},
  number={1},
  pages={171--192},
  year={2018},
  publisher={Springer},
  eprint={1512.03781}
}

@article{ProfilesNew,
  title={Profiles of separations: in graphs, matroids, and beyond},
  author={Diestel, Reinhard and Hundertmark, Fabian and Lemanczyk, Sahar},
  journal={Combinatorica},
  volume={39},
  pages={37--75},
  year={2019},
  publisher={Springer},
  eprint={1110.6207}
}

@article{TangleTreeGraphsMatroids,
  title={Tangle-tree duality: in graphs, matroids and beyond},
  author={Diestel, Reinhard and Oum, Sang-il},
  journal={Combinatorica},
  volume={39},
  number={4},
  pages={879--910},
  year={2019},
  publisher={Springer},
  eprint={1701.02651}
}

@article{Focus,
  title={Point sets and functions inducing tangles of set separations},
  author={Diestel, Reinhard and Elbracht, Christian and Jacobs, Raphael W.},
  journal={Journal of Combinatorics},
  volume={15},
  number={3},
  pages={283-306},
  year={2024},
  eprint={2107.01087}
}

@mastersthesis{ElbrachtMSc,
	author = {Elbracht, Christian},
	school = {Universität Hamburg},
	title = {Tangles determined by majority vote},
	year = {2015},
        note={Available \href{https://www.math.uni-hamburg.de/spag/dm/papers/Elbracht_MSc.pdf.}{online}.}
}

@article{weighted_deciders_AIC,
  title={Tangles are decided by weighted vertex sets},
  author={Elbracht, Christian and Kneip, Jakob and Teegen, Maximilian},
  journal={Advances in Combinatorics},
  year={2020},
  eprint={1811.06821}
}

@article{FiniteSplinters,
  title={Trees of tangles in abstract separation systems},
  author={Elbracht, Christian and Kneip, Jakob and Teegen, Maximilian},
  journal={Journal of Combinatorial Theory, Series A},
  volume={180},
  pages={105425},
  year={2021},
  publisher={Elsevier},
  eprint={1909.09030}
}

@article{InfiniteSplinters,
  title={Trees of tangles in infinite separation systems},
  author={Elbracht, Christian and Kneip, Jakob and Teegen, Maximilian},
  journal={Mathematical Proceedings of the Cambridge Philosophical Society},
  volume={173},
  number={2},
  pages={297--327},
  year={2022},
  publisher={Cambridge University Press},
  eprint={2005.12122}
}

@article{char3tangles,
  title={Tangles and connectivity in graphs},
  author={Grohe, Martin},
  journal={Language and Automata Theory and Applications},
  pages={24--41},
  year={2016},
  publisher={Springer},
  eprint={1602.04727}
}

@article{grohe2015isomorphism,
  title={Isomorphism testing for graphs of bounded rank width},
  author={Grohe, Martin and Schweitzer, Pascal},
  Journal={IEEE 56th Annual Symposium on Foundations of Computer Science},
  pages={1010--1029},
  year={2015},
  publisher={IEEE},
  eprint={1505.03737}
}

@article{StructureOf6ConnectedGraphs,
	author = {Kawarabayashi, Ken-ichi and Norine, Serguei and Thomas, Robin and Wollan, Paul},
	journal = {Journal of Combinatorial Theory, Series~B},
	title = {${K}_6$ minors in 6-connected graphs of bounded tree-width},
	volume = {136},
	year = {2012},
  publisher={Elsevier},
      eprint={1203.2171}
}

@article{GM,
	author = {Robertson, Neil and Seymour, Paul D},
	journal = {Journal of Combinatorial Theory, Series~B},
	title = {Graph minors {I--XX}},
	year = {{1983--2004}}
}

@article{GMX,
  title={{Graph minors. X. Obstructions to tree-decomposition}},
  author={Robertson, Neil and Seymour, Paul D},
  journal={Journal of Combinatorial Theory, Series B},
  volume={52},
  number={2},
  pages={153--190},
  year={1991},
  publisher={Elsevier}
}

@article{characterising4tangles,
  title={Characterising 4-tangles through a connectivity property},
  author={Carmesin, Johannes and Kurkofka, Jan},
  journal={arXiv preprint},
  year={2023},
  eprint = {2309.00902}
}

\end{document}